\definecolor{mypurple}{rgb}{.4,.0,.5}
\def\y{{\bf y}}
\def\x{{\bf x}}
\def\x{{\mathbf x}}
\def\u{{\bf u}}
\def\x{{\bf x}}
\def\y{{\bf y}}
\def\q{{\bf q}}
\def\m{{\bf m}}
\def\h{{\bf h}}
\def\be{\begin{equation}}
\def\ee{\end{equation}}
\def\ba{\left[\begin{array}}
\def\ea{\end{array}\right]}
\def\u{{\bf u}}
\def\x{{\bf x}}
\def\y{{\bf y}}
\def\q{{\bf q}}
\def\p{{\bf p}}
\def\1{{\bf 1}}
\def\0{{\bf 0}}
\def\calX{{\mathcal X}}
\def\barcalX{ \bar{ {\mathcal X} } }
\def\calY{{\mathcal Y}}
\def\mR{{\mathbb R}}
\def\mN{{\mathbb N}}
\def\mE{{\mathbb E}}
\def\mS{{\mathbb S}}
\def\lp{\left (}
\def\rp{\right )}
\newtheorem{theorem}{Theorem}
\newtheorem{proposition}{Proposition}
\begin{document}

\begin{singlespace}

\title {Fully lifted \emph{blirp} interpolation -- a large deviation view  
}
\author{
\textsc{Mihailo Stojnic
\footnote{e-mail: {\tt flatoyer@gmail.com}} }}
\date{}
\maketitle

\centerline{{\bf Abstract}} \vspace*{0.1in}

\cite{Stojnicnflgscompyx23} introduced a powerful \emph{fully lifted} (fl) statistical interpolating mechanism. It established a nested connection between blirps  (bilinearly indexed random processes) and their decoupled (linearly indexed) comparative counterparts. We here revisit the comparison from \cite{Stojnicnflgscompyx23} and introduce its a \emph{large deviation} upgrade. The new machinery allows to substantially widen the \cite{Stojnicnflgscompyx23}'s range of applicability. In addition to \emph{typical}, studying analytically much harder \emph{atypical} random structures features is now possible as well. To give a bit of a practical flavor, we show how the obtained results connect to the so-called \emph{local entropies} (LE)  and their predicated role in understanding solutions clustering  and associated \emph{computational gaps} in  hard random optimization problems. As was the case in \cite{Stojnicnflgscompyx23}, even though the technical considerations often appear as fairly involved, the final interpolating forms admit elegant expressions thereby providing a relatively easy to use tool readily available for further studies. Moreover, as the considered models encompass all well known random structures discussed in \cite{Stojnicnflgscompyx23}, the obtained results automatically apply to them as well.

\vspace*{0.25in} \noindent {\bf Index Terms: Random processes; comparisons; lifting; large deviations; local entropy}.

\end{singlespace}

\section{Introduction}
\label{sec:back}

In \cite{Stojnicnflgscompyx23} a powerful \emph{fully lifted} (fl)  probabilistic blirp interpolating  mechanism was introduced.  It arrived as a strong upgrade on partially lifted concepts from \cite{Stojnicgscomp16,Stojnicgscompyx16} and the basic ones from \cite{Slep62,Gordon85} (see also, e.g.,  \cite{Sudakov71,Fernique74,Fernique75,Kahane86} for early considerations as well as \cite{Stojnicgscomp16,Adler90,Lifshits85,LedTal91,Tal05} for a brief history, relevance,  and development overview). While the range of applicability in a variety of scientific fields is rather wide, applications in random optimizations are of our prevalent interest. They became particularly  fruitful over the last two decades (some of the most prominent examples include, compressed sensing, machine learning, and neural network  statistical  studies; see,  e.g., \cite{Guerra03,Tal06,Pan10,Pan10a,Pan13,Pan13a,StojnicISIT2010binary,StojnicCSetam09,StojnicUpper10,StojnicICASSP10knownsupp,StojnicICASSP10block,StojnicICASSP10var}). Characterizing typical behavior of their various features ranging from standard optimization metrics (objective values, optimal solutions, relations between optimizing variables) to associated algorithmic ones (accuracy, speed, convergence) became possible in large part due to a strong progress made in understanding and developing powerful comparison mechanisms. For example, many of the above performance metrics often exhibit  the so-called \emph{phase-transition} (PT) phenomenon where they undergo an abrupt change as one moves from one region of system parameters to another. In that light, \cite{StojnicISIT2010binary,StojnicCSetam09,StojnicUpper10,StojnicICASSP10block,StojnicICASSP10knownsupp} present concrete comparative techniques that \emph{precisely} demarcate the succeeding/failing parameters regions and the separating borderline between them -- the so-called phase transition (PT) curve.

While sometimes successful, simple comparison forms often need a major upgrade as one faces intrinsically  more complex structures. An easy to state (but notoriously difficult to solve)  prototype example is the famous SK quadratic (or particularly polynomial/tensorial) model where classical Slepian/Gordon max comparison forms \cite{Slep62,Gordon85,Sudakov71,Fernique74,Fernique75,Kahane86} come up short of enabling a precise characterization (see, e.g., \cite{Guerra03,Tal06,Pan10,Pan10a,Pan13,Pan13a}). Similar intricacies are faced in a host of different models including quadratic, bilinear, or minmax forms appearing in a plethora of applications ranging from high-dimensional geometry and signal processing to  machine learning and neural nets (see, e.g., \cite{StojnicLiftStrSec13,StojnicMoreSophHopBnds10,StojnicRicBnds13,StojnicAsymmLittBnds11,StojnicGardSphNeg13,StojnicGardSphErr13} and references therein).  As observed in \cite{Guerra03,Tal06,Pan10,Pan10a,Pan13,Pan13a} for the quadratic/tensorial max forms, and in \cite{Stojnicgscomp16,Stojnicgscompyx16,Stojnicnflgscompyx23,Stojnicsflgscompyx23} for forms from \cite{StojnicLiftStrSec13,StojnicMoreSophHopBnds10,StojnicRicBnds13,StojnicAsymmLittBnds11,StojnicGardSphNeg13,StojnicGardSphErr13}, the needed foundational upgrades are tightly connected to the core improvements in associated random processes' comparisons.

\subsection{An algorithmic hardness connection}
\label{sec:back1}

The results of \cite{Guerra03,Tal06,Pan10,Pan10a,Pan13,Pan13a,Stojnicgscomp16,Stojnicgscompyx16,Stojnicnflgscompyx23,Stojnicsflgscompyx23} relate to typical behavior most prominently represented via the so-called ground state regime. In recent years, a need for studying large deviations counterparts emerged as well. To a degree replacing the standard NP theory, the \emph{computational gaps} became a golden standard concept in attempting to explain  (in a practically more relevant way) mismatch between the theoretically best possible (the one unrestricted by time constraints) and the practically best achievable algorithmic performance (see, e.g., \cite{MMZ05,GamarSud14,GamarSud17,GamarSud17a,AchlioptasR06,AchlioptasCR11,GamMZ22,AlaouiMS22} for examples of well known problems where such gaps are expected to exist). While resolving the computational gaps mysteries in general remains a grand challenge, a gigantic amount of extraordinary effort was put forth to at least demystify them in some concrete instances \cite{GamarSud14,GamarSud17,GamarSud17a,Bald15,Bald16,Bald20,Bald21,BaldassiBBZ07,BaldMPZ23,AbbLiSly21a,Barb24,BarbAKZ23,GamKizPerXu22}. We single out two key features present in a majority od the existing approaches: \textbf{\emph{(i)}} the Overlap gap property (OGP) \cite{GamarSud14,GamarSud17,GamarSud17a,GamKizPerXu22} and  \textbf{\emph{(ii)}} the Local entropy (LE) \cite{Bald15,Bald16,Bald20,Bald21,BaldassiBBZ07,BaldMPZ23,AbbLiSly21a,Barb24,BarbAKZ23} (other approachers have been developed as well  \cite{GamarnikJW20,Wein23,BandeiraAHSWZ22,HopkinsSS15,HopkinsSSS16,HopkinsKPRSS17,DiakonikolasKS17,FeldmanPV18}, but they are typically tailored towards more specific groups of algorithms such as low-degree functions/polynomials, sum of squares hierarchies, statistical queries, and so on). In addition to their potential role in understanding the mystery of computational gaps, they are of independent interest as they provide deep insights into a hierarchical/clustering organization of random optimization problems solutions. Both OGP and LE concepts are usually very difficult to analytically handle as they often rely on combinatorial or nonconvex geometric structures. Interestingly, a large deviation view of standard statistical mechanics approach to studying associated free energies (and their above mentioned ground states) allows for particularly elegant LEs formulations \cite{Bald16,Bald15}. We here revisit the \emph{fully lifted} (fl) interpolating comparison from \cite{Stojnicnflgscompyx23} and introduce its a large deviation counterpart.  Differently from\cite{Stojnicnflgscompyx23}, which is intended for studying \emph{typical} features of random structures, the large deviations nature of the machinery introduced here allows to substantially extend the range of applicability. In particular, studying analytically much harder \emph{atypical} features is now possible as well. After we present the key technical details in Sections   \ref{sec:gencon}-\ref{sec:rthlev}, we in Section \ref{sec:examples} reconnect back to LEs as atypical random optimizations features and recontextualize the role of the obtained comparison results within the realm of studying  the associated computational gaps.

\section{A large deviation bilinear comparison -- first level of lifting}
\label{sec:gencon}

Consider sets $\calX=\{\x^{(1)},\x^{(2)},\dots,\x^{(l)}\}$, $\barcalX=\{\bar{\x}^{(1)},\bar{\x}^{(2)},\dots,\bar{\x}^{(l)}\}$,  with $\x^{(i)},\bar{\x}^{(i)} \in \mR^n$ and $\calY=\{\y^{(1)},\y^{(2)},\dots,\y^{(l)}\}$ with $\y^{(i)}\in \mR^m$. For vectors $\p=[\p_0,\p_1,\p_2]$  and $\q=[\q_0,\q_1,\q_2]$ with ordered components ($\p_0\geq \p_1\geq \p_2= 0$  and $\q_0\geq \q_1\geq \q_2= 0$), real scalars $p,\beta>0$ and $s$, and functions $f_{\bar{\x}^{(i)}} (\cdot): \mR^n\rightarrow \mR$, we are interested in the following
\begin{equation}\label{eq:genanal1}
 f(G,u^{(4,1)},u^{(4,2)},\calX,\barcalX,\calY,\p,\q,\beta,s,p,f_{\bar{\x}^{(i)}} (\cdot))
 =
 \frac{ \log\lp \sum_{i_3=1}^{l}
\lp  \sum_{i_1=1}^{l}\lp\sum_{i_2=1}^{l}e^{\beta  D_{00}^{(i_1,i_2,i_3)}} \rp^{s}\rp^p\rp  }{p|s|\sqrt{n}},
\end{equation}
where
\begin{eqnarray}\label{eq:genanal1a}
 D_{00}^{(i_1,i_2,i_3)} & \triangleq &
  \lp (\y^{(i_2)})^T
 G\x^{(i_1)}+\|\x^{(i_1)}\|_2\|\y^{(i_2)}\|_2 (u^{(4,1)}+u^{(4,2)})  + f_{\bar{\x}^{(i_3)}} (\x^{(i_1)})    \rp.
 \end{eqnarray}
Our focus is on  random mediums and to that end we consider  random matrices  $G\in \mR^{m\times n}$ with i.i.d. standard normal components and random variables $u^{(4,1)}\sim {\mathcal N}(0,\p_0\q_0-\p_1\q_1)$ and $u^{(4,2)}\sim {\mathcal N}(0,\p_1\q_1)$ (all three, $u^{(4,1)}$, $u^{(4,2)}$, and $G$ are independent of each other). Following into the footsteps of \cite{Stojnicnflgscompyx23}, we take a scalar $\m=[\m_1]$ and observe that the following function might be of critically importance for studying $  f(G,u^{(4,1)},u^{(4,2)},\calX,\barcalX,\calY,\p,\q,\beta,s,p,f_{\bar{\x}^{(i)}} (\cdot) ) $
\begin{equation}\label{eq:genanal2}
\xi(\calX,\barcalX,\calY,\p,\q,\m,\beta,s,p,f_{\bar{\x}^{(i)}} (\cdot))  \triangleq   \frac{ \mE_{G,u^{(4,2)}} \log
\lp
\sum_{i_3=1}^{l} \lp \mE_{u^{(4,1)}}\lp \sum_{i_1=1}^{l}\lp\sum_{i_2=1}^{l}e^{\beta  D_{00}^{(i_1,i_2,i_3)}  } \rp^{s}  \rp^{\m_1} \rp^p \rp  }{p|s|\sqrt{n}\m_1}.
\end{equation}
We adopt the convention that $\mE$ without specified subscript denotes the expectation over any underlying randomness. On the other hand, if the subsciprt is specified then the expectation is only with respect to that specified randomness. Following the main ides of \cite{Stojnicgscomp16,Stojnicgscompyx16,Stojnicnflgscompyx23}, we study properties of $\xi(\calX,\barcalX,\calY,\p,\q,\m,\beta,s,p,f_{\bar{\x}^{(i)}} (\cdot))$  via the following interpolating function $\psi(\cdot)$
\begin{equation}\label{eq:genanal3}
\psi(t)  =
  \frac{ \mE_{G,u^{(4,2)},\u^{(2,2)},\h^{(2)}} \log
\lp
\sum_{i_3=1}^{l} \lp \mE_{u^{(4,1)},\u^{(2,1)},\h^{(1)}}\lp \sum_{i_1=1}^{l}\lp\sum_{i_2=1}^{l}e^{\beta  D_{0}^{(i_1,i_2,i_3)}  } \rp^{s}  \rp^{\m_1} \rp^p \rp  }{p|s|\sqrt{n}\m_1},
\end{equation}
where
\begin{eqnarray}\label{eq:genanal3a}
 D_0^{(i_1,i_2,i_3)} & \triangleq & \sqrt{t}(\y^{(i_2)})^T
 G\x^{(i_1)}+\sqrt{1-t}\|\x^{(i_1)}\|_2 (\y^{(i_2)})^T(\u^{(2,1)}+\u^{(2,2)})\nonumber \\
 & & +\sqrt{t}\|\x^{(i_1)}\|_2\|\y^{(i_2)}\|_2(u^{(4,1)}+u^{(4,2)}) +\sqrt{1-t}\|\y^{(i_2)}\|_2(\h^{(1)}+\h^{(2)})^T\x^{(i_1)}
 + f_{\bar{\x}^{(i_3)}} (\x^{(i_1)}).
 \end{eqnarray}
The $m$ dimensional vectors $\u^{(2,1)}$ and $\u^{(2,2)}$  are comprised of i.i.d. zero-mean Gaussians with variances $\p_0-\p_1$ and $\p_1$, respectively. Analogously, $n$ dimensional vectors $\h^{(1)}$ and $\h^{(2)}$ are comprised of i.i.d. zero-mean Gaussians with variances $\q_0-\q_1$ and $\q_1$, respectively. Also, $\u^{(2,1)}$, $\u^{(2,2)}$, $\h^{(1)}$, and $\h^{(2)}$  are independent among themselves and of $G$, $u^{(4,1)}$, and $u^{(4,2)}$ as well.

From (\ref{eq:genanal3a}) one observes that  $D_0^{(i_1,i_2,i_3)}$ is a Gaussian \emph{bilinearly} indexed  random process (blirp). Moreover, it is not that difficult to see that  $\xi(\calX,\barcalX,\calY,\p,\q,\m,\beta,s,p,f_{\bar{\x}^{(i)}} (\cdot)) = \psi(1)$ and since $\psi(0)$ is often easier to handle than $\psi(1)$, finding a way to connect $\psi(1)$ and $\psi(0)$ is rather desirable. Such a connection would then establish a connection between  $\xi(\calX,\barcalX,\calY,\p,\q,\m,\beta,s,p,f_{\bar{\x}^{(i)}} (\cdot)) $ and $\psi(0)$ and ultimately between the original, \emph{bilinearly }indexed process and two \emph{linearly} indexed decoupled  alternatives.

To ensure the easiness of the exposition, we set
\begin{eqnarray}\label{eq:genanal4}
\bar{\u}^{(i_1,1)} & =  & \frac{G\x^{(i_1)}}{\|\x^{(i_1)}\|_2} \nonumber \\
\u^{(i_1,3,1)} & =  & \frac{(\h^{(1)})^T\x^{(i_1)}}{\|\x^{(i_1)}\|_2} \nonumber \\
\u^{(i_1,3,2)} & =  & \frac{(\h^{(2)})^T\x^{(i_1)}}{\|\x^{(i_1)}\|_2}.
\end{eqnarray}
Also, we denote by $\bar{\u}_j^{(i_1,1)}$ the $j$-th component of $\bar{\u}^{(i_1,1)}$ and write
\begin{eqnarray}\label{eq:genanal5}
\bar{\u}_j^{(i_1,1)} & =  & \frac{G_{j,1:n}\x^{(i_1)}}{\|\x^{(i_1)}\|_2},1\leq j\leq m,
\end{eqnarray}
where $G_{j,1:n}$ is the $j$-th row of $G$. Clearly, for any fixed $i_1$, $\bar{\u}^{(i_1,1)}$ has i.i.d. standard normals elements. As stated above, the i.i.d. zero-mean Gaussians elements of $\u^{(2,1)}$ and $\u^{(2,2)}$  have respective variances $\p_0-\p_1$ and $\p_1-\p_2$ and the i.i.d. zero-mean Gaussians elements of $\u^{(i_1,3,1)}$ and $\u^{(i_1,3,2)}$ have respective variances $\q_0-\q_1$ and $\q_1-\q_2$. After setting ${\mathcal U}_r=\{u^{(4,r)},\u^{(2,r)},\h^{(r)}\},r\in\{1,2\}$, (\ref{eq:genanal3}) can be rewritten as
\begin{equation}\label{eq:genanal6}
\psi(t)  =   \frac{\mE_{G,{\mathcal U}_2}
\log
\lp
\sum_{i_3=1}^{l}
\lp \mE_{{\mathcal U}_1} \lp \sum_{i_1=1}^{l}\lp\sum_{i_2=1}^{l}A_{i_3}^{(i_1,i_2)} \rp^{s}\rp^{\m_1} \rp^p \rp  }{p|s|\sqrt{n}\m_1}
=
 \frac{\mE_{G,{\mathcal U}_2}
\log
\lp
\sum_{i_3=1}^{l}
\lp \mE_{{\mathcal U}_1} Z_{i_3}^{\m_1} \rp^p \rp  }{p|s|\sqrt{n}\m_1},
\end{equation}
where $\beta_{i_1}=\beta\|\x^{(i_1)}\|_2$ and
\begin{eqnarray}\label{eq:genanal7}
B^{(i_1,i_2)} & \triangleq &  \sqrt{t}(\y^{(i_2)})^T\bar{\u}^{(i_1,1)}+\sqrt{1-t} (\y^{(i_2)})^T(\u^{(2,1)}+\u^{(2,2)}) \nonumber \\
D^{(i_1,i_2,i_3)} & \triangleq &  (B^{(i_1,i_2)}+\sqrt{t}\|\y^{(i_2)}\|_2 (u^{(4,1)}+u^{(4,2)})+\sqrt{1-t}\|\y^{(i_2)}\|_2(\u^{(i_1,3,1)}+\u^{(i_1,3,2)})
+ f_{\bar{\x}^{(i_3)}} (\x^{(i_1)}) )
\nonumber \\
A_{i_3}^{(i_1,i_2)} & \triangleq &  e^{\beta_{i_1}D^{(i_1,i_2,i_3)}}\nonumber \\
C_{i_3}^{(i_1)} & \triangleq &  \sum_{i_2=1}^{l}A_{i_3}^{(i_1,i_2)}\nonumber \\
Z_{i_3} & \triangleq & \sum_{i_1=1}^{l} \lp \sum_{i_2=1}^{l} A_{i_3}^{(i_1,i_2)}\rp^s =\sum_{i_1=1}^{l}  (C_{i_3}^{(i_1)})^s.
\end{eqnarray}
As \cite{Stojnicgscomp16,Stojnicgscompyx16,Stojnicnflgscompyx23} demonstrated, to establish a comparative connection between $\psi(1)$ and $\psi(0)$, studying monotonicity of $\psi(t)$ is particularly useful. The following derivative is therefore the key object of our focus
\begin{eqnarray}\label{eq:genanal9}
\frac{d\psi(t)}{dt} & = &
\frac{d}{dt}\lp
\frac{\mE_{G,{\mathcal U}_2}
\log
\lp
\sum_{i_3=1}^{l}
\lp \mE_{{\mathcal U}_1} Z_{i_3}^{\m_1} \rp^p \rp  }{p|s|\sqrt{n}\m_1}
\rp
\nonumber \\
& = &  \mE_{G,{\mathcal U}_2}
\sum_{i_3=1}^{l} \frac{
\lp \mE_{{\mathcal U}_1} Z_{i_3}^{\m_1} \rp^{p-1}    }{|s|\sqrt{n}\m_1  \lp
\sum_{i_3=1}^{l}
\lp \mE_{{\mathcal U}_1} Z_{i_3}^{\m_1} \rp^p \rp   }
\frac{d \mE_{{\mathcal U}_1} Z_{i_3}^{\m_1} }{dt}\nonumber \\
& = &  \mE_{G,{\mathcal U}_2}
\sum_{i_3=1}^{l} \frac{\m_1
\lp \mE_{{\mathcal U}_1} Z_{i_3}^{\m_1} \rp^{p-1}    }{|s|\sqrt{n}\m_1  \lp
\sum_{i_3=1}^{l}
\lp \mE_{{\mathcal U}_1} Z_{i_3}^{\m_1} \rp^p \rp   }
\mE_{{\mathcal U}_1} \frac{1}{Z_{i_3}^{1-\m_1}}\frac{d Z_{i_3}}{dt}\nonumber \\
& = &  \mE_{G,{\mathcal U}_2}
 \sum_{i_3=1}^{l} \frac{\m_1
\lp \mE_{{\mathcal U}_1} Z_{i_3}^{\m_1} \rp^{p-1}    }{|s|\sqrt{n}\m_1  \lp
\sum_{i_3=1}^{l}
\lp \mE_{{\mathcal U}_1} Z_{i_3}^{\m_1} \rp^p \rp   }
\mE_{{\mathcal U}_1} \frac{1}{Z_{i_3}^{1-\m_1}} \frac{d\lp \sum_{i_1=1}^{l} \lp \sum_{i_2=1}^{l} A_{i_3}^{(i_1,i_2)}\rp^s \rp }{dt}\nonumber \\
& = &   \mE_{G,{\mathcal U}_2}
\sum_{i_3=1}^{l} \frac{s\m_1
\lp \mE_{{\mathcal U}_1} Z_{i_3}^{\m_1} \rp^{p-1}    }{|s|\sqrt{n}\m_1  \lp
\sum_{i_3=1}^{l}
\lp \mE_{{\mathcal U}_1} Z_{i_3}^{\m_1} \rp^p \rp   }
\mE_{{\mathcal U}_1} \frac{1}{Z_{i_3}^{1-\m_1}}  \sum_{i=1}^{l} (C_{i_3}^{(i_1)})^{s-1} \nonumber \\
& & \times \sum_{i_2=1}^{l}\beta_{i_1}A_{i_3}^{(i_1,i_2)}\frac{dD^{(i_1,i_2,i_3)}}{dt},
\end{eqnarray}
where
\begin{eqnarray}\label{eq:genanal9a}
\frac{dD^{(i_1,i_2,i_3)}}{dt}= \lp \frac{dB^{(i_1,i_2)}}{dt}+\frac{\|\y^{(i_2)}\|_2 (u^{(4,1)}+u^{(4,2)})}{2\sqrt{t}}-\frac{\|\y^{(i_2)}\|_2 (\u^{(i_1,3,1)}+\u^{(i_1,3,2)})}{2\sqrt{1-t}}\rp.
\end{eqnarray}
Recalling on (\ref{eq:genanal7}) we also have
\begin{eqnarray}\label{eq:genanal10}
\frac{dB^{(i_1,i_2)}}{dt} & = &   \frac{d\lp\sqrt{t}(\y^{(i_2)})^T\bar{\u}^{(i_1,1)}+\sqrt{1-t} (\y^{(i_2)})^T(\u^{(2,1)}+\u^{(2,2)})\rp}{dt} \nonumber \\
 & = &
\sum_{j=1}^{m}\lp \frac{\y_j^{(i_2)}\bar{\u}_j^{(i_1,1)}}{2\sqrt{t}}-\frac{\y_j^{(i_2)}\u_j^{(2,1)}}{2\sqrt{1-t}}-\frac{\y_j^{(i_2)}\u_j^{(2,2)}}{2\sqrt{1-t}}\rp.
\end{eqnarray}
Combining (\ref{eq:genanal9a}) and (\ref{eq:genanal10}) we find
\begin{eqnarray}\label{eq:genanal10a}
\frac{dD^{(i_1,i_2,i_3)}}{dt} & = & \sum_{j=1}^{m}\lp \frac{\y_j^{(i_2)}\bar{\u}_j^{(i_1,1)}}{2\sqrt{t}}-\frac{\y_j^{(i_2)}\u_j^{(2,1)}}{2\sqrt{1-t}}-\frac{\y_j^{(i_2)}\u_j^{(2,2)}}{2\sqrt{1-t}}\rp \nonumber \\
& & +\frac{\|\y^{(i_2)}\|_2 u^{(4,1)}}{2\sqrt{t}}+\frac{\|\y^{(i_2)}\|_2 u^{(4,2)}}{2\sqrt{t}}-\frac{\|\y^{(i_2)}\|_2 \u^{(i_1,3,1)}}{2\sqrt{1-t}}-\frac{\|\y^{(i_2)}\|_2 \u^{(i_1,3,2)}}{2\sqrt{1-t}}.
\end{eqnarray}
Following the strategy of \cite{Stojnicnflgscompyx23}, we rearrange the above terms into three groups that depend on $G$, ${\mathcal U}_2$, and ${\mathcal U}_1$ and write
\begin{eqnarray}\label{eq:genanal10b}
\frac{dD^{(i_1,i_2,i_3)}}{dt} & = & \bar{T}_G+\bar{T}_2+\bar{T}_1,
\end{eqnarray}
where
\begin{eqnarray}\label{eq:genanal10c}
\bar{T}_G & = & \sum_{j=1}^{m} \frac{\y_j^{(i_2)}\bar{\u}_j^{(i_1,1)}}{2\sqrt{t}}\nonumber \\
\bar{T}_2 & = & -\sum_{j=1}^{m}\frac{\y_j^{(i_2)}\u_j^{(2,2)}}{2\sqrt{1-t}}+\frac{\|\y^{(i_2)}\|_2 u^{(4,2)}}{2\sqrt{t}} -\frac{\|\y^{(i_2)}\|_2 \u^{(i_1,3,2)}}{2\sqrt{1-t}} \nonumber\\
\bar{T}_1 & = &  -\sum_{j=1}^{m} \frac{\y_j^{(i_2)}\u_j^{(2,1)}}{2\sqrt{1-t}}  +\frac{\|\y^{(i_2)}\|_2 u^{(4,1)}}{2\sqrt{t}}-\frac{\|\y^{(i_2)}\|_2 \u^{(i_1,3,1)}}{2\sqrt{1-t}}.
\end{eqnarray}
A combination of (\ref{eq:genanal9}) and (\ref{eq:genanal10b}) gives
\begin{equation}\label{eq:genanal10d}
\frac{d\psi(t)}{dt}  =     \mE_{G,{\mathcal U}_2}
\sum_{i_3=1}^{l} \frac{s\m_1
\lp \mE_{{\mathcal U}_1} Z_{i_3}^{\m_1} \rp^{p-1}    }{|s|\sqrt{n}\m_1  \lp
\sum_{i_3=1}^{l}
\lp \mE_{{\mathcal U}_1} Z_{i_3}^{\m_1} \rp^p \rp   }
\mE_{{\mathcal U}_1} \frac{1}{Z_{i_3}^{1-\m_1}}  \sum_{i_1=1}^{l} (C_{i_3}^{(i_1)})^{s-1} \sum_{i_2=1}^{l}\beta_{i_1}A_{i_3}^{(i_1,i_2)}
\lp \bar{T}_G + \bar{T}_2+ \bar{T}_1\rp.
\end{equation}
A bit of additional rearranging allows to write
\begin{equation}\label{eq:genanal10e}
\frac{d\psi(t)}{dt}  =       \frac{\mbox{sign}(s)}{2\sqrt{n}} \sum_{i_1=1}^{l}  \sum_{i_2=1}^{l}
\beta_{i_1}\lp T_G + T_2+ T_1\rp,
\end{equation}
where
\begin{eqnarray}\label{eq:genanal10f}
T_G & = & \sum_{j=1}^{m}\frac{T_{G,j}}{\sqrt{t}}  \nonumber\\
T_2 & = & -\sum_{j=1}^{m}\frac{T_{2,1,j}}{\sqrt{1-t}}-\|\y^{(i_2)}\|_2\frac{T_{2,2}}{\sqrt{1-t}}+\|\y^{(i_2)}\|_2\frac{T_{2,3}}{\sqrt{t}} \nonumber\\
T_1 & = & -\sum_{j=1}^{m}\frac{T_{1,1,j}}{\sqrt{1-t}}-\|\y^{(i_2)}\|_2\frac{T_{1,2}}{\sqrt{1-t}}+\|\y^{(i_2)}\|_2\frac{T_{1,3}}{\sqrt{t}},
\end{eqnarray}
and
\begin{eqnarray}\label{eq:genanal10g}
T_{G,j} & = &  \mE_{G,{\mathcal U}_2}\lp
\sum_{i_3=1}^{l} \frac{
\lp \mE_{{\mathcal U}_1} Z_{i_3}^{\m_1} \rp^{p-1}    }{ \lp
\sum_{i_3=1}^{l}
\lp \mE_{{\mathcal U}_1} Z_{i_3}^{\m_1} \rp^p \rp   }
\mE_{{\mathcal U}_1}\frac{(C_{i_3}^{(i_1)})^{s-1} A_{i_3}^{(i_1,i_2)} \y_j^{(i_2)}\bar{\u}_j^{(i_1,1)}}{Z_{i_3}^{1-\m_1}} \rp \nonumber \\
T_{2,1,j} & = &  \mE_{G,{\mathcal U}_2}\lp
\sum_{i_3=1}^{l} \frac{
\lp \mE_{{\mathcal U}_1} Z_{i_3}^{\m_1} \rp^{p-1}    }{ \lp
\sum_{i_3=1}^{l}
\lp \mE_{{\mathcal U}_1} Z_{i_3}^{\m_1} \rp^p \rp   }
\mE_{{\mathcal U}_1}\frac{(C_{i_3}^{(i_1)})^{s-1} A_{i_3}^{(i_1,i_2)} \y_j^{(i_2)}\u_j^{(2,2)}}{Z_{i_3}^{1-\m_1}} \rp \nonumber \\
T_{2,2} & = &  \mE_{G,{\mathcal U}_2} \lp
\sum_{i_3=1}^{l} \frac{
\lp \mE_{{\mathcal U}_1} Z_{i_3}^{\m_1} \rp^{p-1}    }{ \lp
\sum_{i_3=1}^{l}
\lp \mE_{{\mathcal U}_1} Z_{i_3}^{\m_1} \rp^p \rp   }
\mE_{{\mathcal U}_1}\frac{(C_{i_3}^{(i_1)})^{s-1} A_{i_3}^{(i_1,i_2)} \u^{(i_1,3,2)}}{Z_{i_3}^{1-\m_1}} \rp \nonumber \\
T_{2,3} & = &  \mE_{G,{\mathcal U}_2}\lp
\sum_{i_3=1}^{l} \frac{
\lp \mE_{{\mathcal U}_1} Z_{i_3}^{\m_1} \rp^{p-1}    }{ \lp
\sum_{i_3=1}^{l}
\lp \mE_{{\mathcal U}_1} Z_{i_3}^{\m_1} \rp^p \rp   }
\mE_{{\mathcal U}_1}\frac{(C_{i_3}^{(i_1)})^{s-1} A_{i_3}^{(i_1,i_2)} u^{(4,2)}}{Z_{i_3}^{1-\m_1}} \rp \nonumber \\
T_{1,1,j} & = &  \mE_{G,{\mathcal U}_2} \lp
\sum_{i_3=1}^{l} \frac{
\lp \mE_{{\mathcal U}_1} Z_{i_3}^{\m_1} \rp^{p-1}    }{ \lp
\sum_{i_3=1}^{l}
\lp \mE_{{\mathcal U}_1} Z_{i_3}^{\m_1} \rp^p \rp   }
\mE_{{\mathcal U}_1}\frac{(C_{i_3}^{(i_1)})^{s-1} A_{i_3}^{(i_1,i_2)} \y_j^{(i_2)}\u_j^{(2,1)}}{Z_{i_3}^{1-\m_1}}\rp \nonumber \\
T_{1,2} & = &  \mE_{G,{\mathcal U}_2} \lp
\sum_{i_3=1}^{l} \frac{
\lp \mE_{{\mathcal U}_1} Z_{i_3}^{\m_1} \rp^{p-1}    }{ \lp
\sum_{i_3=1}^{l}
\lp \mE_{{\mathcal U}_1} Z_{i_3}^{\m_1} \rp^p \rp   }
\mE_{{\mathcal U}_1}\frac{(C_{i_3}^{(i_1)})^{s-1} A_{i_3}^{(i_1,i_2)} \u^{(i_1,3,1)}}{Z_{i_3}^{1-\m_1}}\rp \nonumber \\
T_{1,3} & = &  \mE_{G,{\mathcal U}_2}\lp
\sum_{i_3=1}^{l} \frac{
\lp \mE_{{\mathcal U}_1} Z_{i_3}^{\m_1} \rp^{p-1}    }{ \lp
\sum_{i_3=1}^{l}
\lp \mE_{{\mathcal U}_1} Z_{i_3}^{\m_1} \rp^p \rp   }
\mE_{{\mathcal U}_1}\frac{(C_{i_3}^{(i_1)})^{s-1} A_{i_3}^{(i_1,i_2)} u^{(4,1)}}{Z_{i_3}^{1-\m_1}}\rp,
\end{eqnarray}
where we recognize the above seven objects placed in the three mentioned groups as key components in the ensuing computations.  Each of these terms will be handled separately. In the process of doing so, we heavily rely on  \cite{Stojnicnflgscompyx23} and follow the approach presented therein as closely as possible. However, we proceed in a much faster fashion. In particular, whenever possible, we  avoid unnecessary repetitions of the already introduced concepts  and instead focus on key differences.

\subsection{Computing $\frac{d\psi(t)}{dt}$}
\label{sec:compderivative}

 It turns out as convenient to choose a particular order in which we handle the above seven terms. We first handle the last group ($T_{1,1,j}$, $T_{1,2}$, and $T_{1,3}$) to which we refer as $T_1$--group, then we handle the middle group ($T_{2,1,j}$, $T_{2,2}$, and $T_{2,3}$) to which we refer as $T_2$--group, and at the end we handle $T_{G,j}$ to which we refer as $T_G$--group.

\subsubsection{$T_1$--group -- first level}
\label{sec:handlT1}

Each of the three terms from $T_1$--group is handled separately.

\vspace{.0in}
\underline{\textbf{\emph{Determining}} $T_{1,1,j}$}
\label{sec:hand1T11}

Utilizing Gaussian integration by parts we have

 \begin{eqnarray}\label{eq:liftgenAanal19}
T_{1,1,j} & = & \mE_{G,{\mathcal U}_2}\lp
\sum_{i_3=1}^{l} \frac{
\lp \mE_{{\mathcal U}_1} Z_{i_3}^{\m_1} \rp^{p-1}    }{ \lp
\sum_{i_3=1}^{l}
\lp \mE_{{\mathcal U}_1} Z_{i_3}^{\m_1} \rp^p \rp   }
\mE_{{\mathcal U}_1}  \frac{(C_{i_3}^{(i_1)})^{s-1} A_{i_3}^{(i_1,i_2)}\y_j^{(i_2)}}{Z_{i_3}^{1-\m_1}}\rp \nonumber \\
& = & \mE_{G,{\mathcal U}_2} \lp
\sum_{i_3=1}^{l} \frac{
\lp \mE_{{\mathcal U}_1} Z_{i_3}^{\m_1} \rp^{p-1}    }{ \lp
\sum_{i_3=1}^{l}
\lp \mE_{{\mathcal U}_1} Z_{i_3}^{\m_1} \rp^p \rp   }
\mE_{{\mathcal U}_1}\lp \mE_{{\mathcal U}_1}(\u_j^{(2,1)}\u_j^{(2,1)}) \frac{d}{du_j^{(2,1)}}\lp\frac{(C_{i_3}^{(i_1)})^{s-1} A_{i_3}^{(i_1,i_2)}\y_j^{(i_2)}}{Z_{i_3}^{1-\m_1}}\rp\rp\rp \nonumber \\
& = & \mE_{G,{\mathcal U}_2}\lp
\sum_{i_3=1}^{l} \frac{
\lp \mE_{{\mathcal U}_1} Z_{i_3}^{\m_1} \rp^{p-1}    }{ \lp
\sum_{i_3=1}^{l}
\lp \mE_{{\mathcal U}_1} Z_{i_3}^{\m_1} \rp^p \rp   }
\mE_{{\mathcal U}_1}(\u_j^{(2,1)}\u_j^{(2,1)})\mE_{{\mathcal U}_1}\lp  \frac{d}{du_j^{(2,1)}}\lp\frac{(C_{i_3}^{(i_1)})^{s-1} A_{i_3}^{(i_1,i_2)}\y_j^{(i_2)}}{Z_{i_3}^{1-\m_1}}\rp\rp\rp \nonumber \\
& = & (\p_0-\p_1)  \mE_{G,{\mathcal U}_2} \lp
\sum_{i_3=1}^{l} \frac{
\lp \mE_{{\mathcal U}_1} Z_{i_3}^{\m_1} \rp^{p-1}    }{ \lp
\sum_{i_3=1}^{l}
\lp \mE_{{\mathcal U}_1} Z_{i_3}^{\m_1} \rp^p \rp   }
\mE_{{\mathcal U}_1}\lp  \frac{d}{du_j^{(2,1)}}\lp\frac{(C_{i_3}^{(i_1)})^{s-1} A_{i_3}^{(i_1,i_2)}\y_j^{(i_2)}}{Z_{i_3}^{1-\m_1}}\rp\rp\rp.
\end{eqnarray}
Since the inner expectation on the right hand side of the last equality is structurally identical to the one in (19) in \cite{Stojnicnflgscompyx23}, we can write the following analogue to  \cite{Stojnicnflgscompyx23}'s (20)
\begin{eqnarray}\label{eq:liftgenAanal19a}
T_{1,1,j} & = &   (\p_0-\p_1)\mE_{G,{\mathcal U}_2} \lp
\sum_{i_3=1}^{l} \frac{
\lp \mE_{{\mathcal U}_1} Z_{i_3}^{\m_1} \rp^{p-1}    }{ \lp
\sum_{i_3=1}^{l}
\lp \mE_{{\mathcal U}_1} Z_{i_3}^{\m_1} \rp^p \rp   }
\lp \Theta_1+\Theta_2 \rp\rp,
\end{eqnarray}
 with
{\small\begin{eqnarray}\label{eq:liftgenAanal19c}
\Theta_1 &  = &  \mE_{{\mathcal U}_1} \Bigg( \Bigg. \frac{\y_j^{(i_2)} \lp (C_{i_3}^{(i_1)})^{s-1}\beta_{i_1}A_{i_3}^{(i_1,i_2)}\y_j^{(i_2)}\sqrt{1-t} +A_{i_3}^{(i_1,i_2)}(s-1)(C_{i_3}^{(i_1)})^{s-2}\beta_{i_1}\sum_{p_2=1}^{l}A_{i_3}^{(i_1,p_2)}\y_j^{(p_2)}\sqrt{1-t}\rp}{Z_{i_3}^{1-\m_1}}\Bigg. \Bigg)\Bigg. \Bigg) \nonumber \\
\Theta_2 & = & -(1-\m_1)\mE_{{\mathcal U}_1} \lp\sum_{p_1=1}^{l}
\frac{(C_{i_3}^{(i_1)})^{s-1} A_{i_3}^{(i_1,i_2)}\y_j^{(i_2)}}{Z_{i_3}^{2-\m_1}}
s  (C_{i_3}^{(p_1)})^{s-1}\sum_{p_2=1}^{l}\beta_{p_1}A^{(p_1,p_2)}\y_j^{(p_2)}\sqrt{1-t}\rp\Bigg.\Bigg).\nonumber \\
\end{eqnarray}}

\noindent Moreover, the following analogue to \cite{Stojnicnflgscompyx23}'s (23) immediately follows as well
\begin{eqnarray}\label{eq:liftgenAanal19d}
  & &   \hspace{-.3in}  \sum_{i_1=1}^{l} 
 \sum_{i_2=1}^{l}\sum_{j=1}^{m} 
\lp
\sum_{i_3=1}^{l} \frac{
\lp \mE_{{\mathcal U}_1} Z_{i_3}^{\m_1} \rp^{p-1}    }{ \lp
\sum_{i_3=1}^{l}
\lp \mE_{{\mathcal U}_1} Z_{i_3}^{\m_1} \rp^p \rp   }
 \frac{\beta_{i_1}\Theta_1}{\sqrt{1-t}}\rp
   = 
   \nonumber 
   \\
 & = & 
   \Bigg ( \Bigg.
 \sum_{i_3=1}^{l} \frac{
\lp \mE_{{\mathcal U}_1} Z_{i_3}^{\m_1} \rp^{p}    }{ \lp
\sum_{i_3=1}^{l}
\lp \mE_{{\mathcal U}_1} Z_{i_3}^{\m_1} \rp^p \rp   }
 \mE_{{\mathcal U}_1}\frac{Z_{i_3}^{\m_1}}{\mE_{{\mathcal U}_1} Z_{i_3}^{\m_1}}
  \sum_{i_1=1}^{l}\frac{(C_{i_3}^{(i_1)})^s}{Z_{i_3}}\sum_{i_2=1}^{l}\frac{A_{i_3}^{(i_1,i_2)}}{C_{i_3}^{(i_1)}}\beta_{i_1}^2\|\y^{(i_2)}\|_2^2
 \Bigg ) \Bigg.
  \nonumber\\
& & \hspace{-.12in} +  \Bigg ( \Bigg.
 \sum_{i_3=1}^{l} \frac{
\lp \mE_{{\mathcal U}_1} Z_{i_3}^{\m_1} \rp^{p}    }{ \lp
\sum_{i_3=1}^{l}
\lp \mE_{{\mathcal U}_1} Z_{i_3}^{\m_1} \rp^p \rp   }
 \mE_{{\mathcal U}_1}\frac{Z_{i_3}^{\m_1}}{\mE_{{\mathcal U}_1} Z_{i_3}^{\m_1}}
   \sum_{i_1=1}^{l}\frac{(s-1)(C_{i_3}^{(i_1)})^s}{Z_{i_3}}\sum_{i_2=1}^{l}\sum_{p_2=1}^{l}\frac{A_{i_3}^{(i_1,i_2)}A_{i_3}^{(i_1,p_2)}}{(C_{i_3}^{(i_1)})^2}\beta_{i_1}^2(\y^{(p_2)})^T\y^{(i_2)}
 \Bigg ) \Bigg. .\nonumber \\
 \end{eqnarray}
We introduce the operator
\begin{eqnarray}\label{eq:genAanal19e}
 \Phi_{{\mathcal U}_1}^{(i_3)} & \triangleq &  \mE_{{\mathcal U}_{1}} \frac{Z_{i_3}^{\m_1}}{\mE_{{\mathcal U}_{1}}Z_{i_3}^{\m_1}}  \triangleq  \mE_{{\mathcal U}_{1}} \lp \frac{Z_{i_3}^{\m_1}}{\mE_{{\mathcal U}_{1}}Z_{i_3}^{\m_1}}\lp \cdot \rp\rp,
 \end{eqnarray}
and set
\begin{eqnarray}\label{eq:genAanal19e1}
  \gamma_{00}(i_3) & = &
  \frac{
\lp \mE_{{\mathcal U}_1} Z_{i_3}^{\m_1} \rp^{p}    }{ \lp
\sum_{i_3=1}^{l}
\lp \mE_{{\mathcal U}_1} Z_{i_3}^{\m_1} \rp^p \rp   }
\nonumber \\
  \gamma_0(i_1,i_2;i_3) & = &
\frac{(C_{i_3}^{(i_1)})^{s}}{Z_{i_3}}  \frac{A_{i_3}^{(i_1,i_2)}}{C_{i_3}^{(i_1)}} \nonumber \\
\gamma_{01}^{(1)}  & = &  \gamma_{00}(i_3)\Phi_{{\mathcal U}_1}^{(i_3)} (\gamma_0(i_1,i_2;i_3)) \nonumber \\
\gamma_{02}^{(1)}  & = &  \gamma_{00}(i_3)\Phi_{{\mathcal U}_1}^{(i_3)} (\gamma_0(i_1,i_2;i_3)\times \gamma_0(i_1,p_2;i_3)) \nonumber \\
\gamma_{1}^{(1)}   & = &  \gamma_{00}(i_3)\Phi_{{\mathcal U}_1}^{(i_3)}  \lp \gamma_0(i_1,i_2;i_3)\times \gamma_0(p_1,p_2;i_3) \rp \nonumber \\.
\gamma_{21}^{(1)}   & = &  \gamma_{00}(i_3)\Phi_{{\mathcal U}_1}^{(i_3)}   \gamma_0(i_1,i_2;i_3)  \times  \gamma_{00}(p_3) \Phi_{{\mathcal U}_1}^{(p_3)}  \gamma_0(p_1,p_2;p_3)
\nonumber  \\
\gamma_{22}^{(1)}   & = &  \gamma_{00}(i_3)\lp \Phi_{{\mathcal U}_1}^{(i_3)}   \gamma_0(i_1,i_2;i_3)  \times  \Phi_{{\mathcal U}_1}^{(i_3)}  \gamma_0(p_1,p_2;i_3) \rp.
\end{eqnarray}
It is relatively easy to check that all the above $\gamma$'s are valid measures. For example, we have
\begin{eqnarray}\label{eq:genAanal19f}
 \sum_{i_3=1}^{l}  \sum_{i_1=1}^{l}  \sum_{i_2=1}^{l} \gamma_{01}^{(1)}
  & = &
   \sum_{i_3=1}^{l}  \sum_{i_1=1}^{l}  \sum_{i_2=1}^{l} \gamma_{00}(i_3)\Phi_{{\mathcal U}_1}^{(i_3)} (\gamma_0(i_1,i_2;i_3))
   \nonumber \\
  & = & \sum_{i_3=1}^{l}   \sum_{i_1=1}^{l}  \sum_{i_2=1}^{l} \gamma_{00}(i_3)  \mE_{{\mathcal U}_1}\frac{Z_{i_3}^{\m_1}}{\mE_{{\mathcal U}_1} Z_{i_3}^{\m_1}}\frac{(C_{i_3}^{(i_1)})^{s}}{Z_{i_3}}  \frac{A_{i_3}^{(i_1,i_2)}}{C_{i_3}^{(i_1)}} \nonumber \\
& = &
\sum_{i_3=1}^{l}  \gamma_{00}(i_3)  \mE_{{\mathcal U}_1}\frac{Z_{i_3}^{\m_1}}{\mE_{{\mathcal U}_1} Z_{i_3}^{\m_1}}   \sum_{i_1=1}^{l} \frac{(C_{i_3}^{(i_1)})^{s}}{Z_{i_3}}  \sum_{i_2=1}^{l} \frac{A_{i_3}^{(i_1,i_2)}}{C_{i_3}^{(i_1)}}\nonumber\\
 & = &
\sum_{i_3=1}^{l}  \gamma_{00}(i_3)  \mE_{{\mathcal U}_1}\frac{Z_{i_3}^{\m_1}}{\mE_{{\mathcal U}_1} Z_{i_3}^{\m_1}}  \nonumber \\
 & = &
\sum_{i_3=1}^{l}  \gamma_{00}(i_3)   \nonumber \\
 & = & 1,
 \end{eqnarray}
where the fourth equality follows by the definitions of $Z_{i_3}$ and $C_{i_3}^{(i_1)}$  from (\ref{eq:genanal7}). Keeping in mind that one trivially has $\gamma_{01}^{(1)}\geq 0$, (\ref{eq:genAanal19f}) ensures that $\gamma_{01}^{(1)}$ is indeed a measure. The proofs for other $\gamma$'s are identical and we skip them. As a convention, $\langle \cdot \rangle_a$ will denote the average with respect to measure $a$. Additionally, since each $\gamma$ measure (and $\Phi(\cdot)$ operator) defined above are also functions of $t$, all functions that depend on them will be functions of $t$ as well. We skip stating this explicitly to lighten the notation and facilitate writing.

Utilizing (\ref{eq:genAanal19e1}) we can rewrite (\ref{eq:liftgenAanal19d}) as
\begin{eqnarray}\label{eq:liftgenAanal19g}
\sum_{i_1=1}^{l}\sum_{i_2=1}^{l}\sum_{j=1}^{m} \lp
\sum_{i_3=1}^{l} \frac{
\lp \mE_{{\mathcal U}_1} Z_{i_3}^{\m_1} \rp^{p-1}    }{ \lp
\sum_{i_3=1}^{l}
\lp \mE_{{\mathcal U}_1} Z_{i_3}^{\m_1} \rp^p \rp   }
 \frac{\beta_{i_1}\Theta_1}{\sqrt{1-t}}\rp
&  = & \beta^2 \Bigg ( \Bigg. \langle \|\x^{(i_1)}\|_2^2\|\y^{(i_2)}\|_2^2\rangle_{\gamma_{01}^{(1)}}
\nonumber \\
& &
+  (s-1) \langle \|\x^{(i_1)}\|_2^2(\y^{(p_2)})^T\y^{(i_2)}\rangle_{\gamma_{02}^{(1)}} \Bigg ) \Bigg. .
 \end{eqnarray}
In an analogous fashion, from (\ref{eq:liftgenAanal19c}) we find
\begin{eqnarray}\label{eq:liftgenAanal19h}
\sum_{i_1=1}^{l}\sum_{i_2=1}^{l}\sum_{j=1}^{m} \lp
\sum_{i_3=1}^{l} \frac{
\lp \mE_{{\mathcal U}_1} Z_{i_3}^{\m_1} \rp^{p-1}    }{ \lp
\sum_{i_3=1}^{l}
\lp \mE_{{\mathcal U}_1} Z_{i_3}^{\m_1} \rp^p \rp   }
\frac{\beta_{i_1}\Theta_2}{\sqrt{1-t}}\rp
 \hspace{-.07in}
 & = &  \hspace{-.05in} -s(1-\m_1) \mE_{G,{\mathcal U}_2} \Bigg( \Bigg.
 \sum_{i_3=1}^{l} \frac{
\lp \mE_{{\mathcal U}_1} Z_{i_3}^{\m_1} \rp^{p}    }{ \lp
\sum_{i_3=1}^{l}
\lp \mE_{{\mathcal U}_1} Z_{i_3}^{\m_1} \rp^p \rp   }
\nonumber \\
& &
 \hspace{-.05in} \times
 \frac{Z_{i_3}^{\m_1}}{\mE_{{\mathcal U}_1} Z_{i_3}^{\m_1}} \sum_{i_1=1}^{l}\frac{(C_{i_3}^{(i_1)})^s}{Z_{i_3}}\sum_{i_2=1}^{l}
\frac{A_{i_3}^{(i_1,i_2)}}{C_{i_3}^{(i_1)}} \nonumber \\
& &
 \hspace{-.05in}
\times
 \sum_{p_1=1}^{l} \frac{(C_{i_3}^{(p_1)})^s}{Z_{i_3}}\sum_{p_2=1}^{l}\frac{A_{i_3}^{(p_1,p_2)}}{C_{i_3}^{(p_1)}} \beta_{i_1}\beta_{p_1}(\y^{(p_2)})^T\y^{(i_2)} \Bigg.\Bigg)\nonumber \\
& =&  \hspace{-.05in} -s\beta^2(1-\m_1) \mE_{G,{\mathcal U}_2} \langle \|\x^{(i_1)}\|_2\|\x^{(p_1)}\|_2(\y^{(p_2)})^T\y^{(i_2)} \rangle_{\gamma_{1}^{(1)}}.
\nonumber \\
\end{eqnarray}
A combination of  (\ref{eq:liftgenAanal19a}), (\ref{eq:liftgenAanal19g}), and (\ref{eq:liftgenAanal19h}) gives
\begin{eqnarray}\label{eq:liftgenAanal19i}
\sum_{i_1=1}^{l}\sum_{i_2=1}^{l}\sum_{j=1}^{m} \beta_{i_1}\frac{T_{1,1,j}}{\sqrt{1-t
}}
& = &  (\p_0-\p_1) \mE_{G,{\mathcal U}_2} \lp
\sum_{i_3=1}^{l} \frac{
\lp \mE_{{\mathcal U}_1} Z_{i_3}^{\m_1} \rp^{p-1}    }{ \lp
\sum_{i_3=1}^{l}
\lp \mE_{{\mathcal U}_1} Z_{i_3}^{\m_1} \rp^p \rp   }
\lp \frac{\beta_{i_1}\Theta_1}{\sqrt{1-t}}+\frac{\beta_{i_1}\Theta_2}{\sqrt{1-t}} \rp\rp\nonumber \\
& = & (\p_0-\p_1)\beta^2 \nonumber \\
 & &
 \times \lp \mE_{G,{\mathcal U}_2} \langle \|\x^{(i_1)}\|_2^2\|\y^{(i_2)}\|_2^2\rangle_{\gamma_{01}^{(1)}} +  (s-1)\mE_{G,{\mathcal U}_2}\langle \|\x^{(i_1)}\|_2^2(\y^{(p_2)})^T\y^{(i_2)}\rangle_{\gamma_{02}^{(1)}} \rp \nonumber \\
& & - (\p_0-\p_1)s\beta^2(1-\m_1)\langle \|\x^{(i_1)}\|_2\|\x^{(p_1)}\|_2(\y^{(p_2)})^T\y^{(i_2)} \rangle_{\gamma_{1}^{(1)}}.
\end{eqnarray}

\underline{\textbf{\emph{Determining}} $T_{1,2}$}
\label{sec:hand1T12}

Following further the Gaussian integration by parts path we have
\begin{eqnarray}\label{eq:liftgenBanal20}
T_{1,2} \hspace{-.0in} & = & \mE_{G,{\mathcal U}_2} \lp
\sum_{i_3=1}^{l} \frac{
\lp \mE_{{\mathcal U}_1} Z_{i_3}^{\m_1} \rp^{p-1}    }{ \lp
\sum_{i_3=1}^{l}
\lp \mE_{{\mathcal U}_1} Z_{i_3}^{\m_1} \rp^p \rp   }
\mE_{{\mathcal U}_1} \frac{(C_{i_3}^{(i_1)})^{s-1} A_{i_3}^{(i_1,i_2)}\u^{(i_1,3,1)}}{Z_{i_3}^{1-\m_1}}\rp \nonumber \\
& = & \mE_{G,{\mathcal U}_2} \lp
\sum_{i_3=1}^{l} \frac{
\lp \mE_{{\mathcal U}_1} Z_{i_3}^{\m_1} \rp^{p-1}    }{ \lp
\sum_{i_3=1}^{l}
\lp \mE_{{\mathcal U}_1} Z_{i_3}^{\m_1} \rp^p \rp   }
\mE_{{\mathcal U}_1} \sum_{p_1=1}^{l}\mE_{{\mathcal U}_1}(\u^{(i_1,3,1)}\u^{(p_1,3,1)}) \frac{d}{d\u^{(p_1,3,1)}}\lp\frac{(C_{i_3}^{(i_1)})^{s-1} A_{i_3}^{(i_1,i_2)}}{Z_{i_3}^{1-\m_1}}\rp\rp \nonumber \\
& = &  \mE_{G,{\mathcal U}_2} \lp
\sum_{i_3=1}^{l} \frac{(\q_0-\q_1)
\lp \mE_{{\mathcal U}_1} Z_{i_3}^{\m_1} \rp^{p-1}    }{ \lp
\sum_{i_3=1}^{l}
\lp \mE_{{\mathcal U}_1} Z_{i_3}^{\m_1} \rp^p \rp   }
\mE_{{\mathcal U}_1} \sum_{p_1=1}^{l}\frac{(\x^{(i_1)})^T\x^{(p_1)}}{\|\x^{(i_1)}\|_2\|\x^{(p_1)}\|_2} \frac{d}{d\u^{(p_1,3,1)}}\lp\frac{(C_{i_3}^{(i_1)})^{s-1} A_{i_3}^{(i_1,i_2)}}{Z_{i_3}^{1-\m_1}}\rp\rp.
\nonumber \\
\end{eqnarray}
Since the inner expectation on the most right hand side of the last equality is structurally identical to the corresponding one in   \cite{Stojnicnflgscompyx23}'s (30), one can immediately write the following analogue to \cite{Stojnicnflgscompyx23}'s (31)

$ $
{\footnotesize \begin{eqnarray}\label{eq:liftgenBanal20a}
T_{1,2}  & = & \mE_{G,{\mathcal U}_2} \lp
\sum_{i_3=1}^{l} \frac{(\q_0-\q_1)
\lp \mE_{{\mathcal U}_1} Z_{i_3}^{\m_1} \rp^{p-1}    }{ \lp
\sum_{i_3=1}^{l}
\lp \mE_{{\mathcal U}_1} Z_{i_3}^{\m_1} \rp^p \rp   }
\mE_{{\mathcal U}_1} \sum_{p_1=1}^{l}\frac{(\x^{(i_1)})^T\x^{(p_1)}}{\|\x^{(i_1)}\|_2\|\x^{(p_1)}\|_2} \frac{d}{d\u^{(p_1,3,1)}}\lp\frac{(C_{i_3}^{(i_1)})^{s-1} A_{i_3}^{(i_1,i_2)}}{Z_{i_3}^{1-\m_1}}\rp\rp \nonumber \\
& = &
\mE_{G,{\mathcal U}_2} \Bigg( \Bigg.
\sum_{i_3=1}^{l} \frac{(\q_0-\q_1)
\lp \mE_{{\mathcal U}_1} Z_{i_3}^{\m_1} \rp^{p-1}    }{ \lp
\sum_{i_3=1}^{l}
\lp \mE_{{\mathcal U}_1} Z_{i_3}^{\m_1} \rp^p \rp   }
\nonumber \\
& & \times \mE_{{\mathcal U}_1} \lp\frac{(C_{i_3}^{(i_1)})^{s-1}\beta_{i_1}A_{i_3}^{(i_1,i_2)}\|\y^{(i_2)}\|_2\sqrt{1-t}+A_{i_3}^{(i_1,i_2)}(s-1)(C_{i_3}^{(i_1)})^{s-2}\beta_{i_1}\sum_{p_2=1}^{l}A_{i_3}^{(i_1,p_2)}\|\y^{(p_2)}\|_2\sqrt{1-t} }{Z_{i_3}^{1-\m_1}}\rp\Bigg.\Bigg) \nonumber  \\
  & & -\mE_{G,{\mathcal U}_2}\Bigg( \Bigg.
  \sum_{i_3=1}^{l} \frac{(\q_0-\q_1)(1-\m_1)
\lp \mE_{{\mathcal U}_1} Z_{i_3}^{\m_1} \rp^{p-1}    }{ \lp
\sum_{i_3=1}^{l}
\lp \mE_{{\mathcal U}_1} Z_{i_3}^{\m_1} \rp^p \rp   }
  \nonumber \\
& & \times\mE_{{\mathcal U}_1} \lp\sum_{p_1=1}^{l}\frac{(\x^{(i_1)})^T\x^{(p_1)}}{\|\x^{(i_1)}\|_2\|\x^{(p_1)}\|_2}
\frac{(C_{i_3}^{(i_1)})^{s-1} A_{i_3}^{(i_1,i_2)}}{Z_{i_3}^{2-\m_1}}
s  (C_{i_3}^{(p_1)})^{s-1}\sum_{p_2=1}^{l}\beta_{p_1}A_{i_3}^{(p_1,p_2)}\|\y^{(p_2)}\|_2\sqrt{1-t}\rp\Bigg. \Bigg).
\end{eqnarray}}
Utilizing (\ref{eq:genAanal19e1})  we then also have
\begin{eqnarray}\label{eq:liftgenBanal20b}
\sum_{i_1=1}^{l}\sum_{i_2=1}^{l} \beta_{i_1}\|\y^{(i_2)}\|_2 \frac{T_{1,2}}{\sqrt{1-t}} & = & (\q_0-\q_1)\beta^2 \nonumber \\
& & \times
\lp\mE_{G,{\mathcal U}_2}\langle \|\x^{(i_1)}\|_2^2\|\y^{(i_2)}\|_2^2\rangle_{\gamma_{01}^{(1)}} +   (s-1)\mE_{G,{\mathcal U}_2}\langle \|\x^{(i_1)}\|_2^2 \|\y^{(i_2)}\|_2\|\y^{(p_2)}\|_2\rangle_{\gamma_{02}^{(1)}}\rp\nonumber \\
& & - (\q_0-\q_1)s\beta^2(1-\m_1)\mE_{G,{\mathcal U}_2}\langle (\x^{(p_1)})^T\x^{(i_1)}\|\y^{(i_2)}\|_2\|\y^{(p_2)}\|_2 \rangle_{\gamma_{1}^{(1)}}.
\end{eqnarray}

\underline{\textbf{\emph{Determining}} $T_{1,3}$}
\label{sec:hand1T13}

Proceeding again via Gaussian integration by parts we find
\begin{eqnarray}\label{eq:liftgenCanal21}
T_{1,3} & = & \mE_{G,{\mathcal U}_2} \lp
\sum_{i_3=1}^{l} \frac{
\lp \mE_{{\mathcal U}_1} Z_{i_3}^{\m_1} \rp^{p-1}    }{ \lp
\sum_{i_3=1}^{l}
\lp \mE_{{\mathcal U}_1} Z_{i_3}^{\m_1} \rp^p \rp   }
\mE_{{\mathcal U}_1}  \frac{(C^{(i_1)})^{s-1} A^{(i_1,i_2)}u^{(4,1)}}{Z^{1-\m_1}} \rp \nonumber \\
& = & \mE_{G,{\mathcal U}_2} \lp
\sum_{i_3=1}^{l} \frac{
\lp \mE_{{\mathcal U}_1} Z_{i_3}^{\m_1} \rp^{p-1}    }{ \lp
\sum_{i_3=1}^{l}
\lp \mE_{{\mathcal U}_1} Z_{i_3}^{\m_1} \rp^p \rp   }
\mE_{{\mathcal U}_1} \lp\mE_{{\mathcal U}_1} (u^{(4,1)}u^{(4,1)})\lp\frac{d}{du^{(4,1)}} \lp\frac{(C^{(i_1)})^{s-1} A^{(i_1,i_2)}u^{(4,1)}}{Z^{1-\m_1}}\rp \rp\rp\rp \nonumber \\
& = &  (\p_0\q_0-\p_1\q_1) \mE_{G,{\mathcal U}_2} \lp
\sum_{i_3=1}^{l} \frac{
\lp \mE_{{\mathcal U}_1} Z_{i_3}^{\m_1} \rp^{p-1}    }{ \lp
\sum_{i_3=1}^{l}
\lp \mE_{{\mathcal U}_1} Z_{i_3}^{\m_1} \rp^p \rp   }
\mE_{{\mathcal U}_1} \lp\frac{d}{du^{(4,1)}} \lp\frac{(C^{(i_1)})^{s-1} A^{(i_1,i_2)}u^{(4,1)}}{Z^{1-\m_1}}\rp\rp\rp.
\end{eqnarray}
Recognizing that the last inner expectation is structurally identical to the corresponding one  in  \cite{Stojnicnflgscompyx23}'s (33)  allows us to immediately write the following analogues to  \cite{Stojnicnflgscompyx23}'s (34) and (35)

\begin{eqnarray}\label{eq:liftgenCanal21a}
T_{1,3}  & = & \mE_{G,{\mathcal U}_2} \lp
\sum_{i_3=1}^{l} \frac{ (\p_0\q_0-\p_1\q_1)
\lp \mE_{{\mathcal U}_1} Z_{i_3}^{\m_1} \rp^{p-1}    }{ \lp
\sum_{i_3=1}^{l}
\lp \mE_{{\mathcal U}_1} Z_{i_3}^{\m_1} \rp^p \rp   }
\mE_{{\mathcal U}_1} \lp\frac{d}{du^{(4,1)}} \lp\frac{(C_{i_3}^{(i_1)})^{s-1} A_{i_3}^{(i_1,i_2)}u^{(4,1)}}{Z_{i_3}^{1-\m_1}}\rp\rp\rp \nonumber \\
 & = &\mE_{G,{\mathcal U}_2} \Bigg( \Bigg.
 \sum_{i_3=1}^{l} \frac{ (\p_0\q_0-\p_1\q_1)
\lp \mE_{{\mathcal U}_1} Z_{i_3}^{\m_1} \rp^{p-1}    }{ \lp
\sum_{i_3=1}^{l}
\lp \mE_{{\mathcal U}_1} Z_{i_3}^{\m_1} \rp^p \rp   }
 \nonumber \\
& & \times  \mE_{{\mathcal U}_1} \lp\frac{(C_{i_3}^{(i_1)})^{s-1}\beta_{i_1}A_{i_3}^{(i_1,i_2)}\|\y^{(i_2)}\|_2\sqrt{t}+A_{i_3}^{(i_1,i_2)}(s-1)(C_{i_3}^{(i_1)})^{s-2}\beta_{i_1}\sum_{p_2=1}^{l}A_{i_3}^{(i_1,p_2)}\|\y^{(p_2)}\|_2\sqrt{t}}{Z_{i_3}^{1-\m_1}} \rp \Bigg.\Bigg)\nonumber \\
& & -\mE_{G,{\mathcal U}_2} \Bigg( \Bigg.
\sum_{i_3=1}^{l} \frac{ (\p_0\q_0-\p_1\q_1)
\lp \mE_{{\mathcal U}_1} Z_{i_3}^{\m_1} \rp^{p-1}    }{ \lp
\sum_{i_3=1}^{l}
\lp \mE_{{\mathcal U}_1} Z_{i_3}^{\m_1} \rp^p \rp   }
\nonumber \\
& & \times \mE_{{\mathcal U}_1} \lp
\frac{(C_{i_3}^{(i_1)})^{s-1} A_{i_3}^{(i_1,i_2)}}{Z_{i_3}^{2-\m_1}}
s  \sum_{p_1=1}^{l} (C_{i_3}^{(p_1)})^{s-1}\sum_{p_2=1}^{l}\beta_{p_1}A_{i_3}^{(p_1,p_2)}\|\y^{(p_2)}\|_2\sqrt{t}\rp \Bigg. \Bigg),
\end{eqnarray}
and
\begin{eqnarray}\label{eq:liftgenCanal21b}
\sum_{i_1=1}^{l}\sum_{i_2=1}^{l} \beta_{i_1}\|\y^{(i_2)}\|_2 \frac{T_{1,3}}{\sqrt{t}} & = & (\p_0\q_0-\p_1\q_1)\beta^2 \nonumber \\
& & \times
\lp \mE_{G,{\mathcal U}_2}\langle \|\x^{(i_1)}\|_2^2\|\y^{(i_2)}\|_2^2\rangle_{\gamma_{01}^{(1)}} +   (s-1)\mE_{G,{\mathcal U}_2}\langle \|\x^{(i_1)}\|_2^2 \|\y^{(i_2)}\|_2\|\y^{(p_2)}\|_2\rangle_{\gamma_{02}^{(1)}}\rp\nonumber \\
& & - (\p_0\q_0-\p_1\q_1)s\beta^2(1-\m_1)\mE_{G,{\mathcal U}_2}\langle \|\x^{(i_1)}\|_2\|\x^{(p_`)}\|_2\|\y^{(i_2)}\|_2\|\y^{(p_2)}\|_2 \rangle_{\gamma_{1}^{(1)}}. \nonumber \\
\end{eqnarray}

\subsubsection{$T_2$--group  -- first level}
\label{sec:handlT2}

Following the practice established in the previous subsection, each of the three terms from $T_2$-group are handled separately.

\underline{\textbf{\emph{Determining}} $T_{2,1,j}$}
\label{sec:hand1T21}

After applying the Gaussian integration by parts we have
\begin{eqnarray}\label{eq:genDanal19}
T_{2,1,j}& = &  \mE_{G,{\mathcal U}_2}\lp
\sum_{i_3=1}^{l} \frac{
\lp \mE_{{\mathcal U}_1} Z_{i_3}^{\m_1} \rp^{p-1}    }{ \lp
\sum_{i_3=1}^{l}
\lp \mE_{{\mathcal U}_1} Z_{i_3}^{\m_1} \rp^p \rp   }
\mE_{{\mathcal U}_1}\frac{(C_{i_3}^{(i_1)})^{s-1} A_{i_3}^{(i_1,i_2)} \y_j^{(i_2)}\u_j^{(2,2)}}{Z_{i_3}^{1-\m_1}} \rp \nonumber \\
 &  = & \mE_{G,{\mathcal U}_2,{\mathcal U}_1} \sum_{i_3=1}^{l}
  \frac{(C_{i_3}^{(i_1)})^{s-1} A_{i_3}^{(i_1,i_2)}\y_j^{(i_2)}\u_j^{(2,2)}}{Z_{i_3}^{1-\m_1} \lp
\sum_{i_3=1}^{l}
\lp \mE_{{\mathcal U}_1} Z_{i_3}^{\m_1} \rp^p \rp  }
\lp \mE_{{\mathcal U}_1} Z_{i_3}^{\m_1} \rp^{p-1}
\nonumber \\
& = &
\mE_{G,{\mathcal U}_1}\lp
\sum_{i_3=1}^{l}
\mE_{{\mathcal U}_2}\lp\mE_{{\mathcal U}_2} (\u_j^{(2,2)}\u_j^{(2,2)})\frac{d}{d\u_j^{(2,2)}}\lp \frac{(C_{i_3}^{(i_1)})^{s-1} A_{i_3}^{(i_1,i_2)}\y_j^{(i_2)}}{Z_{i_3}^{1-\m_1}  \lp
\sum_{i_3=1}^{l}
\lp \mE_{{\mathcal U}_1} Z_{i_3}^{\m_1} \rp^p \rp   }
\lp \mE_{{\mathcal U}_1} Z_{i_3}^{\m_1} \rp^{p-1}
 \rp\rp\rp \nonumber \\
& = &
\mE_{G,{\mathcal U}_2,{\mathcal U}_1}\lp
\sum_{i_3=1}^{l} \frac{
\p_1 \lp \mE_{{\mathcal U}_1} Z_{i_3}^{\m_1} \rp^{p-1}    }{ \lp
\sum_{i_3=1}^{l}
\lp \mE_{{\mathcal U}_1} Z_{i_3}^{\m_1} \rp^p \rp   }
\frac{d}{d\u_j^{(2,2)}}\lp \frac{(C_{i_3}^{(i_1)})^{s-1} A_{i_3}^{(i_1,i_2)}\y_j^{(i_2)}}{Z_{i_3}^{1-\m_1}}\rp\rp \nonumber \\
& & + \mE_{G,{\mathcal U}_2,{\mathcal U}_1}\lp
\sum_{i_3=1}^{l}
\frac{\p_1\lp \mE_{{\mathcal U}_1} Z_{i_3}^{\m_1} \rp^{p-1} \lp(C_{i_3}^{(i_1)})^{s-1} A_{i_3}^{(i_1,i_2)}\y_j^{(i_2)} \rp}{Z_{i_3}^{1-\m_1}}\frac{d}{d\u_j^{(2,2)}}\lp \frac{1}{  \lp
\sum_{i_3=1}^{l}
\lp \mE_{{\mathcal U}_1} Z_{i_3}^{\m_1} \rp^p \rp  }\rp\rp  \nonumber \\
& & + \mE_{G,{\mathcal U}_2,{\mathcal U}_1}\lp
\sum_{i_3=1}^{l}
\frac{\p_1 \lp(C_{i_3}^{(i_1)})^{s-1} A_{i_3}^{(i_1,i_2)}\y_j^{(i_2)} \rp}{Z_{i_3}^{1-\m_1}  \lp
\sum_{i_3=1}^{l}
\lp \mE_{{\mathcal U}_1} Z_{i_3}^{\m_1} \rp^p \rp  }\frac{d}{d\u_j^{(2,2)}}\lp
\lp \mE_{{\mathcal U}_1} Z_{i_3}^{\m_1} \rp^{p-1}
\rp\rp.\nonumber \\
\end{eqnarray}
To make writing a bit more convenient we set
\begin{eqnarray}\label{eq:genDanal19a}
T_{2,1,j}   =   T_{2,1,j}^{c} +  T_{2,1,j}^{d}+  T_{2,1,j}^{e},
\end{eqnarray}
where
\begin{eqnarray}\label{eq:genDanal19b}
T_{2,1,j}^c &  = &
\mE_{G,{\mathcal U}_2,{\mathcal U}_1}\lp
\sum_{i_3=1}^{l} \frac{
\p_1 \lp \mE_{{\mathcal U}_1} Z_{i_3}^{\m_1} \rp^{p-1}    }{ \lp
\sum_{i_3=1}^{l}
\lp \mE_{{\mathcal U}_1} Z_{i_3}^{\m_1} \rp^p \rp   }
\frac{d}{d\u_j^{(2,2)}}\lp \frac{(C_{i_3}^{(i_1)})^{s-1} A_{i_3}^{(i_1,i_2)}\y_j^{(i_2)}}{Z_{i_3}^{1-\m_1}}\rp\rp  \nonumber \\
T_{2,1,j}^d &  = & \mE_{G,{\mathcal U}_2,{\mathcal U}_1}\lp
\sum_{i_3=1}^{l}
\frac{\p_1\lp \mE_{{\mathcal U}_1} Z_{i_3}^{\m_1} \rp^{p-1} \lp(C_{i_3}^{(i_1)})^{s-1} A_{i_3}^{(i_1,i_2)}\y_j^{(i_2)} \rp}{Z_{i_3}^{1-\m_1}}\frac{d}{d\u_j^{(2,2)}}\lp \frac{1}{  \lp
\sum_{i_3=1}^{l}
\lp \mE_{{\mathcal U}_1} Z_{i_3}^{\m_1} \rp^p \rp  }\rp\rp  \nonumber \\
T_{2,1,j}^e &  = &
\mE_{G,{\mathcal U}_2,{\mathcal U}_1}\lp
\sum_{i_3=1}^{l}
\frac{\p_1 \lp(C_{i_3}^{(i_1)})^{s-1} A_{i_3}^{(i_1,i_2)}\y_j^{(i_2)} \rp}{Z_{i_3}^{1-\m_1}  \lp
\sum_{i_3=1}^{l}
\lp \mE_{{\mathcal U}_1} Z_{i_3}^{\m_1} \rp^p \rp  }\frac{d}{d\u_j^{(2,2)}}\lp
\lp \mE_{{\mathcal U}_1} Z_{i_3}^{\m_1} \rp^{p-1}
\rp\rp.
\end{eqnarray}
 Moreover, we observe that scaling $T_{2,1,j}^c$  by $\p_1$ gives the term structurally identical to the one considered in the first part of Section \ref{sec:hand1T11} scaled by $(\p_0-\p_1)$. That further allows to write
\begin{eqnarray}\label{eq:genDanal19b1}
\sum_{i_1=1}^{l}\sum_{i_2=1}^{l}\sum_{j=1}^{m} \beta_{i_1}\frac{T_{2,1,j}^c}{\sqrt{1-t
}}
& = &  \sum_{i_1=1}^{l}\sum_{i_2=1}^{l}\sum_{j=1}^{m} \beta_{i_1}\frac{T_{1,1,j}}{\sqrt{1-t}}\frac{\p_1}{\p_0-\p_1}\nonumber\\
& = & \p_1\beta^2
\lp \mE_{G,{\mathcal U}_2}\langle \|\x^{(i_1)}\|_2^2\|\y^{(i_2)}\|_2^2\rangle_{\gamma_{01}^{(1)}} +   (s-1)\mE_{G,{\mathcal U}_2}\langle \|\x^{(i_1)}\|_2^2(\y^{(p_2)})^T\y^{(i_2)}\rangle_{\gamma_{02}^{(1)}} \rp\nonumber \\
& & - \p_1s\beta^2(1-\m_1)\mE_{G,{\mathcal U}_2}\langle \|\x^{(i_1)}\|_2\|\x^{(p_1)}\|_2(\y^{(p_2)})^T\y^{(i_2)} \rangle_{\gamma_{1}^{(1)}}.
\end{eqnarray}

To determine  $T_{2,1,j}^d$ we start with
\begin{eqnarray}\label{eq:genDanal20}
\frac{d}{d\u_j^{(2,2)}}\lp \frac{ 1 }{  \lp
\sum_{i_3=1}^{l}
\lp \mE_{{\mathcal U}_1} Z_{i_3}^{\m_1} \rp^p \rp  } \rp
=
- p \sum_{p_3=1}^{l} \frac{
\lp \mE_{{\mathcal U}_1} Z_{p_3}^{\m_1} \rp^{p-1} }{\lp
\sum_{i_3=1}^{l}
\lp \mE_{{\mathcal U}_1} Z_{i_3}^{\m_1} \rp^p  \rp^2}
\mE_{{\mathcal U}_1}\frac{d Z_{p_3}^{\m_1}}{d\u_j^{(2,2)}}.
\end{eqnarray}
From  \cite{Stojnicnflgscompyx23}'s (41) we have
\begin{eqnarray}\label{eq:genDanal21}
\frac{dZ_{p_3}^{\m_1}}{d\u_j^{(2,2)}}  & = & \frac{\m_1}{Z_{p_3}^{1-\m_1}}s  \sum_{p_1=1}^{l}  (C_{p_3}^{(p_1)})^{s-1}\sum_{p_2=1}^{l}
\beta_{p_1}A_{p_3}^{(p_1,p_2)}\y_j^{(p_2)}\sqrt{1-t}.
\end{eqnarray}
Combining (\ref{eq:genDanal20}) and (\ref{eq:genDanal21}) one then finds
\begin{eqnarray}\label{eq:genDanal22}
\frac{d}{d\u_j^{(2,2)}}\lp \frac{ 1 }{  \lp
\sum_{i_3=1}^{l}
\lp \mE_{{\mathcal U}_1} Z_{i_3}^{\m_1} \rp^p \rp  } \rp
 & = &
- p \sum_{p_3=1}^{l} \frac{
\lp \mE_{{\mathcal U}_1} Z_{p_3}^{\m_1} \rp^{p-1} }{\lp
\sum_{i_3=1}^{l}
\lp \mE_{{\mathcal U}_1} Z_{i_3}^{\m_1} \rp^p  \rp^2}
\nonumber  \\
& &
\times
\mE_{{\mathcal U}_1}  \frac{\m_1}{Z_{p_3}^{1-\m_1}}s \sum_{p_1=1}^{l}  (C_{p_3}^{(p_1)})^{s-1}\sum_{p_2=1}^{l}
\beta_{p_1}A_{p_3}^{(p_1,p_2)}\y_j^{(p_2)}\sqrt{1-t}.
\end{eqnarray}
Plugging (\ref{eq:genDanal22}) in  (\ref{eq:genDanal19b}) gives
\begin{eqnarray}\label{eq:genDanal23}
T_{2,1,j}^d &  = & \mE_{G,{\mathcal U}_2,{\mathcal U}_1}\lp
\sum_{i_3=1}^{l}
\frac{\p_1\lp \mE_{{\mathcal U}_1} Z_{i_3}^{\m_1} \rp^{p-1} \lp(C_{i_3}^{(i_1)})^{s-1} A_{i_3}^{(i_1,i_2)}\y_j^{(i_2)} \rp}{Z_{i_3}^{1-\m_1}}\frac{d}{d\u_j^{(2,2)}}\lp \frac{1}{  \lp
\sum_{i_3=1}^{l}
\lp \mE_{{\mathcal U}_1} Z_{i_3}^{\m_1} \rp^p \rp  }\rp\rp
 \nonumber \\
& = & -s\sqrt{1-t}\p_1\m_1 p \mE_{G,{\mathcal U}_2,{\mathcal U}_1}\Bigg( \Bigg.
\sum_{i_3=1}^{l}
\frac{\lp \mE_{{\mathcal U}_1} Z_{i_3}^{\m_1} \rp^{p-1}\lp(C_{i_3}^{(i_1)})^{s-1} A_{i_3}^{(i_1,i_2)}\y_j^{(i_2)} \rp}{Z_{i_3}^{1-\m_1}} \nonumber \\
& & \times
\lp
\sum_{p_3=1}^{l}
\frac{\lp \mE_{{\mathcal U}_1} Z_{p_3}^{\m_1} \rp^{p-1}}{\lp
\sum_{i_3=1}^{l}
\lp \mE_{{\mathcal U}_1} Z_{i_3}^{\m_1} \rp^p  \rp^2}
\mE_{{\mathcal U}_1}  \frac{1}{Z_{p_3}^{1-\m_1}} \sum_{p_1=1}^{l}  (C_{p_3}^{(p_1)})^{s-1}\sum_{p_2=1}^{l}
\beta_{p_1}A_{p_3}^{(p_1,p_2)}\y_j^{(p_2)} \rp\Bigg. \Bigg)\nonumber \\
& = & -s\sqrt{1-t}\p_1\m_1 p \mE_{G,{\mathcal U}_2}\Bigg( \Bigg.
\sum_{i_3=1}^{l}
\frac{\lp \mE_{{\mathcal U}_1} Z_{i_3}^{\m_1} \rp^{p}}{\lp
\sum_{i_3=1}^{l}
\lp \mE_{{\mathcal U}_1} Z_{i_3}^{\m_1} \rp^p  \rp}
\mE_{{\mathcal U}_1}\frac{Z_{i_3}^{\m_1}}{\mE_{{\mathcal U}_1} Z_{i_3}^{\m_1}}
 \frac{(C_{i_3}^{(i_1)})^{s}}{Z_{i_3}}  \frac{A_{i_3}^{(i_1,i_2)}}{C_{i_3}^{(i_1)}}\y_j^{(i_2)} \nonumber \\
& & \times
\lp
\sum_{p_3=1}^{l}
\frac{\lp \mE_{{\mathcal U}_1} Z_{p_3}^{\m_1} \rp^{p}}{\lp
\sum_{i_3=1}^{l}
\lp \mE_{{\mathcal U}_1} Z_{i_3}^{\m_1} \rp^p  \rp}
\mE_{{\mathcal U}_1}  \frac{Z_{p_3}^{\m_1}}{\mE_{{\mathcal U}_1} Z_{p_3}^{\m_1}} \sum_{p_1=1}^{l}  \frac{(C_{p_3}^{(p_1)})^s}{Z_{p_3}}\sum_{p_2=1}^{l}
\frac{A_{p_3}^{(p_1,p_2)}}{C_{p_3}^{(p,1)}}\beta_{p_1}\y_j^{(p_2)}  \rp\Bigg. \Bigg).
\end{eqnarray}
It will turn out as convenient to also note the following
\begin{eqnarray}\label{eq:genDanal24}
 \sum_{i_1=1}^{l}  \sum_{i_2=1}^{l} \sum_{j=1}^{m}  \beta_{i_1}\frac{T_{2,1,j}^d}{\sqrt{1-t}} &  = &  -s\p_1\m_1 p \mE_{G,{\mathcal U}_2}  \sum_{j=1}^{m} \Bigg( \Bigg.
 \sum_{i_3=1}^{l}
\frac{\lp \mE_{{\mathcal U}_1} Z_{i_3}^{\m_1} \rp^{p}}{\lp
\sum_{i_3=1}^{l}
\lp \mE_{{\mathcal U}_1} Z_{i_3}^{\m_1} \rp^p  \rp}
\nonumber \\
& & \times
  \mE_{{\mathcal U}_1}\frac{Z_{i_3}^{\m_1}}{\mE_{{\mathcal U}_1} Z_{i_3}^{\m_1}}
 \sum_{i_1=1}^{l} \frac{(C_{i_3}^{(i_1)})^{s}}{Z_{i_3}}  \sum_{i_2=1}^{l} \frac{A_{i_3}^{(i_1,i_2)}}{C_{i_3}^{(i_1)}} \beta_{i_1}\y_j^{(i_2)} \nonumber \\
& & \times
\Bigg ( \Bigg.
\sum_{p_3=1}^{l}
\frac{\lp \mE_{{\mathcal U}_1} Z_{p_3}^{\m_1} \rp^{p}}{\lp
\sum_{p_3=1}^{l}
\lp \mE_{{\mathcal U}_1} Z_{p_3}^{\m_1} \rp^p  \rp}
\nonumber \\
& & \times
\mE_{{\mathcal U}_1}  \frac{Z_{p_3}^{\m_1}}{\mE_{{\mathcal U}_1} Z_{p_3}^{\m_1}} \sum_{p_1=1}^{l}  \frac{(C_{p_3}^{(p_1)})^s}{Z_{p_3}}\sum_{p_2=1}^{l}
\frac{A_{p_3}^{(p_1,p_2)}}{C_{p_3}^{(p,1)}}\beta_{p_1}\y_j^{(p_2)}   \Bigg . \Bigg ) \Bigg. \Bigg)\nonumber\\
& = & -s\p_1\m_1 p \mE_{G,{\mathcal U}_2} \langle \beta_{p_1}\beta_{i_1}(\y^{(p_2)})^T\y^{(i_2)} \rangle_{\gamma_{21}^{(1)}} \nonumber \\
& = & -s\beta^2\p_1\m_1 p \mE_{G,{\mathcal U}_2} \langle \|\x^{(i_1)}\|_2\|\x^{(p_1)}\|_2(\y^{(p_2)})^T\y^{(i_2)} \rangle_{\gamma_{21}^{(1)}}.
\end{eqnarray}

To determine $T_{2,1,j}^e$ we start by observing
\begin{eqnarray}\label{eq:genDanal24aa0}
 \frac{d}{d\u_j^{(2,2)}}\lp
\lp \mE_{{\mathcal U}_1} Z_{i_3}^{\m_1} \rp^{p-1}
\rp
=
 (p-1) \lp  \mE_{{\mathcal U}_1} Z_{i_3}^{\m_1} \rp^{p-2}
\mE_{{\mathcal U}_1}\frac{d Z_{i_3}^{\m_1}}{d\u_j^{(2,2)}}.
\end{eqnarray}
Combining (\ref{eq:genDanal24aa0}) and (\ref{eq:genDanal21}) one then finds
\begin{eqnarray}\label{eq:genDanal24aa1}
 \frac{d}{d\u_j^{(2,2)}}\lp
\lp \mE_{{\mathcal U}_1} Z_{i_3}^{\m_1} \rp^{p-1}
\rp
=
(p-1)  \lp  \mE_{{\mathcal U}_1} Z_{i_3}^{\m_1} \rp^{p-2}
\mE_{{\mathcal U}_1}
\frac{\m_1}{Z_{i_3}^{1-\m_1}}s  \sum_{p_1=1}^{l}  (C_{i_3}^{(p_1)})^{s-1}\sum_{p_2=1}^{l}
\beta_{p_1}A_{i_3}^{(p_1,p_2)}\y_j^{(p_2)}\sqrt{1-t}.
\end{eqnarray}
Plugging (\ref{eq:genDanal24aa1}) in  (\ref{eq:genDanal19b})  first gives
\begin{eqnarray}\label{eq:genDanal24ab0}
T_{2,1,j}^e &  = &
\mE_{G,{\mathcal U}_2,{\mathcal U}_1}
\Bigg ( \Bigg.
\sum_{i_3=1}^{l}
\frac{\p_1 \lp(C_{i_3}^{(i_1)})^{s-1} A_{i_3}^{(i_1,i_2)}\y_j^{(i_2)} \rp}{Z_{i_3}^{1-\m_1}  \lp
\sum_{i_3=1}^{l}
\lp \mE_{{\mathcal U}_1} Z_{i_3}^{\m_1} \rp^p \rp  }
\nonumber \\
& & \times
(p-1)  \lp  \mE_{{\mathcal U}_1} Z_{i_3}^{\m_1} \rp^{p-2}
\mE_{{\mathcal U}_1}
\frac{\m_1}{Z_{i_3}^{1-\m_1}}s  \sum_{p_1=1}^{l}  (C_{i_3}^{(p_1)})^{s-1}\sum_{p_2=1}^{l}
\beta_{p_1}A_{i_3}^{(p_1,p_2)}\y_j^{(p_2)}\sqrt{1-t}
\Bigg . \Bigg )
\nonumber \\
&  = &
\p_1 s \m_1 (p-1)\mE_{G,{\mathcal U}_2 }
\Bigg ( \Bigg.
\sum_{i_3=1}^{l}
\frac{\lp  \mE_{{\mathcal U}_1} Z_{i_3}^{\m_1} \rp^p }
{ \lp
\sum_{i_3=1}^{l}
\lp \mE_{{\mathcal U}_1} Z_{i_3}^{\m_1} \rp^p \rp  }
\nonumber \\
& & \times
\mE_{{\mathcal U}_1}
\frac{Z_{i_3}^{\m_1}}{ \mE_{{\mathcal U}_1} Z_{i_3}^{\m_1} }
\frac{ \lp (C_{i_3}^{(i_1)})^{s} A_{i_3}^{(i_1,i_2)}\y_j^{(i_2)} \rp}{Z_{i_3} C_{i_3}^{(i_1)} }
\nonumber \\
& & \times
\mE_{{\mathcal U}_1}
\frac{Z_{i_3}^{\m_1}}{ \mE_{{\mathcal U}_1} Z_{i_3}^{\m_1} }   \sum_{p_1=1}^{l}  \frac{(C_{i_3}^{(p_1)})^{s} }{Z_{i_3}}\sum_{p_2=1}^{l}
\frac{ \beta_{p_1}A_{i_3}^{(p_1,p_2)}\y_j^{(p_2)}\sqrt{1-t} } { C_{i_3}^{(p_1)} }
\Bigg . \Bigg ),
\end{eqnarray}
and then
\begin{eqnarray}\label{eq:genDanal24ab1}
\sum_{i_1=1}^{l}\sum_{i_2=1}^{l} \beta_{i_1}\|\y^{(i_2)}\|_2\frac{T_{2,2}^e}{\sqrt{1-t}}
     &  = &     s\beta^2\p_1\m_1 (p-1) \mE_{G,{\mathcal U}_2} \langle \|\x^{(i_1)}\|_2\|\x^{(p_1)}\|_2(\y^{(p_2)})^T\y^{(i_2)} \rangle_{\gamma_{22}^{(1)}}.
\end{eqnarray}
Combining  (\ref{eq:genDanal19a}), (\ref{eq:genDanal19b1}), (\ref{eq:genDanal24}), and (\ref{eq:genDanal24ab1}) we obtain
\begin{eqnarray}\label{eq:genDanal25}
 \sum_{i_1=1}^{l}  \sum_{i_2=1}^{l} \sum_{j=1}^{m}  \beta_{i_1}\frac{T_{2,1,j}}{\sqrt{1-t}}& = & \p_1\beta^2
 \lp \mE_{G,{\mathcal U}_2}\langle \|\x^{(i_1)}\|_2^2\|\y^{(i_2)}\|_2^2\rangle_{\gamma_{01}^{(1)}} +  (s-1)\mE_{G,{\mathcal U}_2}\langle \|\x^{(i_1)}\|_2^2(\y^{(p_2)})^T\y^{(i_2)}\rangle_{\gamma_{02}^{(1)}} \rp\nonumber \\
& & - \p_1s\beta^2(1-\m_1)\mE_{G,{\mathcal U}_2}\langle \|\x^{(i_1)}\|_2\|\x^{(p_1)}\|_2(\y^{(p_2)})^T\y^{(i_2)} \rangle_{\gamma_{1}^{(1)}}\nonumber \\
 &   &
  -s\beta^2\p_1\m_1\mE_{G,{\mathcal U}_2} \langle \|\x^{(i_1)}\|_2\|\x^{(p_1)}\|_2(\y^{(p_2)})^T\y^{(i_2)} \rangle_{\gamma_{21}^{(1)}}
  \nonumber
  \\
   &   &
   +  s\beta^2\p_1\m_1 (p-1) \mE_{G,{\mathcal U}_2} \langle \|\x^{(i_1)}\|_2\|\x^{(p_1)}\|_2(\y^{(p_2)})^T\y^{(i_2)} \rangle_{\gamma_{22}^{(1)}}.
\end{eqnarray}

\underline{\textbf{\emph{Determining}} $T_{2,2}$}
\label{sec:hand1T22}

As earlier, we apply the Gaussian integration by parts to find
\begin{eqnarray}\label{eq:liftgenEanal20}
T_{2,2} \hspace{-.05in} & = &  \mE_{G,{\mathcal U}_2} \lp
\sum_{i_3=1}^{l} \frac{
\lp \mE_{{\mathcal U}_1} Z_{i_3}^{\m_1} \rp^{p-1}    }{ \lp
\sum_{i_3=1}^{l}
\lp \mE_{{\mathcal U}_1} Z_{i_3}^{\m_1} \rp^p \rp   }
\mE_{{\mathcal U}_1}\frac{(C_{i_3}^{(i_1)})^{s-1} A_{i_3}^{(i_1,i_2)} \u^{(i_1,3,2)}}{Z_{i_3}^{1-\m_1}} \rp \nonumber \\
& = &  \mE_{G,{\mathcal U}_2,{\mathcal U}_1} \lp
\sum_{i_3=1}^{l} \frac{
\lp \mE_{{\mathcal U}_1} Z_{i_3}^{\m_1} \rp^{p-1}    }{ \lp
\sum_{i_3=1}^{l}
\lp \mE_{{\mathcal U}_1} Z_{i_3}^{\m_1} \rp^p \rp   }
 \frac{(C_{i_3}^{(i_1)})^{s-1} A_{i_3}^{(i_1,i_2)} \u^{(i_1,3,2)}}{Z_{i_3}^{1-\m_1}}\rp\nonumber \\
& = & \mE_{G,{\mathcal U}_1} \lp
\sum_{i_3=1}^{l}
\mE_{{\mathcal U}_2} \lp \sum_{p_1=1}^{l}\mE_{{\mathcal U}_2}(\u^{(i_1,3,2)}\u^{(p_1,3,2)}) \frac{d}{d\u^{(p_1,3,2)}}\lp\frac{(C_{i_3}^{(i_1)})^{s-1} A_{i_3}^{(i_1,i_2)  } \lp \mE_{{\mathcal U}_1} Z_{i_3}^{\m_1} \rp^{p-1} }  {Z_{i_3}^{1-\m_1}   \lp
\sum_{i_3=1}^{l}
\lp \mE_{{\mathcal U}_1} Z_{i_3}^{\m_1} \rp^p \rp  }
   \rp\rp\rp \nonumber \\
& = & \mE_{G,{\mathcal U}_2,{\mathcal U}_1} \lp
\sum_{i_3=1}^{l} \frac{\q_1
\lp \mE_{{\mathcal U}_1} Z_{i_3}^{\m_1} \rp^{p-1}    }{ \lp
\sum_{i_3=1}^{l}
\lp \mE_{{\mathcal U}_1} Z_{i_3}^{\m_1} \rp^p \rp   }
  \sum_{p_1=1}^{l} \frac{\beta^2(\x^{(i_1)})^T\x^{(p_1)}}{\beta_{i_1}\beta_{p_1}}
  \frac{d}{d\u^{(p_1,3,2)}}\lp\frac{(C_{i_3}^{(i_1)})^{s-1} A_{i_3}^{(i_1,i_2)}}{Z_{i_3}^{1-\m_1}}\rp\rp \nonumber \\
& & + \mE_{G,{\mathcal U}_2,{\mathcal U}_1} \Bigg ( \Bigg.
\sum_{i_3=1}^{l}
\frac{ \q_1\lp \mE_{{\mathcal U}_1} Z_{i_3}^{\m_1} \rp^{p-1}   (C_{i_3}^{(i_1)})^{s-1} A_{i_3}^{(i_1,i_2)}}{Z_{i_3}^{1-\m_1}}
\nonumber \\
& & \times
  \sum_{p_1=1}^{l} \frac{\beta^2(\x^{(i_1)})^T\x^{(p_1)}}{\beta_{i_1}\beta_{p_1}}
     \frac{d}{d\u^{(p_1,3,2)}}\lp\frac{1}{ \lp
\sum_{i_3=1}^{l}
\lp \mE_{{\mathcal U}_1} Z_{i_3}^{\m_1} \rp^p \rp  }  \rp  \Bigg ) \Bigg. \nonumber \\
& & + \mE_{G,{\mathcal U}_2,{\mathcal U}_1} \lp
\sum_{i_3=1}^{l}
\frac{ \q_1 (C_{i_3}^{(i_1)})^{s-1} A_{i_3}^{(i_1,i_2)}}{\lp
\sum_{i_3=1}^{l}
\lp \mE_{{\mathcal U}_1} Z_{i_3}^{\m_1} \rp^p \rp  Z_{i_3}^{1-\m_1}}
  \sum_{p_1=1}^{l} \q_1\frac{\beta^2(\x^{(i_1)})^T\x^{(p_1)}}{\beta_{i_1}\beta_{p_1}}\frac{d}{d\u^{(p_1,3,2)}}\lp  \lp \mE_{{\mathcal U}_1} Z_{i_3}^{\m_1} \rp^{p-1}  \rp\rp, \nonumber \\
 \end{eqnarray}
where we recall on
\begin{eqnarray}\label{eq:genEanal19c}
\mE_{{\mathcal U}_2}(\u^{(i_1,3,2)}\u^{(p_1,3,2)}) & = & \q_1\frac{(\x^{(i_1)})^T\x^{(p_1)}}{\|\x^{(i_1)}\|_2\|\x^{(p_1)}\|_2}
=\q_1\frac{\beta^2(\x^{(i_1)})^T\x^{(p_1)}}{\beta_{i_1}\beta_{p_1}}, \nonumber \\
\mE_{{\mathcal U}_2}(\u^{(i_1,3,1)}\u^{(p_1,3,1)}) & = & (\q_0-\q_1)\frac{(\x^{(i_1)})^T\x^{(p_1)}}{\|\x^{(i_1)}\|_2\|\x^{(p_1)}\|_2}=(\q_0-\q_1)\frac{\beta^2(\x^{(i_1)})^T\x^{(p_1)}}{\beta_{i_1}\beta_{p_1}}.
\end{eqnarray}
One can then rewrite (\ref{eq:liftgenEanal20}) as
\begin{eqnarray}\label{eq:genEanal19a}
T_{2,2}   =   T_{2,2}^{c} +  T_{2,2}^{d} +  T_{2,2}^{e},
\end{eqnarray}
where
\begin{eqnarray}\label{eq:genEanal19b}
T_{2,2}^c &  = &
\mE_{G,{\mathcal U}_2,{\mathcal U}_1} \lp
\sum_{i_3=1}^{l} \frac{\q_1
\lp \mE_{{\mathcal U}_1} Z_{i_3}^{\m_1} \rp^{p-1}    }{ \lp
\sum_{i_3=1}^{l}
\lp \mE_{{\mathcal U}_1} Z_{i_3}^{\m_1} \rp^p \rp   }
  \sum_{p_1=1}^{l} \frac{\beta^2(\x^{(i_1)})^T\x^{(p_1)}}{\beta_{i_1}\beta_{p_1}}
  \frac{d}{d\u^{(p_1,3,2)}}\lp\frac{(C_{i_3}^{(i_1)})^{s-1} A_{i_3}^{(i_1,i_2)}}{Z_{i_3}^{1-\m_1}}\rp\rp \nonumber \\
T_{2,2}^d &  = & \mE_{G,{\mathcal U}_2,{\mathcal U}_1} \Bigg ( \Bigg.
\sum_{i_3=1}^{l}
\frac{ \q_1\lp \mE_{{\mathcal U}_1} Z_{i_3}^{\m_1} \rp^{p-1}   (C_{i_3}^{(i_1)})^{s-1} A_{i_3}^{(i_1,i_2)}}{Z_{i_3}^{1-\m_1}}
\nonumber \\
& & \times
  \sum_{p_1=1}^{l} \frac{\beta^2(\x^{(i_1)})^T\x^{(p_1)}}{\beta_{i_1}\beta_{p_1}}
     \frac{d}{d\u^{(p_1,3,2)}}\lp\frac{1}{ \lp
\sum_{i_3=1}^{l}
\lp \mE_{{\mathcal U}_1} Z_{i_3}^{\m_1} \rp^p \rp  }  \rp  \Bigg ) \Bigg. \nonumber \\
T_{2,2}^e &  = &
\mE_{G,{\mathcal U}_2,{\mathcal U}_1} \lp
\sum_{i_3=1}^{l}
\frac{ \q_1 (C_{i_3}^{(i_1)})^{s-1} A_{i_3}^{(i_1,i_2)}}{\lp
\sum_{i_3=1}^{l}
\lp \mE_{{\mathcal U}_1} Z_{i_3}^{\m_1} \rp^p \rp  Z_{i_3}^{1-\m_1}}
  \sum_{p_1=1}^{l} \frac{\beta^2(\x^{(i_1)})^T\x^{(p_1)}}{\beta_{i_1}\beta_{p_1}} \frac{d}{d\u^{(p_1,3,2)}}\lp  \lp \mE_{{\mathcal U}_1} Z_{i_3}^{\m_1} \rp^{p-1}  \rp\rp. \nonumber \\
\end{eqnarray}
Since scaling $T_{2,2}^c$ by $\q_1$ produces the term structurally identical to the one considered in the second part of Section \ref{sec:hand1T11} scaled by $(\q_0-\q_1)$, one can immediately utilize (\ref{eq:liftgenBanal20b}) and write
 \begin{eqnarray}\label{eq:genEanal19c1}
\sum_{i_1=1}^{l}\sum_{i_2=1}^{l} \beta_{i_1}\|\y^{(i_2)}\|_2 \frac{T_{2,2}^c}{\sqrt{1-t}} & = & \q_1 \beta^2 \Bigg( \Bigg. \mE_{G,{\mathcal U}_2}\langle \|\x^{(i_1)}\|_2^2\|\y^{(i_2)}\|_2^2\rangle_{\gamma_{01}^{(1)}} \nonumber \\
& & +   (s-1)\mE_{G,{\mathcal U}_2}\langle \|\x^{(i_1)}\|_2^2 \|\y^{(i_2)}\|_2\|\y^{(p_2)}\|_2\rangle_{\gamma_{02}^{(1)}}\Bigg.\Bigg)   \nonumber \\
& & - \q_1s\beta^2(1-\m_1)\mE_{G,{\mathcal U}_2}\langle (\x^{(p_1)})^T\x^{(i_1)}\|\y^{(i_2)}\|_2\|\y^{(p_2)}\|_2 \rangle_{\gamma_{1}^{(1)}}.
\end{eqnarray}

To determining  $T_{2,2}^d$  we first observe
\begin{eqnarray}\label{eq:genEanal20}
\frac{d}{d\u^{(p_1,3,2)}}\lp\frac{1}{ \lp
\sum_{i_3=1}^{l}
\lp \mE_{{\mathcal U}_1} Z_{i_3}^{\m_1} \rp^p \rp  } \rp   & = &
 -p\sum_{p_3=1}^{l} \frac{
\lp \mE_{{\mathcal U}_1} Z_{p_3}^{\m_1} \rp^{p-1}}{\lp
\sum_{i_3=1}^{l}
\lp \mE_{{\mathcal U}_1} Z_{i_3}^{\m_1} \rp^p  \rp^2}
\mE_{{\mathcal U}_1}\frac{dZ_{p_3}^{\m_1}}{d\u^{(p_1,3)}}.
\end{eqnarray}
From \cite{Stojnicnflgscompyx23}'s (52) we also have
\begin{eqnarray}\label{eq:genEanal21}
\frac{dZ_{p_3}^{\m_1}}{d\u^{(p_1,3,2)}} & = & \frac{\m_1}{Z_{p_3}^{1-\m_1}} s  (C_{p_3}^{(p_1)})^{s-1}\sum_{p_2=1}^{l}
\beta_{p_1}A_{p_3}^{(p_1,p_2)}\|\y^{(p_2)}\|_2\sqrt{1-t}.
\end{eqnarray}
A combination of (\ref{eq:genEanal20}) and (\ref{eq:genEanal21}) then gives
\begin{eqnarray}\label{eq:genEanal22}
\frac{d}{d\u^{(p_1,3,2)}}\lp\frac{1}{ \lp
\sum_{i_3=1}^{l}
\lp \mE_{{\mathcal U}_1} Z_{i_3}^{\m_1} \rp^p \rp  } \rp   & = &
 -p\sum_{p_3=1}^{l} \frac{
\lp \mE_{{\mathcal U}_1} Z_{p_3}^{\m_1} \rp^{p-1}}{\lp
\sum_{i_3=1}^{l}
\lp \mE_{{\mathcal U}_1} Z_{i_3}^{\m_1} \rp^p  \rp^2}
 \nonumber \\
 & & \times \frac{\m_1}{Z_{p_3}^{1-\m_1}} s  (C_{p_3}^{(p_1)})^{s-1}\sum_{p_2=1}^{l}
\beta_{p_1}A_{p_3}^{(p_1,p_2)}\|\y^{(p_2)}\|_2\sqrt{1-t}.
\end{eqnarray}
Plugging  (\ref{eq:genEanal22}) back into the expression for $T_{2,2}^d$ given in  (\ref{eq:genEanal19b}) we also have
\begin{eqnarray}\label{eq:genEanal23}
 T_{2,2}^d &  = & -\q_1 p\mE_{G,{\mathcal U}_2,{\mathcal U}_1} \Bigg( \Bigg.\sum_{i_3=1}^{l}
\frac{ \lp \mE_{{\mathcal U}_1} Z_{i_3}^{\m_1} \rp^{p-1}   (C_{i_3}^{(i_1)})^{s-1} A_{i_3}^{(i_1,i_2)}}{Z_{i_3}^{1-\m_1}}
  \sum_{p_1=1}^{l}  \frac{\beta^2(\x^{(i_1)})^T\x^{(p_1)}}{\beta_{i_1}\beta_{p_1}}
   \nonumber \\
 & & \times
 \sum_{p_3=1}^{l} \frac{
\lp \mE_{{\mathcal U}_1} Z_{p_3}^{\m_1} \rp^{p-1}}{\lp
\sum_{i_3=1}^{l}
\lp \mE_{{\mathcal U}_1} Z_{i_3}^{\m_1} \rp^p  \rp^2}
 \frac{\m_1}{Z_{p_3}^{1-\m_1}} s  (C_{p_3}^{(p_1)})^{s-1}\sum_{p_2=1}^{l}
\beta_{p_1}A_{p_3}^{(p_1,p_2)}\|\y^{(p_2)}\|_2\sqrt{1-t}  \Bigg. \Bigg) \nonumber \\
&  = & -s\sqrt{1-t}\beta^2\q_1\m_1 p \mE_{G,{\mathcal U}_2,{\mathcal U}_1} \Bigg( \Bigg.
 \sum_{i_3=1}^{l} \frac{
\lp \mE_{{\mathcal U}_1} Z_{i_3}^{\m_1} \rp^{p}}{\lp
\sum_{i_3=1}^{l}
\lp \mE_{{\mathcal U}_1} Z_{i_3}^{\m_1} \rp^p  \rp}
\frac{Z_{i_3}^{\m_1}}{\mE_{{\mathcal U}_1} Z_{i_3}^{\m_1}}
 \frac{(C_{i_3}^{(i_1)})^{s-1} A_{i_3}^{(i_1,i_2)}}{Z_{i_3}} \nonumber \\
 & & \times
 \sum_{p_3=1}^{l} \frac{
\lp \mE_{{\mathcal U}_1} Z_{p_3}^{\m_1} \rp^{p}}{\lp
\sum_{p_3=1}^{l}
\lp \mE_{{\mathcal U}_1} Z_{p_3}^{\m_1} \rp^p  \rp}
\frac{Z_{p_3}^{\m_1}}{\mE_{{\mathcal U}_1} Z_{p_3}^{\m_1}}
  \sum_{p_1=1}^{l}
 \frac{(C_{p_3}^{(p_1)})^{s}}{Z_{p_3}}  \sum_{p_2=1}^{l}
\frac{A_{p_3}^{(p_1,p_2)}}{(C_{p_3}^{(p_1)})}\|\y^{(p_2)}\|_2\frac{(\x^{(i_1)})^T\x^{(p_1)}}{\beta_{i_1}} \Bigg. \Bigg),
\end{eqnarray}
and consequently
\begin{eqnarray}\label{eq:genEanal24}
\sum_{i_1=1}^{l}\sum_{i_2=1}^{l} \beta_{i_1}\|\y^{(i_2)}\|_2\frac{T_{2,2}^d}{\sqrt{1-t}}
& = & -s\beta^2\q_1\m_1 p \mE_{G,{\mathcal U}_2} \langle \|\y^{(i_2)}\|_2\|\y^{(p_2)}\|_2(\x^{(i_1)})^T\x^{(p_1)}\rangle_{\gamma_{21}^{(1)}}.
\end{eqnarray}

To determine $T_{2,2}^e$ we first note
\begin{eqnarray}\label{eq:genDanal24aa0bb0}
\frac{d}{d\u^{(p_1,3,2)}}\lp  \lp \mE_{{\mathcal U}_1} Z_{i_3}^{\m_1} \rp^{p-1}  \rp=
 (p-1) \lp  \mE_{{\mathcal U}_1} Z_{i_3}^{\m_1} \rp^{p-2}
\mE_{{\mathcal U}_1}\frac{d Z_{i_3}^{\m_1}}{d\u^{(p_1,3,2)}}.
\end{eqnarray}
Combining (\ref{eq:genDanal24aa0}) and  (\ref{eq:genEanal21})  one finds
\begin{equation}\label{eq:genDanal24aa1bb1}
\frac{d}{d\u^{(p_1,3,2)}}\lp  \lp \mE_{{\mathcal U}_1} Z_{i_3}^{\m_1} \rp^{p-1}  \rp
=
(p-1)  \lp  \mE_{{\mathcal U}_1} Z_{i_3}^{\m_1} \rp^{p-2}
\mE_{{\mathcal U}_1}
\frac{\m_1}{Z_{i_3}^{1-\m_1}} s  (C_{i_3}^{(p_1)})^{s-1}\sum_{p_2=1}^{l}
\beta_{p_1}A_{i_3}^{(p_1,p_2)}\|\y^{(p_2)}\|_2\sqrt{1-t}.
\end{equation}
Plugging  (\ref{eq:genEanal22}) back into the expression for $T_{2,2}^e$ given in  (\ref{eq:genEanal19b}) we first obtain
\begin{eqnarray}\label{eq:genDanal24aa1bb2}
T_{2,2}^e &  = &
\mE_{G,{\mathcal U}_2,{\mathcal U}_1} \Bigg ( \Bigg.
\sum_{i_3=1}^{l}
\frac{ \q_1 (C_{i_3}^{(i_1)})^{s-1} A_{i_3}^{(i_1,i_2)}}{\lp
\sum_{i_3=1}^{l}
\lp \mE_{{\mathcal U}_1} Z_{i_3}^{\m_1} \rp^p \rp  Z_{i_3}^{1-\m_1}}
  \sum_{p_1=1}^{l} \frac{\beta^2(\x^{(i_1)})^T\x^{(p_1)}}{\beta_{i_1}\beta_{p_1}}
  \nonumber \\
  & & \times
  (p-1)  \lp  \mE_{{\mathcal U}_1} Z_{i_3}^{\m_1} \rp^{p-2}
\mE_{{\mathcal U}_1}
\frac{\m_1}{Z_{i_3}^{1-\m_1}} s  (C_{i_3}^{(p_1)})^{s-1}\sum_{p_2=1}^{l}
\beta_{p_1}A_{i_3}^{(p_1,p_2)}\|\y^{(p_2)}\|_2\sqrt{1-t}
    \Bigg ) \Bigg.   \nonumber \\
    &  = & s\beta^2\q_1\m_1 (p-1)
\mE_{G,{\mathcal U}_2} \Bigg ( \Bigg.
\sum_{i_3=1}^{l}
\frac{    \lp  \mE_{{\mathcal U}_1} Z_{i_3}^{\m_1} \rp^p   }{\lp
\sum_{i_3=1}^{l}
\lp \mE_{{\mathcal U}_1} Z_{i_3}^{\m_1} \rp^p \rp  }
\mE_{{\mathcal U}_1}
\frac{Z_{i_3}^{\m_1}} { \mE_{{\mathcal U}_1} Z_{i_3}^{\m_1}}
\frac{     (C_{i_3}^{(i_1)})^{s}  }  {   Z_{i_3}  }
\frac{     A_{i_3}^{(i_1,i_2)} }  {  C_{i_3}^{(i_1)}  }
  \nonumber \\
  & & \times
\mE_{{\mathcal U}_1}
\frac{Z_{i_3}^{\m_1}} { \mE_{{\mathcal U}_1} Z_{i_3}^{\m_1}}
  \sum_{p_1=1}^{l}
\frac{  (C_{i_3}^{(p_1)})^{s} }{Z_{i_3}}
  \sum_{p_2=1}^{l}
  \frac{ A_{i_3}^{(p_1,p_2)}\|\y^{(p_2)}\|_2
(\x^{(i_1)})^T\x^{(p_1)} \sqrt{1-t} }{   C_{i_3}^{(p_1)}
 \beta_{i_1} }
    \Bigg ) \Bigg. ,
\end{eqnarray}
and
\begin{eqnarray}\label{eq:genDanal24aa1bb3}
\sum_{i_1=1}^{l}\sum_{i_2=1}^{l} \beta_{i_1}\|\y^{(i_2)}\|_2\frac{T_{2,2}^e}{\sqrt{1-t}}
     &  = &     s\beta^2\q_1\m_1 (p-1) \mE_{G,{\mathcal U}_2} \langle \|\x^{(i_1)}\|_2\|\x^{(p_1)}\|_2(\y^{(p_2)})^T\y^{(i_2)} \rangle_{\gamma_{22}^{(1)}}.
\end{eqnarray}
A combination of (\ref{eq:genEanal19a}), (\ref{eq:genEanal19c1}),
(\ref{eq:genEanal24}), and (\ref{eq:genDanal24aa1bb3})  gives
\begin{eqnarray}\label{eq:genEanal25}
\sum_{i_1=1}^{l}\sum_{i_2=1}^{l} \beta_{i_1}\|\y^{(i_2)}\|_2\frac{T_{2,2}}{\sqrt{1-t}} &  = &
\q_1\beta^2 \Bigg( \Bigg. \mE_{G,{\mathcal U}_2}\langle \|\x^{(i_1)}\|_2^2\|\y^{(i_2)}\|_2^2\rangle_{\gamma_{01}^{(1)}} \nonumber \\
& & +  (s-1)\mE_{G,{\mathcal U}_2}\langle \|\x^{(i_1)}\|_2^2 \|\y^{(i_2)}\|_2\|\y^{(p_2)}\|_2\rangle_{\gamma_{02}^{(1)}}\Bigg.\Bigg) \nonumber \\
& & - \q_1s\beta^2(1-\m_1)\mE_{G,{\mathcal U}_2}\langle (\x^{(p_1)})^T\x^{(i_1)}\|\y^{(i_2)}\|_2\|\y^{(p_2)}\|_2 \rangle_{\gamma_{1}^{(1)}} \nonumber \\
&  & -s\beta^2\q_1\m_1 p \mE_{G,{\mathcal U}_2} \langle \|\y^{(i_2)}\|_2\|\y^{(p_2)}\|_2(\x^{(i_1)})^T\x^{(p_1)}\rangle_{\gamma_{21}^{(1)}}
\nonumber \\
& &
+ s\beta^2\q_1\m_1 (p-1) \mE_{G,{\mathcal U}_2} \langle \|\x^{(i_1)}\|_2\|\x^{(p_1)}\|_2(\y^{(p_2)})^T\y^{(i_2)} \rangle_{\gamma_{22}^{(1)}}.
\end{eqnarray}

\underline{\textbf{\emph{Determining}} $T_{2,3}$}
\label{sec:hand1T23}

After Gaussian integration by parts we find
\begin{eqnarray}\label{eq:genFanal21}
T_{2,3} & = &  \mE_{G,{\mathcal U}_2}\lp
\sum_{i_3=1}^{l} \frac{
\lp \mE_{{\mathcal U}_1} Z_{i_3}^{\m_1} \rp^{p-1}    }{ \lp
\sum_{i_3=1}^{l}
\lp \mE_{{\mathcal U}_1} Z_{i_3}^{\m_1} \rp^p \rp   }
\mE_{{\mathcal U}_1}\frac{(C_{i_3}^{(i_1)})^{s-1} A_{i_3}^{(i_1,i_2)} u^{(4,2)}}{Z_{i_3}^{1-\m_1}} \rp \nonumber \\
& = & \mE_{G,{\mathcal U}_1} \lp  \mE_{{\mathcal U}_2} \lp\mE_{{\mathcal U}_2} (u^{(4,2)}u^{(4,2)})\lp\frac{d}{du^{(4,2)}} \lp\frac{(C_{i_3}^{(i_1)})^{s-1} A_{i_3}^{(i_1,i_2)}}{Z_{i_3}^{1-\m_1}}
\frac{
\lp \mE_{{\mathcal U}_1} Z_{i_3}^{\m_1} \rp^{p-1}    }{ \lp
\sum_{i_3=1}^{l}
\lp \mE_{{\mathcal U}_1} Z_{i_3}^{\m_1} \rp^p \rp   }
\rp \rp\rp\rp \nonumber \\
& = & \p_1\q_1\mE_{G,{\mathcal U}_2,{\mathcal U}_1} \lp
\sum_{i_3=1}^{l}
\frac{
\lp \mE_{{\mathcal U}_1} Z_{i_3}^{\m_1} \rp^{p-1}    }{ \lp
\sum_{i_3=1}^{l}
\lp \mE_{{\mathcal U}_1} Z_{i_3}^{\m_1} \rp^p \rp   }
\lp\frac{d}{du^{(4,2)}} \lp\frac{(C_{i_3}^{(i_1)})^{s-1} A_{i_3}^{(i_1,i_2)}}{Z_{i_3}^{1-\m_1}}\rp\rp\rp \nonumber \\
& & + \p_1\q_1\mE_{G,{\mathcal U}_2,{\mathcal U}_1} \lp \sum_{i_3=1}^{l}
 \frac{ \lp \mE_{{\mathcal U}_1} Z_{i_3}^{\m_1} \rp^{p-1}     (C_{i_3}^{(i_1)})^{s-1} A_{i_3}^{(i_1,i_2)}}{Z_{i_3}^{1-\m_1}}\lp\frac{d}{du^{(4,2)}} \lp\frac{1}{ \lp
\sum_{i_3=1}^{l}
\lp \mE_{{\mathcal U}_1} Z_{i_3}^{\m_1} \rp^p \rp } \rp\rp\rp
\nonumber \\
& & + \p_1\q_1\mE_{G,{\mathcal U}_2,{\mathcal U}_1} \lp \sum_{i_3=1}^{l}
 \frac{     (C_{i_3}^{(i_1)})^{s-1} A_{i_3}^{(i_1,i_2)}}{  \lp
\sum_{i_3=1}^{l}
\lp \mE_{{\mathcal U}_1} Z_{i_3}^{\m_1} \rp^p \rp    Z_{i_3}^{1-\m_1}}\lp\frac{d}{du^{(4,2)}} \lp   \lp \mE_{{\mathcal U}_1} Z_{i_3}^{\m_1} \rp^{p-1}   \rp\rp\rp.
\end{eqnarray}
Following the methodology presented above, we find it useful to rewrite (\ref{eq:genFanal21}) as
\begin{eqnarray}\label{eq:genFanal22}
T_{2,3} & = & T_{2,3}^c + T_{2,3}^d +T_{2,3}^e,
\end{eqnarray}
where
\begin{eqnarray}\label{eq:genFanal23}
T_{2,3}^c & = &  \p_1\q_1\mE_{G,{\mathcal U}_2,{\mathcal U}_1} \lp
\sum_{i_3=1}^{l}
\frac{
\lp \mE_{{\mathcal U}_1} Z_{i_3}^{\m_1} \rp^{p-1}    }{ \lp
\sum_{i_3=1}^{l}
\lp \mE_{{\mathcal U}_1} Z_{i_3}^{\m_1} \rp^p \rp   }
\lp\frac{d}{du^{(4,2)}} \lp\frac{(C_{i_3}^{(i_1)})^{s-1} A_{i_3}^{(i_1,i_2)}}{Z_{i_3}^{1-\m_1}}\rp\rp\rp \nonumber \\
 T_{2,3}^d & = & \p_1\q_1\mE_{G,{\mathcal U}_2,{\mathcal U}_1} \lp \sum_{i_3=1}^{l}
 \frac{ \lp \mE_{{\mathcal U}_1} Z_{i_3}^{\m_1} \rp^{p-1}     (C_{i_3}^{(i_1)})^{s-1} A_{i_3}^{(i_1,i_2)}}{Z_{i_3}^{1-\m_1}}   \lp \frac{d}{du^{(4,2)}} \lp\frac{1}{ \lp
\sum_{i_3=1}^{l}
\lp \mE_{{\mathcal U}_1} Z_{i_3}^{\m_1} \rp^p \rp } \rp\rp\rp
\nonumber \\
 T_{2,3}^e & = &
\p_1\q_1\mE_{G,{\mathcal U}_2,{\mathcal U}_1} \lp \sum_{i_3=1}^{l}
 \frac{     (C_{i_3}^{(i_1)})^{s-1} A_{i_3}^{(i_1,i_2)}}{  \lp
\sum_{i_3=1}^{l}
\lp \mE_{{\mathcal U}_1} Z_{i_3}^{\m_1} \rp^p \rp    Z_{i_3}^{1-\m_1}}\lp\frac{d}{du^{(4,2)}} \lp   \lp \mE_{{\mathcal U}_1} Z_{i_3}^{\m_1} \rp^{p-1}   \rp\rp\rp.
\end{eqnarray}
After recognizing  that $\frac{T_{2,3}^c}{\p_1\q_1}=\frac{T_{1,3}}{\p_0\q_0-\p_1\q_1}$, one, based on (\ref{eq:liftgenCanal21b}), obtains
 \begin{eqnarray}\label{eq:genFanal23b}
\sum_{i_1=1}^{l}\sum_{i_2=1}^{l} \beta_{i_1}\|\y^{(i_2)}\|_2 \frac{T_{2,3}^c}{\sqrt{t}} & = &
\p_1\q_1 \beta^2 \Bigg( \Bigg. \mE_{G,{\mathcal U}_2}\langle \|\x^{(i_1)}\|_2^2\|\y^{(i_2)}\|_2^2\rangle_{\gamma_{01}^{(1)}} \nonumber \\
& & \times +   (s-1)\mE_{G,{\mathcal U}_2}\langle \|\x^{(i_1)}\|_2^2 \|\y^{(i_2)}\|_2\|\y^{(p_2)}\|_2\rangle_{\gamma_{02}^{(1)}}\Bigg. \Bigg) \nonumber \\
& & - \p_1\q_1 s\beta^2(1-\m_1)\mE_{G,{\mathcal U}_2}\langle \|\x^{(i_1)}\|_2\|\x^{(p_`)}\|_2\|\y^{(i_2)}\|_2\|\y^{(p_2)}\|_2 \rangle_{\gamma_{1}^{(1)}}.
\end{eqnarray}

To find $T_{2,3}^d$, we first write
\begin{eqnarray}\label{eq:genFanal24}
\frac{d}{du^{(4,2)}} \lp\frac{1}{ \lp
\sum_{i_3=1}^{l}
\lp \mE_{{\mathcal U}_1} Z_{i_3}^{\m_1} \rp^p \rp } \rp
  & = &
 - p \sum_{p_3=1}^{l} \frac{\lp \mE_{{\mathcal U}_1} Z_{p_3}^{\m_1} \rp^{p-1} } { \lp
\sum_{i_3=1}^{l}
\lp \mE_{{\mathcal U}_1} Z_{i_3}^{\m_1} \rp^p \rp^2} \mE_{{\mathcal U}_1} \frac{dZ_{p_3}^{\m_1}}{d u^{(4,2)}}
\nonumber \\
& = &  - p \sum_{p_3=1}^{l} \frac{\lp \mE_{{\mathcal U}_1} Z_{p_3}^{\m_1} \rp^{p-1} } { \lp
\sum_{i_3=1}^{l}
\lp \mE_{{\mathcal U}_1} Z_{i_3}^{\m_1} \rp^p \rp^2}\mE_{{\mathcal U}_1} \frac{\m_1}{Z_{p_3}^{1-\m_1}}\frac{dZ_{p_3}}{d u^{(4,2)}}.
\end{eqnarray}
From \cite{Stojnicnflgscompyx23}'s (62) we also have
\begin{equation}\label{eq:genFanal25}
\frac{dZ_{p_3}}{du^{(4,2)}}=\frac{d\sum_{p_1=1}^{l}  (C_{p_3}^{(p_1)})^s}{du^{(4,2)}}
=s \sum_{p_1=1}^{l} (C_{p_3}^{(p_1)})^{s-1}\sum_{p_2=1}^{l}\frac{d(A_{p_3}^{(p_1,p_2)})}{du^{(4,2)}}
=
s \sum_{p_1=1}^{l} (C_{p_3}^{(p_1)})^{s-1}\sum_{p_2=1}^{l}
\beta_{p_1}A_{p_3}^{(p_1,p_2)}\|\y^{(p_2)}\|_2\sqrt{t}.
\end{equation}
A combination of (\ref{eq:genFanal24}) and (\ref{eq:genFanal25}) gives
\begin{eqnarray}\label{eq:genFanal26}
\frac{d}{du^{(4,2)}} \lp\frac{1}{ \lp
\sum_{i_3=1}^{l}
\lp \mE_{{\mathcal U}_1} Z_{i_3}^{\m_1} \rp^p \rp } \rp
 & = &  - p \sum_{p_3=1}^{l} \frac{\lp \mE_{{\mathcal U}_1} Z_{p_3}^{\m_1} \rp^{p-1} } { \lp
\sum_{i_3=1}^{l}
\lp \mE_{{\mathcal U}_1} Z_{i_3}^{\m_1} \rp^p \rp^2}\mE_{{\mathcal U}_1} \frac{\m_1}{Z_{p_3}^{1-\m_1}}  \nonumber \\
 & & \times s \sum_{p_1=1}^{l} (C_{p_3}^{(p_1)})^{s-1}\sum_{p_2=1}^{l}
\beta_{p_1}A_{p_3}^{(p_1,p_2)}\|\y^{(p_2)}\|_2\sqrt{t}.
\end{eqnarray}
Plugging (\ref{eq:genFanal26}) into the expression for $T_{2,3}^d$ from (\ref{eq:genFanal23}) gives
\begin{eqnarray}\label{eq:genFanal27}
T_{2,3}^d   & = & -\p_1\q_1  \mE_{G,{\mathcal U}_2,{\mathcal U}_1} \Bigg( \Bigg.
\sum_{i_3=1}^{l}
 \frac{ \lp \mE_{{\mathcal U}_1} Z_{i_3}^{\m_1} \rp^{p-1}     (C_{i_3}^{(i_1)})^{s-1} A_{i_3}^{(i_1,i_2)}}{Z_{i_3}^{1-\m_1}} \nonumber \\
& & \times p \sum_{p_3=1}^{l} \frac{\lp \mE_{{\mathcal U}_1} Z_{p_3}^{\m_1} \rp^{p-1} } { \lp
\sum_{i_3=1}^{l}
\lp \mE_{{\mathcal U}_1} Z_{i_3}^{\m_1} \rp^p \rp^2 }
\mE_{{\mathcal U}_1} \frac{\m_1}{Z_{p_3}^{1-\m_1}}   s \sum_{p_1=1}^{l} (C_{p_3}^{(p_1)})^{s-1}\sum_{p_2=1}^{l}
\beta_{p_1}A_{p_3}^{(p_1,p_2)}\|\y^{(p_2)}\|_2\sqrt{t}\Bigg. \Bigg) \nonumber \\
& = & -s\sqrt{t}\p_1\q_1\m_1 p \mE_{G,{\mathcal U}_2} \Bigg( \Bigg.
\sum_{p_3=1}^{l} \frac{\lp \mE_{{\mathcal U}_1} Z_{p_3}^{\m_1} \rp^{p} } { \lp
\sum_{i_3=1}^{l}
\lp \mE_{{\mathcal U}_1} Z_{i_3}^{\m_1} \rp^p \rp }
\mE_{{\mathcal U}_1} \frac{Z^{\m_1}}{\mE_{{\mathcal U}_1} Z^{\m_1}} \frac{(C^{(i_1)})^s}{Z}\frac{A^{(i_1,i_2)}}{(C^{(i_1)})}\nonumber \\
& & \times
\sum_{p_3=1}^{l} \frac{\lp \mE_{{\mathcal U}_1} Z_{p_3}^{\m_1} \rp^{p} } { \lp
\sum_{i_3=1}^{l}
\lp \mE_{{\mathcal U}_1} Z_{i_3}^{\m_1} \rp^p \rp }
  \mE_{{\mathcal U}_1} \frac{Z^{\m_1}}{\mE_{{\mathcal U}_1} Z^{\m_1}} \sum_{p_1=1}^{l} \frac{(C^{(p_1)})^{s}}{Z}\sum_{p_2=1}^{l}
\frac{A^{(p_1,p_2)}}{(C^{(p_1)})}\beta_{p_1}\|\y^{(p_2)}\|_2\Bigg. \Bigg).
\end{eqnarray}
We then note
\begin{eqnarray}\label{eq:genFanal28}
\sum_{i_1=1}^{l}\sum_{i_2=1}^{l} \beta_{i_1}\|\y^{(i_2)}\|_2\frac{T_{2,3}^d}{\sqrt{t}}
 & = & -s\beta^2\p_1\q_1\m_1 p \mE_{G,{\mathcal U}_2} \langle\|\x^{(i_2)}\|_2\|\x^{(p_2)}\|_2\|\y^{(i_2)}\|_2\|\y^{(p_2)} \|_2 \rangle_{\gamma_{21}^{(1)}}.
\end{eqnarray}

To determine $T_{2,3}^e$ we first write
\begin{eqnarray}\label{eq:genDanal24aa0bb0cc0}
\frac{d}{du^{(4,2)}}\lp  \lp \mE_{{\mathcal U}_1} Z_{i_3}^{\m_1} \rp^{p-1}  \rp=
 (p-1) \lp  \mE_{{\mathcal U}_1} Z_{i_3}^{\m_1} \rp^{p-2}
\mE_{{\mathcal U}_1}\frac{d Z_{i_3}^{\m_1}}{d u^{(4,2)}}.
\end{eqnarray}
Combining (\ref{eq:genDanal24aa0bb0cc0}) and  (\ref{eq:genFanal25})  one obtains
\begin{equation}\label{eq:genDanal24aa1cc1}
\frac{d}{du^{(4,2)}}\lp  \lp \mE_{{\mathcal U}_1} Z_{i_3}^{\m_1} \rp^{p-1}  \rp
=
(p-1)  \lp  \mE_{{\mathcal U}_1} Z_{i_3}^{\m_1} \rp^{p-2}
\mE_{{\mathcal U}_1}
\frac{\m_1}{Z_{i_3}^{1-\m_1}}   s \sum_{p_1=1}^{l} (C_{i_3}^{(p_1)})^{s-1}\sum_{p_2=1}^{l}
\beta_{p_1}A_{i_3}^{(p_1,p_2)}\|\y^{(p_2)}\|_2\sqrt{t}.
\end{equation}
Plugging  (\ref{eq:genDanal24aa1cc1}) back into the expression for $T_{2,3}^e$ given in (\ref{eq:genFanal23}) we further find
\begin{eqnarray}\label{eq:genDanal24aa1cc2}
T_{2,2}^e &  = &
\p_1\q_1 \mE_{G,{\mathcal U}_2,{\mathcal U}_1} \Bigg ( \Bigg.
\sum_{i_3=1}^{l}
 \frac{     (C_{i_3}^{(i_1)})^{s-1} A_{i_3}^{(i_1,i_2)}}{  \lp
\sum_{i_3=1}^{l}
\lp \mE_{{\mathcal U}_1} Z_{i_3}^{\m_1} \rp^p \rp    Z_{i_3}^{1-\m_1}}
\nonumber \\
  & & \times
(p-1)  \lp  \mE_{{\mathcal U}_1} Z_{i_3}^{\m_1} \rp^{p-2}
\mE_{{\mathcal U}_1}
\frac{\m_1}{Z_{i_3}^{1-\m_1}}   s \sum_{p_1=1}^{l} (C_{i_3}^{(p_1)})^{s-1}\sum_{p_2=1}^{l}
\beta_{p_1}A_{i_3}^{(p_1,p_2)}\|\y^{(p_2)}\|_2\sqrt{t}
    \Bigg ) \Bigg.   \nonumber \\
    &  = & s\beta^2\p_1\q_1\m_1 (p-1)
\mE_{G,{\mathcal U}_2} \Bigg ( \Bigg.
\sum_{i_3=1}^{l}
\frac{    \lp  \mE_{{\mathcal U}_1} Z_{i_3}^{\m_1} \rp^p   }{\lp
\sum_{i_3=1}^{l}
\lp \mE_{{\mathcal U}_1} Z_{i_3}^{\m_1} \rp^p \rp  }
\mE_{{\mathcal U}_1}
\frac{Z_{i_3}^{\m_1}} { \mE_{{\mathcal U}_1} Z_{i_3}^{\m_1}}
\frac{     (C_{i_3}^{(i_1)})^{s}  }  {   Z_{i_3}  }
\frac{     A_{i_3}^{(i_1,i_2)} }  {  C_{i_3}^{(i_1)}  }
  \nonumber \\
  & & \times
\mE_{{\mathcal U}_1}
\frac{Z_{i_3}^{\m_1}} { \mE_{{\mathcal U}_1} Z_{i_3}^{\m_1}}
  \sum_{p_1=1}^{l}
\frac{  (C_{i_3}^{(p_1)})^{s} }{Z_{i_3}}
  \sum_{p_2=1}^{l}
  \frac{ A_{i_3}^{(p_1,p_2)}\|\y^{(p_2)}\|_2
\|\x^{(i_1)}\|_2   \|\x^{(p_1) }\|_2 \sqrt{t} }{   C_{i_3}^{(p_1)}
 \beta_{i_1} }
    \Bigg ) \Bigg. ,
\end{eqnarray}
and then
\begin{eqnarray}\label{eq:genDanal24aa1cc3}
\sum_{i_1=1}^{l}\sum_{i_2=1}^{l} \beta_{i_1}\|\y^{(i_2)}\|_2\frac{T_{2,3}^e}{\sqrt{t}}
     &  = &     s\beta^2 \p_1\q_1\m_1 (p-1) \mE_{G,{\mathcal U}_2} \langle \|\x^{(i_1)}\|_2\|\x^{(p_1)}\|_2  \|\y^{(p_2)}\|_2    \|\y^{(i_2)}\|_2 \rangle_{\gamma_{22}^{(1)}}.
\end{eqnarray}
A combination of (\ref{eq:genFanal22}), (\ref{eq:genFanal23b}), (\ref{eq:genFanal28}), and (\ref{eq:genDanal24aa1cc3}) gives
\begin{eqnarray}\label{eq:genFanal29}
\sum_{i_1=1}^{l}\sum_{i_2=1}^{l} \beta_{i_1}\|\y^{(i_2)}\|_2\frac{T_{2,3}}{\sqrt{t}}
& = &
\p_1\q_1\beta^2 \Bigg( \Bigg. \mE_{G,{\mathcal U}_2}\langle \|\x^{(i_1)}\|_2^2\|\y^{(i_2)}\|_2^2\rangle_{\gamma_{01}^{(1)}} \nonumber \\
& & +   (s-1)\mE_{G,{\mathcal U}_2}\langle \|\x^{(i_1)}\|_2^2 \|\y^{(i_2)}\|_2\|\y^{(p_2)}\|_2\rangle_{\gamma_{02}^{(1)}}\Bigg.\Bigg) \nonumber \\
& & - \p_1\q_1 s\beta^2(1-\m_1)\mE_{G,{\mathcal U}_2}\langle \|\x^{(i_1)}\|_2\|\x^{(p_`)}\|_2\|\y^{(i_2)}\|_2\|\y^{(p_2)}\|_2 \rangle_{\gamma_{1}^{(1)}} \nonumber \\
&  & -s\beta^2\p_1\q_1\m_1\mE_{G,{\mathcal U}_2} \langle\|\x^{(i_2)}\|_2\|\x^{(p_2)}\|_2\|\y^{(i_2)}\|_2\|\y^{(p_2)}\|_2   \rangle_{\gamma_{21}^{(1)}}
\nonumber \\
& & +
 s\beta^2 \p_1 \q_1\m_1 (p-1) \mE_{G,{\mathcal U}_2} \langle \|\x^{(i_1)}\|_2\|\x^{(p_1)}\|_2  \|\y^{(p_2)}\|_2    \|\y^{(i_2)}\|_2 \rangle_{\gamma_{22}^{(1)}}.
\end{eqnarray}

\subsubsection{$T_G$--group -- first level}
\label{sec:handlTG}

We first recall on
\begin{eqnarray}\label{eq:genGanal1}
T_{G,j} & = &  \mE_{G,{\mathcal U}_2}\lp
\sum_{i_3=1}^{l} \frac{
\lp \mE_{{\mathcal U}_1} Z_{i_3}^{\m_1} \rp^{p-1}    }{ \lp
\sum_{i_3=1}^{l}
\lp \mE_{{\mathcal U}_1} Z_{i_3}^{\m_1} \rp^p \rp   }
\mE_{{\mathcal U}_1}\frac{(C_{i_3}^{(i_1)})^{s-1} A_{i_3}^{(i_1,i_2)} \y_j^{(i_2)}\bar{\u}_j^{(i_1,1)}}{Z_{i_3}^{1-\m_1}} \rp \nonumber \\
 & = &  \mE_{{\mathcal U}_2,{\mathcal U}_1}\lp
\sum_{i_3=1}^{l}
\mE_{G}
\frac{
\lp \mE_{{\mathcal U}_1} Z_{i_3}^{\m_1} \rp^{p-1}    }{ \lp
\sum_{i_3=1}^{l}
\lp \mE_{{\mathcal U}_1} Z_{i_3}^{\m_1} \rp^p \rp   }
\frac{(C_{i_3}^{(i_1)})^{s-1} A_{i_3}^{(i_1,i_2)} \y_j^{(i_2)}\bar{\u}_j^{(i_1,1)}}{Z_{i_3}^{1-\m_1}} \rp.
 \end{eqnarray}
Gaussian integration by parts gives
\begin{eqnarray}\label{eq:genGanal2}
T_{G,j}  & = &
 \mE_{{\mathcal U}_2,{\mathcal U}_1} \Bigg( \Bigg. \sum_{i_3=1}^{l}\mE_{G} \lp
 \frac{
\lp \mE_{{\mathcal U}_1} Z_{i_3}^{\m_1} \rp^{p-1}    }{ \lp
\sum_{i_3=1}^{l}
\lp \mE_{{\mathcal U}_1} Z_{i_3}^{\m_1} \rp^p \rp   }
 \sum_{p_1=1}^{l} \mE (\u_j^{(i_1,1)}\u_j^{(p_1,1)})\frac{d}{d\u_j^{(p_1,1)}}\lp \frac{(C_{i_3}^{(i_1)})^{s-1} A_{i_3}^{(i_1,i_2)}\y_j^{(i_2)}}{Z_{i_3}^{1-\m_1}}\rp \rp \nonumber \\
 & & +  \sum_{p_1=1}^{l} \mE_{G} \Bigg.\Bigg(   \frac{  \lp \mE_{{\mathcal U}_1} Z_{i_3}^{\m_1} \rp^{p-1}   (C_{i_3}^{(i_1)})^{s-1} A_{i_3}^{(i_1,i_2)}\y_j^{(i_2)}}{Z_{i_3}^{1-\m_1}}
 \nonumber  \\
 & &
 \times
  \sum_{p_1=1}^{l}
 \mE (\u_j^{(i_1,1)}\u_j^{(p_1,1)})\frac{d}{d\u_j^{(p_1,1)}}  \Bigg.\Bigg(   \frac{1}{ \lp
\sum_{i_3=1}^{l}
\lp \mE_{{\mathcal U}_1} Z_{i_3}^{\m_1} \rp^p \rp  }\Bigg.\Bigg)  \Bigg.\Bigg)
\nonumber \\
 & & +  \sum_{p_1=1}^{l} \mE_{G} \Bigg.\Bigg(   \frac{   (C_{i_3}^{(i_1)})^{s-1} A_{i_3}^{(i_1,i_2)}\y_j^{(i_2)}}{ \lp
\sum_{i_3=1}^{l}
\lp \mE_{{\mathcal U}_1} Z_{i_3}^{\m_1} \rp^p \rp   Z_{i_3}^{1-\m_1}}
  \sum_{p_1=1}^{l}
 \mE (\u_j^{(i_1,1)}\u_j^{(p_1,1)})\frac{d}{d\u_j^{(p_1,1)}}  \Bigg.\Bigg(    \lp \mE_{{\mathcal U}_1} Z_{i_3}^{\m_1} \rp^{p-1}     \Bigg.\Bigg)  \Bigg.\Bigg) \Bigg.\Bigg).
 \end{eqnarray}
We first set
\begin{eqnarray}\label{eq:genGanal3}
\Theta_{G,1} & = & \sum_{p_1=1}^{l} \mE (\u_j^{(i_1,1)}\u_j^{(p_1,1)})\frac{d}{d\u_j^{(p_1,1)}}\lp \frac{(C_{i_3}^{(i_1)})^{s-1} A_{i_3}^{(i_1,i_2)}\y_j^{(i_2)}}{Z_{i_3}^{1-\m_1}}\rp \nonumber \\
\Theta_{G,2} & = &   \lp   \frac{(C_{i_3}^{(i_1)})^{s-1} A_{i_3}^{(i_1,i_2)}\y_j^{(i_2)}}{Z_{i_3}^{1-\m_1}} \sum_{p_1=1}^{l} \mE (\u_j^{(i_1,1)}\u_j^{(p_1,1)})\frac{d}{d\u_j^{(p_1,1)}}\lp   \frac{1}{  \lp
\sum_{i_3=1}^{l}
\lp \mE_{{\mathcal U}_1} Z_{i_3}^{\m_1} \rp^p \rp   } \rp \rp \nonumber \\
\Theta_{G,3} & = &
 \Bigg.\Bigg(   \frac{   (C_{i_3}^{(i_1)})^{s-1} A_{i_3}^{(i_1,i_2)}\y_j^{(i_2)}}{ \lp
\sum_{i_3=1}^{l}
\lp \mE_{{\mathcal U}_1} Z_{i_3}^{\m_1} \rp^p \rp   Z_{i_3}^{1-\m_1}}
  \sum_{p_1=1}^{l}
 \mE (\u_j^{(i_1,1)}\u_j^{(p_1,1)})\frac{d}{d\u_j^{(p_1,1)}}  \Bigg.\Bigg(    \lp \mE_{{\mathcal U}_1} Z_{i_3}^{\m_1} \rp^{p-1}     \Bigg.\Bigg)  \Bigg.\Bigg)
\nonumber \\
T_{G,j}^c & = &  \mE_{G,{\mathcal U}_2}\lp \mE_{{\mathcal U}_1}  \sum_{i_3=1}^{l}  \frac{
\lp \mE_{{\mathcal U}_1} Z_{i_3}^{\m_1} \rp^{p-1}    }{ \lp
\sum_{i_3=1}^{l}
\lp \mE_{{\mathcal U}_1} Z_{i_3}^{\m_1} \rp^p \rp   }  \Theta_{G,1}\rp \nonumber \\
T_{G,j}^d & = &  \mE_{G,{\mathcal U}_2}\lp \mE_{{\mathcal U}_1}  \sum_{i_3=1}^{l}
\lp \mE_{{\mathcal U}_1} Z_{i_3}^{\m_1} \rp^{p-1}        \Theta_{G,2}\rp
\nonumber \\
T_{G,j}^e & = &  \mE_{G,{\mathcal U}_2}\lp \mE_{{\mathcal U}_1}  \sum_{i_3=1}^{l}
     \Theta_{G,3}\rp ,
 \end{eqnarray}
and write
\begin{eqnarray}\label{eq:genGanal4}
T_{G,j}  & = &  T_{G,j}^c +T_{G,j}^d+T_{G,j}^e.
 \end{eqnarray}
From \cite{Stojnicnflgscompyx23}'s (71) we  have
\begin{eqnarray}\label{eq:genGanal5}
\Theta_{G,1} & = & \lp \frac{\y_j^{(i_2)}}{Z^{1-\m_1}}\lp(C^{(i_1)})^{s-1}\beta_{i_1}A^{(i_1,i_2)}\y_j^{(i_2)}\sqrt{t}+(s-1)(C^{(i_1)})^{s-2}\beta_{i_1}\sum_{p_2=1}^{l}A^{(i_1,p_2)}\y_j^{(p_2)}\sqrt{t}\rp \rp \nonumber \\
& &  -(1-\m_1)
\mE \lp\sum_{p_1=1}^{l} \frac{(\x^{(i_1)})^T\x^{(p_1)}}{\|\x^{(i_1)}\|_2\|\x^{(p_1)}\|_2}
\frac{(C^{(i_1)})^{s-1} A^{(i_1,i_2)}\y_j^{(i_2)}}{Z^{2-\m_1}}
s  (C^{(p_1)})^{s-1}\sum_{p_2=1}^{l}\beta_{p_1}A^{(p_1,p_2)}\y_j^{(p_2)}\sqrt{t}\rp,\nonumber \\
\end{eqnarray}
which together with (\ref{eq:liftgenAanal19g}) allows to immediately write
\begin{eqnarray}\label{eq:genGanal6}
 \sum_{i_1=1}^{l} \sum_{i_2=1}^{l}\sum_{j=1}^{m}\beta_{i_1} \frac{T_{G,j}^c}{\sqrt{t}}
 & = & \sum_{i_1=1}^{l} \sum_{i_2=1}^{l}\sum_{j=1}^{m}\beta_{i_1} \frac{ \mE_{G,{\mathcal U}_2}\lp \mE_{{\mathcal U}_1}  \sum_{i_3=1}^{l}  \frac{
\lp \mE_{{\mathcal U}_1} Z_{i_3}^{\m_1} \rp^{p-1}    }{ \lp
\sum_{i_3=1}^{l}
\lp \mE_{{\mathcal U}_1} Z_{i_3}^{\m_1} \rp^p \rp   }  \Theta_{G,1}\rp  }  {\sqrt{t}} \nonumber\\
 & = & \beta^2 \lp \mE_{G,{\mathcal U}_2} \langle \|\x^{(i_1)}\|_2^2\|\y^{(i_2)}\|_2^2\rangle_{\gamma_{01}^{(1)}} + \mE_{G,{\mathcal U}_2}\langle \|\x^{(i_1)}\|_2^2(\y^{(p_2)})^T\y^{(i_2)}\rangle_{\gamma_{02}^{(1)}}    \rp   \nonumber \\
 &  & -s\beta^2(1-\m_1) \mE_{G,{\mathcal U}_2}\langle (\x^{(p_1)})^T\x^{(i_1)}(\y^{(p_2)})^T\y^{(i_2)}\rangle_{\gamma_{1}^{(1)}}.
 \end{eqnarray}

To determine $T_{G,j}^d$, we first observe
\begin{eqnarray}\label{eq:genGanal7aa0}
\frac{d}{d\u_j^{(p_1,1)}}\lp   \frac{1}{ \lp
\sum_{i_3=1}^{l}
\lp \mE_{{\mathcal U}_1} Z_{i_3}^{\m_1} \rp^p \rp    } \rp
& = & -p \sum_{p_3=1}^{l} \frac{ \lp
 \mE_{{\mathcal U}_1} Z_{p_3}^{\m_1} \rp^{p-1} }{ \lp
\sum_{i_3=1}^{l}
\lp \mE_{{\mathcal U}_1} Z_{i_3}^{\m_1} \rp^p \rp  ^2}\mE_{{\mathcal U}_1}  \frac{dZ_{p_3}^{\m_1}}{d\u_j^{(p_1,1)}}
\nonumber \\
& = & -\m_1 p \sum_{p_3=1}^{l} \frac{ \lp
 \mE_{{\mathcal U}_1} Z_{p_3}^{\m_1} \rp^{p-1} }{ \lp
\sum_{i_3=1}^{l}
\lp \mE_{{\mathcal U}_1} Z_{i_3}^{\m_1} \rp^p \rp  ^2}\mE_{{\mathcal U}_1} Z_{p_3}^{\m_1-1} \frac{dZ_{p_3} }{d\u_j^{(p_1,1)}},
\end{eqnarray}
and then utilize \cite{Stojnicnflgscompyx23}'s (73) to obtain
\begin{eqnarray}\label{eq:genGanal7}
\frac{d}{d\u_j^{(p_1,1)}}\lp   \frac{1}{ \lp
\sum_{i_3=1}^{l}
\lp \mE_{{\mathcal U}_1} Z_{i_3}^{\m_1} \rp^p \rp    } \rp
 & = & -\m_1 p \sum_{p_3=1}^{l} \frac{ \lp
 \mE_{{\mathcal U}_1} Z_{p_3}^{\m_1} \rp^{p-1} }{ \lp
\sum_{i_3=1}^{l}
\lp \mE_{{\mathcal U}_1} Z_{i_3}^{\m_1} \rp^p \rp  ^2}
\nonumber \\
& &
\times
\mE_{{\mathcal U}_1} Z_{p_3}^{\m_1-1}   s (C_{p_3}^{(p_1)})^{s-1}\sum_{p_2=1}^{l}A_{p_3}^{(p_1,p_2)}\beta_{p_1}\y_j^{(p_2)}\sqrt{t}.
\end{eqnarray}
A combination of (\ref{eq:genGanal3}) with (\ref{eq:genGanal7}) then gives
\begin{eqnarray}\label{eq:genGanal8}
\Theta_{G,2} & = &   \lp   \frac{(C_{i_3}^{(i_1)})^{s-1} A_{i_3}^{(i_1,i_2)}\y_j^{(i_2)}}{Z_{i_3}^{1-\m_1}} \sum_{p_1=1}^{l} \mE (\u_j^{(i_1,1)}\u_j^{(p_1,1)})\frac{d}{d\u_j^{(p_1,1)}}\lp   \frac{1}{  \lp
\sum_{i_3=1}^{l}
\lp \mE_{{\mathcal U}_1} Z_{i_3}^{\m_1} \rp^p \rp   } \rp \rp \nonumber \\
& = &   -s\m_1 p  \frac{(C_{i_3}^{(i_1)})^{s-1} A_{i_3}^{(i_1,i_2)}\y_j^{(i_2)}}{Z_{i_3}^{1-\m_1}}
 \sum_{p_1=1}^{l}
\frac{(\x^{(p_1)})^T\x^{(i_1)}}{\|\x^{(i_1)}\|_2\|\x^{(p_1)}\|_2}
  \sum_{p_3=1}^{l} \frac{ \lp
 \mE_{{\mathcal U}_1} Z_{p_3}^{\m_1} \rp^{p-1} }{ \lp
\sum_{i_3=1}^{l}
\lp \mE_{{\mathcal U}_1} Z_{i_3}^{\m_1} \rp^p \rp  ^2}
\nonumber \\
& &
\times
\mE_{{\mathcal U}_1} Z_{p_3}^{\m_1-1}    (C_{p_3}^{(p_1)})^{s-1}\sum_{p_2=1}^{l}A_{p_3}^{(p_1,p_2)}\beta_{p_1}\y_j^{(p_2)}\sqrt{t}
  \nonumber \\
  & = &   -s\m_1 p
\frac{Z_{i_3}^{\m_1}}{ \lp
\sum_{i_3=1}^{l}
\lp \mE_{{\mathcal U}_1} Z_{i_3}^{\m_1} \rp^p \rp }
   \frac{(C_{i_3}^{(i_1)})^{s} A_{i_3}^{(i_1,i_2)}  }{Z_{i_3}  C_{i_3}^{(i_1)}  }
  \sum_{p_3=1}^{l} \frac{ \lp
 \mE_{{\mathcal U}_1} Z_{p_3}^{\m_1} \rp^{p} }{ \lp
\sum_{p_3=1}^{l}
\lp \mE_{{\mathcal U}_1} Z_{p_3}^{\m_1} \rp^p \rp }
\nonumber \\
& &
\times
\mE_{{\mathcal U}_1}  \frac{Z_{p_3}^{\m_1} }
{\mE_{{\mathcal U}_1} Z_{p_3}^{\m_1} }
 \sum_{p_1=1}^{l}
 \frac{  ( C_{p_3}^{(p_1)} )^{s} }{ Z_{p_3} }    \sum_{p_2=1}^{l}  \frac{  A_{p_3}^{(p_1,p_2)}  } {  C_{p_3}^{(p_1)} }   \beta_{p_1}\y_j^{(p_2)} \y_j^{(i_2)}\frac{(\x^{(p_1)})^T\x^{(i_1)}}{\|\x^{(i_1)}\|_2\|\x^{(p_1)}\|_2} \sqrt{t}.
  \end{eqnarray}
From (\ref{eq:genGanal3}) and (\ref{eq:genGanal8}), we then find
\begin{eqnarray}\label{eq:genGanal9}
 \sum_{i_1=1}^{l} \sum_{i_2=1}^{l}\sum_{j=1}^{m}\beta_{i_1} \frac{T_{G,j}^d}{\sqrt{t}}
  & = & -s\beta^2\m_1 p \mE_{G,{\mathcal U}_2} \langle(\x^{(p_1)})^T\x^{(i_1)} (\y^{(p_2)})^T\y^{(i_2)} \rangle_{\gamma_{21}^{(1)}}.
 \end{eqnarray}

To determine $T_{G,j}^e$, we first observe
\begin{eqnarray}\label{eq:genGanal7aa0bb0}
\frac{d}{d\u_j^{(p_1,1)}}\lp
\lp \mE_{{\mathcal U}_1} Z_{i_3}^{\m_1} \rp^{p-1}     \rp
& = & (p-1) \lp
 \mE_{{\mathcal U}_1} Z_{i_3}^{\m_1} \rp^{p-2}
\mE_{{\mathcal U}_1}  \frac{dZ_{i_3}^{\m_1}}{d\u_j^{(p_1,1)}}
\nonumber \\
& = & \m_1
(p-1) \lp
 \mE_{{\mathcal U}_1} Z_{i_3}^{\m_1} \rp^{p-2}
\mE_{{\mathcal U}_1} Z_{i_3}^{\m_1-1} \frac{dZ_{i_3} }{d\u_j^{(p_1,1)}},
\end{eqnarray}
and then utilize again \cite{Stojnicnflgscompyx23}'s (73) to obtain
\begin{equation}\label{eq:genGanal7bb1}
\frac{d}{d\u_j^{(p_1,1)}}\lp
\lp \mE_{{\mathcal U}_1} Z_{i_3}^{\m_1} \rp^{p-1}     \rp
  =  \m_1
(p-1) \lp
 \mE_{{\mathcal U}_1} Z_{i_3}^{\m_1} \rp^{p-2}
\mE_{{\mathcal U}_1} Z_{i_3}^{\m_1-1}   s (C_{i_3}^{(p_1)})^{s-1}\sum_{p_2=1}^{l}A_{i_3}^{(p_1,p_2)}\beta_{p_1}\y_j^{(p_2)}\sqrt{t}.
\end{equation}
Plugging (\ref{eq:genGanal7bb1}) into the expression for $ \Theta_{G,3} $   from  (\ref{eq:genGanal3}) gives
\begin{eqnarray}\label{eq:genGanal8bb2}
\Theta_{G,3} & = &
 \Bigg.\Bigg(   \frac{   (C_{i_3}^{(i_1)})^{s-1} A_{i_3}^{(i_1,i_2)}\y_j^{(i_2)}}{ \lp
\sum_{i_3=1}^{l}
\lp \mE_{{\mathcal U}_1} Z_{i_3}^{\m_1} \rp^p \rp   Z_{i_3}^{1-\m_1}}
  \sum_{p_1=1}^{l}
 \mE (\u_j^{(i_1,1)}\u_j^{(p_1,1)})\frac{d}{d\u_j^{(p_1,1)}}  \Bigg.\Bigg(    \lp \mE_{{\mathcal U}_1} Z_{i_3}^{\m_1} \rp^{p-1}     \Bigg.\Bigg)  \Bigg.\Bigg)
\nonumber \\
 & = &   s\m_1 (p-1)  \frac{(C_{i_3}^{(i_1)})^{s-1} A_{i_3}^{(i_1,i_2)}\y_j^{(i_2)}}{ \lp
\sum_{i_3=1}^{l}
\lp \mE_{{\mathcal U}_1} Z_{i_3}^{\m_1} \rp^p \rp    Z_{i_3}^{1-\m_1}}
 \sum_{p_1=1}^{l}
\frac{(\x^{(p_1)})^T\x^{(i_1)}}{\|\x^{(i_1)}\|_2\|\x^{(p_1)}\|_2}
\nonumber \\
& &  \times
\lp
 \mE_{{\mathcal U}_1} Z_{i_3}^{\m_1} \rp^{p-2}
\mE_{{\mathcal U}_1} Z_{i_3}^{\m_1-1}    (C_{i_3}^{(p_1)})^{s-1}\sum_{p_2=1}^{l}A_{i_3}^{(p_1,p_2)}\beta_{p_1}\y_j^{(p_2)}\sqrt{t}
  \nonumber \\
  & = &   s\m_1 (p-1)
   \sum_{i_3=1}^{l} \frac{ \lp
 \mE_{{\mathcal U}_1} Z_{i_3}^{\m_1} \rp^{p} }{ \lp
\sum_{p_3=1}^{l}
\lp \mE_{{\mathcal U}_1} Z_{p_3}^{\m_1} \rp^p \rp }
\frac{Z_{i_3}^{\m_1}}{
 \mE_{{\mathcal U}_1} Z_{i_3}^{\m_1}   }
   \frac{(C_{i_3}^{(i_1)})^{s} A_{i_3}^{(i_1,i_2)}  }{Z_{i_3}  C_{i_3}^{(i_1)}  }
\nonumber \\
& &
\times
\mE_{{\mathcal U}_1}  \frac{Z_{i_3}^{\m_1} }
{\mE_{{\mathcal U}_1} Z_{i_3}^{\m_1} }
 \sum_{p_1=1}^{l}
 \frac{  ( C_{i_3}^{(p_1)} )^{s} }{ Z_{i_3} }    \sum_{p_2=1}^{l}  \frac{  A_{i_3}^{(p_1,p_2)}  } {  C_{i_3}^{(p_1)} }   \beta_{p_1}\y_j^{(p_2)} \y_j^{(i_2)}\frac{(\x^{(p_1)})^T\x^{(i_1)}}{\|\x^{(i_1)}\|_2\|\x^{(p_1)}\|_2} \sqrt{t}.
  \end{eqnarray}
From (\ref{eq:genGanal3}) and (\ref{eq:genGanal8bb2}), we then find
\begin{eqnarray}\label{eq:genGanal9bb3}
 \sum_{i_1=1}^{l} \sum_{i_2=1}^{l}\sum_{j=1}^{m}\beta_{i_1} \frac{T_{G,j}^e}{\sqrt{t}}
   & = &   s\beta^2\m_1 (p-1) \mE_{G,{\mathcal U}_2} \langle(\x^{(p_1)})^T\x^{(i_1)} (\y^{(p_2)})^T\y^{(i_2)} \rangle_{\gamma_{222}^{(1)}}.
 \end{eqnarray}
A further combination of (\ref{eq:genGanal4}), (\ref{eq:genGanal6}),  (\ref{eq:genGanal9}), and  (\ref{eq:genGanal9bb3}) gives
\begin{eqnarray}\label{eq:genGanal10}
 \sum_{i_1=1}^{l} \sum_{i_2=1}^{l}\sum_{j=1}^{m}\beta_{i_1} \frac{T_{G,j}}{\sqrt{t}}
  & = & \beta^2 \lp \mE_{G,{\mathcal U}_2} \langle \|\x^{(i_1)}\|_2^2\|\y^{(i_2)}\|_2^2\rangle_{\gamma_{01}^{(1)}} +   (s-1) \mE_{G,{\mathcal U}_2}\langle \|\x^{(i_1)}\|_2^2(\y^{(p_2)})^T\y^{(i_2)}\rangle_{\gamma_{02}^{(1)}}   \rp    \nonumber \\
 &  & -s\beta^2(1-\m_1) \mE_{G,{\mathcal U}_2}\langle (\x^{(p_1)})^T\x^{(i_1)}(\y^{(p_2)})^T\y^{(i_2)}\rangle_{\gamma_{1}^{(1)}} \nonumber \\
 & & -s\beta^2\m_1 p \mE_{G,{\mathcal U}_2} \langle(\x^{(p_1)})^T\x^{(i_1)} (\y^{(p_2)})^T\y^{(i_2)} \rangle_{\gamma_{21}^{(1)}}
 \nonumber \\
   &  &   +s\beta^2\m_1 (p-1) \mE_{G,{\mathcal U}_2} \langle(\x^{(p_1)})^T\x^{(i_1)} (\y^{(p_2)})^T\y^{(i_2)} \rangle_{\gamma_{22}^{(1)}}.
 \end{eqnarray}

\subsubsection{Connecting everything together}
\label{sec:conalt}

We now connect together all the results obtained above. From (\ref{eq:genanal10e}) and (\ref{eq:genanal10f}) we first write
\begin{eqnarray}\label{eq:ctp1}
\frac{d\psi(t)}{dt}  & = &       \frac{\mbox{sign}(s)}{2\beta\sqrt{n}} \lp \Omega_G+\Omega_1+\Omega_2+\Omega_3\rp,
\end{eqnarray}
where
\begin{eqnarray}\label{eq:ctp2}
\Omega_G & = & \sum_{i_1=1}^{l}  \sum_{i_2=1}^{l}\sum_{j=1}^{m} \beta_{i_1}\frac{T_{G,j}}{\sqrt{t}}  \nonumber\\
\Omega_1 & = & -\sum_{i_1=1}^{l}  \sum_{i_2=1}^{l} \sum_{j=1}^{m}\beta_{i_1}\frac{T_{2,1,j}}{\sqrt{1-t}}-\sum_{i_1=1}^{l}  \sum_{i_2=1}^{l} \sum_{j=1}^{m}\beta_{i_1}\frac{T_{1,1,j}}{\sqrt{1-t}} \nonumber\\
\Omega_2 & = & -\sum_{i_1=1}^{l}  \sum_{i_2=1}^{l}\beta_{i_1}\|\y^{(i_2)}\|_2\frac{T_{2,2}}{\sqrt{1-t}}-\sum_{i_1=1}^{l}  \sum_{i_2=1}^{l}\beta_{i_1}\|\y^{(i_2)}\|_2\frac{T_{1,2}}{\sqrt{1-t}} \nonumber\\
\Omega_3 & = & \sum_{i_1=1}^{l}  \sum_{i_2=1}^{l}\beta_{i_1}\|\y^{(i_2)}\|_2\frac{T_{2,3}}{\sqrt{t}}+ \sum_{i_1=1}^{l}  \sum_{i_2=1}^{l}\beta_{i_1}\|\y^{(i_2)}\|_2\frac{T_{1,3}}{\sqrt{t}}.
\end{eqnarray}
From (\ref{eq:genGanal10}) we have
\begin{eqnarray}\label{eq:cpt3}
\Omega_G & = & \beta^2 \lp \mE_{G,{\mathcal U}_2} \langle \|\x^{(i_1)}\|_2^2\|\y^{(i_2)}\|_2^2\rangle_{\gamma_{01}^{(1)}} +  (s-1)\mE_{G,{\mathcal U}_2}\langle \|\x^{(i_1)}\|_2^2(\y^{(p_2)})^T\y^{(i_2)}\rangle_{\gamma_{02}^{(1)}}  \rp     \nonumber \\
 &  & -s\beta^2(1-\m_1) \mE_{G,{\mathcal U}_2}\langle (\x^{(p_1)})^T\x^{(i_1)}(\y^{(p_2)})^T\y^{(i_2)}\rangle_{\gamma_{1}^{(1)}} \nonumber \\
 & & -s\beta^2\m_1 p \mE_{G,{\mathcal U}_2} \langle(\x^{(p_1)})^T\x^{(i_1)} (\y^{(p_2)})^T\y^{(i_2)} \rangle_{\gamma_{21}^{(1)}} \nonumber \\
  & & +s\beta^2\m_1 (p-1) \mE_{G,{\mathcal U}_2} \langle(\x^{(p_1)})^T\x^{(i_1)} (\y^{(p_2)})^T\y^{(i_2)} \rangle_{\gamma_{22}^{(1)}}.
 \end{eqnarray}
From (\ref{eq:liftgenAanal19i}) and (\ref{eq:genDanal25}), we have
\begin{eqnarray}\label{eq:cpt4}
-\Omega_1& = & (\p_0-\p_1)\beta^2 \lp \mE_{G,{\mathcal U}_2}\langle \|\x^{(i_1)}\|_2^2\|\y^{(i_2)}\|_2^2\rangle_{\gamma_{01}^{(1)}} +   (s-1)\mE_{G,{\mathcal U}_2}\langle \|\x^{(i_1)}\|_2^2(\y^{(p_2)})^T\y^{(i_2)}\rangle_{\gamma_{02}^{(1)}} \rp \nonumber \\
& & - (\p_0-\p_1)s\beta^2(1-\m_1)\mE_{G,{\mathcal U}_2}\langle \|\x^{(i_1)}\|_2\|\x^{(p_1)}\|_2(\y^{(p_2)})^T\y^{(i_2)} \rangle_{\gamma_{1}^{(1)}} \nonumber\\
& & +\p_1\beta^2 \lp \mE_{G,{\mathcal U}_2}\langle \|\x^{(i_1)}\|_2^2\|\y^{(i_2)}\|_2^2\rangle_{\gamma_{01}^{(1)}} +   (s-1)\mE_{G,{\mathcal U}_2}\langle \|\x^{(i_1)}\|_2^2(\y^{(p_2)})^T\y^{(i_2)}\rangle_{\gamma_{02}^{(1)}} \rp \nonumber \\
& & - \p_1s\beta^2(1-\m_1)\mE_{G,{\mathcal U}_2}\langle \|\x^{(i_1)}\|_2\|\x^{(p_1)}\|_2(\y^{(p_2)})^T\y^{(i_2)} \rangle_{\gamma_{1}^{(1)}}\nonumber \\
 &   &
  -s\beta^2\p_1\m_1  p \mE_{G,{\mathcal U}_2} \langle \|\x^{(i_1)}\|_2\|\x^{(p_1)}\|_2(\y^{(p_2)})^T\y^{(i_2)} \rangle_{\gamma_{21}^{(1)}} \nonumber \\
 &   &
 + s\beta^2\p_1\m_1 (p-1) \mE_{G,{\mathcal U}_2} \langle \|\x^{(i_1)}\|_2\|\x^{(p_1)}\|_2(\y^{(p_2)})^T\y^{(i_2)} \rangle_{\gamma_{22}^{(1)}} \nonumber \\
& = & \p_0\beta^2 \lp \mE_{G,{\mathcal U}_2}\langle \|\x^{(i_1)}\|_2^2\|\y^{(i_2)}\|_2^2\rangle_{\gamma_{01}^{(1)}} +   (s-1)\mE_{G,{\mathcal U}_2}\langle \|\x^{(i_1)}\|_2^2(\y^{(p_2)})^T\y^{(i_2)}\rangle_{\gamma_{02}^{(1)}} \rp \nonumber \\
& & - \p_0s\beta^2(1-\m_1)\mE_{G,{\mathcal U}_2}\langle \|\x^{(i_1)}\|_2\|\x^{(p_1)}\|_2(\y^{(p_2)})^T\y^{(i_2)} \rangle_{\gamma_{1}^{(1)}}\nonumber \\
 &   &
  -s\beta^2\p_1\m_1 p \mE_{G,{\mathcal U}_2} \langle \|\x^{(i_1)}\|_2\|\x^{(p_1)}\|_2(\y^{(p_2)})^T\y^{(i_2)} \rangle_{\gamma_{21}^{(1)}}  \nonumber \\
   &   &
  +s\beta^2\p_1\m_1 (p-1) \mE_{G,{\mathcal U}_2} \langle \|\x^{(i_1)}\|_2\|\x^{(p_1)}\|_2(\y^{(p_2)})^T\y^{(i_2)} \rangle_{\gamma_{22}^{(1)}}.
\end{eqnarray}
From (\ref{eq:liftgenBanal20b}) and (\ref{eq:genEanal25}), we have
\begin{eqnarray}\label{eq:cpt5}
-\Omega_2 & = & (\q_0-\q_1)\beta^2 \lp\mE_{G,{\mathcal U}_2}\langle \|\x^{(i_1)}\|_2^2\|\y^{(i_2)}\|_2^2\rangle_{\gamma_{01}^{(1)}} +   (s-1)\mE_{G,{\mathcal U}_2}\langle \|\x^{(i_1)}\|_2^2 \|\y^{(i_2)}\|_2\|\y^{(p_2)}\|_2\rangle_{\gamma_{02}^{(1)}}\rp\nonumber \\
& & - (\q_0-\q_1)s\beta^2(1-\m_1)\mE_{G,{\mathcal U}_2}\langle (\x^{(p_1)})^T\x^{(i_1)}\|\y^{(i_2)}\|_2\|\y^{(p_2)}\|_2 \rangle_{\gamma_{1}^{(1)}}\nonumber \\
&   &+
\q_1\beta^2\lp\mE_{G,{\mathcal U}_2}\langle \|\x^{(i_1)}\|_2^2\|\y^{(i_2)}\|_2^2\rangle_{\gamma_{01}^{(1)}} +   (s-1)\mE_{G,{\mathcal U}_2}\langle \|\x^{(i_1)}\|_2^2 \|\y^{(i_2)}\|_2\|\y^{(p_2)}\|_2\rangle_{\gamma_{02}^{(1)}}\rp\nonumber \\
& & - \q_1s\beta^2(1-\m_1)\mE_{G,{\mathcal U}_2}\langle (\x^{(p_1)})^T\x^{(i_1)}\|\y^{(i_2)}\|_2\|\y^{(p_2)}\|_2 \rangle_{\gamma_{1}^{(1)}} \nonumber \\
&  & -s\beta^2\q_1\m_1 p\mE_{G,{\mathcal U}_2} \langle \|\y^{(i_2)}\|_2\|\y^{(p_2)}\|_2(\x^{(i_1)})^T\x^{(p_1)}\rangle_{\gamma_{21}^{(1)}} \nonumber \\
&  & +s\beta^2\q_1\m_1 (p-1) \mE_{G,{\mathcal U}_2} \langle \|\y^{(i_2)}\|_2\|\y^{(p_2)}\|_2(\x^{(i_1)})^T\x^{(p_1)}\rangle_{\gamma_{22}^{(1)}} \nonumber \\
&  = &
\q_0\beta^2\lp\mE_{G,{\mathcal U}_2}\langle \|\x^{(i_1)}\|_2^2\|\y^{(i_2)}\|_2^2\rangle_{\gamma_{01}^{(1)}} +   (s-1)\mE_{G,{\mathcal U}_2}\langle \|\x^{(i_1)}\|_2^2 \|\y^{(i_2)}\|_2\|\y^{(p_2)}\|_2\rangle_{\gamma_{02}^{(1)}}\rp\nonumber \\
& & - \q_0s\beta^2(1-\m_1)\mE_{G,{\mathcal U}_2}\langle (\x^{(p_1)})^T\x^{(i_1)}\|\y^{(i_2)}\|_2\|\y^{(p_2)}\|_2 \rangle_{\gamma_{1}^{(1)}} \nonumber \\
&  & -s\beta^2\q_1\m_1 p \mE_{G,{\mathcal U}_2} \langle \|\y^{(i_2)}\|_2\|\y^{(p_2)}\|_2(\x^{(i_1)})^T\x^{(p_1)}\rangle_{\gamma_{21}^{(1)}} \nonumber \\
&  & +s\beta^2\q_1\m_1 (p-1) \mE_{G,{\mathcal U}_2} \langle \|\y^{(i_2)}\|_2\|\y^{(p_2)}\|_2(\x^{(i_1)})^T\x^{(p_1)}\rangle_{\gamma_{22}^{(1)}}.
\end{eqnarray}
From (\ref{eq:liftgenCanal21b}) and (\ref{eq:genFanal29}), we have
  \begin{eqnarray}\label{eq:cpt6}
\Omega_3 & = & (\p_0\q_0-\p_1\q_1)\beta^2 \lp \mE_{G,{\mathcal U}_2}\langle \|\x^{(i_1)}\|_2^2\|\y^{(i_2)}\|_2^2\rangle_{\gamma_{01}^{(1)}} +   (s-1)\mE_{G,{\mathcal U}_2}\langle \|\x^{(i_1)}\|_2^2 \|\y^{(i_2)}\|_2\|\y^{(p_2)}\|_2\rangle_{\gamma_{02}^{(1)}}\rp\nonumber \\
& & - (\p_0\q_0-\p_1\q_1)s\beta^2(1-\m_1)\mE_{G,{\mathcal U}_2}\langle \|\x^{(i_1)}\|_2\|\x^{(p_`)}\|_2\|\y^{(i_2)}\|_2\|\y^{(p_2)}\|_2 \rangle_{\gamma_{1}^{(1)}}\nonumber \\
&  & +
\p_1\q_1\beta^2\lp\mE_{G,{\mathcal U}_2}\langle \|\x^{(i_1)}\|_2^2\|\y^{(i_2)}\|_2^2\rangle_{\gamma_{01}^{(1)}} +   (s-1)\mE_{G,{\mathcal U}_2}\langle \|\x^{(i_1)}\|_2^2 \|\y^{(i_2)}\|_2\|\y^{(p_2)}\|_2\rangle_{\gamma_{02}^{(1)}}\rp\nonumber \\
& & - \p_1\q_1 s\beta^2(1-\m_1)\mE_{G,{\mathcal U}_2}\langle \|\x^{(i_1)}\|_2\|\x^{(p_`)}\|_2\|\y^{(i_2)}\|_2\|\y^{(p_2)}\|_2 \rangle_{\gamma_{1}^{(1)}} \nonumber \\
&  & -s\beta^2\p_1\q_1\m_1 p \mE_{G,{\mathcal U}_2} \mE_{G,{\mathcal U}_2}\langle\|\x^{(i_2)}\|_2\|\x^{(p_2)}\|_2\|\y^{(i_2)}\|_2\|\y^{(p_2)}\rangle_{\gamma_{21}^{(1)}} \nonumber \\
&  & +s\beta^2\p_1\q_1\m_1 (p-1) \mE_{G,{\mathcal U}_2} \mE_{G,{\mathcal U}_2}\langle\|\x^{(i_2)}\|_2\|\x^{(p_2)}\|_2\|\y^{(i_2)}\|_2\|\y^{(p_2)}\rangle_{\gamma_{22}^{(1)}} \nonumber \\
& = &
\p_0\q_0\beta^2\lp\mE_{G,{\mathcal U}_2}\langle \|\x^{(i_1)}\|_2^2\|\y^{(i_2)}\|_2^2\rangle_{\gamma_{01}^{(1)}} +   (s-1)\mE_{G,{\mathcal U}_2}\langle \|\x^{(i_1)}\|_2^2 \|\y^{(i_2)}\|_2\|\y^{(p_2)}\|_2\rangle_{\gamma_{02}^{(1)}}\rp\nonumber \\
& & - \p_0\q_0 s\beta^2(1-\m_1)\mE_{G,{\mathcal U}_2}\langle \|\x^{(i_1)}\|_2\|\x^{(p_`)}\|_2\|\y^{(i_2)}\|_2\|\y^{(p_2)}\|_2 \rangle_{\gamma_{1}^{(1)}} \nonumber \\
&  & -s\beta^2\p_1\q_1
\m_1 p \mE_{G,{\mathcal U}_2} \langle\|\x^{(i_1)}\|_2\|\x^{(p_1)}\|_2\|\y^{(i_2)}\|_2\|\y^{(p_2)}\rangle_{\gamma_{21}^{(1)}}   \nonumber \\
&  & +s\beta^2\p_1\q_1
\m_1 (p-1) \mE_{G,{\mathcal U}_2} \langle\|\x^{(i_1)}\|_2\|\x^{(p_1)}\|_2\|\y^{(i_2)}\|_2\|\y^{(p_2)}\rangle_{\gamma_{12}^{(1)}}.
\end{eqnarray}
Finally, a combination of (\ref{eq:ctp1}) and (\ref{eq:cpt3})-(\ref{eq:cpt6}) gives
\begin{eqnarray}\label{eq:cpt7}
\frac{d\psi(t)}{dt}  & = &       \frac{\mbox{sign}(s)\beta^2}{2\sqrt{n}} \lp \phi_1^{(1)}+\phi_{21}^{(1)} +\phi_{22}^{(1)} +\phi_{01}^{(1)}+\phi_{02}^{(1)}\rp ,
 \end{eqnarray}
where
\begin{eqnarray}\label{eq:cpt8}
\phi_1^{(1)} & = &
-s(1-\m_1)\mE_{G,{\mathcal U}_2} \langle (\p_0\|\x^{(i_1)}\|_2\|\x^{(p_1)}\|_2 -(\x^{(p_1)})^T\x^{(i_1)})(\q_0\|\y^{(i_2)}\|_2\|\y^{(p_2)}\|_2 -(\y^{(p_2)})^T\y^{(i_2)})\rangle_{\gamma_{1}^{(1)}} \nonumber \\
\phi_{21}^{(1)} & = &
-s\m_1 p \mE_{G,{\mathcal U}_2} \langle (\p_1\|\x^{(i_1)}\|_2\|\x^{(p_1)}\|_2 -(\x^{(p_1)})^T\x^{(i_1)})(\q_1\|\y^{(i_2)}\|_2\|\y^{(p_2)}\|_2 -(\y^{(p_2)})^T\y^{(i_2)})\rangle_{\gamma_{21}^{(1)}} \nonumber \\
\phi_{22}^{(1)} & = &
s\m_1 (p-1) \mE_{G,{\mathcal U}_2} \langle (\p_1\|\x^{(i_1)}\|_2\|\x^{(p_1)}\|_2 -(\x^{(p_1)})^T\x^{(i_1)})(\q_1\|\y^{(i_2)}\|_2\|\y^{(p_2)}\|_2 -(\y^{(p_2)})^T\y^{(i_2)})\rangle_{\gamma_{22}^{(1)}} \nonumber \\
\phi_{01}^{(1)} & = & (1-\p_0)(1-\q_0)\mE_{G,{\mathcal U}_2}\langle \|\x^{(i_1)}\|_2^2\|\y^{(i_2)}\|_2^2\rangle_{\gamma_{01}^{(1)}} \nonumber\\
\phi_{02}^{(1)} & = & (s-1)(1-\p_0)\mE_{G,{\mathcal U}_2}\left\langle \|\x^{(i_1)}\|_2^2 \lp\q_0\|\y^{(i_2)}\|_2\|\y^{(p_2)}\|_2-(\y^{(p_2)})^T\y^{(i_2)}\rp\right\rangle_{\gamma_{02}^{(1)}}. \end{eqnarray}

The following proposition summarizes the above results.
\begin{proposition}
\label{thm:thm1} Consider vector $\p=[\p_0,\p_1,\p_2]$ with $\p_0\geq \p_1\geq \p_2=0$,  vector $\q=[\q_0,\q_1,\q_2]$ with $\q_0\geq \q_1\geq \q_2=0$, and a scalar $\m_1$. For $k\in\{1,2\}$ let $G\in\mR^{m \times n},u^{(4,k)}\in\mR^1,\u^{(2,k)}\in\mR^{m\times 1}$, and $\h^{(k)}\in\mR^{n\times 1}$ be independent Gaussian objects with  i.i.d. zero-mean components.  Let the variances of the elements of $G$, $u^{(4,1)}$, $u^{(4,2)}$, $\u^{(2,1)}$, $\u^{(2,2)}$, $\h^{(1)}$, and $\h^{(2)}$ be $1$, $\p_0\q_0-\p_1\q_1$, $\p_1\q_1$, $\p_0-\p_1$, $\p_1$, $\q_0-\q_1$, and $\q_1$, respectively. Let ${\mathcal U}_k=[u^{(4,k)},\u^{(2,k)},\h^{(k)}]$ and  assume that sets ${\mathcal X}=\{\x^{(1)},\x^{(2)},\dots,\x^{(l)}\}$, $\bar{{\mathcal X}}=\{\bar{\x}^{(1)},\bar{\x}^{(2)},\dots,\bar{\x}^{(l)}\}$,  and  ${\mathcal Y}=\{\y^{(1)},\y^{(2)},\dots,\y^{(l)}\}$ where $\x^{(i)}, \bar{\x}^{(i)}\in \mR^{n}$ and $\y^{(i)}\in \mR^{m}$, $1\leq i\leq l$ are given. Let $p,\beta\geq 0$ and $s$ be real numbers and let $f_{\bar{\x}^{(i)}}(\cdot) : \mR^n  \rightarrow \mR$ be a given function. Consider 
\begin{equation}\label{eq:prop1eq1}
\psi( t)  =  \mE_{G,{\mathcal U}_2} \frac{1}{p|s|\sqrt{n}\m_1}
\log \lp  \sum_{i_3=1}^{l}  \lp  \mE_{{\mathcal U}_1} \lp \sum_{i_1=1}^{l}\lp\sum_{i_2=1}^{l}e^{\beta D_0^{(i_1,i_2,i_3 )}} \rp^{s}\rp^{\m_1} \rp^p \rp,
\end{equation}
where
\begin{eqnarray}\label{eq:prop1eq2}
 D_0^{(i_1,i_2,i_3)} & \triangleq & \sqrt{t}(\y^{(i_2)})^T
 G\x^{(i_1)}+\sqrt{1-t}\|\x^{(i_1)}\|_2 (\y^{(i_2)})^T(\u^{(2,1)}+\u^{(2,2)})\nonumber \\
 & & +\sqrt{t}\|\x^{(i_1)}\|_2\|\y^{(i_2)}\|_2(u^{(4,1)}+u^{(4,2)}) +\sqrt{1-t}\|\y^{(i_2)}\|_2(\h^{(1)}+\h^{(2)})^T\x^{(i_1)}
 + f_{\bar{\x}^{(i_3)} } ( \x^{(i_1)}  ). \nonumber \\
 \end{eqnarray}
Then
\begin{eqnarray}\label{eq:prop1eq3}
\frac{d\psi(t)}{dt}  & = &   \frac{\mbox{sign}(s)\beta^2}{2\sqrt{n}} \lp \phi_1^{(1)}+\phi_{21}^{(1)} +\phi_{22}^{(1)} +\phi_{01}^{(1)}+\phi_{02}^{(1)}\rp,
 \end{eqnarray}
where $\phi$'s are as in (\ref{eq:cpt8}).
\end{proposition}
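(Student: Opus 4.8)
The plan is to differentiate $\psi(t)$ directly and identify the result with the right-hand side of \eqref{eq:prop1eq3} by tracking all the Gaussian contributions. First I would observe that $\psi(t)$ in \eqref{eq:prop1eq1} coincides with the interpolating function already set up in \eqref{eq:genanal3}--\eqref{eq:genanal6}, so that the entire machinery built in Sections \ref{sec:gencon}--\ref{sec:conalt} applies verbatim. The strategy is to carry out the differentiation in the chain-rule form recorded in \eqref{eq:genanal9}, which pushes the $t$-derivative through the nested $\log$, the outer power $p$, the expectation $\mE_{{\mathcal U}_1}$, the power $\m_1$, the inner power $s$, and finally onto each exponent $D^{(i_1,i_2,i_3)}$ via $\frac{dD^{(i_1,i_2,i_3)}}{dt}$ in \eqref{eq:genanal10a}. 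The only subtlety at this stage is the appearance of the $\frac{1}{2\sqrt{t}}$ and $\frac{1}{2\sqrt{1-t}}$ factors, which are integrable singularities at the endpoints and cause no difficulty for $t\in(0,1)$; the notational device $\beta_{i_1}=\beta\|\x^{(i_1)}\|_2$ lets one pull a common factor $\beta$ out front, which is exactly what produces the prefactor $\frac{\mbox{sign}(s)\beta^2}{2\sqrt{n}}$ in the end.

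Next I would split $\frac{dD^{(i_1,i_2,i_3)}}{dt}$ into the three groups $\bar T_G,\bar T_2,\bar T_1$ of \eqref{eq:genanal10b}--\eqref{eq:genanal10c}, according to which block of randomness ($G$, ${\mathcal U}_2$, or ${\mathcal U}_1$) each term depends on. This is the organizational heart of the argument: it reduces the computation to the seven scalar quantities $T_{G,j},T_{2,1,j},T_{2,2},T_{2,3},T_{1,1,j},T_{1,2},T_{1,3}$ defined in \eqref{eq:genanal10g}. Each of these is handled by Gaussian integration by parts with respect to the appropriate variable, using $\mE(g\,F(g))=\mE(g^2)\,\mE(F'(g))$ for a zero-mean Gaussian $g$; the covariances needed are the elementary ones recorded in \eqref{eq:genEanal19c}, reflecting the prescribed variances $\p_0-\p_1$, $\p_1$, $\q_0-\q_1$, $\q_1$, $\p_0\q_0-\p_1\q_1$, $\p_1\q_1$. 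For the $T_1$-group the integration by parts is against ${\mathcal U}_1$-variables and yields \eqref{eq:liftgenAanal19i}, \eqref{eq:liftgenBanal20b}, \eqref{eq:liftgenCanal21b}; for the $T_2$- and $T_G$-groups the differentiated variable also sits inside $\mE_{{\mathcal U}_1}Z_{i_3}^{\m_1}$ through the ratio defining $\psi$, so the product rule generates the three pieces $T^c,T^d,T^e$ (cf. \eqref{eq:genDanal19a}, \eqref{eq:genEanal19a}, \eqref{eq:genFanal22}, \eqref{eq:genGanal4}), whose $d$- and $e$-parts are precisely what bring the measures $\gamma_{21}^{(1)}$ and $\gamma_{22}^{(1)}$ into play via the factors $-\m_1 p$ and $\m_1(p-1)$. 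At each step I would invoke the structural identity with the corresponding display in \cite{Stojnicnflgscompyx23} (equations (19)--(73) there) so as not to repeat the bookkeeping, introducing the operator $\Phi_{{\mathcal U}_1}^{(i_3)}$ of \eqref{eq:genAanal19e} and the $\gamma$-measures of \eqref{eq:genAanal19e1}, whose validity as probability measures is the content of the computation in \eqref{eq:genAanal19f}.

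Finally I would assemble the pieces. The seven terms combine into the four aggregates $\Omega_G,\Omega_1,\Omega_2,\Omega_3$ of \eqref{eq:ctp2}, evaluated in \eqref{eq:cpt3}--\eqref{eq:cpt6}; the key algebraic simplification is that the $(\p_0-\p_1)$- and $\p_1$-contributions (resp. the $(\q_0-\q_1)$- and $\q_1$-, resp. the $(\p_0\q_0-\p_1\q_1)$- and $\p_1\q_1$-) add to the clean coefficients $\p_0$, $\q_0$, $\p_0\q_0$. Collecting the $G$-group against the combined decoupled groups produces the cancellations that turn bare norms into the covariance-difference factors $\p_0\|\x^{(i_1)}\|_2\|\x^{(p_1)}\|_2-(\x^{(p_1)})^T\x^{(i_1)}$ and its $\q$- and subscript-$1$ analogues appearing in \eqref{eq:cpt8}, and hence yields \eqref{eq:cpt7}, which is exactly \eqref{eq:prop1eq3}. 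The main obstacle I anticipate is purely one of careful bookkeeping rather than of idea: keeping the indices $(i_1,i_2,i_3)$ versus the dummy replication indices $(p_1,p_2,p_3)$ straight through the double application of integration by parts in the $T^d$ and $T^e$ subterms, and making sure every measure $\gamma_{01}^{(1)},\gamma_{02}^{(1)},\gamma_{1}^{(1)},\gamma_{21}^{(1)},\gamma_{22}^{(1)}$ is attached to the observable it actually weights — the large-deviation parameters $p$ and $\m_1$ proliferate factors that must be matched exactly for the endpoint cancellations in \eqref{eq:cpt4}--\eqref{eq:cpt6} to go through.
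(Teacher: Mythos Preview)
Your proposal is correct and follows essentially the same approach as the paper: the proof in the paper simply states that the proposition follows from the presentation in Section~\ref{sec:gencon}, and your outline faithfully reproduces that presentation---chain-rule differentiation as in \eqref{eq:genanal9}, the three-group split \eqref{eq:genanal10b}--\eqref{eq:genanal10c}, Gaussian integration by parts on each of the seven scalar terms with the $T^c,T^d,T^e$ decomposition for the $T_2$- and $T_G$-groups, and the final assembly via $\Omega_G,\Omega_1,\Omega_2,\Omega_3$ in \eqref{eq:ctp1}--\eqref{eq:cpt8}.
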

\begin{proof}
  Follows immediately as a consequence of the above presentation.
\end{proof}

\section{Moving from the first to the second level of full lifting}
\label{sec:seclev}

In this section, we discuss how the first level results presented above extend to the second level. All key concepts needed for this extension are already present in Section \ref{sec:gencon}. However, one has to be very careful as to how to use them. Also, since the move from the first to the second level is pretty much repeated successively later on as one moves to higher levels,  we proceed very carefully and formalize all general steps that can then readily be recalled on and utilized on higher levels. As usual, to facilitate the exposition, we try to parallel the analytical methodology of Section \ref{sec:gencon} as closely as possible. On occasion and when circumstances allow, we avoid unnecessary repetitions and proceed at a bit faster pace.

We again consider vectors $\p$ and $\q$ but this time $\p=[\p_0,\p_1,\p_2,\p_3]$ with $\p_0\geq \p_1\geq \p_2\geq \p_3=0$ and $\q=[\q_0,\q_1,\q_2,\q_3]$ with $\q_0\geq \q_1\geq \q_2\geq \q_3 = 0$. In addition to  $G\in \mR^{m\times n}$ with i.i.d. standard normal elements we now have random variables $u^{(4,1)}\sim {\mathcal N}(0,\p_0\q_0-\p_1\q_1)$, $u^{(4,2)}\sim {\mathcal N}(0,\p_1\q_1-\p_2\q_2)$, and $u^{(4,3)}\sim {\mathcal N}(0,\p_2\q_2)$ that are independent of $G$ and among themselves. We now have a vector $\m=[\m_1,\m_2]$ and the following interpolating function $\psi(\cdot)$
\begin{equation}\label{eq:lev2genanal3}
\psi(t)  =  \mE_{G,{\mathcal U}_3} \frac{1}{p|s|\sqrt{n}\m_2} \log \lp \mE_{{\mathcal U}_2}\lp
\lp
\sum_{i_3=1}^{l}
\lp
 \mE_{{\mathcal U}_1}  Z_{i_3}^{\m_1}
\rp^p
\rp^{\frac{\m_2}{\m_1}}\rp\rp,
\end{equation}
with
\begin{eqnarray}\label{eq:lev2genanal3a}
Z_{i_3} & \triangleq & \sum_{i_1=1}^{l}\lp\sum_{i_2=1}^{l}e^{\beta D_0^{(i_1,i_2,i_3)}} \rp^{s} \nonumber \\
 D_0^{(i_1,i_2,i_3)} & \triangleq & \sqrt{t}(\y^{(i_2)})^T
 G\x^{(i_1)}+\sqrt{1-t}\|\x^{(i_1)}\|_2 (\y^{(i_2)})^T(\u^{(2,1)}+\u^{(2,2)}+\u^{(2,3)}  )\nonumber \\
 & & +\sqrt{t}\|\x^{(i_1)}\|_2\|\y^{(i_2)}\|_2(u^{(4,1)}+u^{(4,2)}+u^{(4,3)}  ) +\sqrt{1-t}\|\y^{(i_2)}\|_2(\h^{(1)}+\h^{(2)}+\h^{(3)}  )^T\x^{(i_1)}
\nonumber \\
& &  + f_{\bar{\x}^{(i_3)}} ( \x^{(i_1)}  ),
 \end{eqnarray}
 and ${\mathcal U}_k=[u^{(4,k)},\u^{(2,k)},\h^{(2k)}]$, $k\in\{1,2,3\}$. In (\ref{eq:lev2genanal3})
 and (\ref{eq:lev2genanal3a}), $\u^{(2,1)}$, $\u^{(2,2)}$, and $\u^{(2,3)}$  are $m$ dimensional Gaussians with variances $\p_0-\p_1$, $\p_1-\p_2$, and , $\p_2$, respectively. Analogously,  $\h^{(1)}$, $\h^{(2)}$, and $\h^{(3)}$ are $n$ dimensional Gaussians with variances $\q_0-\q_1$, $\q_1-\q_2$, and $\q_2$, respectively. All random quantities are zero-mean and independent among themselves. Following the convention established in (\ref{eq:genanal4}), we set
\begin{eqnarray}\label{eq:lev2genanal4}
\bar{\u}^{(i_1,1)} & =  & \frac{G\x^{(i_1)}}{\|\x^{(i_1)}\|_2} \nonumber \\
\u^{(i_1,3,1)} & =  & \frac{(\h^{(1)})^T\x^{(i_1)}}{\|\x^{(i_1)}\|_2} \nonumber \\
\u^{(i_1,3,2)} & =  & \frac{(\h^{(2)})^T\x^{(i_1)}}{\|\x^{(i_1)}\|_2} \nonumber \\
\u^{(i_1,3,3)} & =  & \frac{(\h^{(3)})^T\x^{(i_1)}}{\|\x^{(i_1)}\|_2}.
\end{eqnarray}
and recall that
\begin{eqnarray}\label{eq:lev2genanal5}
\bar{\u}_j^{(i_1,1)} & =  & \frac{G_{j,1:n}\x^{(i_1)}}{\|\x^{(i_1)}\|_2},1\leq j\leq m.
\end{eqnarray}
It is not that difficult to see that for any $i_1$ the elements of $\u^{(i_1,1)}$ are i.i.d. standard normals  and the elements of $\u^{(i_1,3,1)}$, $\u^{(i_1,3,2)}$, and $\u^{(i_1,3,3)}$ are zero-mean Gaussians with  respective variances $\q_0-\q_1$, $\q_1-\q_2$, and $\q_2$. We then rewrite (\ref{eq:lev2genanal3}) as
\begin{equation}\label{eq:lev2genanal6}
\psi(t)  =  \mE_{G,{\mathcal U}_3} \frac{1}{p|s|\sqrt{n}\m_2} \log \lp\mE_{{\mathcal U}_2}  \lp
\sum_{i_3=1}^{l}
\lp
\mE_{{\mathcal U}_1} \lp \sum_{i_1=1}^{l}\lp\sum_{i_2=1}^{l}A_{i_3}^{(i_1,i_2)} \rp^{s}\rp^{\m_1}
\rp^p
 \rp^{\frac{\m_2}{\m_1}}\rp,
\end{equation}
where $\beta_{i_1}=\beta\|\x^{(i_1)}\|_2$ and now
\begin{eqnarray}\label{eq:lev2genanal7}
B^{(i_1,i_2)} & \triangleq &  \sqrt{t}(\y^{(i_2)})^T\bar{\u}^{(i_1,1)}+\sqrt{1-t} (\y^{(i_2)})^T(\u^{(2,1)}+\u^{(2,2)}+\u^{(2,3)} ) \nonumber \\
D^{(i_1,i_2,i_3)} & \triangleq &  B^{(i_1,i_2)}+\sqrt{t}\|\y^{(i_2)}\|_2 (u^{(4,1)}+u^{(4,2)}+u^{(4,3)})+\sqrt{1-t}\|\y^{(i_2)}\|_2(\u^{(i_1,3,1)}+\u^{(i_1,3,2)}+\u^{(i_1,3,3)})  \nonumber \\
& & + f_{\bar{\x}^{(i_3)} } (\x^{(i_1)}) \nonumber \\
A_{i_3}^{(i_1,i_2)} & \triangleq &  e^{\beta_{i_1}D^{(i_1,i_2,i_3)}}\nonumber \\
C_{i_3}^{(i_1)} & \triangleq &  \sum_{i_2=1}^{l}A^{(i_1,i_2)}\nonumber \\
Z_{i_3} & \triangleq & \sum_{i_1=1}^{l} \lp \sum_{i_2=1}^{l} A_{i_3}^{(i_1,i_2)}\rp^s =\sum_{i_1=1}^{l}  (C_{i_3}^{(i_1)})^s.
\end{eqnarray}
To study monotonicity of $\psi(t)$, we again start by considering its derivative
\begin{eqnarray}\label{eq:lev2genanal9}
\frac{d\psi(t)}{dt} & = &  \frac{d}{dt}\lp
\mE_{G,{\mathcal U}_3} \frac{1}{p|s|\sqrt{n}\m_2} \log \lp \mE_{{\mathcal U}_2}\lp
\lp
\sum_{i_3=1}^{l}
\lp
 \mE_{{\mathcal U}_1}  Z_{i_3}^{\m_1}
\rp^p
\rp^{\frac{\m_2}{\m_1}}\rp\rp
 \rp\nonumber \\
& = &  \mE_{G,{\mathcal U}_3} \frac{1}{p|s|\sqrt{n}\m_2\mE_{{\mathcal U}_2}\lp\lp
\sum_{i_3=1}^{l}
\lp
 \mE_{{\mathcal U}_1}  Z_{i_3}^{\m_1}
\rp^p
\rp^{\frac{\m_2}{\m_1}}\rp}
\frac{d\lp \mE_{{\mathcal U}_2}\lp\lp
\sum_{i_3=1}^{l}
\lp
 \mE_{{\mathcal U}_1}  Z_{i_3}^{\m_1}
\rp^p
\rp^{\frac{\m_2}{\m_1}}\rp\rp}{dt}\nonumber \\
& = &  \mE_{G,{\mathcal U}_3,{\mathcal U}_2}
\sum_{i_3=1}^{l}
\frac{\lp
\sum_{i_3=1}^{l}
\lp
 \mE_{{\mathcal U}_1}  Z_{i_3}^{\m_1}
\rp^p
\rp^{\frac{\m_2}{\m_1}-1} \lp
 \mE_{{\mathcal U}_1}  Z_{i_3}^{\m_1}
\rp^{p-1}  }   {|s|\sqrt{n}\m_1\mE_{{\mathcal U}_2}\lp\lp
\sum_{i_3=1}^{l}
\lp
 \mE_{{\mathcal U}_1}  Z_{i_3}^{\m_1}
\rp^p
\rp^{\frac{\m_2}{\m_1}}\rp}
\frac{d \mE_{{\mathcal U}_1} Z_{i_3}^{\m_1} }{dt}\nonumber \\
& = & \mE_{G,{\mathcal U}_3,{\mathcal U}_2}
\sum_{i_3=1}^{l}
\frac{\lp
\sum_{i_3=1}^{l}
\lp
 \mE_{{\mathcal U}_1}  Z_{i_3}^{\m_1}
\rp^p
\rp^{\frac{\m_2}{\m_1}-1} \lp
 \mE_{{\mathcal U}_1}  Z_{i_3}^{\m_1}
\rp^{p-1}  }   {|s|\sqrt{n}\mE_{{\mathcal U}_2}\lp\lp
\sum_{i_3=1}^{l}
\lp
 \mE_{{\mathcal U}_1}  Z_{i_3}^{\m_1}
\rp^p
\rp^{\frac{\m_2}{\m_1}}\rp}
\mE_{{\mathcal U}_1} \frac{1}{Z_{i_3}^{1-\m_1}}\frac{d Z_{i_3}}{dt}\nonumber \\
& = &  \mE_{G,{\mathcal U}_3,{\mathcal U}_2}
\sum_{i_3=1}^{l}
\frac{\lp
\sum_{i_3=1}^{l}
\lp
 \mE_{{\mathcal U}_1}  Z_{i_3}^{\m_1}
\rp^p
\rp^{\frac{\m_2}{\m_1}-1} \lp
 \mE_{{\mathcal U}_1}  Z_{i_3}^{\m_1}
\rp^{p-1}  }   {|s|\sqrt{n}\mE_{{\mathcal U}_2}\lp\lp
\sum_{i_3=1}^{l}
\lp
 \mE_{{\mathcal U}_1}  Z_{i_3}^{\m_1}
\rp^p
\rp^{\frac{\m_2}{\m_1}}\rp}
\mE_{{\mathcal U}_1} \frac{1}{Z_{i_3}^{1-\m_1}}
\frac{d\lp \sum_{i_1=1}^{l} \lp \sum_{i_2=1}^{l} A_{i_3}^{(i_1,i_2)}\rp^s \rp }{dt}\nonumber \\
& = &  \mE_{G,{\mathcal U}_3,{\mathcal U}_2}
\sum_{i_3=1}^{l}
\frac{\lp
\sum_{i_3=1}^{l}
\lp
 \mE_{{\mathcal U}_1}  Z_{i_3}^{\m_1}
\rp^p
\rp^{\frac{\m_2}{\m_1}-1} \lp
 \mE_{{\mathcal U}_1}  Z_{i_3}^{\m_1}
\rp^{p-1}  }   {|s|\sqrt{n}\mE_{{\mathcal U}_2}\lp\lp
\sum_{i_3=1}^{l}
\lp
 \mE_{{\mathcal U}_1}  Z_{i_3}^{\m_1}
\rp^p
\rp^{\frac{\m_2}{\m_1}}\rp}
s \mE_{{\mathcal U}_1} \frac{1}{Z_{i_3}^{1-\m_1}}
 \sum_{i=1}^{l} (C_{i_3}^{(i_1)})^{s-1} \nonumber \\
& & \times \sum_{i_2=1}^{l}\beta_{i_1}A_{i_3}^{(i_1,i_2)}\frac{dD^{(i_1,i_2,i_3)}}{dt},
\end{eqnarray}
where
\begin{equation}\label{eq:lev2genanal9a}
\frac{dD^{(i_1,i_2,i_3)}}{dt}= \lp \frac{dB^{(i_1,i_2)}}{dt}+\frac{\|\y^{(i_2)}\|_2 (u^{(4,1)}+u^{(4,2)}+u^{(4,3)})}{2\sqrt{t}}-\frac{\|\y^{(i_2)}\|_2 (\u^{(i_1,3,1)}+\u^{(i_1,3,2)}+\u^{(i_1,3,3)})}{2\sqrt{1-t}}\rp.
\end{equation}
From (\ref{eq:lev2genanal7}) we find
\begin{eqnarray}\label{eq:lev2genanal10}
\frac{dB^{(i_1,i_2)}}{dt} & = &   \frac{d\lp\sqrt{t}(\y^{(i_2)})^T\bar{\u}^{(i_1,1)}+\sqrt{1-t} (\y^{(i_2)})^T(\u^{(2,1)}+\u^{(2,2)}+\u^{(2,3)})\rp}{dt} \nonumber \\
 & = &
\sum_{j=1}^{m}\lp \frac{\y_j^{(i_2)}\bar{\u}_j^{(i_1,1)}}{2\sqrt{t}}-\frac{\y_j^{(i_2)}\u_j^{(2,1)}}{2\sqrt{1-t}}-\frac{\y_j^{(i_2)}\u_j^{(2,2)}}{2\sqrt{1-t}}
-\frac{\y_j^{(i_2)}\u_j^{(2,3)}}{2\sqrt{1-t}}\rp.
\end{eqnarray}
Analogously to (\ref{eq:genanal10e}) we then write
\begin{equation}\label{eq:lev2genanal10e}
\frac{d\psi(t)}{dt}  =       \frac{\mbox{sign}(s)}{2\beta\sqrt{n}} \sum_{i_1=1}^{l}  \sum_{i_2=1}^{l}
\beta_{i_1}\lp T_G + T_2+ T_1\rp,
\end{equation}
where
\begin{eqnarray}\label{eq:lev2genanal10f}
T_G & = & \sum_{j=1}^{m}\frac{T_{G,j}}{\sqrt{t}}  \nonumber\\
T_3 & = & -\sum_{j=1}^{m}\frac{T_{3,1,j}}{\sqrt{1-t}}-\|\y^{(i_2)}\|_2\frac{T_{3,2}}{\sqrt{1-t}}+\|\y^{(i_2)}\|_2\frac{T_{3,3}}{\sqrt{t}} \nonumber\\
T_2 & = & -\sum_{j=1}^{m}\frac{T_{2,1,j}}{\sqrt{1-t}}-\|\y^{(i_2)}\|_2\frac{T_{2,2}}{\sqrt{1-t}}+\|\y^{(i_2)}\|_2\frac{T_{2,3}}{\sqrt{t}} \nonumber\\
T_1 & = & -\sum_{j=1}^{m}\frac{T_{1,1,j}}{\sqrt{1-t}}-\|\y^{(i_2)}\|_2\frac{T_{1,2}}{\sqrt{1-t}}+\|\y^{(i_2)}\|_2\frac{T_{1,3}}{\sqrt{t}},
\end{eqnarray}
and for $k\in\{1,2,3\}$
\begin{eqnarray}\label{eq:lev2genanal10g}
T_{G,j} & = &  \mE_{G,{\mathcal U}_3,{\mathcal U}_2} \lp
\sum_{i_3=1}^{l}
\frac{\lp
\sum_{i_3=1}^{l}
\lp
 \mE_{{\mathcal U}_1}  Z_{i_3}^{\m_1}
\rp^p
\rp^{\frac{\m_2}{\m_1}-1} \lp
 \mE_{{\mathcal U}_1}  Z_{i_3}^{\m_1}
\rp^{p-1}  }   {\mE_{{\mathcal U}_2}\lp\lp
\sum_{i_3=1}^{l}
\lp
 \mE_{{\mathcal U}_1}  Z_{i_3}^{\m_1}
\rp^p
\rp^{\frac{\m_2}{\m_1}}\rp}
  \mE_{{\mathcal U}_1}\frac{(C_{i_3}^{(i_1)})^{s-1} A_{i_3}^{(i_1,i_2)} \y_j^{(i_2)}\bar{\u}_j^{(i_1,1)}}{Z_{i_3}^{1-\m_1}} \rp \nonumber \\
T_{k,1,j} & = &   \mE_{G,{\mathcal U}_3,{\mathcal U}_2} \lp
\sum_{i_3=1}^{l}
\frac{\lp
\sum_{i_3=1}^{l}
\lp
 \mE_{{\mathcal U}_1}  Z_{i_3}^{\m_1}
\rp^p
\rp^{\frac{\m_2}{\m_1}-1} \lp
 \mE_{{\mathcal U}_1}  Z_{i_3}^{\m_1}
\rp^{p-1}  }   {\mE_{{\mathcal U}_2}\lp\lp
\sum_{i_3=1}^{l}
\lp
 \mE_{{\mathcal U}_1}  Z_{i_3}^{\m_1}
\rp^p
\rp^{\frac{\m_2}{\m_1}}\rp}
  \mE_{{\mathcal U}_1}\frac{(C_{i_3}^{(i_1)})^{s-1} A_{i_3}^{(i_1,i_2)} \y_j^{(i_2)}\u_j^{(2,k)}}{Z_{i_3}^{1-\m_1}} \rp \nonumber \\
T_{k,2} & = &   \mE_{G,{\mathcal U}_3,{\mathcal U}_2} \lp
\sum_{i_3=1}^{l}
\frac{\lp
\sum_{i_3=1}^{l}
\lp
 \mE_{{\mathcal U}_1}  Z_{i_3}^{\m_1}
\rp^p
\rp^{\frac{\m_2}{\m_1}-1} \lp
 \mE_{{\mathcal U}_1}  Z_{i_3}^{\m_1}
\rp^{p-1}  }   {\mE_{{\mathcal U}_2}\lp\lp
\sum_{i_3=1}^{l}
\lp
 \mE_{{\mathcal U}_1}  Z_{i_3}^{\m_1}
\rp^p
\rp^{\frac{\m_2}{\m_1}}\rp}
  \mE_{{\mathcal U}_1}\frac{(C_{i_3}^{(i_1)})^{s-1} A_{i_3}^{(i_1,i_2)} \u^{(i_1,3,k)}}{Z_{i_3}^{1-\m_1}} \rp \nonumber \\
T_{k,3} & = &   \mE_{G,{\mathcal U}_3,{\mathcal U}_2} \lp
\sum_{i_3=1}^{l}
\frac{\lp
\sum_{i_3=1}^{l}
\lp
 \mE_{{\mathcal U}_1}  Z_{i_3}^{\m_1}
\rp^p
\rp^{\frac{\m_2}{\m_1}-1} \lp
 \mE_{{\mathcal U}_1}  Z_{i_3}^{\m_1}
\rp^{p-1}  }   {\mE_{{\mathcal U}_2}\lp\lp
\sum_{i_3=1}^{l}
\lp
 \mE_{{\mathcal U}_1}  Z_{i_3}^{\m_1}
\rp^p
\rp^{\frac{\m_2}{\m_1}}\rp}  \mE_{{\mathcal U}_1}\frac{(C_{i_3}^{(i_1)})^{s-1} A_{i_3}^{(i_1,i_2)} u^{(4,k)}}{Z_{i_3}^{1-\m_1}} \rp.
\end{eqnarray}
Following the practice established in Section \ref{sec:gencon},  each of the above ten terms will be handled separately.

\subsection{Computing $\frac{d\psi(t)}{dt}$  -- second level}
\label{sec:lev2compderivative}

The three terms indexed by $k$ are grouped into three groups, $T_k$, $k\in\{1,2,3\}$. As earlier, we carefully select the order in which we handle each of these groups and start with $T_1$, then move to $T_2$, and then to $T_3$. After all three $T_k$ groups are handled we switch to $T_{G,j}$.

\subsubsection{$T_1$--group --- second level}
\label{sec:lev2handlTk1}

Paralleling what was done in Section \ref{sec:handlT1},  each of the three terms from $T_1$--group is handled separately.

\underline{\textbf{\emph{Determining}} $T_{1,1,j}$}
\label{sec:lev2hand1T11}

As in (\ref{eq:liftgenAanal19}), the Gaussian integration by parts gives
 \begin{eqnarray}\label{eq:lev2liftgenAanal19}
T_{1,1,j} & = &   \mE_{G,{\mathcal U}_3,{\mathcal U}_2} \lp
\sum_{i_3=1}^{l}
\frac{\lp
\sum_{i_3=1}^{l}
\lp
 \mE_{{\mathcal U}_1}  Z_{i_3}^{\m_1}
\rp^p
\rp^{\frac{\m_2}{\m_1}-1} \lp
 \mE_{{\mathcal U}_1}  Z_{i_3}^{\m_1}
\rp^{p-1}  }   {\mE_{{\mathcal U}_2}\lp\lp
\sum_{i_3=1}^{l}
\lp
 \mE_{{\mathcal U}_1}  Z_{i_3}^{\m_1}
\rp^p
\rp^{\frac{\m_2}{\m_1}}\rp}
  \mE_{{\mathcal U}_1}\frac{(C_{i_3}^{(i_1)})^{s-1} A_{i_3}^{(i_1,i_2)} \y_j^{(i_2)}\u_j^{(2,1)}}{Z_{i_3}^{1-\m_1}} \rp \nonumber \\
& = &  (\p_0 -\p_1) \mE_{G,{\mathcal U}_3,{\mathcal U}_2} \Bigg ( \Bigg.
\sum_{i_3=1}^{l}
\frac{\lp
\sum_{i_3=1}^{l}
\lp
 \mE_{{\mathcal U}_1}  Z_{i_3}^{\m_1}
\rp^p
\rp^{\frac{\m_2}{\m_1}-1} \lp
 \mE_{{\mathcal U}_1}  Z_{i_3}^{\m_1}
\rp^{p-1}  }   {\mE_{{\mathcal U}_2}\lp\lp
\sum_{i_3=1}^{l}
\lp
 \mE_{{\mathcal U}_1}  Z_{i_3}^{\m_1}
\rp^p
\rp^{\frac{\m_2}{\m_1}}\rp}
\nonumber \\
& & \times
 \mE_{{\mathcal U}_1}\lp  \frac{d}{d\bar{\u}_j^{(2,1)}}\lp\frac{(C_{i_3}^{(i_1)})^{s-1} A_{i_3}^{(i_1,i_2)}\y_j^{(i_2)}}{Z_{i_3}^{1-\m_1}}\rp\rp
 \Bigg. \Bigg ) .
\end{eqnarray}
Moreover, similarly  to (\ref{eq:liftgenAanal19a}), we also have
\begin{eqnarray}\label{eq:lev2liftgenAanal19a}
T_{1,1,j}
& = &  (\p_0-\p_1)
\mE_{G,{\mathcal U}_3,{\mathcal U}_2} \lp
\sum_{i_3=1}^{l}
\frac{\lp
\sum_{i_3=1}^{l}
\lp
 \mE_{{\mathcal U}_1}  Z_{i_3}^{\m_1}
\rp^p
\rp^{\frac{\m_2}{\m_1}-1} \lp
 \mE_{{\mathcal U}_1}  Z_{i_3}^{\m_1}
\rp^{p-1}  }   {\mE_{{\mathcal U}_2}\lp\lp
\sum_{i_3=1}^{l}
\lp
 \mE_{{\mathcal U}_1}  Z_{i_3}^{\m_1}
\rp^p
\rp^{\frac{\m_2}{\m_1}}\rp}
 \lp \Theta_1+\Theta_2 \rp\rp,
\end{eqnarray}
where $\Theta_1$ and $\Theta_2$ are as in (\ref{eq:liftgenAanal19c}). Analogously to (\ref{eq:liftgenAanal19d}), we write
 \begin{eqnarray}\label{eq:lev2liftgenAanal19d}
\sum_{i_1=1}^{l}\sum_{i_2=1}^{l}\sum_{j=1}^{m} \mE_{{\mathcal U}_2} \lp
\sum_{i_3=1}^{l}
\frac{\lp
\sum_{i_3=1}^{l}
\lp
 \mE_{{\mathcal U}_1}  Z_{i_3}^{\m_1}
\rp^p
\rp^{\frac{\m_2}{\m_1}-1} \lp
 \mE_{{\mathcal U}_1}  Z_{i_3}^{\m_1}
\rp^{p-1}  }   {\mE_{{\mathcal U}_2}\lp\lp
\sum_{i_3=1}^{l}
\lp
 \mE_{{\mathcal U}_1}  Z_{i_3}^{\m_1}
\rp^p
\rp^{\frac{\m_2}{\m_1}}\rp}
 \frac{\beta_{i_1}\Theta_1}{\sqrt{1-t}}\rp
&  = &  L^{(2)}_1+L^{(2)}_2,
 \end{eqnarray}
where
\begin{eqnarray}\label{eq:lev2liftgenAanal19d1}
L_1^{(2)} &  = & \mE_{{\mathcal U}_2} \lp
\sum_{i_3=1}^{l}
\frac{\lp
\sum_{i_3=1}^{l}
\lp
 \mE_{{\mathcal U}_1}  Z_{i_3}^{\m_1}
\rp^p
\rp^{\frac{\m_2}{\m_1}-1} \lp
 \mE_{{\mathcal U}_1}  Z_{i_3}^{\m_1}
\rp^{p-1}   Z_{i_3}^{\m_1} }   {\mE_{{\mathcal U}_2}\lp\lp
\sum_{i_3=1}^{l}
\lp
 \mE_{{\mathcal U}_1}  Z_{i_3}^{\m_1}
\rp^p
\rp^{\frac{\m_2}{\m_1}}\rp}
 \sum_{i_1=1}^{l}\frac{(C_{i_3}^{(i_1)})^s}{Z_{i_3}}\sum_{i_2=1}^{l}\frac{A_{i_3}^{(i_1,i_2)}}{C_{i_3}^{(i_1)}}\beta_{i_1}^2\|\y^{(i_2)}\|_2^2\rp \nonumber\\
L_2^{(2)} &  = & \mE_{{\mathcal U}_2} \Bigg ( \Bigg.
\sum_{i_3=1}^{l}
\frac{\lp
\sum_{i_3=1}^{l}
\lp
 \mE_{{\mathcal U}_1}  Z_{i_3}^{\m_1}
\rp^p
\rp^{\frac{\m_2}{\m_1}-1} \lp
 \mE_{{\mathcal U}_1}  Z_{i_3}^{\m_1}
\rp^{p-1}   Z_{i_3}^{\m_1} }   {\mE_{{\mathcal U}_2}\lp\lp
\sum_{i_3=1}^{l}
\lp
 \mE_{{\mathcal U}_1}  Z_{i_3}^{\m_1}
\rp^p
\rp^{\frac{\m_2}{\m_1}}\rp}
\nonumber \\
& & \times
\sum_{i_1=1}^{l}\frac{(s-1)(C_{i_3}^{(i_1)})^s}{Z_{i_3}}\sum_{i_2=1}^{l}\sum_{p_2=1}^{l}\frac{A_{i_3}^{(i_1,i_2)}A_{i_3}^{(i_1,p_2)}}{(C_{i_3}^{(i_1)})^2}\beta_{i_1}^2(\y^{(p_2)})^T\y^{(i_2)}
\Bigg.\Bigg ).
 \end{eqnarray}
 We introduce the operators
\begin{eqnarray}\label{eq:lev2genAanal19d2}
 \Phi_{{\mathcal U}_1}^{(i_3)} & \triangleq &  \mE_{{\mathcal U}_1} \frac{Z_{i_3}^{\m_1}}{\mE_{{\mathcal U}_1}\lp Z_{i_3}^{\m_1}\rp} \nonumber \\
 \Phi_{{\mathcal U}_2} & \triangleq & \mE_{{\mathcal U}_2}   \frac{\lp
 \sum_{i_3=1}^{l} \lp
 \mE_{{\mathcal U}_1}  Z_{i_3}^{\m_1}
\rp^p
      \rp^{\frac{\m_2}{\m_1}}}
      {\mE_{{\mathcal U}_2}\lp
      \lp
      \sum_{i_3=1}^{l}
\lp
 \mE_{{\mathcal U}_1}  Z_{i_3}^{\m_1}
\rp^p
      \rp^{\frac{\m_2}{\m_1}}\rp},
  \end{eqnarray}
and consider the following weighted/product Gibbs measures
\begin{eqnarray}\label{eq:lev2genAanal19e}
  \gamma_{00}(i_3) & = &
  \frac{
\lp \mE_{{\mathcal U}_1} Z_{i_3}^{\m_1} \rp^{p}    }{ \lp
\sum_{i_3=1}^{l}
\lp \mE_{{\mathcal U}_1} Z_{i_3}^{\m_1} \rp^p \rp   }
\nonumber \\
  \gamma_0(i_1,i_2;i_3) & = &
\frac{(C_{i_3}^{(i_1)})^{s}}{Z_{i_3}}  \frac{A_{i_3}^{(i_1,i_2)}}{C_{i_3}^{(i_1)}} \nonumber \\
\gamma_{01}^{(2)}  & = &  \Phi_{{\mathcal U}_2 } \lp  \gamma_{00}(i_3) \Phi_{{\mathcal U}_1}^{(i_3)}  (\gamma_0(i_1,i_2;i_3)) \rp  \nonumber \\
\gamma_{02}^{(2)}  & = & \Phi_{{\mathcal U}_1} \lp \gamma_{00}(i_3)\Phi_{{\mathcal U}_1}^{(i_3)} (\gamma_0(i_1,i_2;i_3)\times \gamma_0(i_1,p_2;i_3)) \rp \nonumber \\
\gamma_1^{(2)}  & = & \Phi_{{\mathcal U}_1} \lp \gamma_{00}(i_3)\Phi_{{\mathcal U}_1}^{(i_3)} (\gamma_0(i_1,i_2;i_3)\times \gamma_0(p_1,p_2;i_3)) \rp \nonumber\\
\gamma_{21}^{(2)}  & = & \Phi_{{\mathcal U}_2} \lp \gamma_{00}(i_3)\Phi_{{\mathcal U}_1}^{(i_3)} \gamma_0(i_1,i_2;i_3)\times
\gamma_{00}(p_3)\Phi_{{\mathcal U}_1} ^{(p_3)} \gamma_0(p_1,p_2;p_3 )  \rp  \nonumber \\
\gamma_{22}^{(2)}  & = & \Phi_{{\mathcal U}_2} \lp \gamma_{00}(i_3)  \lp  \Phi_{{\mathcal U}_1}^{(i_3)} \gamma_0(i_1,i_2;i_3)  \times
\Phi_{{\mathcal U}_1} ^{(i_3)} \gamma_0(p_1,p_2;i_3)  \rp  \rp  \nonumber \\
\gamma_3^{(2)}  & = & (\Phi_{{\mathcal U}_2}  \gamma_{00}(i_3) \Phi_{{\mathcal U}_1}^{(i_3)} \gamma_0(i_1,i_2;i_3)\times  \Phi_{{\mathcal U}_2}
\gamma_{00}(p_3)\Phi_{{\mathcal U}_1}^{(p_3)} \gamma_0(p_1,p_2;p_3)).
\end{eqnarray}
As in Section \ref{sec:gencon}, it is relatively easy to check that all of the above $\gamma$'s are indeed valid measures. For example, for $\gamma_{01}^{(2)}$ we have
\begin{eqnarray}\label{eq:lev2genAanal19f}
 \sum_{i_3=1}^{l}  \sum_{i_1=1}^{l}  \sum_{i_2=1}^{l} \gamma_{01}^{(2)} & = &
 \sum_{i_3=1}^{l}  \sum_{i_1=1}^{l}  \sum_{i_2=1}^{l}
 \mE_{{\mathcal U}_2}   \frac{\lp
 \sum_{i_3=1}^{l} \lp
 \mE_{{\mathcal U}_1}  Z_{i_3}^{\m_1}
\rp^p
      \rp^{\frac{\m_2}{\m_1}}}
      {\mE_{{\mathcal U}_2}\lp
      \lp
      \sum_{i_3=1}^{l}
\lp
 \mE_{{\mathcal U}_1}  Z_{i_3}^{\m_1}
\rp^p
      \rp^{\frac{\m_2}{\m_1}}\rp}
   \frac{
\lp \mE_{{\mathcal U}_1} Z_{i_3}^{\m_1} \rp^{p}    }{ \lp
\sum_{i_3=1}^{l}
\lp \mE_{{\mathcal U}_1} Z_{i_3}^{\m_1} \rp^p \rp   }
      \nonumber \\
      & & \times
\mE_{{\mathcal U}_1} \frac{Z_{i_3}^{\m_1}}{\mE_{{\mathcal U}_1}\lp Z_{i_3}^{\m_1}\rp}
\frac{(C_{i_3}^{(i_1)})^{s}}{Z}  \frac{A_{i_3}^{(i_1,i_2)}}{C_{i_3}^{(i_1)}}\nonumber \\
& = &
 \sum_{i_3=1}^{l}
   \frac{
\lp \mE_{{\mathcal U}_1} Z_{i_3}^{\m_1} \rp^{p}    }{ \lp
\sum_{i_3=1}^{l}
\lp \mE_{{\mathcal U}_1} Z_{i_3}^{\m_1} \rp^p \rp   }
 \mE_{{\mathcal U}_2}   \frac{\lp
 \sum_{i_3=1}^{l} \lp
 \mE_{{\mathcal U}_1}  Z_{i_3}^{\m_1}
\rp^p
      \rp^{\frac{\m_2}{\m_1}}}
      {\mE_{{\mathcal U}_2}\lp
      \lp
      \sum_{i_3=1}^{l}
\lp
 \mE_{{\mathcal U}_1}  Z_{i_3}^{\m_1}
\rp^p
      \rp^{\frac{\m_2}{\m_1}}\rp}
      \mE_{{\mathcal U}_1} \frac{Z_{i_3}^{\m_1}}{\mE_{{\mathcal U}_1}\lp Z_{i_3}^{\m_1}\rp}
\nonumber \\
& & \times
  \sum_{i_1=1}^{l} \frac{(C_{i_3}^{(i_1)})^{s}}{Z_{i_3}}  \sum_{i_2=1}^{l} \frac{A_{i_3}^{(i_1,i_2)}}{C_{i_3}^{(i_1)}}\nonumber\\
 & = &
 \sum_{i_3=1}^{l}
   \frac{
\lp \mE_{{\mathcal U}_1} Z_{i_3}^{\m_1} \rp^{p}    }{ \lp
\sum_{i_3=1}^{l}
\lp \mE_{{\mathcal U}_1} Z_{i_3}^{\m_1} \rp^p \rp   }
 \mE_{{\mathcal U}_2}   \frac{\lp
 \sum_{i_3=1}^{l} \lp
 \mE_{{\mathcal U}_1}  Z_{i_3}^{\m_1}
\rp^p
      \rp^{\frac{\m_2}{\m_1}}}
      {\mE_{{\mathcal U}_2}\lp
      \lp
      \sum_{i_3=1}^{l}
\lp
 \mE_{{\mathcal U}_1}  Z_{i_3}^{\m_1}
\rp^p
      \rp^{\frac{\m_2}{\m_1}}\rp}
      \mE_{{\mathcal U}_1} \frac{Z_{i_3}^{\m_1}}{\mE_{{\mathcal U}_1}\lp Z_{i_3}^{\m_1}\rp}
       \nonumber \\
 & = & 1.
 \end{eqnarray}
As the proofs for other  $\gamma$'s trivially proceed along the above lines we skip them. Following the convention we set earlier, $\langle \cdot \rangle_{a}$ denotes the average with respect to measure $a$. Analogously to (\ref{eq:liftgenAanal19g}), from (\ref{eq:lev2liftgenAanal19d1}) we have
\begin{eqnarray}\label{eq:lev2liftgenAanal19g}
L_1^{(2)} +
L_2^{(2)}
  & = &  \beta^2 \Bigg(\Bigg. \langle \|\x^{(i_1)}\|_2^2\|\y^{(i_2)}\|_2^2\rangle_{\gamma_{01}^{(2)}}   +   (s-1)\langle \|\x^{(i_1)}\|_2^2(\y^{(p_2)})^T\y^{(i_2)}\rangle_{\gamma_{02}^{(2)}} \Bigg. \Bigg).
 \end{eqnarray}
Relying on (\ref{eq:liftgenAanal19c}), we further observe
\begin{eqnarray}\label{eq:lev2liftgenAanal19h}
\sum_{i_1=1}^{l}\sum_{i_2=1}^{l}\sum_{j=1}^{m} \mE_{{\mathcal U}_2} \lp
\sum_{i_3=1}^{l}
\frac{\lp
\sum_{i_3=1}^{l}
\lp
 \mE_{{\mathcal U}_1}  Z_{i_3}^{\m_1}
\rp^p
\rp^{\frac{\m_2}{\m_1}-1} \lp
 \mE_{{\mathcal U}_1}  Z_{i_3}^{\m_1}
\rp^{p-1}   }   {\mE_{{\mathcal U}_2}\lp\lp
\sum_{i_3=1}^{l}
\lp
 \mE_{{\mathcal U}_1}  Z_{i_3}^{\m_1}
\rp^p
\rp^{\frac{\m_2}{\m_1}}\rp}
\frac{\beta_{i_1}\Theta_2}{\sqrt{1-t}}\rp
  =  L^{(2)}_3,
\end{eqnarray}
where
\begin{eqnarray}\label{eq:lev2liftgenAanal19h1}
L^{(2)}_3 & = & -s(1-\m_1)\mE_{{\mathcal U}_2} \Bigg( \Bigg.
\sum_{i_3=1}^{l}
\frac{\lp
\sum_{i_3=1}^{l}
\lp
 \mE_{{\mathcal U}_1}  Z_{i_3}^{\m_1}
\rp^p
\rp^{\frac{\m_2}{\m_1}-1} \lp
 \mE_{{\mathcal U}_1}  Z_{i_3}^{\m_1}
\rp^{p-1}   Z_{i_3}^{\m_1}   }   {\mE_{{\mathcal U}_2}\lp\lp
\sum_{i_3=1}^{l}
\lp
 \mE_{{\mathcal U}_1}  Z_{i_3}^{\m_1}
\rp^p
\rp^{\frac{\m_2}{\m_1}}\rp}
\sum_{i_1=1}^{l}\frac{(C_{i_3}^{(i_1)})^s}{Z_{i_3}}\sum_{i_2=1}^{l}
\frac{A_{i_3}^{(i_1,i_2)}}{C_{i_3}^{(i_1)}} \nonumber \\
& & \times
  \sum_{p_1=1}^{l} \frac{(C_{i_3}^{(p_1)})^s}{Z_{i_3}}\sum_{p_2=1}^{l}\frac{A_{i_3}^{(p_1,p_2)}}{C_{i_3}^{(p_1)}} \beta_{i_1}\beta_{p_1}(\y^{(p_2)})^T\y^{(i_2)} \Bigg.\Bigg)\nonumber \\
& =& -s\beta^2(1-\m_1)\langle \|\x^{(i_1)}\|_2\|\x^{(p_1)}\|_2(\y^{(p_2)})^T\y^{(i_2)} \rangle_{\gamma_{1}^{(2)}}.
\end{eqnarray}
Analogously to (\ref{eq:liftgenAanal19i}), a combination of  (\ref{eq:lev2liftgenAanal19a}), (\ref{eq:lev2liftgenAanal19d}), (\ref{eq:lev2liftgenAanal19g}), (\ref{eq:lev2liftgenAanal19h}), and (\ref{eq:lev2liftgenAanal19h1}) gives
\begin{eqnarray}\label{eq:lev2liftgenAanal19i}
\sum_{i_1=1}^{l}\sum_{i_2=1}^{l}\sum_{j=1}^{m} \beta_{i_1}\frac{T_{1,1,j}}{\sqrt{1-t
}}
& = & (\p_0-\p_1)\beta^2 \Bigg( \Bigg. \mE_{G,{\mathcal U}_3}\langle \|\x^{(i_1)}\|_2^2\|\y^{(i_2)}\|_2^2\rangle_{\gamma_{01}^{(2)}} \nonumber \\
& & +   (s-1)\mE_{G,{\mathcal U}_3}\langle \|\x^{(i_1)}\|_2^2(\y^{(p_2)})^T\y^{(i_2)}\rangle_{\gamma_{02}^{(2)}} \Bigg. \Bigg) \nonumber \\
& & - (\p_0-\p_1)s\beta^2(1-\m_1)\mE_{G,{\mathcal U}_3}\langle \|\x^{(i_1)}\|_2\|\x^{(p_1)}\|_2(\y^{(p_2)})^T\y^{(i_2)} \rangle_{\gamma_{1}^{(2)}}.
\end{eqnarray}

\underline{\textbf{\emph{Determining}} $T_{1,2}$}
\label{sec:lev2hand1T12}

Integrating by parts we first find
\begin{eqnarray}\label{eq:lev2liftgenBanal20}
T_{1,2} & = &   \mE_{G,{\mathcal U}_3,{\mathcal U}_2} \lp
\sum_{i_3=1}^{l}
\frac{\lp
\sum_{i_3=1}^{l}
\lp
 \mE_{{\mathcal U}_1}  Z_{i_3}^{\m_1}
\rp^p
\rp^{\frac{\m_2}{\m_1}-1} \lp
 \mE_{{\mathcal U}_1}  Z_{i_3}^{\m_1}
\rp^{p-1}  }   {\mE_{{\mathcal U}_2}\lp\lp
\sum_{i_3=1}^{l}
\lp
 \mE_{{\mathcal U}_1}  Z_{i_3}^{\m_1}
\rp^p
\rp^{\frac{\m_2}{\m_1}}\rp}
  \mE_{{\mathcal U}_1}\frac{(C_{i_3}^{(i_1)})^{s-1} A_{i_3}^{(i_1,i_2)} \u^{(i_1,3,1)}}{Z_{i_3}^{1-\m_1}} \rp \nonumber \\
& = & (\q_0-\q_1) \mE_{G,{\mathcal U}_3,{\mathcal U}_2} \Bigg ( \Bigg .
\sum_{i_3=1}^{l}
\frac{\lp
\sum_{i_3=1}^{l}
\lp
 \mE_{{\mathcal U}_1}  Z_{i_3}^{\m_1}
\rp^p
\rp^{\frac{\m_2}{\m_1}-1} \lp
 \mE_{{\mathcal U}_1}  Z_{i_3}^{\m_1}
\rp^{p-1}  }   {\mE_{{\mathcal U}_2}\lp\lp
\sum_{i_3=1}^{l}
\lp
 \mE_{{\mathcal U}_1}  Z_{i_3}^{\m_1}
\rp^p
\rp^{\frac{\m_2}{\m_1}}\rp}
\nonumber \\
& &
\times
\mE_{{\mathcal U}_1} \sum_{p_1=1}^{l}\frac{(\x^{(i_1)})^T\x^{(p_1)}}{\|\x^{(i_1)}\|_2\|\x^{(p_1)}\|_2} \frac{d}{d\u^{(p_1,3,1)}}\lp\frac{(C_{i_3}^{(i_1)})^{s-1} A_{(i_3}^{(i_1,i_2)}}{Z_{i_3}^{1-\m_1}}\rp
\Bigg . \Bigg ) , \nonumber \\
\end{eqnarray}
and after following the move from (\ref{eq:liftgenBanal20})  to (\ref{eq:liftgenBanal20b}) , we then write analogously to (\ref{eq:liftgenBanal20b})
 \begin{eqnarray}\label{eq:lev2liftgenBanal20b}
\sum_{i_1=1}^{l}\sum_{i_2=1}^{l} \beta_{i_1}\|\y^{(i_2)}\|_2 \frac{T_{1,2}}{\sqrt{1-t}}
& = & (\q_0-\q_1) \beta^2 \Bigg( \Bigg. \mE_{G,{\mathcal U}_3}\langle \|\x^{(i_1)}\|_2^2\|\y^{(i_2)}\|_2^2\rangle_{\gamma_{01}^{(2)}} \nonumber \\
& & +  (s-1)\mE_{G,{\mathcal U}_3}\langle \|\x^{(i_1)}\|_2^2 \|\y^{(i_2)}\|_2\|\y^{(p_2)}\|_2\rangle_{\gamma_{02}^{(2)}}\Bigg.\Bigg)\nonumber \\
& & - (\q_0-\q_1)s\beta^2(1-\m_1)\mE_{G,{\mathcal U}_3}\langle (\x^{(p_1)})^T\x^{(i_1)}\|\y^{(i_2)}\|_2\|\y^{(p_2)}\|_2 \rangle_{\gamma_{1}^{(2)}}.
\end{eqnarray}

\underline{\textbf{\emph{Determining}} $T_{1,3}$}
\label{sec:lev2hand1T13}

Another integration by parts gives
\begin{eqnarray}\label{eq:lev2liftgenCanal21}
T_{1,3} & = & \mE_{G,{\mathcal U}_3,{\mathcal U}_2} \lp
\sum_{i_3=1}^{l}
\frac{\lp
\sum_{i_3=1}^{l}
\lp
 \mE_{{\mathcal U}_1}  Z_{i_3}^{\m_1}
\rp^p
\rp^{\frac{\m_2}{\m_1}-1} \lp
 \mE_{{\mathcal U}_1}  Z_{i_3}^{\m_1}
\rp^{p-1}  }   {\mE_{{\mathcal U}_2}\lp\lp
\sum_{i_3=1}^{l}
\lp
 \mE_{{\mathcal U}_1}  Z_{i_3}^{\m_1}
\rp^p
\rp^{\frac{\m_2}{\m_1}}\rp} \mE_{{\mathcal U}_1}  \frac{(C^{(i_1)})^{s-1} A^{(i_1,i_2)}u^{(4,1)}}{Z^{1-\m_1}} \rp \nonumber \\
& = & (\p_0\q_0-\p_1\q_1) \mE_{G,{\mathcal U}_3,{\mathcal U}_2} \Bigg ( \Bigg .
\sum_{i_3=1}^{l}
\frac{\lp
\sum_{i_3=1}^{l}
\lp
 \mE_{{\mathcal U}_1}  Z_{i_3}^{\m_1}
\rp^p
\rp^{\frac{\m_2}{\m_1}-1} \lp
 \mE_{{\mathcal U}_1}  Z_{i_3}^{\m_1}
\rp^{p-1}  }   {\mE_{{\mathcal U}_2}\lp\lp
\sum_{i_3=1}^{l}
\lp
 \mE_{{\mathcal U}_1}  Z_{i_3}^{\m_1}
\rp^p
\rp^{\frac{\m_2}{\m_1}}\rp}
\nonumber \\
& &
\times
 \mE_{{\mathcal U}_1} \lp\frac{d}{du^{(4,1)}} \lp\frac{(C^{(i_1)})^{s-1} A^{(i_1,i_2)}u^{(4,1)}}{Z^{1-\m_1}}\rp\rp
 \Bigg . \Bigg ).
\end{eqnarray}
Following the move from (\ref{eq:liftgenCanal21}) to (\ref{eq:liftgenCanal21b}), we then write analogously to (\ref{eq:liftgenCanal21b})
\begin{eqnarray}\label{eq:lev2liftgenCanal21b}
\sum_{i_1=1}^{l}\sum_{i_2=1}^{l} \beta_{i_1}\|\y^{(i_2)}\|_2 \frac{T_{1,3}}{\sqrt{t}}
& = & (\p_0\q_0-\p_1\q_1) \beta^2 \Bigg(\Bigg. \mE_{G,{\mathcal U}_3}\langle \|\x^{(i_1)}\|_2^2\|\y^{(i_2)}\|_2^2\rangle_{\gamma_{01}^{(2)}}\nonumber \\
 & & +  (s-1)\mE_{G,{\mathcal U}_3}\langle \|\x^{(i_1)}\|_2^2 \|\y^{(i_2)}\|_2\|\y^{(p_2)}\|_2\rangle_{\gamma_{02}^{(2)}}\Bigg.\Bigg) \nonumber \\
& & - (\p_0\q_0-\p_1\q_1)s\beta^2(1-\m_1)\mE_{G,{\mathcal U}_3}\langle \|\x^{(i_1)}\|_2\|\x^{(p_`)}\|_2\|\y^{(i_2)}\|_2\|\y^{(p_2)}\|_2 \rangle_{\gamma_{1}^{(2)}}. \nonumber \\
\end{eqnarray}

\subsubsection{$T_2$--group --- second level}
\label{sec:lev2handlT2}

We again handle separately each of the three terms from  $T_2$ group.

\underline{\textbf{\emph{Determining}} $T_{2,1,j}$}
\label{sec:lev2hand1T21}

Gaussian integration by parts first gives
\begin{eqnarray}\label{eq:lev2genDanal19}
T_{2,1,j} & = &   \mE_{G,{\mathcal U}_3,{\mathcal U}_2} \lp
\sum_{i_3=1}^{l}
\frac{\lp
\sum_{i_3=1}^{l}
\lp
 \mE_{{\mathcal U}_1}  Z_{i_3}^{\m_1}
\rp^p
\rp^{\frac{\m_2}{\m_1}-1} \lp
 \mE_{{\mathcal U}_1}  Z_{i_3}^{\m_1}
\rp^{p-1}  }   {\mE_{{\mathcal U}_2}\lp\lp
\sum_{i_3=1}^{l}
\lp
 \mE_{{\mathcal U}_1}  Z_{i_3}^{\m_1}
\rp^p
\rp^{\frac{\m_2}{\m_1}}\rp}
  \mE_{{\mathcal U}_1}\frac{(C_{i_3}^{(i_1)})^{s-1} A_{i_3}^{(i_1,i_2)} \y_j^{(i_2)}\u_j^{(2,2)}}{Z_{i_3}^{1-\m_1}} \rp \nonumber \\
  & = &
\mE_{G,{\mathcal U}_3,{\mathcal U}_1} \Bigg ( \Bigg.
\sum_{i_3=1}^{l}
\frac{ 1 }   {\mE_{{\mathcal U}_2}\lp\lp
\sum_{i_3=1}^{l}
\lp
 \mE_{{\mathcal U}_1}  Z_{i_3}^{\m_1}
\rp^p
\rp^{\frac{\m_2}{\m_1}}\rp}
\nonumber \\
& &
\times
\mE_{{\mathcal U}_2}\lp\mE_{{\mathcal U}_2} (\u_j^{(2,2)}\u_j^{(2,2)})\frac{d}{d\u_j^{(2,2)}}\lp \frac{(C_{i_3}^{(i_1)})^{s-1} A_{i_3}^{(i_1,i_2)}\y_j^{(i_2)}}{Z_{i_3}^{1-\m_1} \lp
\sum_{i_3=1}^{l}
\lp
 \mE_{{\mathcal U}_1}  Z_{i_3}^{\m_1}
\rp^p
\rp^{1-\frac{\m_2}{\m_1}} \lp
 \mE_{{\mathcal U}_1}  Z_{i_3}^{\m_1}
\rp^{1-p}   }
\rp\rp
\Bigg . \Bigg )
 \nonumber \\
& = &
 (\p_1-\p_2) \mE_{G,{\mathcal U}_3,{\mathcal U}_2,{\mathcal U}_1} \Bigg ( \Bigg .
\sum_{i_3=1}^{l}
\frac{\lp
\sum_{i_3=1}^{l}
\lp
 \mE_{{\mathcal U}_1}  Z_{i_3}^{\m_1}
\rp^p
\rp^{\frac{\m_2}{\m_1}-1} \lp
 \mE_{{\mathcal U}_1}  Z_{i_3}^{\m_1}
\rp^{p-1}  }   {\mE_{{\mathcal U}_2}\lp\lp
\sum_{i_3=1}^{l}
\lp
 \mE_{{\mathcal U}_1}  Z_{i_3}^{\m_1}
\rp^p
\rp^{\frac{\m_2}{\m_1}}\rp}
\nonumber \\
& &
\times
 \frac{d}{d\u_j^{(2,2)}}\lp \frac{(C_{i_3}^{(i_1)})^{s-1} A_{i_3}^{(i_1,i_2)}\y_j^{(i_2)}}{Z_{i_3}^{1-\m_1}}\rp\Bigg . \Bigg ) \nonumber \\
&  & +
 (\p_1-\p_2) \mE_{G,{\mathcal U}_3,{\mathcal U}_2,{\mathcal U}_1} \Bigg ( \Bigg .
\sum_{i_3=1}^{l}
\frac{
\lp
 \mE_{{\mathcal U}_1}  Z_{i_3}^{\m_1}
\rp^{p-1}  }   {\mE_{{\mathcal U}_2}\lp\lp
\sum_{i_3=1}^{l}
\lp
 \mE_{{\mathcal U}_1}  Z_{i_3}^{\m_1}
\rp^p
\rp^{\frac{\m_2}{\m_1}}\rp}
\frac{(C_{i_3}^{(i_1)})^{s-1} A_{i_3}^{(i_1,i_2)}\y_j^{(i_2)}}{Z_{i_3}^{1-\m_1}}
\nonumber \\
& &
\times
 \frac{d}{d\u_j^{(2,2)}}\lp    \lp
\sum_{i_3=1}^{l}
\lp
 \mE_{{\mathcal U}_1}  Z_{i_3}^{\m_1}
\rp^p
\rp^{\frac{\m_2}{\m_1} -1 }             \rp\Bigg . \Bigg ) \nonumber \\
&  & +
 (\p_1-\p_2) \mE_{G,{\mathcal U}_3,{\mathcal U}_2,{\mathcal U}_1} \Bigg ( \Bigg .
\sum_{i_3=1}^{l}
\frac{
  \lp
\sum_{i_3=1}^{l}
\lp
 \mE_{{\mathcal U}_1}  Z_{i_3}^{\m_1}
\rp^p
\rp^{\frac{\m_2}{\m_1} -1 }
 }   {\mE_{{\mathcal U}_2}\lp\lp
\sum_{i_3=1}^{l}
\lp
 \mE_{{\mathcal U}_1}  Z_{i_3}^{\m_1}
\rp^p
\rp^{\frac{\m_2}{\m_1}}\rp}
\frac{(C_{i_3}^{(i_1)})^{s-1} A_{i_3}^{(i_1,i_2)}\y_j^{(i_2)}}{Z_{i_3}^{1-\m_1}}
\nonumber \\
& &
\times
 \frac{d}{d\u_j^{(2,2)}}\lp
\lp
 \mE_{{\mathcal U}_1}  Z_{i_3}^{\m_1}
\rp^{p-1}
           \rp\Bigg . \Bigg ) .
 \end{eqnarray}
Similarly to what was done  in (\ref{eq:genDanal19a}), we split $T_{2,1,j}$ into three components
\begin{eqnarray}\label{eq:lev2genDanal19a}
T_{2,1,j}   =   T_{2,1,j}^{c} +  T_{2,1,j}^{d} +  T_{2,1,j}^{e},
\end{eqnarray}
where
\begin{eqnarray}\label{eq:lev2genDanal19b}
T_{2,1,j}^c
& = &
 (\p_1-\p_2) \mE_{G,{\mathcal U}_3,{\mathcal U}_2,{\mathcal U}_1} \Bigg ( \Bigg .
\sum_{i_3=1}^{l}
\frac{\lp
\sum_{i_3=1}^{l}
\lp
 \mE_{{\mathcal U}_1}  Z_{i_3}^{\m_1}
\rp^p
\rp^{\frac{\m_2}{\m_1}-1} \lp
 \mE_{{\mathcal U}_1}  Z_{i_3}^{\m_1}
\rp^{p-1}  }   {\mE_{{\mathcal U}_2}\lp\lp
\sum_{i_3=1}^{l}
\lp
 \mE_{{\mathcal U}_1}  Z_{i_3}^{\m_1}
\rp^p
\rp^{\frac{\m_2}{\m_1}}\rp}
\nonumber \\
& &
\times
 \frac{d}{d\u_j^{(2,2)}}\lp \frac{(C_{i_3}^{(i_1)})^{s-1} A_{i_3}^{(i_1,i_2)}\y_j^{(i_2)}}{Z_{i_3}^{1-\m_1}}\rp\Bigg . \Bigg ) \nonumber \\
T_{2,1,j}^d &  = &
 (\p_1-\p_2) \mE_{G,{\mathcal U}_3,{\mathcal U}_2,{\mathcal U}_1} \Bigg ( \Bigg .
\sum_{i_3=1}^{l}
\frac{
\lp
 \mE_{{\mathcal U}_1}  Z_{i_3}^{\m_1}
\rp^{p-1}  }   {\mE_{{\mathcal U}_2}\lp\lp
\sum_{i_3=1}^{l}
\lp
 \mE_{{\mathcal U}_1}  Z_{i_3}^{\m_1}
\rp^p
\rp^{\frac{\m_2}{\m_1}}\rp}
\frac{(C_{i_3}^{(i_1)})^{s-1} A_{i_3}^{(i_1,i_2)}\y_j^{(i_2)}}{Z_{i_3}^{1-\m_1}}
\nonumber \\
& &
\times
 \frac{d}{d\u_j^{(2,2)}}\lp    \lp
\sum_{i_3=1}^{l}
\lp
 \mE_{{\mathcal U}_1}  Z_{i_3}^{\m_1}
\rp^p
\rp^{\frac{\m_2}{\m_1} -1 }             \rp
\Bigg . \Bigg ) \nonumber \\
T_{2,1,j}^e &  = &
 (\p_1-\p_2) \mE_{G,{\mathcal U}_3,{\mathcal U}_2,{\mathcal U}_1} \Bigg ( \Bigg .
\sum_{i_3=1}^{l}
\frac{
  \lp
\sum_{i_3=1}^{l}
\lp
 \mE_{{\mathcal U}_1}  Z_{i_3}^{\m_1}
\rp^p
\rp^{\frac{\m_2}{\m_1} -1 }
 }   {\mE_{{\mathcal U}_2}\lp\lp
\sum_{i_3=1}^{l}
\lp
 \mE_{{\mathcal U}_1}  Z_{i_3}^{\m_1}
\rp^p
\rp^{\frac{\m_2}{\m_1}}\rp}
\frac{(C_{i_3}^{(i_1)})^{s-1} A_{i_3}^{(i_1,i_2)}\y_j^{(i_2)}}{Z_{i_3}^{1-\m_1}}
\nonumber \\
& &
\times
 \frac{d}{d\u_j^{(2,2)}}\lp
\lp
 \mE_{{\mathcal U}_1}  Z_{i_3}^{\m_1}
\rp^{p-1}
           \rp\Bigg . \Bigg ) .
 \end{eqnarray}
 We observe that $T_{2,1,j}^c$ scaled by $(\p_1-\p_2)$  and  the term considered in the first part of Section \ref{sec:lev2hand1T11} scaled by $(\p_0-\p_1)$ are structurally identical. This then implies
\begin{eqnarray}\label{eq:lev2genDanal19b1}
\sum_{i_1=1}^{l}\sum_{i_2=1}^{l}\sum_{j=1}^{m} \beta_{i_1}\frac{T_{2,1,j}^c}{\sqrt{1-t
}}
& = &  \sum_{i_1=1}^{l}\sum_{i_2=1}^{l}\sum_{j=1}^{m} \beta_{i_1}\frac{T_{1,1,j}}{\sqrt{1-t}}\frac{\p_1-\p_2}{\p_0-\p_1}\nonumber\\
& = & (\p_1-\p_2)\beta^2 \nonumber \\
& & \times \lp \mE_{G,{\mathcal U}_3}\langle \|\x^{(i_1)}\|_2^2\|\y^{(i_2)}\|_2^2\rangle_{\gamma_{01}^{(2)}} +  (s-1)\mE_{G,{\mathcal U}_3}\langle \|\x^{(i_1)}\|_2^2(\y^{(p_2)})^T\y^{(i_2)}\rangle_{\gamma_{02}^{(2)}} \rp \nonumber \\
& & - (\p_1-\p_2)s\beta^2(1-\m_1)\mE_{G,{\mathcal U}_3,{\mathcal U}_2}\langle \|\x^{(i_1)}\|_2\|\x^{(p_1)}\|_2(\y^{(p_2)})^T\y^{(i_2)} \rangle_{\gamma_{1}^{(2)}}.
\end{eqnarray}

We focus on $T_{2,1,j}^d$ next and note the following
\begin{eqnarray}\label{eq:lev2genDanal20}
 \frac{d}{d\u_j^{(2,2)}}\lp
 \lp
\sum_{i_3=1}^{l}
\lp
 \mE_{{\mathcal U}_1}  Z_{i_3}^{\m_1}
\rp^p
\rp^{\frac{\m_2}{\m_1} -1 }
 \rp
=
-p \sum_{p_3=1}^{l}
  \frac{\lp 1-\frac{\m_2}{\m_1} \rp
\lp
 \mE_{{\mathcal U}_1}  Z_{p_3}^{\m_1}
\rp^{p-1}  }{   \lp
\sum_{i_3=1}^{l}
\lp
 \mE_{{\mathcal U}_1}  Z_{i_3}^{\m_1}
\rp^p
\rp^{2 - \frac{\m_2}{\m_1}  }
 }
\mE_{{\mathcal U}_1}\frac{d Z_{p_3}^{\m_1}}{d\u_j^{(2,2)}}.\nonumber \\
\end{eqnarray}
Utilization of(\ref{eq:genDanal21}) allows to write analogously to (\ref{eq:genDanal23})
\begin{eqnarray}\label{eq:lev2genDanal23}
T_{2,1,j}^d &  = & (\p_1-\p_2) \mE_{G,{\mathcal U}_3,{\mathcal U}_2,{\mathcal U}_1}
\Bigg ( \Bigg .
\sum_{i_3=1}^{l}
\frac{
\lp
 \mE_{{\mathcal U}_1}  Z_{i_3}^{\m_1}
\rp^{p-1}  }   {\mE_{{\mathcal U}_2}\lp\lp
\sum_{i_3=1}^{l}
\lp
 \mE_{{\mathcal U}_1}  Z_{i_3}^{\m_1}
\rp^p
\rp^{\frac{\m_2}{\m_1}}\rp}
\frac{(C_{i_3}^{(i_1)})^{s-1} A_{i_3}^{(i_1,i_2)}\y_j^{(i_2)}}{Z_{i_3}^{1-\m_1}}
 \nonumber \\
 & &
 \times
\Bigg . \Bigg (
 -p \sum_{p_3=1}^{l}
  \frac{\lp 1-\frac{\m_2}{\m_1} \rp
\lp
 \mE_{{\mathcal U}_1}  Z_{p_3}^{\m_1}
\rp^{p-1}  }{   \lp
\sum_{i_3=1}^{l}
\lp
 \mE_{{\mathcal U}_1}  Z_{i_3}^{\m_1}
\rp^p
\rp^{2 - \frac{\m_2}{\m_1}  }
 }
\mE_{{\mathcal U}_1}\frac{d Z_{p_3}^{\m_1}}{d\u_j^{(2,2)}}
\Bigg . \Bigg )
\Bigg . \Bigg )
\nonumber \\
&  = & -s\sqrt{1-t}(\p_1-\p_2)(\m_1-\m_2) p \mE_{G,{\mathcal U}_3,{\mathcal U}_2,{\mathcal U}_1}
\Bigg ( \Bigg .
\sum_{i_3=1}^{l}
\frac{
\lp
 \mE_{{\mathcal U}_1}  Z_{i_3}^{\m_1}
\rp^{p-1}  }   {\mE_{{\mathcal U}_2}\lp\lp
\sum_{i_3=1}^{l}
\lp
 \mE_{{\mathcal U}_1}  Z_{i_3}^{\m_1}
\rp^p
\rp^{\frac{\m_2}{\m_1}}\rp}
\nonumber \\
& &
\times
\frac{(C_{i_3}^{(i_1)})^{s-1} A_{i_3}^{(i_1,i_2)}\y_j^{(i_2)}}{Z_{i_3}^{1-\m_1}}
 \nonumber \\
 & &
 \times
\Bigg . \Bigg (
  \sum_{p_3=1}^{l}
  \frac{
\lp
 \mE_{{\mathcal U}_1}  Z_{p_3}^{\m_1}
\rp^{p-1}  }  {   \lp
\sum_{i_3=1}^{l}
\lp
 \mE_{{\mathcal U}_1}  Z_{i_3}^{\m_1}
\rp^p
\rp^{2 - \frac{\m_2}{\m_1}  }
 }
\mE_{{\mathcal U}_1}
  \frac{1}{Z_{p_3}^{1-\m_1}}  \sum_{p_1=1}^{l}  (C_{p_3}^{(p_1)})^{s-1}\sum_{p_2=1}^{l}
\beta_{p_1}A_{p_3}^{(p_1,p_2)}\y_j^{(p_2)}\sqrt{1-t}
\Bigg . \Bigg )
\Bigg . \Bigg )
\nonumber \\
 & = & -s\sqrt{1-t}(\p_1-\p_2)(\m_1-\m_2)\mE_{G,{\mathcal U}_3}\Bigg( \Bigg. \mE_{{\mathcal U}_2}
\frac{ \lp  \sum_{i_3=1}^{l}
\lp
 \mE_{{\mathcal U}_1}  Z_{i_3}^{\m_1}
\rp^p
    \rp^{\frac{\m_2}{\m_1}}  }{\mE_{{\mathcal U}_2}\lp  \lp  \sum_{i_3=1}^{l}
\lp
 \mE_{{\mathcal U}_1}  Z_{i_3}^{\m_1}
\rp^p
    \rp^{\frac{\m_2}{\m_1}}   \rp}
\nonumber \\
& &
\times
  \sum_{i_3=1}^{l}
  \frac{
\lp
 \mE_{{\mathcal U}_1}  Z_{i_3}^{\m_1}
\rp^{p}  }  {   \lp
\sum_{i_3=1}^{l}
\lp
 \mE_{{\mathcal U}_1}  Z_{i_3}^{\m_1}
\rp^p
\rp
 }
\mE_{{\mathcal U}_1}\frac{Z_{i_3}^{\m_1}}{\mE_{{\mathcal U}_1} Z_{i_3}^{\m_1}}
 \frac{(C_{i_3}^{(i_1)})^{s}}{Z_{i_3} }  \frac{A_{i_3}^{(i_1,i_2)}}{C_{i_3}^{(i_1)}}\y_j^{(i_2)} \nonumber \\
& & \times
  \sum_{p_3=1}^{l}
  \frac{
\lp
 \mE_{{\mathcal U}_1}  Z_{p_3}^{\m_1}
\rp^{p}  }  {   \lp
\sum_{p_3=1}^{l}
\lp
 \mE_{{\mathcal U}_1}  Z_{p_3}^{\m_1}
\rp^p
\rp
 }
\lp \mE_{{\mathcal U}_1}  \frac{Z_{p_3}^{\m_1}}{\mE_{{\mathcal U}_1} Z_{p_3}^{\m_1}} \sum_{p_1=1}^{l}  \frac{(C_{p_3}^{(p_1)})^s}{Z_{p_3}}\sum_{p_2=1}^{l}
\frac{A^{(p_1,p_2)}}{C_{p_3}^{(p_1)}}\beta_{p_1}\y_j^{(p_2)}  \rp\Bigg. \Bigg).
\end{eqnarray}
We then also find
\begin{eqnarray}\label{eq:lev2genDanal24}
 \sum_{i_1=1}^{l}  \sum_{i_2=1}^{l} \sum_{j=1}^{m}  \beta_{i_1}\frac{T_{2,1,j}^d}{\sqrt{1-t}}
 & = & -s\beta^2(\p_1-\p_2)(\m_1-\m_2)p
 \mE_{G,{\mathcal U}_3} \langle \|\x^{(i_1)}\|_2\|\x^{(p_1)}\|_2(\y^{(p_2)})^T\y^{(i_2)} \rangle_{\gamma_{21}^{(2)}}.
\end{eqnarray}

Switching the focus to $T_{2,1,j}^e$ we note the following
\begin{eqnarray}\label{eq:lev2genDanal20bb0}
 \frac{d}{d\u_j^{(2,2)}}\lp
 \lp
 \mE_{{\mathcal U}_1}  Z_{i_3}^{\m_1}
\rp^{p-1}
 \rp
=
(p-1)  \lp
 \mE_{{\mathcal U}_1}  Z_{i_3}^{\m_1}
\rp^{p-2}
\mE_{{\mathcal U}_1}\frac{d Z_{i_3}^{\m_1}}{d\u_j^{(2,2)}}.\nonumber \\
\end{eqnarray}
Relying again on (\ref{eq:genDanal21}) we first write
\begin{eqnarray}\label{eq:lev2genDanal23bb1}
T_{2,1,j}^d &  = & (\p_1-\p_2) \mE_{G,{\mathcal U}_3,{\mathcal U}_2,{\mathcal U}_1}
\Bigg ( \Bigg .
\sum_{i_3=1}^{l}
\frac{
 \lp
\sum_{i_3=1}^{l}
\lp
 \mE_{{\mathcal U}_1}  Z_{i_3}^{\m_1}
\rp^p
\rp^{\frac{\m_2}{\m_1}  -1 }  }   {\mE_{{\mathcal U}_2}\lp\lp
\sum_{i_3=1}^{l}
\lp
 \mE_{{\mathcal U}_1}  Z_{i_3}^{\m_1}
\rp^p
\rp^{\frac{\m_2}{\m_1}}\rp  }
\frac{(C_{i_3}^{(i_1)})^{s-1} A_{i_3}^{(i_1,i_2)}\y_j^{(i_2)}}{Z_{i_3}^{1-\m_1}}
 \nonumber \\
 & &
 \times
\Bigg . \Bigg (
(p-1)  \lp
 \mE_{{\mathcal U}_1}  Z_{i_3}^{\m_1}
\rp^{p-2}
\mE_{{\mathcal U}_1}\frac{d Z_{i_3}^{\m_1}}{d\u_j^{(2,2)}}
 \Bigg . \Bigg )
\Bigg . \Bigg )
\nonumber \\
&  = & s\sqrt{1-t}(\p_1-\p_2) \m_1 (p-1)  \mE_{G,{\mathcal U}_3,{\mathcal U}_2,{\mathcal U}_1}
\Bigg ( \Bigg .
\sum_{i_3=1}^{l}
\frac{
 \lp
\sum_{i_3=1}^{l}
\lp
 \mE_{{\mathcal U}_1}  Z_{i_3}^{\m_1}
\rp^p
\rp^{\frac{\m_2}{\m_1}  -1 }  }   {\mE_{{\mathcal U}_2}\lp\lp
\sum_{i_3=1}^{l}
\lp
 \mE_{{\mathcal U}_1}  Z_{i_3}^{\m_1}
\rp^p
\rp^{\frac{\m_2}{\m_1}}\rp  }
\nonumber \\
& &
\times
\frac{(C_{i_3}^{(i_1)})^{s-1} A_{i_3}^{(i_1,i_2)}\y_j^{(i_2)}}{Z_{i_3}^{1-\m_1}}
 \nonumber \\
 & &
 \times
 \Bigg ( \Bigg .
\lp
 \mE_{{\mathcal U}_1}  Z_{i_3}^{\m_1}
\rp^{p-2}
\mE_{{\mathcal U}_1}
  \frac{1}{Z_{i_3}^{1-\m_1}}  \sum_{p_1=1}^{l}  (C_{i_3}^{(p_1)})^{s-1}\sum_{p_2=1}^{l}
\beta_{p_1}A_{i_3}^{(p_1,p_2)}\y_j^{(p_2)}\sqrt{1-t}
\Bigg . \Bigg )
\Bigg . \Bigg )
\nonumber \\
 & = & s\sqrt{1-t}(\p_1-\p_2) \m_1 (p-1)  \mE_{G,{\mathcal U}_3}\Bigg( \Bigg. \mE_{{\mathcal U}_2}
\frac{ \lp  \sum_{i_3=1}^{l}
\lp
 \mE_{{\mathcal U}_1}  Z_{i_3}^{\m_1}
\rp^p
    \rp^{\frac{\m_2}{\m_1}}  }{\mE_{{\mathcal U}_2}\lp  \lp  \sum_{i_3=1}^{l}
\lp
 \mE_{{\mathcal U}_1}  Z_{i_3}^{\m_1}
\rp^p
    \rp^{\frac{\m_2}{\m_1}}   \rp}
\nonumber \\
& &
\times
  \sum_{i_3=1}^{l}
  \frac{
\lp
 \mE_{{\mathcal U}_1}  Z_{i_3}^{\m_1}
\rp^{p}  }  {   \lp
\sum_{i_3=1}^{l}
\lp
 \mE_{{\mathcal U}_1}  Z_{i_3}^{\m_1}
\rp^p
\rp
 }
\mE_{{\mathcal U}_1}\frac{Z_{i_3}^{\m_1}}{\mE_{{\mathcal U}_1} Z_{i_3}^{\m_1}}
 \frac{(C_{i_3}^{(i_1)})^{s}}{Z_{i_3}}  \frac{A_{i_3}^{(i_1,i_2)}}{C_{i_3}^{(i_1)}}\y_j^{(i_2)} \nonumber \\
& & \times
\lp \mE_{{\mathcal U}_1}  \frac{Z_{i_3}^{\m_1}}{\mE_{{\mathcal U}_1} Z_{i_3}^{\m_1}} \sum_{p_1=1}^{l}  \frac{(C_{i_3}^{(p_1)})^s}{Z_{i_3}}\sum_{p_2=1}^{l}
\frac{A_{i_3}^{(p_1,p_2)}}{C_{i_3}^{(p_1)}}\beta_{p_1}\y_j^{(p_2)}  \rp\Bigg. \Bigg),
\end{eqnarray}
and then find
\begin{eqnarray}\label{eq:lev2genDanal24bb2}
 \sum_{i_1=1}^{l}  \sum_{i_2=1}^{l} \sum_{j=1}^{m}  \beta_{i_1}\frac{T_{2,1,j}^e}{\sqrt{1-t}}
 & = & s\beta^2(\p_1-\p_2)\m_1 (p-1)
 \mE_{G,{\mathcal U}_3} \langle \|\x^{(i_1)}\|_2\|\x^{(p_1)}\|_2(\y^{(p_2)})^T\y^{(i_2)} \rangle_{\gamma_{22}^{(2)}}.
\end{eqnarray}
Finally we then also have
\begin{eqnarray}\label{eq:lev2genDanal25}
 \sum_{i_1=1}^{l}  \sum_{i_2=1}^{l} \sum_{j=1}^{m}  \beta_{i_1}\frac{T_{2,1,j}}{\sqrt{1-t}}
 & = & (\p_1-\p_2)\beta^2 \nonumber \\
 & & \times
  \lp \mE_{G,{\mathcal U}_3}\langle \|\x^{(i_1)}\|_2^2\|\y^{(i_2)}\|_2^2\rangle_{\gamma_{01}^{(2)}} +   (s-1)\mE_{G,{\mathcal U}_3}\langle \|\x^{(i_1)}\|_2^2(\y^{(p_2)})^T\y^{(i_2)}\rangle_{\gamma_{02}^{(2)}} \rp\nonumber \\
& & - (\p_1-\p_2)s\beta^2(1-\m_1)\mE_{G,{\mathcal U}_3}\langle \|\x^{(i_1)}\|_2\|\x^{(p_1)}\|_2(\y^{(p_2)})^T\y^{(i_2)} \rangle_{\gamma_{1}^{(2)}}
\nonumber \\
 &   &
  -s\beta^2(\p_1-\p_2)(\m_1-\m_2) p \mE_{G,{\mathcal U}_3} \langle \|\x^{(i_1)}\|_2\|\x^{(p_1)}\|_2(\y^{(p_2)})^T\y^{(i_2)} \rangle_{\gamma_{21}^{(2)}}
\nonumber \\
   &   &
+  s\beta^2(\p_1-\p_2) \m_1 (p-1) \mE_{G,{\mathcal U}_3} \langle \|\x^{(i_1)}\|_2\|\x^{(p_1)}\|_2(\y^{(p_2)})^T\y^{(i_2)} \rangle_{\gamma_{22}^{(2)}}.
\end{eqnarray}

\underline{\textbf{\emph{Determining}} $T_{2,2}$}
\label{sec:lev2hand1T22}

Gaussian integration by parts gives
\begin{eqnarray}\label{eq:lev2liftgenEanal20}
T_{2,2} & = &   \mE_{G,{\mathcal U}_3,{\mathcal U}_2} \lp
\sum_{i_3=1}^{l}
\frac{\lp
\sum_{i_3=1}^{l}
\lp
 \mE_{{\mathcal U}_1}  Z_{i_3}^{\m_1}
\rp^p
\rp^{\frac{\m_2}{\m_1}-1} \lp
 \mE_{{\mathcal U}_1}  Z_{i_3}^{\m_1}
\rp^{p-1}  }   {\mE_{{\mathcal U}_2}\lp\lp
\sum_{i_3=1}^{l}
\lp
 \mE_{{\mathcal U}_1}  Z_{i_3}^{\m_1}
\rp^p
\rp^{\frac{\m_2}{\m_1}}\rp}
  \mE_{{\mathcal U}_1}\frac{(C_{i_3}^{(i_1)})^{s-1} A_{i_3}^{(i_1,i_2)} \u^{(i_1,3,2)}}{Z_{i_3}^{1-\m_1}} \rp
  \nonumber \\
  & = & \mE_{G,{\mathcal U}_3,{\mathcal U}_1} \Bigg( \Bigg.
  \sum_{i_3=1}^{l}
\frac{  1  }   {\mE_{{\mathcal U}_2}\lp\lp
\sum_{i_3=1}^{l}
\lp
 \mE_{{\mathcal U}_1}  Z_{i_3}^{\m_1}
\rp^p
\rp^{\frac{\m_2}{\m_1}}\rp}
  \nonumber \\
 & & \times
\mE_{{\mathcal U}_2} \Bigg. \Bigg(
 \sum_{p_1=1}^{l}\mE_{{\mathcal U}_2}(\u^{(i_1,3,2)}\u^{(p_1,3,2)}) \frac{d}{d\u^{(p_1,3,2)}}
 \Bigg. \Bigg(
 \frac{(C_{i_3}^{(i_1)})^{s-1} A_{i_3}^{(i_1,i_2)}  \lp
 \mE_{{\mathcal U}_1}  Z_{i_3}^{\m_1}
\rp^{p-1}  }  { Z^{1-\m_1}  \lp
\sum_{i_3=1}^{l}
\lp
 \mE_{{\mathcal U}_1}  Z_{i_3}^{\m_1}
\rp^p
\rp^{1 - \frac{\m_2}{\m_1}}  }
\Bigg. \Bigg)
\Bigg. \Bigg)
\Bigg. \Bigg) \nonumber \\
& = &   T_{2,2}^{c} +  T_{2,2}^{d}+  T_{2,2}^{e},
 \end{eqnarray}
where
\begin{eqnarray}\label{eq:lev2genEanal19b}
T_{2,2}^c &  = &
\mE_{G,{\mathcal U}_2,{\mathcal U}_1} \Bigg ( \Bigg .
 \sum_{i_3=1}^{l}
\frac{\lp
\sum_{i_3=1}^{l}
\lp
 \mE_{{\mathcal U}_1}  Z_{i_3}^{\m_1}
\rp^p
\rp^{\frac{\m_2}{\m_1}-1} \lp
 \mE_{{\mathcal U}_1}  Z_{i_3}^{\m_1}
\rp^{p-1}  }   {\mE_{{\mathcal U}_2}\lp\lp
\sum_{i_3=1}^{l}
\lp
 \mE_{{\mathcal U}_1}  Z_{i_3}^{\m_1}
\rp^p
\rp^{\frac{\m_2}{\m_1}}\rp}
 \nonumber \\
 & &
 \times
 \sum_{p_1=1}^{l}\mE_{{\mathcal U}_2}(\u^{(i_1,3,2)}\u^{(p_1,3,2)}) \frac{d}{d\u^{(p_1,3,2)}}\lp\frac{(C_{i_3}^{(i_1)})^{s-1} A_{i_3}^{(i_1,i_2)}}{Z_{i_3}^{1-\m_1}}
 \rp
 \Bigg ) \Bigg .
 \nonumber \\
T_{2,2}^d  &  = &
\mE_{G,{\mathcal U}_2,{\mathcal U}_1} \Bigg ( \Bigg .
 \sum_{i_3=1}^{l}
\frac{ \lp
 \mE_{{\mathcal U}_1}  Z_{i_3}^{\m_1}
\rp^{p-1}  }   {\mE_{{\mathcal U}_2}\lp\lp
\sum_{i_3=1}^{l}
\lp
 \mE_{{\mathcal U}_1}  Z_{i_3}^{\m_1}
\rp^p
\rp^{\frac{\m_2}{\m_1}}\rp}
 \frac{(C_{i_3}^{(i_1)})^{s-1} A_{i_3}^{(i_1,i_2)}}{Z_{i_3}^{1-\m_1}}
 \nonumber \\
 & &
 \times
 \sum_{p_1=1}^{l}\mE_{{\mathcal U}_2}(\u^{(i_1,3,2)}\u^{(p_1,3,2)})
 \frac{d}{d\u^{(p_1,3,2)}}
 \Bigg ( \Bigg .
 \lp
\sum_{i_3=1}^{l}
\lp
 \mE_{{\mathcal U}_1}  Z_{i_3}^{\m_1}
\rp^p
\rp^{\frac{\m_2}{\m_1}-1}
 \Bigg ) \Bigg .
 \Bigg ) \Bigg .
 \nonumber \\
 T_{2,2}^e  &  = &
\mE_{G,{\mathcal U}_2,{\mathcal U}_1} \Bigg ( \Bigg .
 \sum_{i_3=1}^{l}
\frac{
 \lp
\sum_{i_3=1}^{l}
\lp
 \mE_{{\mathcal U}_1}  Z_{i_3}^{\m_1}
\rp^p
\rp^{\frac{\m_2}{\m_1}-1}
 }   {\mE_{{\mathcal U}_2}\lp\lp
\sum_{i_3=1}^{l}
\lp
 \mE_{{\mathcal U}_1}  Z_{i_3}^{\m_1}
\rp^p
\rp^{\frac{\m_2}{\m_1}}\rp}
 \frac{(C_{i_3}^{(i_1)})^{s-1} A_{i_3}^{(i_1,i_2)}}{Z_{i_3}^{1-\m_1}}
 \nonumber \\
 & &
 \times
 \sum_{p_1=1}^{l}\mE_{{\mathcal U}_2}(\u^{(i_1,3,2)}\u^{(p_1,3,2)})
 \frac{d}{d\u^{(p_1,3,2)}}
 \lp
\lp
 \mE_{{\mathcal U}_1}  Z_{i_3}^{\m_1}
\rp^{p-1}
 \rp
 \Bigg . \Bigg ).
\end{eqnarray}
Since
\begin{eqnarray}\label{eq:lev2genEanal19c}
\mE_{{\mathcal U}_2}(\u^{(i_1,3,2)}\u^{(p_1,3,2)}) & = & (\q_1-\q_2)\frac{(\x^{(i_1)})^T\x^{(p_1)}}{\|\x^{(i_1)}\|_2\|\x^{(p_1)}\|_2}
=(\q_1-\q_2)\frac{\beta^2(\x^{(i_1)})^T\x^{(p_1)}}{\beta_{i_1}\beta_{p_1}}, \nonumber \\
\mE_{{\mathcal U}_2}(\u^{(i_1,3,1)}\u^{(p_1,3,1)}) & = & (\q_0-\q_1)\frac{(\x^{(i_1)})^T\x^{(p_1)}}{\|\x^{(i_1)}\|_2\|\x^{(p_1)}\|_2}=(\q_0-\q_1)\frac{\beta^2(\x^{(i_1)})^T\x^{(p_1)}}{\beta_{i_1}\beta_{p_1}},
\end{eqnarray}
we have that $T_{2,2}^c$ scaled by $(\q_1-\q_2)$  and the term considered in the second part of Section \ref{sec:lev2hand1T11} scaled by $(\q_0-\q_1)$ are structurally identical. One can then utilize  (\ref{eq:lev2liftgenBanal20b}) to obtain
\begin{eqnarray}\label{eq:lev2genEanal19c1}
\sum_{i_1=1}^{l}\sum_{i_2=1}^{l} \beta_{i_1}\|\y^{(i_2)}\|_2 \frac{T_{2,2}^c}{\sqrt{1-t}} & = & (\q_1-\q_2)\beta^2\Bigg(\Bigg. \mE_{G,{\mathcal U}_3}\langle \|\x^{(i_1)}\|_2^2\|\y^{(i_2)}\|_2^2\rangle_{\gamma_{01}^{(2)}} \nonumber \\
& & +   (s-1)\mE_{G,{\mathcal U}_3}\langle \|\x^{(i_1)}\|_2^2 \|\y^{(i_2)}\|_2\|\y^{(p_2)}\|_2\rangle_{\gamma_{02}^{(2)}}\Bigg.\Bigg) \nonumber \\
& & - (\q_1-\q_2)s\beta^2(1-\m_1)\mE_{G,{\mathcal U}_3}\langle (\x^{(p_1)})^T\x^{(i_1)}\|\y^{(i_2)}\|_2\|\y^{(p_2)}\|_2 \rangle_{\gamma_{1}^{(2)}}.
\nonumber \\
\end{eqnarray}

To determine  $T_{2,2}^d$, we start with
\begin{eqnarray}\label{eq:lev2genEanal20}
 \frac{d}{d\u^{(p_1,3,2)}}\lp
 \lp
\sum_{i_3=1}^{l}
\lp
 \mE_{{\mathcal U}_1}  Z_{i_3}^{\m_1}
\rp^p
\rp^{\frac{\m_2}{\m_1} -1 }
 \rp
=
-p \sum_{p_3=1}^{l}
  \frac{\lp 1-\frac{\m_2}{\m_1} \rp
\lp
 \mE_{{\mathcal U}_1}  Z_{p_3}^{\m_1}
\rp^{p-1}  }{   \lp
\sum_{i_3=1}^{l}
\lp
 \mE_{{\mathcal U}_1}  Z_{i_3}^{\m_1}
\rp^p
\rp^{2 - \frac{\m_2}{\m_1}  }
 }
\mE_{{\mathcal U}_1}\frac{d Z_{p_3}^{\m_1}}{d\u^{(p_1,3,2)}}.
\end{eqnarray}
From (\ref{eq:genEanal21}) we also have
\begin{eqnarray}\label{eq:lev2genEanal21}
\frac{dZ_{p_3}^{\m_1}}{d\u^{(p_1,3,2)}}  & = & \frac{\m_1}{Z_{p_3}^{1-\m_1}} s  (C_{p_3}^{(p_1)})^{s-1}\sum_{p_2=1}^{l}
\beta_{p_1}A_{p_3}^{(p_1,p_2)}\|\y^{(p_2)}\|_2\sqrt{1-t}.
\end{eqnarray}
A combination of (\ref{eq:lev2genEanal20}) and (\ref{eq:lev2genEanal21}) then gives
\begin{eqnarray}\label{eq:lev2genEanal22}
 \frac{d}{d\u^{(p_1,3,2)}}\lp
 \lp
\sum_{i_3=1}^{l}
\lp
 \mE_{{\mathcal U}_1}  Z_{i_3}^{\m_1}
\rp^p
\rp^{\frac{\m_2}{\m_1} -1 }
 \rp
& = &
-p \sum_{p_3=1}^{l}
  \frac{\lp 1-\frac{\m_2}{\m_1} \rp
\lp
 \mE_{{\mathcal U}_1}  Z_{p_3}^{\m_1}
\rp^{p-1}  }{   \lp
\sum_{i_3=1}^{l}
\lp
 \mE_{{\mathcal U}_1}  Z_{i_3}^{\m_1}
\rp^p
\rp^{2 - \frac{\m_2}{\m_1}  }
 }
 \nonumber \\
 & &
 \times
\mE_{{\mathcal U}_1}
  \frac{\m_1}{Z_{p_3}^{1-\m_1}} s  (C_{p_3}^{(p_1)})^{s-1}\sum_{p_2=1}^{l}
\beta_{p_1}A_{p_3}^{(p_1,p_2)}\|\y^{(p_2)}\|_2\sqrt{1-t}.\nonumber \\
\end{eqnarray}
Combining  further (\ref{eq:lev2genEanal19b}) and (\ref{eq:lev2genEanal22}) we obtain  the following analogue to (\ref{eq:genEanal23})
\begin{eqnarray}\label{eq:lev2genEanal23}
 T_{2,2}^d
  &  = &
\mE_{G,{\mathcal U}_2,{\mathcal U}_1} \Bigg ( \Bigg .
 \sum_{i_3=1}^{l}
\frac{ \lp
 \mE_{{\mathcal U}_1}  Z_{i_3}^{\m_1}
\rp^{p-1}  }   {\mE_{{\mathcal U}_2}\lp\lp
\sum_{i_3=1}^{l}
\lp
 \mE_{{\mathcal U}_1}  Z_{i_3}^{\m_1}
\rp^p
\rp^{\frac{\m_2}{\m_1}}\rp}
 \frac{(C_{i_3}^{(i_1)})^{s-1} A_{i_3}^{(i_1,i_2)}}{Z_{i_3}^{1-\m_1}}
 \nonumber \\
 & &
 \times
 \sum_{p_1=1}^{l}\mE_{{\mathcal U}_2}(\u^{(i_1,3,2)}\u^{(p_1,3,2)})
  \Bigg ( \Bigg .
-p \sum_{p_3=1}^{l}
  \frac{\lp 1-\frac{\m_2}{\m_1} \rp
\lp
 \mE_{{\mathcal U}_1}  Z_{p_3}^{\m_1}
\rp^{p-1}  }{   \lp
\sum_{i_3=1}^{l}
\lp
 \mE_{{\mathcal U}_1}  Z_{i_3}^{\m_1}
\rp^p
\rp^{2 - \frac{\m_2}{\m_1}  }
 }
 \nonumber \\
 & &
 \times
\mE_{{\mathcal U}_1}
  \frac{\m_1}{Z_{p_3}^{1-\m_1}} s  (C_{p_3}^{(p_1)})^{s-1}\sum_{p_2=1}^{l}
\beta_{p_1}A_{p_3}^{(p_1,p_2)}\|\y^{(p_2)}\|_2\sqrt{1-t}
 \Bigg ) \Bigg .
 \Bigg ) \Bigg .
 \nonumber \\
  &  = & -s\sqrt{1-t}\beta^2(\q_1-\q_2)(\m_1-\m_2)\mE_{G,{\mathcal U}_3} \Bigg( \Bigg.
  \mE_{{\mathcal U}_2}
  \frac{   \lp \sum_{i_3=1}^{l} \lp \mE_{{\mathcal U}_1} Z_{i_3}^{\m_1}\rp^p      \rp^{\frac{\m_2}{\m_1}}   }
  {  \mE_{{\mathcal U}_2}   \lp     \lp \sum_{i_3=1}^{l} \lp \mE_{{\mathcal U}_1} Z_{i_3}^{\m_1}\rp^p      \rp^{\frac{\m_2}{\m_1}}      \rp   }
  \nonumber \\
  & &
  \times
\sum_{i_3=1}^{l}
\frac{  \lp \mE_{{\mathcal U}_1} Z_{i_3}^{\m_1}\rp^p      } { \lp \sum_{i_3=1}^{l} \lp \mE_{{\mathcal U}_1} Z_{i_3}^{\m_1}\rp^p      \rp }
    \mE_{{\mathcal U}_1}\frac{Z_{i_3}^{\m_1}}{\mE_{{\mathcal U}_1} Z_{i_3}^{\m_1}}
 \frac{(C_{i_3}^{(i_1)})^{s-1} A_{i_3}^{(i_1,i_2)}}{Z_{i_3}} \nonumber \\
 & & \times
\sum_{p_3=1}^{l}
\frac{  \lp \mE_{{\mathcal U}_1} Z_{p_3}^{\m_1}\rp^p      } { \lp \sum_{i_3=1}^{l} \lp \mE_{{\mathcal U}_1} Z_{p_3}^{\m_1}\rp^p      \rp }
\mE_{{\mathcal U}_1}\frac{Z_{p_3}^{\m_1}}{\mE_{{\mathcal U}_1} Z_{p_3}^{\m_1}}
  \sum_{p_1=1}^{l}
 \frac{(C_{p_3}^{(p_1)})^{s}}{Z_{p_3}}  \sum_{p_2=1}^{l}
\frac{A_{p_3}^{(p_1,p_2)}}{(C_{p_3}^{(p_1)})}\|\y^{(p_2)}\|_2\frac{(\x^{(i_1)})^T\x^{(p_1)}}{\beta_{i_1}} \Bigg. \Bigg). .\nonumber \\
\end{eqnarray}
We then note 
\begin{equation}\label{eq:lev2genEanal24}
\sum_{i_1=1}^{l}\sum_{i_2=1}^{l} \beta_{i_1}\|\y^{(i_2)}\|_2\frac{T_{2,2}^d}{\sqrt{1-t}}
 =
-s\beta^2(\q_1-\q_2)(\m_1-\m_2) p \mE_{G,{\mathcal U}_3} \langle \|\y^{(i_2)}\|_2\|\y^{(p_2)}\|_2(\x^{(i_1)})^T\x^{(p_1)}\rangle_{\gamma_{21}^{(2)}}.
\end{equation}

To determine  $T_{2,2}^e$, we start with
\begin{eqnarray}\label{eq:lev2genEanal20bb0}
 \frac{d}{d\u^{(p_1,3,2)}}\lp
 \lp
 \mE_{{\mathcal U}_1}  Z_{i_3}^{\m_1}
\rp^{p-1}
 \rp
=
(p-1) \lp
 \mE_{{\mathcal U}_1}  Z_{i_3}^{\m_1}
\rp^{p-2}
\mE_{{\mathcal U}_1}\frac{d Z_{i_3}^{\m_1}}{d\u^{(p_1,3,2)}}.
\end{eqnarray}
 A combination of (\ref{eq:lev2genEanal20bb0}) and (\ref{eq:lev2genEanal21}) then gives
\begin{eqnarray}\label{eq:lev2genEanal22bb1}
 \frac{d}{d\u^{(p_1,3,2)}}\lp
 \lp
\sum_{i_3=1}^{l}
\lp
 \mE_{{\mathcal U}_1}  Z_{i_3}^{\m_1}
\rp^p
\rp^{\frac{\m_2}{\m_1} -1 }
 \rp
& = &
(p-1) \lp
 \mE_{{\mathcal U}_1}  Z_{i_3}^{\m_1}
\rp^{p-2}
 \nonumber \\
 & &
 \times
\mE_{{\mathcal U}_1}
  \frac{\m_1}{Z_{i_3}^{1-\m_1}} s  (C_{i_3}^{(p_1)})^{s-1}\sum_{p_2=1}^{l}
\beta_{p_1}A_{i_3}^{(p_1,p_2)}\|\y^{(p_2)}\|_2\sqrt{1-t}.\nonumber \\
\end{eqnarray}
Combining  further (\ref{eq:lev2genEanal19b}) and (\ref{eq:lev2genEanal22bb1}) we first arrive at  the following analogue to (\ref{eq:genEanal23})
\begin{eqnarray}\label{eq:lev2genEanal23bb2}
 T_{2,2}^e
  &  = &
\mE_{G,{\mathcal U}_2,{\mathcal U}_1} \Bigg ( \Bigg .
\mE_{{\mathcal U}_2} \sum_{i_3=1}^{l}
\frac{ \lp\lp
\sum_{i_3=1}^{l}
\lp
 \mE_{{\mathcal U}_1}  Z_{i_3}^{\m_1}
\rp^p
\rp^{\frac{\m_2}{\m_1} -1 }\rp  }   {\mE_{{\mathcal U}_2}\lp\lp
\sum_{i_3=1}^{l}
\lp
 \mE_{{\mathcal U}_1}  Z_{i_3}^{\m_1}
\rp^p
\rp^{\frac{\m_2}{\m_1}}\rp}
 \frac{(C_{i_3}^{(i_1)})^{s-1} A_{i_3}^{(i_1,i_2)}}{Z_{i_3}^{1-\m_1}}
 \nonumber \\
 & &
 \times
 \sum_{p_1=1}^{l}\mE_{{\mathcal U}_2}(\u^{(i_1,3,2)}\u^{(p_1,3,2)})
  \Bigg ( \Bigg .
(p-1) \lp
 \mE_{{\mathcal U}_1}  Z_{i_3}^{\m_1}
\rp^{p-2}
 \nonumber \\
 & &
 \times
\mE_{{\mathcal U}_1}
  \frac{\m_1}{Z_{i_3}^{1-\m_1}} s  (C_{i_3}^{(p_1)})^{s-1}\sum_{p_2=1}^{l}
\beta_{p_1}A_{i_3}^{(p_1,p_2)}\|\y^{(p_2)}\|_2\sqrt{1-t}
 \Bigg ) \Bigg .
 \Bigg ) \Bigg .
 \nonumber \\
  &  = & s\sqrt{1-t}\beta^2(\q_1-\q_2) \m_1  (p-1)  \mE_{G,{\mathcal U}_3} \Bigg( \Bigg.
  \mE_{{\mathcal U}_2}
  \frac{   \lp \sum_{i_3=1}^{l} \lp \mE_{{\mathcal U}_1} Z_{i_3}^{\m_1}\rp^p      \rp^{\frac{\m_2}{\m_1}}   }
  {  \mE_{{\mathcal U}_2}   \lp     \lp \sum_{i_3=1}^{l} \lp \mE_{{\mathcal U}_1} Z_{i_3}^{\m_1}\rp^p      \rp^{\frac{\m_2}{\m_1}}      \rp   }
  \nonumber \\
  & &
  \times
\sum_{i_3=1}^{l}
\frac{  \lp \mE_{{\mathcal U}_1} Z_{i_3}^{\m_1}\rp^p      } { \lp \sum_{i_3=1}^{l} \lp \mE_{{\mathcal U}_1} Z_{i_3}^{\m_1}\rp^p      \rp }
    \mE_{{\mathcal U}_1}\frac{Z_{i_3}^{\m_1}}{\mE_{{\mathcal U}_1} Z_{i_3}^{\m_1}}
 \frac{(C_{i_3}^{(i_1)})^{s-1} A_{i_3}^{(i_1,i_2)}}{Z_{i_3}} \nonumber \\
 & & \times
\mE_{{\mathcal U}_1}\frac{Z_{i_3}^{\m_1}}{\mE_{{\mathcal U}_1} Z_{i_3}^{\m_1}}
  \sum_{p_1=1}^{l}
 \frac{(C_{i_3}^{(p_1)})^{s}}{Z_{i_3}}  \sum_{p_2=1}^{l}
\frac{A_{i_3}^{(p_1,p_2)}}{(C_{i_3}^{(p_1)})}\|\y^{(p_2)}\|_2\frac{(\x^{(i_1)})^T\x^{(p_1)}}{\beta_{i_1}} \Bigg. \Bigg),\nonumber \\
\end{eqnarray}
and then note  
\begin{equation}\label{eq:lev2genEanal24bb3}
\sum_{i_1=1}^{l}\sum_{i_2=1}^{l} \beta_{i_1}\|\y^{(i_2)}\|_2\frac{T_{2,2}^e}{\sqrt{1-t}}
 =
s\beta^2(\q_1-\q_2) \m_1 ( p -1 ) \mE_{G,{\mathcal U}_3} \langle \|\y^{(i_2)}\|_2\|\y^{(p_2)}\|_2(\x^{(i_1)})^T\x^{(p_1)}\rangle_{\gamma_{21}^{(22}}.
\end{equation}
A combination of (\ref{eq:lev2liftgenEanal20}), (\ref{eq:lev2genEanal19c1}),
(\ref{eq:lev2genEanal24}), and (\ref{eq:lev2genEanal24bb3})  gives
\begin{eqnarray}\label{eq:lev2genEanal25}
\sum_{i_1=1}^{l}\sum_{i_2=1}^{l} \beta_{i_1}\|\y^{(i_2)}\|_2\frac{T_{2,2}}{\sqrt{1-t}}
& = & (\q_1-\q_2) \beta^2 \Bigg( \Bigg. \mE_{G,{\mathcal U}_3}\langle \|\x^{(i_1)}\|_2^2\|\y^{(i_2)}\|_2^2\rangle_{\gamma_{01}^{(2)}} \nonumber \\
& & +   (s-1)\mE_{G,{\mathcal U}_3}\langle \|\x^{(i_1)}\|_2^2 \|\y^{(i_2)}\|_2\|\y^{(p_2)}\|_2\rangle_{\gamma_{02}^{(2)}}\Bigg.\Bigg) \nonumber \\
& & - (\q_1-\q_2)s\beta^2(1-\m_1)\mE_{G,{\mathcal U}_3}\langle (\x^{(p_1)})^T\x^{(i_1)}\|\y^{(i_2)}\|_2\|\y^{(p_2)}\|_2 \rangle_{\gamma_{1}^{(2)}} \nonumber \\
&  & -s\beta^2(\q_1-\q_2)(\m_1-\m_2) p  \mE_{G,{\mathcal U}_3} \langle \|\y^{(i_2)}\|_2\|\y^{(p_2)}\|_2(\x^{(i_1)})^T\x^{(p_1)}\rangle_{\gamma_{21}^{(2)}}
\nonumber \\
&  &  + s\beta^2(\q_1-\q_2) \m_1 (p-1) \mE_{G,{\mathcal U}_3} \langle \|\y^{(i_2)}\|_2\|\y^{(p_2)}\|_2(\x^{(i_1)})^T\x^{(p_1)}\rangle_{\gamma_{22}^{(2)}}. \nonumber \\
\end{eqnarray}

\underline{\textbf{\emph{Determining}} $T_{2,3}$}
\label{sec:lev2hand1T23}

After Gaussian integration by parts we obtain
\begin{eqnarray}\label{eq:lev2genFanal21}
T_{2,3} & = &   \mE_{G,{\mathcal U}_3,{\mathcal U}_2} \lp
\sum_{i_3=1}^{l}
\frac{\lp
\sum_{i_3=1}^{l}
\lp
 \mE_{{\mathcal U}_1}  Z_{i_3}^{\m_1}
\rp^p
\rp^{\frac{\m_2}{\m_1}-1} \lp
 \mE_{{\mathcal U}_1}  Z_{i_3}^{\m_1}
\rp^{p-1}  }   {\mE_{{\mathcal U}_2}\lp\lp
\sum_{i_3=1}^{l}
\lp
 \mE_{{\mathcal U}_1}  Z_{i_3}^{\m_1}
\rp^p
\rp^{\frac{\m_2}{\m_1}}\rp}
\mE_{{\mathcal U}_1}\frac{(C_{i_3}^{(i_1)})^{s-1} A_{i_3}^{(i_1,i_2)} u^{(4,2)}}{Z_{i_3}^{1-\m_1}} \rp
\nonumber  \\
& = & \mE_{G,{\mathcal U}_3,{\mathcal U}_1} \Bigg( \Bigg.
\sum_{i_3=1}^{l}
\frac{ 1 }   {\mE_{{\mathcal U}_2}\lp\lp
\sum_{i_3=1}^{l}
\lp
 \mE_{{\mathcal U}_1}  Z_{i_3}^{\m_1}
\rp^p
\rp^{\frac{\m_2}{\m_1}}\rp}
\nonumber \\
& & \times \mE_{{\mathcal U}_2} \lp\mE_{{\mathcal U}_2} (u^{(4,2)}u^{(4,2)})\lp\frac{d}{du^{(4,2)}} \lp\frac{(C_{i_3}^{(i_1)})^{s-1} A_{i_3}^{(i_1,i_2)}}{Z_{i_3}^{1-\m_1}  \lp
\sum_{i_3=1}^{l}
\lp
 \mE_{{\mathcal U}_1}  Z_{i_3}^{\m_1}
\rp^p
\rp^{1-\frac{\m_2}{\m_1}} \lp
 \mE_{{\mathcal U}_1}  Z_{i_3}^{\m_1}
\rp^{1-p}
 }
\rp \rp\rp\Bigg. \Bigg) \nonumber \\
& = & (\p_1\q_1-\p_2\q_2)\mE_{G,{\mathcal U}_3,{\mathcal U}_2,{\mathcal U}_1}  \Bigg ( \Bigg.
\sum_{i_3=1}^{l}
\frac{\lp
\sum_{i_3=1}^{l}
\lp
 \mE_{{\mathcal U}_1}  Z_{i_3}^{\m_1}
\rp^p
\rp^{\frac{\m_2}{\m_1}-1} \lp
 \mE_{{\mathcal U}_1}  Z_{i_3}^{\m_1}
\rp^{p-1}  }   {\mE_{{\mathcal U}_2}\lp\lp
\sum_{i_3=1}^{l}
\lp
 \mE_{{\mathcal U}_1}  Z_{i_3}^{\m_1}
\rp^p
\rp^{\frac{\m_2}{\m_1}}\rp}
\nonumber \\
& &
\times
\lp
\frac{d}{du^{(4,2)}} \lp\frac{(C_{i_3}^{(i_1)})^{s-1} A_{i_3}^{(i_1,i_2)}}{Z_{i_3}^{1-\m_1}}
\rp
\rp
 \Bigg ) \Bigg.
\nonumber \\
& & + (\p_1\q_1-\p_2\q_2)\mE_{G,{\mathcal U}_3,{\mathcal U}_2,{\mathcal U}_1}  \Bigg ( \Bigg.
\sum_{i_3=1}^{l}
\frac{Z_{i_3}^{\m_1-1}(C_{i_3}^{(i_1)})^{s-1} A_{i_3}^{(i_1,i_2)}  \lp
 \mE_{{\mathcal U}_1}  Z_{i_3}^{\m_1}
\rp^{p-1}  }
{  \mE_{{\mathcal U}_2}\lp\lp
\sum_{i_3=1}^{l}
\lp
 \mE_{{\mathcal U}_1}  Z_{i_3}^{\m_1}
\rp^p
\rp^{\frac{\m_2}{\m_1}}\rp   }
\nonumber \\
& &
\times
 \Bigg ( \Bigg.
 \frac{d}{du^{(4,2)}}
  \Bigg ( \Bigg.
\lp
\sum_{i_3=1}^{l}
\lp
 \mE_{{\mathcal U}_1}  Z_{i_3}^{\m_1}
\rp^p
\rp^{\frac{\m_2}{\m_1} -1 }
 \Bigg ) \Bigg.
 \Bigg ) \Bigg.
 \Bigg ) \Bigg.
\nonumber \\
& & + (\p_1\q_1-\p_2\q_2)\mE_{G,{\mathcal U}_3,{\mathcal U}_2,{\mathcal U}_1} \Bigg ( \Bigg.
\sum_{i_3=1}^{l}
\frac{  \lp
\sum_{i_3=1}^{l}
\lp
 \mE_{{\mathcal U}_1}  Z_{i_3}^{\m_1}
\rp^p
\rp^{\frac{\m_2}{\m_1}  -1 }
 Z_{i_3}^{\m_1-1}(C_{i_3}^{(i_1)})^{s-1} A_{i_3}^{(i_1,i_2)}  }
{  \mE_{{\mathcal U}_2}\lp\lp
\sum_{i_3=1}^{l}
\lp
 \mE_{{\mathcal U}_1}  Z_{i_3}^{\m_1}
\rp^p
\rp^{\frac{\m_2}{\m_1}}\rp   }
\nonumber \\
& &
\times
\lp\frac{d}{du^{(4,2)}} \lp
 \lp
 \mE_{{\mathcal U}_1}  Z_{i_3}^{\m_1}
\rp^{p-1}
\rp
\rp
\Bigg .\Bigg ).\nonumber \\
\end{eqnarray}
As usual, it convenient to rewrite the above as
\begin{eqnarray}\label{eq:lev2genFanal22}
T_{2,3} & = & T_{2,3}^c+T_{2,3}^d+T_{2,3}^e,
\end{eqnarray}
where
\begin{eqnarray}\label{eq:lev2genFanal23}
T_{2,3}^c & = & (\p_1\q_1-\p_2\q_2)\mE_{G,{\mathcal U}_3,{\mathcal U}_2,{\mathcal U}_1}  \Bigg ( \Bigg.
\sum_{i_3=1}^{l}
\frac{\lp
\sum_{i_3=1}^{l}
\lp
 \mE_{{\mathcal U}_1}  Z_{i_3}^{\m_1}
\rp^p
\rp^{\frac{\m_2}{\m_1}-1} \lp
 \mE_{{\mathcal U}_1}  Z_{i_3}^{\m_1}
\rp^{p-1}  }   {\mE_{{\mathcal U}_2}\lp\lp
\sum_{i_3=1}^{l}
\lp
 \mE_{{\mathcal U}_1}  Z_{i_3}^{\m_1}
\rp^p
\rp^{\frac{\m_2}{\m_1}}\rp}
\nonumber \\
& &
\times
\lp
\frac{d}{du^{(4,2)}} \lp\frac{(C_{i_3}^{(i_1)})^{s-1} A_{i_3}^{(i_1,i_2)}}{Z_{i_3}^{1-\m_1}}
\rp
\rp
 \Bigg ) \Bigg.
  \nonumber \\
 T_{2,3}^d & = & (\p_1\q_1-\p_2\q_2)\mE_{G,{\mathcal U}_3,{\mathcal U}_2,{\mathcal U}_1}  \Bigg ( \Bigg.
\sum_{i_3=1}^{l}
\frac{Z_{i_3}^{\m_1-1}(C_{i_3}^{(i_1)})^{s-1} A_{i_3}^{(i_1,i_2)}  \lp
 \mE_{{\mathcal U}_1}  Z_{i_3}^{\m_1}
\rp^{p-1}  }
{  \mE_{{\mathcal U}_2}\lp\lp
\sum_{i_3=1}^{l}
\lp
 \mE_{{\mathcal U}_1}  Z_{i_3}^{\m_1}
\rp^p
\rp^{\frac{\m_2}{\m_1}}\rp   }
\nonumber \\
& &
\times
 \Bigg ( \Bigg.
 \frac{d}{du^{(4,2)}}
  \Bigg ( \Bigg.
\lp
\sum_{i_3=1}^{l}
\lp
 \mE_{{\mathcal U}_1}  Z_{i_3}^{\m_1}
\rp^p
\rp^{\frac{\m_2}{\m_1} -1 }
 \Bigg ) \Bigg.
 \Bigg ) \Bigg.
 \Bigg ) \Bigg.
 \nonumber \\
 T_{2,3}^e & = &
 (\p_1\q_1-\p_2\q_2)\mE_{G,{\mathcal U}_3,{\mathcal U}_2,{\mathcal U}_1} \Bigg ( \Bigg.
\sum_{i_3=1}^{l}
\frac{  \lp
\sum_{i_3=1}^{l}
\lp
 \mE_{{\mathcal U}_1}  Z_{i_3}^{\m_1}
\rp^p
\rp^{\frac{\m_2}{\m_1} -1 }
 Z_{i_3}^{\m_1-1}(C_{i_3}^{(i_1)})^{s-1} A_{i_3}^{(i_1,i_2)}  }
{  \mE_{{\mathcal U}_2}\lp\lp
\sum_{i_3=1}^{l}
\lp
 \mE_{{\mathcal U}_1}  Z_{i_3}^{\m_1}
\rp^p
\rp^{\frac{\m_2}{\m_1}  }\rp   }
\nonumber \\
& &
\times
\lp\frac{d}{du^{(4,2)}} \lp
 \lp
 \mE_{{\mathcal U}_1}  Z_{i_3}^{\m_1}
\rp^{p-1}
\rp
\rp
\Bigg .\Bigg ). \nonumber \\
 \end{eqnarray}
Observing $\frac{T_{2,3}^c}{\p_1\q_1-\p_2\q_2}=\frac{T_{1,3}}{\p_0\q_0-\p_1\q_1}$ together with (\ref{eq:lev2liftgenCanal21b}) gives the following analogue to (\ref{eq:genFanal23b})
\begin{eqnarray}\label{eq:lev2genFanal23b}
\sum_{i_1=1}^{l}\sum_{i_2=1}^{l} \beta_{i_1}\|\y^{(i_2)}\|_2 \frac{T_{2,3}^c}{\sqrt{t}} & = &
(\p_1\q_1-\p_2\q_2)\beta^2 \Bigg(\Bigg. \mE_{G,{\mathcal U}_3}\langle \|\x^{(i_1)}\|_2^2\|\y^{(i_2)}\|_2^2\rangle_{\gamma_{01}^{(2)}} \nonumber \\
& & +   (s-1)\mE_{G,{\mathcal U}_3}\langle \|\x^{(i_1)}\|_2^2 \|\y^{(i_2)}\|_2\|\y^{(p_2)}\|_2\rangle_{\gamma_{02}^{(2)}} \Bigg.\Bigg) \nonumber \\
& & - (\p_1\q_1-\p_2\q_2) s\beta^2(1-\m_1)\mE_{G,{\mathcal U}_3}\langle \|\x^{(i_1)}\|_2\|\x^{(p_`)}\|_2\|\y^{(i_2)}\|_2\|\y^{(p_2)}\|_2 \rangle_{\gamma_{1}^{(2)}}. \nonumber \\
\end{eqnarray}

To find $T_{2,3}^d$, we first write
\begin{eqnarray}\label{eq:lev2genFanal24}
 \frac{d}{du^{(4,2)}}\lp
 \lp
\sum_{i_3=1}^{l}
\lp
 \mE_{{\mathcal U}_1}  Z_{i_3}^{\m_1}
\rp^p
\rp^{\frac{\m_2}{\m_1} -1 }
 \rp
=
-p \sum_{p_3=1}^{l}
  \frac{\lp 1-\frac{\m_2}{\m_1} \rp
\lp
 \mE_{{\mathcal U}_1}  Z_{p_3}^{\m_1}
\rp^{p-1}  }{   \lp
\sum_{i_3=1}^{l}
\lp
 \mE_{{\mathcal U}_1}  Z_{i_3}^{\m_1}
\rp^p
\rp^{2 - \frac{\m_2}{\m_1}  }
 }
\mE_{{\mathcal U}_1}   \frac{\m_1}{Z_{p_3}^{1-\m_1}}       \frac{d Z_{p_3} }{du^{(4,2)}}.
\nonumber \\
 \end{eqnarray}
 Recalling on (\ref{eq:genFanal25}) gives
\begin{equation}\label{eq:lev2genFanal25}
\frac{dZ_{p_3}}{du^{(4,2)}}=\frac{d\sum_{p_1=1}^{l}  (C_{p_3}^{(p_1)})^s}{du^{(4,2)}}
=s \sum_{p_1=1}^{l} (C_{p_3}^{(p_1)})^{s-1}\sum_{p_2=1}^{l}\frac{d(A_{p_3}^{(p_1,p_2)})}{du^{(4,2)}}=s \sum_{p_1=1}^{l} (C_{p_3}^{(p_1)})^{s-1}\sum_{p_2=1}^{l}
\beta_{p_1}A_{p_3}^{(p_1,p_2)}\|\y^{(p_2)}\|_2\sqrt{t}.
\end{equation}
Connecting (\ref{eq:lev2genFanal23}), (\ref{eq:lev2genFanal24}) and (\ref{eq:lev2genFanal25}), we first find
\begin{eqnarray}\label{eq:lev2genFanal27}
T_{2,3}^d   & = & -(\p_1\q_1-\p_2\q_2)\mE_{G,{\mathcal U}_3,{\mathcal U}_2,{\mathcal U}_1} \Bigg( \Bigg.
\sum_{i_3=1}^{l}
\frac{Z_{i_3}^{\m_1-1}(C_{i_3}^{(i_1)})^{s-1} A_{i_3}^{(i_1,i_2)}  \lp
 \mE_{{\mathcal U}_1}  Z_{i_3}^{\m_1}
\rp^{p-1}  }
{  \mE_{{\mathcal U}_2}\lp\lp
\sum_{i_3=1}^{l}
\lp
 \mE_{{\mathcal U}_1}  Z_{i_3}^{\m_1}
\rp^p
\rp^{\frac{\m_2}{\m_1}}\rp   }
\nonumber \\
& & \times
p
\sum_{p_3=1}^{l}
  \frac{\lp 1-\frac{\m_2}{\m_1} \rp
\lp
 \mE_{{\mathcal U}_1}  Z_{p_3}^{\m_1}
\rp^{p-1}  }{   \lp
\sum_{i_3=1}^{l}
\lp
 \mE_{{\mathcal U}_1}  Z_{i_3}^{\m_1}
\rp^p
\rp^{2 - \frac{\m_2}{\m_1}  }
 }
\mE_{{\mathcal U}_1} \frac{\m_1}{Z_{p_3}^{1-\m_1}}s \sum_{p_1=1}^{l} (C_{p_3}^{(p_1)})^{s-1}\sum_{p_2=1}^{l}
\beta_{p_1}A_{p_3}^{(p_1,p_2)}\|\y^{(p_2)}\|_2\sqrt{t}\Bigg. \Bigg) \nonumber \\
& = & -s\sqrt{t}(\p_1\q_1-\p_2\q_2)(\m_1-\m_2) p \mE_{G,{\mathcal U}_3} \Bigg( \Bigg.
\sum_{i_3=1}^{l}
\frac{  \lp
\sum_{i_3=1}^{l}
\lp
 \mE_{{\mathcal U}_1}  Z_{i_3}^{\m_1}
\rp^p
\rp^{\frac{\m_2}{\m_1}}
  }
{  \mE_{{\mathcal U}_2}\lp\lp
\sum_{i_3=1}^{l}
\lp
 \mE_{{\mathcal U}_1}  Z_{i_3}^{\m_1}
\rp^p
\rp^{\frac{\m_2}{\m_1}}\rp   }
\nonumber \\
& &
\times
\sum_{i_3=1}^{l}
  \frac{
\lp
 \mE_{{\mathcal U}_1}  Z_{i_3}^{\m_1}
\rp^{p}  }{   \lp
\sum_{i_3=1}^{l}
\lp
 \mE_{{\mathcal U}_1}  Z_{i_3}^{\m_1}
\rp^p
\rp
 }
\mE_{{\mathcal U}_1} \frac{Z_{i_3}^{\m_1}}{\mE_{{\mathcal U}_1} Z_{i_3}^{\m_1}} \frac{(C_{i_3}^{(i_1)})^s}{Z_{i_3}}\frac{A_{i_3}^{(i_1,i_2)}}{(C_{i_3}^{(i_1)})}\nonumber \\
& & \times
\sum_{p_3=1}^{l}
  \frac{
\lp
 \mE_{{\mathcal U}_1}  Z_{p_3}^{\m_1}
\rp^{p}  }{   \lp
\sum_{p_3=1}^{l}
\lp
 \mE_{{\mathcal U}_1}  Z_{p_3}^{\m_1}
\rp^p
\rp
 }
\mE_{{\mathcal U}_1} \frac{Z_{p_3}^{\m_1}}{\mE_{{\mathcal U}_1} Z_{p_3}^{\m_1}} \sum_{p_1=1}^{l} \frac{(C_{p_3}^{(p_1)})^{s}}{Z_{p_3}}\sum_{p_2=1}^{l}
\frac{A_{p_3}^{(p_1,p_2)}}{(C^{(p_1)})}\beta_{p_1}\|\y^{(p_2)}\|_2\Bigg. \Bigg).
\end{eqnarray}
and then
\begin{eqnarray}\label{eq:lev2genFanal28}
\sum_{i_1=1}^{l}\sum_{i_2=1}^{l} \beta_{i_1}\|\y^{(i_2)}\|_2\frac{T_{2,3}^d}{\sqrt{t}}
 & = & -s\beta^2(\p_1\q_1-\p_2\q_2)(\m_1-\m_2) p \mE_{G,{\mathcal U}_3} \langle\|\x^{(i_2)}\|_2\|\x^{(p_2)}\|_2\|\y^{(i_2)}\|_2\|\y^{(p_2)}\rangle_{\gamma_{21}^{(2)}}.\nonumber \\
\end{eqnarray}

To determine $T_{2,3}^e$, we first write
\begin{eqnarray}\label{eq:lev2genFanal24bb0}
 \frac{d}{du^{(4,2)}}\lp
\lp
 \mE_{{\mathcal U}_1}  Z_{i_3}^{\m_1}
\rp^{p-1}
  \rp
=
(p-1) \lp
 \mE_{{\mathcal U}_1}  Z_{i_3}^{\m_1}
\rp^{p-2}
 \mE_{{\mathcal U}_1}   \frac{\m_1}{Z_{i_3}^{1-\m_1}}       \frac{d Z_{i_3} }{du^{(4,2)}}.
\nonumber \\
 \end{eqnarray}
 Connecting (\ref{eq:lev2genFanal23}), (\ref{eq:lev2genFanal24bb0}) and (\ref{eq:lev2genFanal25}), we find
\begin{eqnarray}\label{eq:lev2genFanal27bb2}
T_{2,3}^e   & = & -(\p_1\q_1-\p_2\q_2)\mE_{G,{\mathcal U}_3,{\mathcal U}_2,{\mathcal U}_1} \Bigg( \Bigg.
\sum_{i_3=1}^{l}
\frac{  \lp
\sum_{i_3=1}^{l}
\lp
 \mE_{{\mathcal U}_1}  Z_{i_3}^{\m_1}
\rp^p
\rp^{\frac{\m_2}{\m_1} -1 }
 Z_{i_3}^{\m_1-1}(C_{i_3}^{(i_1)})^{s-1} A_{i_3}^{(i_1,i_2)}  }
{  \mE_{{\mathcal U}_2}\lp\lp
\sum_{i_3=1}^{l}
\lp
 \mE_{{\mathcal U}_1}  Z_{i_3}^{\m_1}
\rp^p
\rp^{\frac{\m_2}{\m_1}}\rp   }
\nonumber \\
& & \times
(p-1) \lp
 \mE_{{\mathcal U}_1}  Z_{i_3}^{\m_1}
\rp^{p-2}
\mE_{{\mathcal U}_1} \frac{\m_1}{Z_{i_3}^{1-\m_1}}s \sum_{p_1=1}^{l} (C_{i_3}^{(p_1)})^{s-1}\sum_{p_2=1}^{l}
\beta_{p_1}A_{i_3}^{(p_1,p_2)}\|\y^{(p_2)}\|_2\sqrt{t}\Bigg. \Bigg) \nonumber \\
& = & s\sqrt{t}(\p_1\q_1-\p_2\q_2)\m_1 (p-1) \mE_{G,{\mathcal U}_3} \Bigg( \Bigg.
\sum_{i_3=1}^{l}
\frac{  \lp
\sum_{i_3=1}^{l}
\lp
 \mE_{{\mathcal U}_1}  Z_{i_3}^{\m_1}
\rp^p
\rp^{\frac{\m_2}{\m_1}}
  }
{  \mE_{{\mathcal U}_2}\lp\lp
\sum_{i_3=1}^{l}
\lp
 \mE_{{\mathcal U}_1}  Z_{i_3}^{\m_1}
\rp^p
\rp^{\frac{\m_2}{\m_1}}\rp   }
\nonumber \\
& &
\times
\sum_{i_3=1}^{l}
  \frac{
\lp
 \mE_{{\mathcal U}_1}  Z_{i_3}^{\m_1}
\rp^{p}  }{   \lp
\sum_{i_3=1}^{l}
\lp
 \mE_{{\mathcal U}_1}  Z_{i_3}^{\m_1}
\rp^p
\rp
 }
\mE_{{\mathcal U}_1} \frac{Z_{i_3}^{\m_1}}{\mE_{{\mathcal U}_1} Z_{i_3}^{\m_1}} \frac{(C_{i_3}^{(i_1)})^s}{Z_{i_3}}\frac{A_{i_3}^{(i_1,i_2)}}{(C_{i_3}^{(i_1)})}\nonumber \\
& & \times
\mE_{{\mathcal U}_1} \frac{Z_{i_3}^{\m_1}}{\mE_{{\mathcal U}_1} Z_{i_3}^{\m_1}} \sum_{p_1=1}^{l} \frac{(C_{i_3}^{(p_1)})^{s}}{Z_{i_3}}\sum_{p_2=1}^{l}
\frac{A_{i_3}^{(p_1,p_2)}}{(C_{i_3}^{(p_1)})}\beta_{p_1}\|\y^{(p_2)}\|_2\Bigg. \Bigg),
\end{eqnarray}
and 
\begin{eqnarray}\label{eq:lev2genFanal28bb3}
\sum_{i_1=1}^{l}\sum_{i_2=1}^{l} \beta_{i_1}\|\y^{(i_2)}\|_2\frac{T_{2,3}^e}{\sqrt{t}}
 & = & s\beta^2(\p_1\q_1-\p_2\q_2) \m_1 (p-1) \mE_{G,{\mathcal U}_3} \langle\|\x^{(i_2)}\|_2\|\x^{(p_2)}\|_2\|\y^{(i_2)}\|_2\|\y^{(p_2)}\rangle_{\gamma_{22}^{(2)}}.\nonumber \\
\end{eqnarray}
A combination of  (\ref{eq:lev2genFanal22}), (\ref{eq:lev2genFanal23b}), (\ref{eq:lev2genFanal28}), and (\ref{eq:lev2genFanal28bb3}) is sufficient to obtain the following analogue to (\ref{eq:genFanal29})
 \begin{eqnarray}\label{eq:lev2genFanal29}
\sum_{i_1=1}^{l}\sum_{i_2=1}^{l} \beta_{i_1}\|\y^{(i_2)}\|_2\frac{T_{2,3}}{\sqrt{t}}
& = &
(\p_1\q_1-\p_2\q_2)\beta^2 \Bigg( \Bigg. \mE_{G,{\mathcal U}_3}\langle \|\x^{(i_1)}\|_2^2\|\y^{(i_2)}\|_2^2\rangle_{\gamma_{01}^{(2)}} \nonumber \\
& & +  (s-1)\mE_{G,{\mathcal U}_3}\langle \|\x^{(i_1)}\|_2^2 \|\y^{(i_2)}\|_2\|\y^{(p_2)}\|_2\rangle_{\gamma_{02}^{(2)}}\Bigg.\Bigg) \nonumber \\
& & - (\p_1\q_1-\p_2\q_2) s\beta^2(1-\m_1)\mE_{G,{\mathcal U}_3}\langle \|\x^{(i_1)}\|_2\|\x^{(p_`)}\|_2\|\y^{(i_2)}\|_2\|\y^{(p_2)}\|_2 \rangle_{\gamma_{1}^{(2)}} \nonumber \\
&  & -s\beta^2(\p_1\q_1-\p_2\q_2)(\m_1-\m_2) p \mE_{G,{\mathcal U}_3} \langle\|\x^{(i_2)}\|_2\|\x^{(p_2)}\|_2\|\y^{(i_2)}\|_2\|\y^{(p_2)}\rangle_{\gamma_{21}^{(2)}}\nonumber \\
&  & +s\beta^2(\p_1\q_1-\p_2\q_2) \m_1 (p-1) \mE_{G,{\mathcal U}_3} \langle\|\x^{(i_2)}\|_2\|\x^{(p_2)}\|_2\|\y^{(i_2)}\|_2\|\y^{(p_2)}\rangle_{\gamma_{22}^{(2)}}.\nonumber \\
\end{eqnarray}

\subsubsection{$T_3$--group --- second level}
\label{sec:lev2x3lev2handlTk2}

As usual, each of the three terms from $T_3$ group is handled separately.

\underline{\textbf{\emph{Determining}} $T_{3,1,j}$}
\label{sec:lev2x3lev2hand1T21}

After Gaussian integration by parts we write
\begin{eqnarray}\label{eq:lev2x3lev2genDanal19}
T_{3,1,j} & = &   \mE_{G,{\mathcal U}_3,{\mathcal U}_2} \lp
\sum_{i_3=1}^{l}
\frac{\lp
\sum_{i_3=1}^{l}
\lp
 \mE_{{\mathcal U}_1}  Z_{i_3}^{\m_1}
\rp^p
\rp^{\frac{\m_2}{\m_1}-1} \lp
 \mE_{{\mathcal U}_1}  Z_{i_3}^{\m_1}
\rp^{p-1}  }   {\mE_{{\mathcal U}_2}\lp\lp
\sum_{i_3=1}^{l}
\lp
 \mE_{{\mathcal U}_1}  Z_{i_3}^{\m_1}
\rp^p
\rp^{\frac{\m_2}{\m_1}}\rp}
  \mE_{{\mathcal U}_1}\frac{(C_{i_3}^{(i_1)})^{s-1} A_{i_3}^{(i_1,i_2)} \y_j^{(i_2)}\u_j^{(2,3)}}{Z_{i_3}^{1-\m_1}} \rp \nonumber \\
& = &  \mE_{G,{\mathcal U}_2,{\mathcal U}_1} \lp \mE_{{\mathcal U}_3}
\sum_{i_3=1}^{l}
\frac{\lp
\sum_{i_3=1}^{l}
\lp
 \mE_{{\mathcal U}_1}  Z_{i_3}^{\m_1}
\rp^p
\rp^{\frac{\m_2}{\m_1}-1} \lp
 \mE_{{\mathcal U}_1}  Z_{i_3}^{\m_1}
\rp^{p-1}  }   {\mE_{{\mathcal U}_2}\lp\lp
\sum_{i_3=1}^{l}
\lp
 \mE_{{\mathcal U}_1}  Z_{i_3}^{\m_1}
\rp^p
\rp^{\frac{\m_2}{\m_1}}\rp }
 \frac{(C_{i_3}^{(i_1)})^{s-1} A_{i_3}^{(i_1,i_2)} \y_j^{(i_2)}\u_j^{(2,3)}}{Z_{i_3}^{1-\m_1}}
 \rp \nonumber \\
& = &  T_{3,1,j}^{c} +  T_{3,1,j}^{d},
 \end{eqnarray}
 where
\begin{eqnarray}\label{eq:lev2x3lev2genDanal19b}
T_{3,1,j}^c &  = &
\mE_{G,{\mathcal U}_3,{\mathcal U}_2,{\mathcal U}_1} \Bigg . \Bigg(
\sum_{i_3=1}^{l}
\frac{ 1 }   {\mE_{{\mathcal U}_2}\lp\lp
\sum_{i_3=1}^{l}
\lp
 \mE_{{\mathcal U}_1}  Z_{i_3}^{\m_1}
\rp^p
\rp^{\frac{\m_2}{\m_1}}\rp}
\nonumber \\
& &
\times
 \Bigg . \Bigg(
\mE_{{\mathcal U}_3} (\u_j^{(2,3)}\u_j^{(2,3)})\frac{d}{d\u_j^{(2,3)}}
 \Bigg . \Bigg(
\frac{(C_{i_3}^{(i_1)})^{s-1} A_{i_3}^{(i_1,i_2)}\y_j^{(i_2)}}{Z_{i_3}^{1-\m_1}  \lp
\sum_{i_3=1}^{l}
\lp
 \mE_{{\mathcal U}_1}  Z_{i_3}^{\m_1}
\rp^p
\rp^{ 1- \frac{\m_2}{\m_1}} \lp
 \mE_{{\mathcal U}_1}  Z_{i_3}^{\m_1}
\rp^{1-p}   }
\Bigg . \Bigg)
\Bigg . \Bigg)
\Bigg . \Bigg)
\nonumber \\
T_{3,1,j}^d &  = & \mE_{G,{\mathcal U}_3,{\mathcal U}_2,{\mathcal U}_1} \Bigg ( \Bigg.
\sum_{i_3=1}^{l}
 \frac{
\lp
\sum_{i_3=1}^{l}
\lp
 \mE_{{\mathcal U}_1}  Z_{i_3}^{\m_1}
\rp^p
\rp^{\frac{\m_2}{\m_1}-1} \lp
 \mE_{{\mathcal U}_1}  Z_{i_3}^{\m_1}
\rp^{p-1}
(C_{i_3}^{(i_1)})^{s-1} A_{i_3}^{(i_1,i_2)}\y_j^{(i_2)}}{Z_{i_3}^{1-\m_1}   }
\nonumber \\
& &
\times
  \Bigg .\Bigg ( \mE_{{\mathcal U}_2} (\u_j^{(2,3)}\u_j^{(2,3)})\frac{d}{d\u_j^{(2,3)}}
 \Bigg .\Bigg ( \frac{1}{   \mE_{{\mathcal U}_2}\lp\lp
\sum_{i_3=1}^{l}
\lp
 \mE_{{\mathcal U}_1}  Z_{i_3}^{\m_1}
\rp^p
\rp^{\frac{\m_2}{\m_1}}\rp  }
 \Bigg .\Bigg ) \Bigg .\Bigg ) \Bigg .\Bigg ).
 \nonumber \\
\end{eqnarray}
Keeping in mind  $\mE_{{\mathcal U}_3} (\u_j^{(2,3)}\u_j^{(2,3)})=\p_2$, we observe that $T_{3,1,j}^c$ scaled by $\p_2$ is structurally identical to $T_{2,1,j}^c$ scaled by $(\p_1-\p_2)$. This then immediately implies
\begin{eqnarray}\label{eq:lev2x3lev2genDanal19b1}
\sum_{i_1=1}^{l}\sum_{i_2=1}^{l}\sum_{j=1}^{m} \beta_{i_1}\frac{T_{3,1,j}^c}{\sqrt{1-t
}}
& = &  \sum_{i_1=1}^{l}\sum_{i_2=1}^{l}\sum_{j=1}^{m} \beta_{i_1}\frac{T_{2,1,j}}{\sqrt{1-t}}\frac{\p_2}{\p_1-\p_2}\nonumber\\
& = & \p_2 \beta^2 \lp \mE_{G,{\mathcal U}_3}\langle \|\x^{(i_1)}\|_2^2\|\y^{(i_2)}\|_2^2\rangle_{\gamma_{01}^{(2)}} +   (s-1)\mE_{G,{\mathcal U}_3}\langle \|\x^{(i_1)}\|_2^2(\y^{(p_2)})^T\y^{(i_2)}\rangle_{\gamma_{02}^{(2)}} \rp\nonumber \\
& & - \p_2s\beta^2(1-\m_1)\mE_{G,{\mathcal U}_3}\langle \|\x^{(i_1)}\|_2\|\x^{(p_1)}\|_2(\y^{(p_2)})^T\y^{(i_2)} \rangle_{\gamma_{1}^{(2)}}\nonumber \\
 &   &
  -s\beta^2\p_2(\m_1-\m_2) p \mE_{G,{\mathcal U}_3} \langle \|\x^{(i_1)}\|_2\|\x^{(p_1)}\|_2(\y^{(p_2)})^T\y^{(i_2)} \rangle_{\gamma_{21}^{(2)}}
  \nonumber \\
   &   &
  +s\beta^2\p_2 \m_1 (p-1) \mE_{G,{\mathcal U}_3} \langle \|\x^{(i_1)}\|_2\|\x^{(p_1)}\|_2(\y^{(p_2)})^T\y^{(i_2)} \rangle_{\gamma_{22}^{(2)}}.
\end{eqnarray}

We focus next on $T_{3,1,j}^d$ and write
\begin{eqnarray}\label{eq:lev2x3lev2genDanal20}
\frac{d}{d\u_j^{(2,3)}} \lp \frac{1}{  \mE_{{\mathcal U}_2}\lp\lp
\sum_{i_3=1}^{l}
\lp
 \mE_{{\mathcal U}_1}  Z_{i_3}^{\m_1}
\rp^p
\rp^{\frac{\m_2}{\m_1}}\rp    }   \rp
& = & -    \mE_{{\mathcal U}_2} \frac{ \frac{\m_2}{\m_1} \lp\lp
\sum_{i_3=1}^{l}
\lp
 \mE_{{\mathcal U}_1}  Z_{i_3}^{\m_1}
\rp^p
\rp^{\frac{\m_2}{\m_1} -1 }\rp    }{\lp    \mE_{{\mathcal U}_2}\lp\lp
\sum_{i_3=1}^{l}
\lp
 \mE_{{\mathcal U}_1}  Z_{i_3}^{\m_1}
\rp^p
\rp^{\frac{\m_2}{\m_1}}\rp      \rp^2}
\nonumber \\
& &
\times
p\sum_{p_3=1}^{l}
\lp
 \mE_{{\mathcal U}_1}  Z_{p_3}^{\m_1}
\rp^{p-1}
 \mE_{{\mathcal U}_1}\frac{d Z_{p_3}^{\m_1}}{d\u_j^{(2,3)}}.\nonumber \\
\end{eqnarray}
Recalling on (\ref{eq:genDanal21}) we first have
\begin{eqnarray}\label{eq:lev2x3lev2genDanal20a}
\frac{dZ_{p_3}^{\m_1}}{d\u_j^{(2,2)}}
& = & \frac{\m_1}{Z_{p_3}^{1-\m_1}}s  \sum_{p_1=1}^{l}  (C_{p_3}^{(p_1)})^{s-1}\sum_{p_2=1}^{l}
\beta_{p_1}A_{p_3}^{(p_1,p_2)}\y_j^{(p_2)}\sqrt{1-t}.
\end{eqnarray}
A combination of  (\ref{eq:lev2x3lev2genDanal19b}), (\ref{eq:lev2x3lev2genDanal20}), and (\ref{eq:lev2x3lev2genDanal20a}) then gives
\begin{eqnarray}\label{eq:lev2x3lev2genDanal21}
T_{3,1,j}^d &  = & -\p_2 \mE_{G,{\mathcal U}_3,{\mathcal U}_2,{\mathcal U}_1} \Bigg( \Bigg.
\sum_{i_3=1}^{l}
 \frac{
\lp
\sum_{i_3=1}^{l}
\lp
 \mE_{{\mathcal U}_1}  Z_{i_3}^{\m_1}
\rp^p
\rp^{\frac{\m_2}{\m_1}-1} \lp
 \mE_{{\mathcal U}_1}  Z_{i_3}^{\m_1}
\rp^{p-1}
(C_{i_3}^{(i_1)})^{s-1} A_{i_3}^{(i_1,i_2)}\y_j^{(i_2)}}{Z_{i_3}^{1-\m_1}   }
\nonumber \\
& &
\times
  \Bigg( \Bigg.
     \mE_{{\mathcal U}_2} \frac{ \frac{\m_2}{\m_1} \lp\lp
\sum_{i_3=1}^{l}
\lp
 \mE_{{\mathcal U}_1}  Z_{i_3}^{\m_1}
\rp^p
\rp^{\frac{\m_2}{\m_1} -1 }\rp    }{\lp    \mE_{{\mathcal U}_2}\lp\lp
\sum_{i_3=1}^{l}
\lp
 \mE_{{\mathcal U}_1}  Z_{i_3}^{\m_1}
\rp^p
\rp^{\frac{\m_2}{\m_1}}\rp      \rp^2}
  \nonumber \\
& & \times
p\sum_{p_3=1}^{l}
\lp
 \mE_{{\mathcal U}_1}  Z_{p_3}^{\m_1}
\rp^{p-1}
\mE_{{\mathcal U}_1} \frac{\m_1}{Z_{p_3}^{1-\m_1}}s  \sum_{p_1=1}^{l}  (C_{p_3}^{(p_1)})^{s-1}\sum_{p_2=1}^{l}
\beta_{p_1}A_{p_3}^{(p_1,p_2)}\y_j^{(p_2)}\sqrt{1-t}   \Bigg. \Bigg) \Bigg.\Bigg).\nonumber \\
&  = & -s\sqrt{1-t} \p_2 \m_2 p
 \nonumber \\
 & & \times
\mE_{G,{\mathcal U}_3,{\mathcal U}_2,{\mathcal U}_1} \Bigg( \Bigg.
\sum_{i_3=1}^{l}
 \frac{
\lp
\sum_{i_3=1}^{l}
\lp
 \mE_{{\mathcal U}_1}  Z_{i_3}^{\m_1}
\rp^p
\rp^{\frac{\m_2}{\m_1}-1} \lp
 \mE_{{\mathcal U}_1}  Z_{i_3}^{\m_1}
\rp^{p-1}
(C_{i_3}^{(i_1)})^{s-1} A_{i_3}^{(i_1,i_2)}\y_j^{(i_2)}}{  \lp    \mE_{{\mathcal U}_2}\lp\lp
\sum_{i_3=1}^{l}
\lp
 \mE_{{\mathcal U}_1}  Z_{i_3}^{\m_1}
\rp^p
\rp^{\frac{\m_2}{\m_1}}\rp      \rp   Z_{i_3}^{1-\m_1}   }
\nonumber \\
& &
\times
  \Bigg( \Bigg.
     \mE_{{\mathcal U}_2} \frac{ \lp\lp
\sum_{i_3=1}^{l}
\lp
 \mE_{{\mathcal U}_1}  Z_{i_3}^{\m_1}
\rp^p
\rp^{\frac{\m_2}{\m_1}  }\rp    }{ \lp    \mE_{{\mathcal U}_2}\lp\lp
\sum_{i_3=1}^{l}
\lp
 \mE_{{\mathcal U}_1}  Z_{i_3}^{\m_1}
\rp^p
\rp^{\frac{\m_2}{\m_1}}\rp      \rp}
  \nonumber \\
& & \times
\sum_{p_3=1}^{l}
\frac{
\lp
 \mE_{{\mathcal U}_1}  Z_{p_3}^{\m_1}
\rp^{p}   }{  \sum_{p_3=1}^{l}
\lp
 \mE_{{\mathcal U}_1}  Z_{p_3}^{\m_1}
\rp^p }
\mE_{{\mathcal U}_1} \frac{Z_{p_3}^{\m_1} }{\mE_{{\mathcal U}_1}  Z_{p_3}^{\m_1}   }   \sum_{p_1=1}^{l}  (C_{p_3}^{(p_1)})^{s-1}\sum_{p_2=1}^{l}
\beta_{p_1}A_{p_3}^{(p_1,p_2)}\y_j^{(p_2)}\sqrt{1-t}   \Bigg. \Bigg) \Bigg.\Bigg).\nonumber \\
 &  = & -s\sqrt{1-t}\p_2 \m_2 p
 \mE_{G,{\mathcal U}_3} \Bigg( \Bigg.
     \mE_{{\mathcal U}_2} \frac{ \lp\lp
\sum_{i_3=1}^{l}
\lp
 \mE_{{\mathcal U}_1}  Z_{i_3}^{\m_1}
\rp^p
\rp^{\frac{\m_2}{\m_1}  }\rp    }{ \lp    \mE_{{\mathcal U}_2}\lp\lp
\sum_{i_3=1}^{l}
\lp
 \mE_{{\mathcal U}_1}  Z_{i_3}^{\m_1}
\rp^p
\rp^{\frac{\m_2}{\m_1}}\rp      \rp}
\sum_{p_3=1}^{l}
\frac{
\lp
 \mE_{{\mathcal U}_1}  Z_{i_3}^{\m_1}
\rp^{p}   }{  \sum_{i_3=1}^{l}
\lp
 \mE_{{\mathcal U}_1}  Z_{i_3}^{\m_1}
\rp^p }
\nonumber \\
& &
\times
\mE_{{\mathcal U}_1} \frac{Z_{i_3}^{\m_1}}{\mE_{{\mathcal U}_1}  Z_{i_3}^{\m_1}  }  \frac{(C_{i_3}^{(i_1)})^{s}}{Z_{i_3}}
\frac{A_{i_3}^{(i_1,i_2)}}{C_{i_3}^{(i_1)}}\y_j^{(i_2)}
 \nonumber \\
& & \times
     \mE_{{\mathcal U}_2} \frac{ \lp\lp
\sum_{i_3=1}^{l}
\lp
 \mE_{{\mathcal U}_1}  Z_{i_3}^{\m_1}
\rp^p
\rp^{\frac{\m_2}{\m_1}  }\rp    }{ \lp    \mE_{{\mathcal U}_2}\lp\lp
\sum_{i_3=1}^{l}
\lp
 \mE_{{\mathcal U}_1}  Z_{i_3}^{\m_1}
\rp^p
\rp^{\frac{\m_2}{\m_1}}\rp      \rp}
\sum_{p_3=1}^{l}
\frac{
\lp
 \mE_{{\mathcal U}_1}  Z_{p_3}^{\m_1}
\rp^{p}   }{  \sum_{p_3=1}^{l}
\lp
 \mE_{{\mathcal U}_1}  Z_{p_3}^{\m_1}
\rp^p }
 \nonumber \\
& & \times
\mE_{{\mathcal U}_1} \frac{Z_{p_3}^{\m_1}}{\mE_{{\mathcal U}_1}  Z_{p_3}^{\m_1}  }  \sum_{p_1=1}^{l}  \frac{(C_{p_3}^{(p_1)})^{s}}{Z_{p_3}}\sum_{p_2=1}^{l}
\frac{A_{p_3}^{(p_1,p_2)}}{C_{p_3}^{(p_1)}}\beta_{p_1}\y_j^{(p_2)}   \Bigg. \Bigg).
\end{eqnarray}
One then observes
\begin{eqnarray}\label{eq:lev2x3lev2genDanal24}
 \sum_{i_1=1}^{l}  \sum_{i_2=1}^{l} \sum_{j=1}^{m}  \beta_{i_1}\frac{T_{3,1,j}^d}{\sqrt{1-t}}
 & = & -s\beta^2\p_2\m_2 p \mE_{G,{\mathcal U}_3} \langle \|\x^{(i_1)}\|_2\|\x^{(p_1)}\|_2(\y^{(p_2)})^T\y^{(i_2)} \rangle_{\gamma_{3}^{(2)}},
\end{eqnarray}
and
\begin{eqnarray}\label{eq:lev2x3lev2genDanal25}
 \sum_{i_1=1}^{l}  \sum_{i_2=1}^{l} \sum_{j=1}^{m}  \beta_{i_1}\frac{T_{3,1,j}}{\sqrt{1-t}}
 & = &  \sum_{i_1=1}^{l}\sum_{i_2=1}^{l}\sum_{j=1}^{m} \beta_{i_1}\frac{T_{2,1,j}}{\sqrt{1-t}}\frac{\p_2}{\p_1-\p_2}\nonumber\\
& = & \p_2 \beta^2 \lp \mE_{G,{\mathcal U}_3}\langle \|\x^{(i_1)}\|_2^2\|\y^{(i_2)}\|_2^2\rangle_{\gamma_{01}^{(2)}} +  (s-1)\mE_{G,{\mathcal U}_3}\langle \|\x^{(i_1)}\|_2^2(\y^{(p_2)})^T\y^{(i_2)}\rangle_{\gamma_{02}^{(2)}} \rp\nonumber \\
& & - \p_2s\beta^2(1-\m_1)\mE_{G,{\mathcal U}_3}\langle \|\x^{(i_1)}\|_2\|\x^{(p_1)}\|_2(\y^{(p_2)})^T\y^{(i_2)} \rangle_{\gamma_{1}^{(2)}}\nonumber \\
 &   &
  -s\beta^2\p_2(\m_1-\m_2) p \mE_{G,{\mathcal U}_3} \langle \|\x^{(i_1)}\|_2\|\x^{(p_1)}\|_2(\y^{(p_2)})^T\y^{(i_2)} \rangle_{\gamma_{21}^{(2)}}\nonumber \\
 &   &
  +s\beta^2\p_2 \m_1 (p-1) p\mE_{G,{\mathcal U}_3} \langle \|\x^{(i_1)}\|_2\|\x^{(p_1)}\|_2(\y^{(p_2)})^T\y^{(i_2)} \rangle_{\gamma_{22}^{(2)}}\nonumber \\
 &  & -s\beta^2\p_2\m_2 p \mE_{G,{\mathcal U}_3} \langle \|\x^{(i_1)}\|_2\|\x^{(p_1)}\|_2(\y^{(p_2)})^T\y^{(i_2)} \rangle_{\gamma_{3}^{(2)}}.
\end{eqnarray}

\underline{\textbf{\emph{Determining}} $T_{3,2}$}
\label{sec:lev2x3lev2hand1T22}

After Gaussian integration by parts we obtain
\begin{eqnarray}\label{eq:lev2x3lev2liftgenEanal20}
T_{3,2} & = &
\mE_{G,{\mathcal U}_3,{\mathcal U}_2} \lp
\sum_{i_3=1}^{l}
\frac{\lp
\sum_{i_3=1}^{l}
\lp
 \mE_{{\mathcal U}_1}  Z_{i_3}^{\m_1}
\rp^p
\rp^{\frac{\m_2}{\m_1}-1} \lp
 \mE_{{\mathcal U}_1}  Z_{i_3}^{\m_1}
\rp^{p-1}  }   {\mE_{{\mathcal U}_2}\lp\lp
\sum_{i_3=1}^{l}
\lp
 \mE_{{\mathcal U}_1}  Z_{i_3}^{\m_1}
\rp^p
\rp^{\frac{\m_2}{\m_1}}\rp}
  \mE_{{\mathcal U}_1}\frac{(C_{i_3}^{(i_1)})^{s-1} A_{i_3}^{(i_1,i_2)} \u^{(i_1,3,3)} }{Z_{i_3}^{1-\m_1}} \rp \nonumber \\
& = &  \mE_{G,{\mathcal U}_2,{\mathcal U}_1} \lp \mE_{{\mathcal U}_3}
\sum_{i_3=1}^{l}
\frac{\lp
\sum_{i_3=1}^{l}
\lp
 \mE_{{\mathcal U}_1}  Z_{i_3}^{\m_1}
\rp^p
\rp^{\frac{\m_2}{\m_1}-1} \lp
 \mE_{{\mathcal U}_1}  Z_{i_3}^{\m_1}
\rp^{p-1}  }   {\mE_{{\mathcal U}_2}\lp\lp
\sum_{i_3=1}^{l}
\lp
 \mE_{{\mathcal U}_1}  Z_{i_3}^{\m_1}
\rp^p
\rp^{\frac{\m_2}{\m_1}}\rp }
 \frac{(C_{i_3}^{(i_1)})^{s-1} A_{i_3}^{(i_1,i_2)}  \u^{(i_1,3,3)}}{Z_{i_3}^{1-\m_1}}
 \rp \nonumber \\
& = &  T_{3,2}^{c} +  T_{3,2}^{d},
 \end{eqnarray}
where
{\small \begin{eqnarray}\label{eq:lev2x3lev2genEanal19b}
T_{3,2}^c &  = &
\mE_{G,{\mathcal U}_3,{\mathcal U}_2,{\mathcal U}_1} \Bigg . \Bigg(
\sum_{i_3=1}^{l}
\frac{ 1 }   {\mE_{{\mathcal U}_2}\lp\lp
\sum_{i_3=1}^{l}
\lp
 \mE_{{\mathcal U}_1}  Z_{i_3}^{\m_1}
\rp^p
\rp^{\frac{\m_2}{\m_1}}\rp}
\nonumber \\
& &
\times
 \Bigg . \Bigg(
\sum_{p_1=1}^{l}\mE_{{\mathcal U}_3} (\u^{(i_1,3,3)}\u^{(p_1,3,3)})\frac{d}{d\u^{(p_1,3,3)}}
 \Bigg . \Bigg(
\frac{(C_{i_3}^{(i_1)})^{s-1} A_{i_3}^{(i_1,i_2)}  }{Z_{i_3}^{1-\m_1}  \lp
\sum_{i_3=1}^{l}
\lp
 \mE_{{\mathcal U}_1}  Z_{i_3}^{\m_1}
\rp^p
\rp^{ 1- \frac{\m_2}{\m_1}} \lp
 \mE_{{\mathcal U}_1}  Z_{i_3}^{\m_1}
\rp^{1-p}   }
\Bigg . \Bigg)
\Bigg . \Bigg)
\Bigg . \Bigg)
\nonumber \\
T_{3,2}^d &  = & \mE_{G,{\mathcal U}_3,{\mathcal U}_2,{\mathcal U}_1} \Bigg ( \Bigg.
\sum_{i_3=1}^{l}
 \frac{
\lp
\sum_{i_3=1}^{l}
\lp
 \mE_{{\mathcal U}_1}  Z_{i_3}^{\m_1}
\rp^p
\rp^{\frac{\m_2}{\m_1}-1} \lp
 \mE_{{\mathcal U}_1}  Z_{i_3}^{\m_1}
\rp^{p-1}
(C_{i_3}^{(i_1)})^{s-1} A_{i_3}^{(i_1,i_2)}\y_j^{(i_2)}}{Z_{i_3}^{1-\m_1}   }
\nonumber \\
& &
\times
  \Bigg .\Bigg (
  \sum_{p_1=1}^{l}
  \mE_{{\mathcal U}_2} (\u^{(i_1,3,3)}\u^{(p_1,3,3)})\frac{d}{d\u^{(p_1,3,3)}}
 \Bigg .\Bigg ( \frac{1}{   \mE_{{\mathcal U}_2}\lp\lp
\sum_{i_3=1}^{l}
\lp
 \mE_{{\mathcal U}_1}  Z_{i_3}^{\m_1}
\rp^p
\rp^{\frac{\m_2}{\m_1}}\rp  }
 \Bigg .\Bigg ) \Bigg .\Bigg ) \Bigg .\Bigg ).
  \end{eqnarray}}

\noindent Since
\begin{eqnarray}\label{eq:lev2x3lev2genEanal19c}
\mE_{{\mathcal U}_3}(\u^{(i_1,3,3)}\u^{(p_1,3,3)}) & = & \q_2\frac{(\x^{(i_1)})^T\x^{(p_1)}}{\|\x^{(i_1)}\|_2\|\x^{(p_1)}\|_2}
= \q_2\frac{\beta^2(\x^{(i_1)})^T\x^{(p_1)}}{\beta_{i_1}\beta_{p_1}},
\end{eqnarray}
we have  that $T_{3,2}^c$ scaled by $\q_2$ is structurally identical to $T_{2,2}$ scaled by $(\q_0-\q_1)$. This then implies
\begin{eqnarray}\label{eq:lev2x3lev2genEanal19c1}
\sum_{i_1=1}^{l}\sum_{i_2=1}^{l} \beta_{i_1}\|\y^{(i_2)}\|_2 \frac{T_{3,2}^c}{\sqrt{1-t}} & = &
\q_2 \beta^2
\Bigg.\Bigg( \mE_{G,{\mathcal U}_3}\langle \|\x^{(i_1)}\|_2^2\|\y^{(i_2)}\|_2^2\rangle_{\gamma_{01}^{(2)}}
 \nonumber \\
 & &
 + (s-1)\mE_{G,{\mathcal U}_3}\langle \|\x^{(i_1)}\|_2^2 \|\y^{(i_2)}\|_2\|\y^{(p_2)}\|_2\rangle_{\gamma_{02}^{(2)}}\Bigg.\Bigg)
\nonumber \\
& & - \q_2s\beta^2(1-\m_1)\mE_{G,{\mathcal U}_3}\langle (\x^{(p_1)})^T\x^{(i_1)}\|\y^{(i_2)}\|_2\|\y^{(p_2)}\|_2 \rangle_{\gamma_{1}^{(2)}} \nonumber \\
&  & -s\beta^2\q_2(\m_1-\m_2) p \mE_{G,{\mathcal U}_3} \langle \|\y^{(i_2)}\|_2\|\y^{(p_2)}\|_2(\x^{(i_1)})^T\x^{(p_1)}\rangle_{\gamma_{21}^{(2)}}
\nonumber \\
&  & +s\beta^2\q_2 \m_1 (p-1) \mE_{G,{\mathcal U}_3} \langle \|\y^{(i_2)}\|_2\|\y^{(p_2)}\|_2(\x^{(i_1)})^T\x^{(p_1)}\rangle_{\gamma_{22}^{(2)}}.
\end{eqnarray}

To determine  $T_{3,2}^d$, we start with
\begin{eqnarray}\label{eq:lev2x3lev2genEanal20}
\frac{d}{d\u^{(p_1,3,3)}} \lp \frac{1}{  \mE_{{\mathcal U}_2}\lp\lp
\sum_{i_3=1}^{l}
\lp
 \mE_{{\mathcal U}_1}  Z_{i_3}^{\m_1}
\rp^p
\rp^{\frac{\m_2}{\m_1}}\rp    }   \rp
& = & -    \mE_{{\mathcal U}_2} \frac{ \frac{\m_2}{\m_1} \lp\lp
\sum_{i_3=1}^{l}
\lp
 \mE_{{\mathcal U}_1}  Z_{i_3}^{\m_1}
\rp^p
\rp^{\frac{\m_2}{\m_1} -1 }\rp    }{\lp    \mE_{{\mathcal U}_2}\lp\lp
\sum_{i_3=1}^{l}
\lp
 \mE_{{\mathcal U}_1}  Z_{i_3}^{\m_1}
\rp^p
\rp^{\frac{\m_2}{\m_1}}\rp      \rp^2}
\nonumber \\
& &
\times
p\sum_{p_3=1}^{l}
\lp
 \mE_{{\mathcal U}_1}  Z_{p_3}^{\m_1}
\rp^{p-1}
 \mE_{{\mathcal U}_1}\frac{d Z_{p_3}^{\m_1}}{d\u^{(p_1,3,3)}}.\nonumber \\
 \end{eqnarray}
 We also note
\begin{eqnarray}\label{eq:lev2x3lev2genEanal21}
\frac{dZ_{p_3}^{\m_1}}{d\u^{(p_1,3,3)}}  & = & \frac{\m_1}{Z_{p_3}^{1-\m_1}} s  (C_{p_3}^{(p_1)})^{s-1}\sum_{p_2=1}^{l}
\beta_{p_1}A_{p_3}^{(p_1,p_2)}\|\y^{(p_2)}\|_2\sqrt{1-t}.
\end{eqnarray}
A combination of  (\ref{eq:lev2x3lev2genEanal19b}), (\ref{eq:lev2x3lev2genEanal20}), and (\ref{eq:lev2x3lev2genEanal21}) gives
{\small \begin{eqnarray}\label{eq:lev2x3lev2genEanal21b}
T_{3,2}^d &  = &
-\q_2 \mE_{G,{\mathcal U}_3,{\mathcal U}_2,{\mathcal U}_1} \Bigg ( \Bigg.
\sum_{i_3=1}^{l}
 \frac{
\lp
\sum_{i_3=1}^{l}
\lp
 \mE_{{\mathcal U}_1}  Z_{i_3}^{\m_1}
\rp^p
\rp^{\frac{\m_2}{\m_1}-1} \lp
 \mE_{{\mathcal U}_1}  Z_{i_3}^{\m_1}
\rp^{p-1}
(C_{i_3}^{(i_1)})^{s-1} A_{i_3}^{(i_1,i_2)}\y_j^{(i_2)}}{Z_{i_3}^{1-\m_1}   }
\nonumber \\
& &
\times
  \Bigg .\Bigg (
  \sum_{p_1=1}^{l}\frac{(\x^{(i_1)})^T\x^{(p_1)}}{\beta_{i_1}}
      \mE_{{\mathcal U}_2} \frac{ \frac{\m_2}{\m_1} \lp\lp
\sum_{i_3=1}^{l}
\lp
 \mE_{{\mathcal U}_1}  Z_{i_3}^{\m_1}
\rp^p
\rp^{\frac{\m_2}{\m_1} -1 }\rp    }{\lp    \mE_{{\mathcal U}_2}\lp\lp
\sum_{i_3=1}^{l}
\lp
 \mE_{{\mathcal U}_1}  Z_{i_3}^{\m_1}
\rp^p
\rp^{\frac{\m_2}{\m_1}}\rp      \rp^2}
 p\sum_{p_3=1}^{l}
\lp
 \mE_{{\mathcal U}_1}  Z_{p_3}^{\m_1}
\rp^{p-1}
\nonumber \\
& & \times
 \mE_{{\mathcal U}_1}
 \frac{\m_1}{Z_{p_3}^{1-\m_1}} s  (C_{p_3}^{(p_1)})^{s-1}\sum_{p_2=1}^{l}
A_{p_3}^{(p_1,p_2)}\|\y^{(p_2)}\|_2\sqrt{1-t}
       \Bigg .\Bigg ) \Bigg .\Bigg ).
\nonumber \\
 &  = & -s\sqrt{1-t}\q_2 \m_2 p \mE_{G,{\mathcal U}_3} \Bigg( \Bigg.
     \mE_{{\mathcal U}_2} \frac{ \lp\lp
\sum_{i_3=1}^{l}
\lp
 \mE_{{\mathcal U}_1}  Z_{i_3}^{\m_1}
\rp^p
\rp^{\frac{\m_2}{\m_1}  }\rp    }{ \lp    \mE_{{\mathcal U}_2}\lp\lp
\sum_{i_3=1}^{l}
\lp
 \mE_{{\mathcal U}_1}  Z_{i_3}^{\m_1}
\rp^p
\rp^{\frac{\m_2}{\m_1}}\rp      \rp}
\sum_{i_3=1}^{l}
\frac{
\lp
 \mE_{{\mathcal U}_1}  Z_{i_3}^{\m_1}
\rp^{p}   }{  \sum_{i_3=1}^{l}
\lp
 \mE_{{\mathcal U}_1}  Z_{i_3}^{\m_1}
\rp^p }
 \nonumber \\
 & & \times
\mE_{{\mathcal U}_1} \frac{Z_{i_3}^{\m_1}}{\mE_{{\mathcal U}_1} \lp Z_{i_3}^{\m_1}\rp}  \frac{(C_{i_3}^{(i_1)})^{s}}{Z_{i_3}}
\frac{A_{i_3}^{(i_1,i_2)}}{C_{i_3}^{(i_1)}}  \nonumber \\
& & \times
     \mE_{{\mathcal U}_2} \frac{ \lp\lp
\sum_{i_3=1}^{l}
\lp
 \mE_{{\mathcal U}_1}  Z_{i_3}^{\m_1}
\rp^p
\rp^{\frac{\m_2}{\m_1}  }\rp    }{ \lp    \mE_{{\mathcal U}_2}\lp\lp
\sum_{i_3=1}^{l}
\lp
 \mE_{{\mathcal U}_1}  Z_{i_3}^{\m_1}
\rp^p
\rp^{\frac{\m_2}{\m_1}}\rp      \rp}
\sum_{p_3=1}^{l}
\frac{
\lp
 \mE_{{\mathcal U}_1}  Z_{p_3}^{\m_1}
\rp^{p}   }{  \sum_{p_3=1}^{l}
\lp
 \mE_{{\mathcal U}_1}  Z_{p_3}^{\m_1}
\rp^p }
\nonumber \\
& & \times
\mE_{{\mathcal U}_1} \frac{Z_{p_3}^{\m_1}}{\mE_{{\mathcal U}_1} \lp Z_{p_3}^{\m_1}\rp}  \sum_{p_1=1}^{l}  \frac{(C_{p_3}^{(p_1)})^{s}}{Z_{p_3}}\sum_{p_2=1}^{l}
\frac{A_{p_3}^{(p_1,p_2)}}{C_{p_3}^{(p_1)}}\frac{(\x^{(i_1)})^T\x^{(p_1)}}{\beta_{i_1}}\|\y^{(p_2)}\|_2   \Bigg. \Bigg).
\end{eqnarray}}

\noindent We then first note
\begin{equation}\label{eq:lev2x3lev2genEanal24}
\sum_{i_1=1}^{l}\sum_{i_2=1}^{l} \beta_{i_1}\|\y^{(i_2)}\|_2\frac{T_{3,2}^d}{\sqrt{1-t}}
 =
-s\beta^2 \q_2 \m_2 p \mE_{G,{\mathcal U}_3} \langle \|\y^{(i_2)}\|_2\|\y^{(p_2)}\|_2(\x^{(i_1)})^T\x^{(p_1)}\rangle_{\gamma_{3}^{(2)}},
\end{equation}
and after a combination of (\ref{eq:lev2x3lev2liftgenEanal20}), (\ref{eq:lev2x3lev2genEanal19c1}), and
(\ref{eq:lev2x3lev2genEanal24}) obtain
\begin{eqnarray}\label{eq:lev2x3lev2genEanal25}
\sum_{i_1=1}^{l}\sum_{i_2=1}^{l} \beta_{i_1}\|\y^{(i_2)}\|_2\frac{T_{3,2}}{\sqrt{1-t}}
& = & \q_2\beta^2
\Bigg(\Bigg. \mE_{G,{\mathcal U}_3}\langle \|\x^{(i_1)}\|_2^2\|\y^{(i_2)}\|_2^2\rangle_{\gamma_{01}^{(2)}}  \nonumber \\
& & +   (s-1)\mE_{G,{\mathcal U}_3}\langle \|\x^{(i_1)}\|_2^2 \|\y^{(i_2)}\|_2\|\y^{(p_2)}\|_2\rangle_{\gamma_{02}^{(2)}}\Bigg.\Bigg)\nonumber \\
& & - \q_2s\beta^2(1-\m_1)\mE_{G,{\mathcal U}_3}\langle (\x^{(p_1)})^T\x^{(i_1)}\|\y^{(i_2)}\|_2\|\y^{(p_2)}\|_2 \rangle_{\gamma_{1}^{(2)}} \nonumber \\
&  & -s\beta^2\q_2(\m_1-\m_2) p \mE_{G,{\mathcal U}_3} \langle \|\y^{(i_2)}\|_2\|\y^{(p_2)}\|_2(\x^{(i_1)})^T\x^{(p_1)}\rangle_{\gamma_{21}^{(2)}} \nonumber \\
&  & +s\beta^2\q_2 \m_1 (p-1) \mE_{G,{\mathcal U}_3} \langle \|\y^{(i_2)}\|_2\|\y^{(p_2)}\|_2(\x^{(i_1)})^T\x^{(p_1)}\rangle_{\gamma_{22}^{(2)}} \nonumber \\
& & -s\beta^2 \q_2 \m_2 p \mE_{G,{\mathcal U}_3} \langle \|\y^{(i_2)}\|_2\|\y^{(p_2)}\|_2(\x^{(i_1)})^T\x^{(p_1)}\rangle_{\gamma_{3}^{(2)}}.
 \end{eqnarray}

\underline{\textbf{\emph{Determining}} $T_{3,3}$}
\label{sec:lev2x3lev2hand1T23}

Gaussian integration by parts also gives
\begin{eqnarray}\label{eq:lev2x3lev2genFanal21}
T_{3,3}& = &
\mE_{G,{\mathcal U}_3,{\mathcal U}_2} \lp
\sum_{i_3=1}^{l}
\frac{\lp
\sum_{i_3=1}^{l}
\lp
 \mE_{{\mathcal U}_1}  Z_{i_3}^{\m_1}
\rp^p
\rp^{\frac{\m_2}{\m_1}-1} \lp
 \mE_{{\mathcal U}_1}  Z_{i_3}^{\m_1}
\rp^{p-1}  }   {\mE_{{\mathcal U}_2}\lp\lp
\sum_{i_3=1}^{l}
\lp
 \mE_{{\mathcal U}_1}  Z_{i_3}^{\m_1}
\rp^p
\rp^{\frac{\m_2}{\m_1}}\rp}
  \mE_{{\mathcal U}_1}\frac{(C_{i_3}^{(i_1)})^{s-1} A_{i_3}^{(i_1,i_2)} u^{(4,3)} }{Z_{i_3}^{1-\m_1}} \rp \nonumber \\
& = &  \mE_{G,{\mathcal U}_2,{\mathcal U}_1} \lp \mE_{{\mathcal U}_3}
\sum_{i_3=1}^{l}
\frac{\lp
\sum_{i_3=1}^{l}
\lp
 \mE_{{\mathcal U}_1}  Z_{i_3}^{\m_1}
\rp^p
\rp^{\frac{\m_2}{\m_1}-1} \lp
 \mE_{{\mathcal U}_1}  Z_{i_3}^{\m_1}
\rp^{p-1}  }   {\mE_{{\mathcal U}_2}\lp\lp
\sum_{i_3=1}^{l}
\lp
 \mE_{{\mathcal U}_1}  Z_{i_3}^{\m_1}
\rp^p
\rp^{\frac{\m_2}{\m_1}}\rp }
 \frac{(C_{i_3}^{(i_1)})^{s-1} A_{i_3}^{(i_1,i_2)}  u^{(4,3)}}{Z_{i_3}^{1-\m_1}}
 \rp,
\end{eqnarray}
or in a more convenient form
\begin{eqnarray}\label{eq:lev2x3lev2genFanal22}
T_{3,3} & = & T_{3,3}^c+T_{3,3}^d,
\end{eqnarray}
where
\begin{eqnarray}\label{eq:lev2x3lev2genFanal23}
T_{3,2}^c &  = &
 \p_2\q_2 \mE_{G,{\mathcal U}_3,{\mathcal U}_2,{\mathcal U}_1} \Bigg . \Bigg(
\sum_{i_3=1}^{l}
\frac{ 1 }   {\mE_{{\mathcal U}_2}\lp\lp
\sum_{i_3=1}^{l}
\lp
 \mE_{{\mathcal U}_1}  Z_{i_3}^{\m_1}
\rp^p
\rp^{\frac{\m_2}{\m_1}}\rp}
\nonumber \\
& &
\times
 \Bigg . \Bigg(
\frac{d}{du^{(4,3)}}
 \Bigg . \Bigg(
\frac{(C_{i_3}^{(i_1)})^{s-1} A_{i_3}^{(i_1,i_2)}  }{Z_{i_3}^{1-\m_1}  \lp
\sum_{i_3=1}^{l}
\lp
 \mE_{{\mathcal U}_1}  Z_{i_3}^{\m_1}
\rp^p
\rp^{ 1- \frac{\m_2}{\m_1}} \lp
 \mE_{{\mathcal U}_1}  Z_{i_3}^{\m_1}
\rp^{1-p}   }
\Bigg . \Bigg)
\Bigg . \Bigg)
\Bigg . \Bigg)
\nonumber \\
T_{3,2}^d &  = &  \p_2\q_2 \mE_{G,{\mathcal U}_3,{\mathcal U}_2,{\mathcal U}_1} \Bigg ( \Bigg.
\sum_{i_3=1}^{l}
 \frac{
\lp
\sum_{i_3=1}^{l}
\lp
 \mE_{{\mathcal U}_1}  Z_{i_3}^{\m_1}
\rp^p
\rp^{\frac{\m_2}{\m_1}-1} \lp
 \mE_{{\mathcal U}_1}  Z_{i_3}^{\m_1}
\rp^{p-1}
(C_{i_3}^{(i_1)})^{s-1} A_{i_3}^{(i_1,i_2)}\y_j^{(i_2)}}{Z_{i_3}^{1-\m_1}   }
\nonumber \\
& &
\times
  \Bigg .\Bigg (
  \frac{d}{d u^{(4,3)}}
 \Bigg .\Bigg ( \frac{1}{   \mE_{{\mathcal U}_2}\lp\lp
\sum_{i_3=1}^{l}
\lp
 \mE_{{\mathcal U}_1}  Z_{i_3}^{\m_1}
\rp^p
\rp^{\frac{\m_2}{\m_1}}\rp  }
 \Bigg .\Bigg ) \Bigg .\Bigg ) \Bigg .\Bigg ).
 \end{eqnarray}
After  observing $\frac{T_{3,3}^c}{\p_2\q_2}=\frac{T_{2,3}}{\p_1\q_1-\p_2\q_2}$, we immediately have
\begin{eqnarray}\label{eq:lev2x3lev2genFanal23b}
\sum_{i_1=1}^{l}\sum_{i_2=1}^{l} \beta_{i_1}\|\y^{(i_2)}\|_2 \frac{T_{3,3}^c}{\sqrt{t}}
& = &
\p_2\q_2\beta^2
 \nonumber \\
 & & \times
 \lp \mE_{G,{\mathcal U}_3}\langle \|\x^{(i_1)}\|_2^2\|\y^{(i_2)}\|_2^2\rangle_{\gamma_{01}^{(2)}} +  (s-1)\mE_{G,{\mathcal U}_3}\langle \|\x^{(i_1)}\|_2^2 \|\y^{(i_2)}\|_2\|\y^{(p_2)}\|_2\rangle_{\gamma_{02}^{(2)}}\rp\nonumber \\
& & -  \p_2\q_2 s\beta^2(1-\m_1)\mE_{G,{\mathcal U}_3}\langle \|\x^{(i_1)}\|_2\|\x^{(p_`)}\|_2\|\y^{(i_2)}\|_2\|\y^{(p_2)}\|_2 \rangle_{\gamma_{1}^{(2)}} \nonumber \\
&  & -s\beta^2 \p_2\q_2(\m_1-\m_2) p \mE_{G,{\mathcal U}_3} \langle\|\x^{(i_2)}\|_2\|\x^{(p_2)}\|_2\|\y^{(i_2)}\|_2\|\y^{(p_2)}\rangle_{\gamma_{21}^{(2)}}
\nonumber \\
&  & +s\beta^2 \p_2\q_2 \m_1 (p-1) \mE_{G,{\mathcal U}_3} \langle\|\x^{(i_2)}\|_2\|\x^{(p_2)}\|_2\|\y^{(i_2)}\|_2\|\y^{(p_2)}\rangle_{\gamma_{22}^{(2)}}.
\end{eqnarray}

To determine $T_{3,3}^d$ we first write
\begin{eqnarray}\label{eq:lev2x3lev2genFanal24}
\frac{d}{d u^{(4,3)}} \lp \frac{1}{  \mE_{{\mathcal U}_2}\lp\lp
\sum_{i_3=1}^{l}
\lp
 \mE_{{\mathcal U}_1}  Z_{i_3}^{\m_1}
\rp^p
\rp^{\frac{\m_2}{\m_1}}\rp    }   \rp
& = & -    \mE_{{\mathcal U}_2} \frac{ \frac{\m_2}{\m_1} \lp\lp
\sum_{i_3=1}^{l}
\lp
 \mE_{{\mathcal U}_1}  Z_{i_3}^{\m_1}
\rp^p
\rp^{\frac{\m_2}{\m_1} -1 }\rp    }{\lp    \mE_{{\mathcal U}_2}\lp\lp
\sum_{i_3=1}^{l}
\lp
 \mE_{{\mathcal U}_1}  Z_{i_3}^{\m_1}
\rp^p
\rp^{\frac{\m_2}{\m_1}}\rp      \rp^2}
\nonumber \\
& &
\times
p\sum_{p_3=1}^{l}
\lp
 \mE_{{\mathcal U}_1}  Z_{p_3}^{\m_1}
\rp^{p-1}
 \mE_{{\mathcal U}_1}\frac{d Z_{p_3}^{\m_1}}{d u^{(4,3)}}.
 \end{eqnarray}
Utilizing
\begin{equation}\label{eq:lev2x3lev2genFanal25}
\frac{dZ_{p_3}^{\m_1}}{du^{(4,3)}} =
\m_1Z_{p_3}^{\m_1-1}s \sum_{p_1=1}^{l} (C_{p_3}^{(p_1)})^{s-1}\sum_{p_2=1}^{l}
\beta_{p_1}A_{p_3}^{(p_1,p_2)}\|\y^{(p_2)}\|_2\sqrt{t},
\end{equation}
we then obtain
\begin{eqnarray}\label{eq:lev2x3lev2genFanal25a}
T_{3,2}^d &  = &  \p_2\q_2 \mE_{G,{\mathcal U}_3,{\mathcal U}_2,{\mathcal U}_1} \Bigg ( \Bigg.
\sum_{i_3=1}^{l}
 \frac{
\lp
\sum_{i_3=1}^{l}
\lp
 \mE_{{\mathcal U}_1}  Z_{i_3}^{\m_1}
\rp^p
\rp^{\frac{\m_2}{\m_1}-1} \lp
 \mE_{{\mathcal U}_1}  Z_{i_3}^{\m_1}
\rp^{p-1}
(C_{i_3}^{(i_1)})^{s-1} A_{i_3}^{(i_1,i_2)}\y_j^{(i_2)}}{Z_{i_3}^{1-\m_1}   }
\nonumber \\
& &
\times
    \mE_{{\mathcal U}_2} \frac{ \frac{\m_2}{\m_1} \lp\lp
\sum_{i_3=1}^{l}
\lp
 \mE_{{\mathcal U}_1}  Z_{i_3}^{\m_1}
\rp^p
\rp^{\frac{\m_2}{\m_1} -1 }\rp    }{\lp    \mE_{{\mathcal U}_2}\lp\lp
\sum_{i_3=1}^{l}
\lp
 \mE_{{\mathcal U}_1}  Z_{i_3}^{\m_1}
\rp^p
\rp^{\frac{\m_2}{\m_1}}\rp      \rp^2}
\nonumber \\
& &
\times
p\sum_{p_3=1}^{l}
\lp
 \mE_{{\mathcal U}_1}  Z_{p_3}^{\m_1}
\rp^{p-1}
 \mE_{{\mathcal U}_1}
 \m_1Z_{p_3}^{\m_1-1}s \sum_{p_1=1}^{l} (C_{p_3}^{(p_1)})^{s-1}\sum_{p_2=1}^{l}
\beta_{p_1}A_{p_3}^{(p_1,p_2)}\|\y^{(p_2)}\|_2\sqrt{t}
   \Bigg .\Bigg ).
\nonumber \\
 &  = & -s\sqrt{t}\p_2\q_2 \m_2 p \mE_{G,{\mathcal U}_3} \Bigg( \Bigg.
      \mE_{{\mathcal U}_2} \frac{ \lp\lp
\sum_{i_3=1}^{l}
\lp
 \mE_{{\mathcal U}_1}  Z_{i_3}^{\m_1}
\rp^p
\rp^{\frac{\m_2}{\m_1}  }\rp    }{ \lp    \mE_{{\mathcal U}_2}\lp\lp
\sum_{i_3=1}^{l}
\lp
 \mE_{{\mathcal U}_1}  Z_{i_3}^{\m_1}
\rp^p
\rp^{\frac{\m_2}{\m_1}}\rp      \rp}
\sum_{i_3=1}^{l}
\frac{
\lp
 \mE_{{\mathcal U}_1}  Z_{i_3}^{\m_1}
\rp^{p}   }{  \sum_{i_3=1}^{l}
\lp
 \mE_{{\mathcal U}_1}  Z_{i_3}^{\m_1}
\rp^p }
\nonumber \\
& & \times
\mE_{{\mathcal U}_1} \frac{Z_{i_3}^{\m_1}}{\mE_{{\mathcal U}_1} \lp Z_{i_3}^{\m_1}\rp}  \frac{(C_{i_3}^{(i_1)})^{s}}{Z_{i_3}}
\frac{A_{i_3}^{(i_1,i_2)}}{C_{i_3}^{(i_1)}}  \nonumber \\
& & \times
     \mE_{{\mathcal U}_2} \frac{ \lp\lp
\sum_{i_3=1}^{l}
\lp
 \mE_{{\mathcal U}_1}  Z_{i_3}^{\m_1}
\rp^p
\rp^{\frac{\m_2}{\m_1}  }\rp    }{ \lp    \mE_{{\mathcal U}_2}\lp\lp
\sum_{i_3=1}^{l}
\lp
 \mE_{{\mathcal U}_1}  Z_{i_3}^{\m_1}
\rp^p
\rp^{\frac{\m_2}{\m_1}}\rp      \rp}
\sum_{p_3=1}^{l}
\frac{
\lp
 \mE_{{\mathcal U}_1}  Z_{p_3}^{\m_1}
\rp^{p}   }{  \sum_{p_3=1}^{l}
\lp
 \mE_{{\mathcal U}_1}  Z_{p_3}^{\m_1}
\rp^p }
\nonumber \\
& & \times
\mE_{{\mathcal U}_1} \frac{Z_{p_3}^{\m_1}}{\mE_{{\mathcal U}_1}   Z_{p_3}^{\m_1}  }  \sum_{p_1=1}^{l}  \frac{(C_{p_3}^{(p_1)})^{s}}{Z_{p_3}}\sum_{p_2=1}^{l}
\frac{A_{p_3}^{(p_1,p_2)}}{C_{p_3}^{(p_1)}} \beta_{p_1}\|\y^{(p_2)}\|_2   \Bigg. \Bigg).
\end{eqnarray}
After observing
\begin{eqnarray}\label{eq:lev2x3lev2genFanal28}
\sum_{i_1=1}^{l}\sum_{i_2=1}^{l} \beta_{i_1}\|\y^{(i_2)}\|_2\frac{T_{3,3}^d}{\sqrt{t}}
 & = & -s\beta^2 \p_2\q_2 \m_2 p \mE_{G,{\mathcal U}_3} \langle\|\x^{(i_2)}\|_2\|\x^{(p_2)}\|_2\|\y^{(i_2)}\|_2\|\y^{(p_2)}\rangle_{\gamma_{3}^{(2)}},\nonumber \\
\end{eqnarray}
and combining it with (\ref{eq:lev2x3lev2genFanal22}) and (\ref{eq:lev2x3lev2genFanal23b}) we arrive at the following
\begin{eqnarray}\label{eq:lev2x3lev2genFanal29}
\sum_{i_1=1}^{l}\sum_{i_2=1}^{l} \beta_{i_1}\|\y^{(i_2)}\|_2\frac{T_{3,3}}{\sqrt{t}}
& = &
\p_2\q_2 \beta^2
 \nonumber \\
 & & \times
 \lp \mE_{G,{\mathcal U}_3}\langle \|\x^{(i_1)}\|_2^2\|\y^{(i_2)}\|_2^2\rangle_{\gamma_{01}^{(2)}} +   (s-1)\mE_{G,{\mathcal U}_3}\langle \|\x^{(i_1)}\|_2^2 \|\y^{(i_2)}\|_2\|\y^{(p_2)}\|_2\rangle_{\gamma_{02}^{(2)}}\rp\nonumber \\
& & -  \p_2\q_2 s\beta^2(1-\m_1)\mE_{G,{\mathcal U}_3}\langle \|\x^{(i_1)}\|_2\|\x^{(p_`)}\|_2\|\y^{(i_2)}\|_2\|\y^{(p_2)}\|_2 \rangle_{\gamma_{1}^{(2)}} \nonumber \\
&  & -s\beta^2 \p_2\q_2(\m_1-\m_2) p \mE_{G,{\mathcal U}_3} \langle\|\x^{(i_2)}\|_2\|\x^{(p_2)}\|_2\|\y^{(i_2)}\|_2\|\y^{(p_2)}\rangle_{\gamma_{21}^{(2)}} \nonumber \\
&  & +s\beta^2 \p_2\q_2 \m_1 (p-1)  \mE_{G,{\mathcal U}_3} \langle\|\x^{(i_2)}\|_2\|\x^{(p_2)}\|_2\|\y^{(i_2)}\|_2\|\y^{(p_2)}\rangle_{\gamma_{22}^{(2)}} \nonumber \\
 &  & -s\beta^2 \p_2\q_2 \m_2\mE_{G,{\mathcal U}_3} \langle\|\x^{(i_2)}\|_2\|\x^{(p_2)}\|_2\|\y^{(i_2)}\|_2\|\y^{(p_2)}\rangle_{\gamma_{3}^{(2)}}.
\end{eqnarray}

\subsubsection{$T_G$--group -- second level}
\label{sec:lev2handlTG}

We first recall on
\begin{eqnarray}\label{eq:lev2genGanal1}
T_{G,j} & = &  \mE_{G,{\mathcal U}_3,{\mathcal U}_2} \lp
\sum_{i_3=1}^{l}
\frac{\lp
\sum_{i_3=1}^{l}
\lp
 \mE_{{\mathcal U}_1}  Z_{i_3}^{\m_1}
\rp^p
\rp^{\frac{\m_2}{\m_1}-1} \lp
 \mE_{{\mathcal U}_1}  Z_{i_3}^{\m_1}
\rp^{p-1}  }   {\mE_{{\mathcal U}_2}\lp\lp
\sum_{i_3=1}^{l}
\lp
 \mE_{{\mathcal U}_1}  Z_{i_3}^{\m_1}
\rp^p
\rp^{\frac{\m_2}{\m_1}}\rp}
  \mE_{{\mathcal U}_1}\frac{(C_{i_3}^{(i_1)})^{s-1} A_{i_3}^{(i_1,i_2)} \y_j^{(i_2)}\bar{\u}_j^{(i_1,1)}}{Z_{i_3}^{1-\m_1}} \rp \nonumber \\
 & = &  \mE_{{\mathcal U}_3,{\mathcal U}_2,{\mathcal U}_1   } \lp
\mE_{G}  \sum_{i_3=1}^{l}
\frac{\lp
\sum_{i_3=1}^{l}
\lp
 \mE_{{\mathcal U}_1}  Z_{i_3}^{\m_1}
\rp^p
\rp^{\frac{\m_2}{\m_1}-1} \lp
 \mE_{{\mathcal U}_1}  Z_{i_3}^{\m_1}
\rp^{p-1}  }   {\mE_{{\mathcal U}_2}\lp\lp
\sum_{i_3=1}^{l}
\lp
 \mE_{{\mathcal U}_1}  Z_{i_3}^{\m_1}
\rp^p
\rp^{\frac{\m_2}{\m_1}}\rp}
\frac{(C_{i_3}^{(i_1)})^{s-1} A_{i_3}^{(i_1,i_2)} \y_j^{(i_2)}\bar{\u}_j^{(i_1,1)}}{Z_{i_3}^{1-\m_1}} \rp \nonumber \\
 & = &  \mE_{{\mathcal U}_3,{\mathcal U}_2,{\mathcal U}_1} \Bigg(\Bigg.
   \mE_{G}  \sum_{p_1=1}^{l} \mE_G (\u_j^{(i_1,1)}\u_j^{(p_1,1)}) \nonumber \\
    & & \times \frac{d}{d\u_j^{(p_1,1)}} \Bigg(\Bigg.
  \sum_{i_3=1}^{l}
\frac{\lp
\sum_{i_3=1}^{l}
\lp
 \mE_{{\mathcal U}_1}  Z_{i_3}^{\m_1}
\rp^p
\rp^{\frac{\m_2}{\m_1}-1} \lp
 \mE_{{\mathcal U}_1}  Z_{i_3}^{\m_1}
\rp^{p-1}  }   {\mE_{{\mathcal U}_2}\lp\lp
\sum_{i_3=1}^{l}
\lp
 \mE_{{\mathcal U}_1}  Z_{i_3}^{\m_1}
\rp^p
\rp^{\frac{\m_2}{\m_1}}\rp}
\frac{(C_{i_3}^{(i_1)})^{s-1} A_{i_3}^{(i_1,i_2)} \y_j^{(i_2)}\bar{\u}_j^{(i_1,1)}}{Z_{i_3}^{1-\m_1}}     \Bigg.\Bigg) \Bigg.\Bigg). \nonumber \\
 \end{eqnarray}
We set
\begin{eqnarray}\label{eq:lev2genGanal3}
\Theta_{G,1}^{(2)} & = & \sum_{p_1=1}^{l} \mE_G (\u_j^{(i_1,1)}\u_j^{(p_1,1)})\frac{d}{d\u_j^{(p_1,1)}}\lp \frac{(C^{(i_1)})^{s-1} A^{(i_1,i_2)}\y_j^{(i_2)}}{Z^{1-\m_1}}\rp \nonumber \\
\Theta_{G,21}^{(2)} & = &   \Bigg ( \Bigg .
   \sum_{i_3=1}^{l}
\frac{
 \lp
 \mE_{{\mathcal U}_1}  Z_{i_3}^{\m_1}
\rp^{p-1}
 }   {\mE_{{\mathcal U}_2}\lp\lp
\sum_{i_3=1}^{l}
\lp
 \mE_{{\mathcal U}_1}  Z_{i_3}^{\m_1}
\rp^p
\rp^{\frac{\m_2}{\m_1}}\rp}
 \nonumber \\
 & & \times
 \frac{(C^{(i_1)})^{s-1} A^{(i_1,i_2)}\y_j^{(i_2)}}{Z^{1-\m_1}      }          \sum_{p_1=1}^{l} \mE_G (\u_j^{(i_1,1)}\u_j^{(p_1,1)})\frac{d}{d\u_j^{(p_1,1)}}\Bigg ( \Bigg .
  \lp
\sum_{i_3=1}^{l}
\lp
 \mE_{{\mathcal U}_1}  Z_{i_3}^{\m_1}
\rp^p
\rp^{\frac{\m_2}{\m_1}-1}
   \Bigg .\Bigg )\Bigg .\Bigg ) \nonumber \\
\Theta_{G,22}^{(2)} & = &   \Bigg ( \Bigg .
   \sum_{i_3=1}^{l}
\frac{\lp
\sum_{i_3=1}^{l}
\lp
 \mE_{{\mathcal U}_1}  Z_{i_3}^{\m_1}
\rp^p
\rp^{\frac{\m_2}{\m_1}-1} }   {\mE_{{\mathcal U}_2}\lp\lp
\sum_{i_3=1}^{l}
\lp
 \mE_{{\mathcal U}_1}  Z_{i_3}^{\m_1}
\rp^p
\rp^{\frac{\m_2}{\m_1}}\rp}
 \nonumber \\
 & & \times
 \frac{(C^{(i_1)})^{s-1} A^{(i_1,i_2)}\y_j^{(i_2)}}{Z^{1-\m_1}     }          \sum_{p_1=1}^{l} \mE_G (\u_j^{(i_1,1)}\u_j^{(p_1,1)})\frac{d}{d\u_j^{(p_1,1)}}\lp   \lp
 \mE_{{\mathcal U}_1}  Z_{i_3}^{\m_1}
\rp^{p-1}  \rp \Bigg .\Bigg ) \nonumber \\
\Theta_{G,3}^{(2)} & = &   \Bigg ( \Bigg .
    \sum_{i_3=1}^{l}
\lp
\sum_{i_3=1}^{l}
\lp
 \mE_{{\mathcal U}_1}  Z_{i_3}^{\m_1}
\rp^p
\rp^{\frac{\m_2}{\m_1}-1} \lp
 \mE_{{\mathcal U}_1}  Z_{i_3}^{\m_1}
\rp^{p-1}
\nonumber \\
& & \times
  \frac{(C^{(i_1)})^{s-1} A^{(i_1,i_2)}\y_j^{(i_2)}}{Z^{1-\m_1}    }          \sum_{p_1=1}^{l} \mE_G (\u_j^{(i_1,1)}\u_j^{(p_1,1)})\frac{d}{d\u_j^{(p_1,1)}}
   \Bigg . \Bigg (
  \frac{1}{  \mE_{{\mathcal U}_2}\lp\lp
\sum_{i_3=1}^{l}
\lp
 \mE_{{\mathcal U}_1}  Z_{i_3}^{\m_1}
\rp^p
\rp^{\frac{\m_2}{\m_1}}\rp   }
   \Bigg . \Bigg )
 \Bigg . \Bigg ) \nonumber \\
T_{G,j}^c & = &  \mE_{G,{\mathcal U}_3} \lp
 \mE_{{\mathcal U}_2}\mE_{{\mathcal U}_1}   \sum_{i_3=1}^{l}
\frac{\lp
\sum_{i_3=1}^{l}
\lp
 \mE_{{\mathcal U}_1}  Z_{i_3}^{\m_1}
\rp^p
\rp^{\frac{\m_2}{\m_1}-1} \lp
 \mE_{{\mathcal U}_1}  Z_{i_3}^{\m_1}
\rp^{p-1}  }   {\mE_{{\mathcal U}_2}\lp\lp
\sum_{i_3=1}^{l}
\lp
 \mE_{{\mathcal U}_1}  Z_{i_3}^{\m_1}
\rp^p
\rp^{\frac{\m_2}{\m_1}}\rp}
 \Theta_{G,1}^{(2)}  \rp \nonumber \\
T_{G,j}^{d_{1}} & = &  \mE_{G,{\mathcal U}_3}\lp \mE_{{\mathcal U}_2}\mE_{{\mathcal U}_1}  \Theta_{G,21}^{(2)} \rp \nonumber \\
T_{G,j}^{e_{1}} & = &  \mE_{G,{\mathcal U}_3}\lp \mE_{{\mathcal U}_2}\mE_{{\mathcal U}_1}  \Theta_{G,22}^{(2)} \rp \nonumber \\
T_{G,j}^{d_2} & = &  \mE_{G,{\mathcal U}_3}\lp \mE_{{\mathcal U}_2}\mE_{{\mathcal U}_1}  \Theta_{G,3}^{(2)} \rp,
 \end{eqnarray}
and note that
 \begin{eqnarray}\label{eq:lev2genGanal4}
T_{G,j}  & = &  T_{G,j}^c+T_{G,j}^{d_1}+T_{G,j}^{e_1}+T_{G,j}^{d_2}.
 \end{eqnarray}
Following (\ref{eq:genGanal5}) we have
\begin{eqnarray}\label{eq:lev2genGanal5}
\Theta_{G,1}^{(2)} & = & \lp \frac{\y_j^{(i_2)}}{Z^{1-\m_1}}\lp(C^{(i_1)})^{s-1}\beta_{i_1}A^{(i_1,i_2)}\y_j^{(i_2)}\sqrt{t}+(s-1)(C^{(i_1)})^{s-2}\beta_{i_1}\sum_{p_2=1}^{l}A^{(i_1,p_2)}\y_j^{(p_2)}\sqrt{t}\rp \rp \nonumber \\
& &  -(1-\m_1)
\mE \lp\sum_{p_1=1}^{l} \frac{(\x^{(i_1)})^T\x^{(p_1)}}{\|\x^{(i_1)}\|_2\|\x^{(p_1)}\|_2}
\frac{(C^{(i_1)})^{s-1} A^{(i_1,i_2)}\y_j^{(i_2)}}{Z^{2-\m_1}}
s  (C^{(p_1)})^{s-1}\sum_{p_2=1}^{l}\beta_{p_1}A^{(p_1,p_2)}\y_j^{(p_2)}\sqrt{t}\rp,\nonumber \\
\end{eqnarray}
and then analogously to (\ref{eq:genGanal6})
\begin{eqnarray}\label{eq:lev2genGanal6}
 \sum_{i_1=1}^{l} \sum_{i_2=1}^{l}\sum_{j=1}^{m}\beta_{i_1} \frac{T_{G,j}^c}{\sqrt{t}}
 & = & \sum_{i_1=1}^{l} \sum_{i_2=1}^{l}\sum_{j=1}^{m}\beta_{i_1}
 \frac{ \mE_{G,{\mathcal U}_3} \lp
   \mE_{{\mathcal U}_2}\mE_{{\mathcal U}_1}  \sum_{i_3=1}^{l}
\frac{\lp
\sum_{i_3=1}^{l}
\lp
 \mE_{{\mathcal U}_1}  Z_{i_3}^{\m_1}
\rp^p
\rp^{\frac{\m_2}{\m_1}-1} \lp
 \mE_{{\mathcal U}_1}  Z_{i_3}^{\m_1}
\rp^{p-1}  }   {\mE_{{\mathcal U}_2}\lp\lp
\sum_{i_3=1}^{l}
\lp
 \mE_{{\mathcal U}_1}  Z_{i_3}^{\m_1}
\rp^p
\rp^{\frac{\m_2}{\m_1}}\rp}
 \Theta_{G,1}^{(2)}  \rp }
 {\sqrt{t}}
 \nonumber\\
 & = & \sum_{i_1=1}^{l} \sum_{i_2=1}^{l}\sum_{j=1}^{m}\beta_{i_1}
 \frac{ \mE_{G,{\mathcal U}_3} \lp
 \Phi_{{\mathcal U}_2} \sum_{i_3=1}^{l}\gamma_{00}(i_3)
 \frac{1}{\mE Z_{i_3}^{\m_1}}
 \Theta_{G,1}^{(2)}  \rp }
 {\sqrt{t}}
 \nonumber\\
 & = & \beta^2 \lp \mE_{G,{\mathcal U}_3} \langle \|\x^{(i_1)}\|_2^2\|\y^{(i_2)}\|_2^2\rangle_{\gamma_{01}^{(2)}} +   (s-1)\mE_{G,{\mathcal U}_3}\langle \|\x^{(i_1)}\|_2^2(\y^{(p_2)})^T\y^{(i_2)}\rangle_{\gamma_{02}^{(2)}}   \rp    \nonumber \\
 &  & -s\beta^2(1-\m_1) \mE_{G,{\mathcal U}_3}\langle (\x^{(p_1)})^T\x^{(i_1)}(\y^{(p_2)})^T\y^{(i_2)}\rangle_{\gamma_{1}^{(2)}}.
 \end{eqnarray}

To find $T_{G,j}^{d_{1}}$, we start with
\begin{eqnarray}\label{eq:lev2genGanal7}
 \frac{d}{d\bar{\u}_j^{(p_1,1)}}\lp
 \lp
\sum_{i_3=1}^{l}
\lp
 \mE_{{\mathcal U}_1}  Z_{i_3}^{\m_1}
\rp^p
\rp^{\frac{\m_2}{\m_1} -1 }
 \rp
=
-p \sum_{p_3=1}^{l}
  \frac{\lp 1-\frac{\m_2}{\m_1} \rp
\lp
 \mE_{{\mathcal U}_1}  Z_{p_3}^{\m_1}
\rp^{p-1}  }{   \lp
\sum_{i_3=1}^{l}
\lp
 \mE_{{\mathcal U}_1}  Z_{i_3}^{\m_1}
\rp^p
\rp^{2 - \frac{\m_2}{\m_1}  }
 }
\mE_{{\mathcal U}_1}\frac{d Z_{p_3}^{\m_1}}{d\bar{\u}_j^{(p_1,1)}}.
 \end{eqnarray}
From \cite{Stojnicnflgscompyx23}'s (174) we also have
\begin{eqnarray}\label{eq:lev2genGanal7a}
  \frac{dZ_{p_3}^{\m_1}}{d\bar{\u}_j^{(p_1,1)}}
 & = &   \m_1 \frac{Z_{p_3}^{\m_1}}{Z_{p_3}} s (C_{p_3}^{(p_1)})^{s-1}\sum_{p_2=1}^{l}A_{p_3}^{(p_1,p_2)}\beta_{p_1}\y_j^{(p_2)}\sqrt{t}.
\end{eqnarray}
A combination of (\ref{eq:lev2genGanal7}) and (\ref{eq:lev2genGanal7a}) gives
\begin{eqnarray}\label{eq:lev2genGanal7b}
 \frac{d}{d\bar{\u}_j^{(p_1,1)}}\lp
 \lp
\sum_{i_3=1}^{l}
\lp
 \mE_{{\mathcal U}_1}  Z_{i_3}^{\m_1}
\rp^p
\rp^{\frac{\m_2}{\m_1} -1 }
 \rp
&  = &
-p \sum_{p_3=1}^{l}
  \frac{\lp \m_1- \m_2 \rp
\lp
 \mE_{{\mathcal U}_1}  Z_{p_3}^{\m_1}
\rp^{p}  }{   \lp
\sum_{i_3=1}^{l}
\lp
 \mE_{{\mathcal U}_1}  Z_{i_3}^{\m_1}
\rp^p
\rp^{2 - \frac{\m_2}{\m_1}  }
 }
\mE_{{\mathcal U}_1}\frac{ Z_{p_3}^{\m_1} } { \mE_{{\mathcal U}_1} Z_{p_3}^{\m_1}  }
\nonumber \\
& & \times
  \frac{1}{Z_{p_3}} s (C_{p_3}^{(p_1)})^{s-1}\sum_{p_2=1}^{l}A_{p_3}^{(p_1,p_2)}\beta_{p_1}\y_j^{(p_2)}\sqrt{t}.
\end{eqnarray}
Combining further  the expression for $\Theta_{G,21}^{(2)}$ from (\ref{eq:lev2genGanal3}) with (\ref{eq:lev2genGanal7b}), one obtains
\begin{eqnarray}\label{eq:lev2genGanal8}
 \mE_{{\mathcal U}_2,{\mathcal U}_1} \Theta_{G,21}^{(2)} \hspace{-.1in} & = &
 -  \mE_{{\mathcal U}_2}\mE_{{\mathcal U}_1}
 \Bigg ( \Bigg .
   \sum_{i_3=1}^{l}
\frac{
 \lp
 \mE_{{\mathcal U}_1}  Z_{i_3}^{\m_1}
\rp^{p-1}
 }   {\mE_{{\mathcal U}_2}\lp\lp
\sum_{i_3=1}^{l}
\lp
 \mE_{{\mathcal U}_1}  Z_{i_3}^{\m_1}
\rp^p
\rp^{\frac{\m_2}{\m_1}}\rp}
 \nonumber \\
 & & \times
 \frac{(C_{i_3}^{(i_1)})^{s-1} A_{i_3}^{(i_1,i_2)}\y_j^{(i_2)}}{Z_{i_3}^{1-\m_1}      }          \sum_{p_1=1}^{l} \mE_G (\u_j^{(i_1,1)}\u_j^{(p_1,1)})
 p \sum_{p_3=1}^{l}
  \frac{\lp \m_1- \m_2 \rp
\lp
 \mE_{{\mathcal U}_1}  Z_{p_3}^{\m_1}
\rp^{p}  }{   \lp
\sum_{i_3=1}^{l}
\lp
 \mE_{{\mathcal U}_1}  Z_{i_3}^{\m_1}
\rp^p
\rp^{2 - \frac{\m_2}{\m_1}  }
 }
\mE_{{\mathcal U}_1}\frac{ Z_{p_3}^{\m_1} } { \mE_{{\mathcal U}_1} Z_{p_3}^{\m_1}  }
\nonumber \\
& & \times
  \frac{1}{Z_{p_3}} s (C_{p_3}^{(p_1)})^{s-1}\sum_{p_2=1}^{l}A_{p_3}^{(p_1,p_2)}\beta_{p_1}\y_j^{(p_2)}\sqrt{t}
   \Bigg .\Bigg )
   \nonumber \\
 & = &
 -  \mE_{{\mathcal U}_2}
 \Bigg ( \Bigg .
\frac{
\lp
\sum_{i_3=1}^{l}
\lp
 \mE_{{\mathcal U}_1}  Z_{i_3}^{\m_1}
\rp^p
\rp^{\frac{\m_2}{\m_1}}
 }   {\mE_{{\mathcal U}_2}\lp\lp
\sum_{i_3=1}^{l}
\lp
 \mE_{{\mathcal U}_1}  Z_{i_3}^{\m_1}
\rp^p
\rp^{\frac{\m_2}{\m_1}}\rp  }
\mE_{{\mathcal U}_1}
   \sum_{i_3=1}^{l}
   \lp
 \mE_{{\mathcal U}_1}  Z_{i_3}^{\m_1}
\rp^{p-1}
\frac{Z_{i_3}^{\m_1} }  {  \lp
\sum_{i_3=1}^{l}
 \lp
 \mE_{{\mathcal U}_1}  Z_{i_3}^{\m_1}
\rp^p
\rp }
 \nonumber \\
 & & \times
 \frac{(C_{i_3}^{(i_1)})^{s-1} A_{i_3}^{(i_1,i_2)}\y_j^{(i_2)}}{Z_{i_3}      }          \sum_{p_1=1}^{l} \mE_G (\u_j^{(i_1,1)}\u_j^{(p_1,1)})
 p \sum_{p_3=1}^{l}
  \frac{\lp \m_1- \m_2 \rp
\lp
 \mE_{{\mathcal U}_1}  Z_{p_3}^{\m_1}
\rp^{p}  }{   \lp
\sum_{i_3=1}^{l}
\lp
 \mE_{{\mathcal U}_1}  Z_{i_3}^{\m_1}
\rp^p
\rp
 }
\mE_{{\mathcal U}_1}\frac{ Z_{p_3}^{\m_1} } { \mE_{{\mathcal U}_1} Z_{p_3}^{\m_1}  }
\nonumber \\
& & \times
  \frac{1}{Z_{p_3}} s (C_{p_3}^{(p_1)})^{s-1}\sum_{p_2=1}^{l}A_{p_3}^{(p_1,p_2)}\beta_{p_1}\y_j^{(p_2)}\sqrt{t}
   \Bigg .\Bigg )
\nonumber \\
 & = & \Phi_{{\mathcal U}_2}\frac{\m_1-\m_2}{\m_1} \mE_{{\mathcal U}_1}
   \sum_{i_3=1}^{l}
   \lp
 \mE_{{\mathcal U}_1}  Z_{i_3}^{\m_1}
\rp^{p-1} \Theta_{G,2}.
  \end{eqnarray}
Connecting $T_{G,j}^{d}$ from (\ref{eq:genGanal3}), (\ref{eq:genGanal9}), $T_{G,j}^{d_1}$ from (\ref{eq:lev2genGanal3}), and (\ref{eq:lev2genGanal8}), we also obtain
\begin{eqnarray}\label{eq:lev2genGanal9}
 \sum_{i_1=1}^{l} \sum_{i_2=1}^{l}\sum_{j=1}^{m}\beta_{i_1} \frac{T_{G,j}^{d_1}}{\sqrt{t}}
 & = & \sum_{i_1=1}^{l} \sum_{i_2=1}^{l}\sum_{j=1}^{m}\beta_{i_1} \frac{\mE_{G,{\mathcal U}_3}\lp \mE_{{\mathcal U}_2} \mE_{{\mathcal U}_1}  \Theta_{G,2}^{(2)}\rp}{\sqrt{t}} \nonumber\\
 & = &  \mE_{G,{\mathcal U}_3}\lp \mE_{{\mathcal U}_2} \mE_{{\mathcal U}_1} \sum_{i_1=1}^{l} \sum_{i_2=1}^{l}\sum_{j=1}^{m}\beta_{i_1} \frac{\Theta_{G,2}^{(2)}}{\sqrt{t}}\rp \nonumber\\
 & = &  \mE_{G,{\mathcal U}_3}\lp \Phi_{{\mathcal U}_2}\frac{\m_1-\m_2}{\m_1} \mE_{{\mathcal U}_1} \sum_{i_1=1}^{l} \sum_{i_2=1}^{l}\sum_{j=1}^{m}\beta_{i_1} \frac{
   \sum_{i_3=1}^{l}
   \lp
 \mE_{{\mathcal U}_1}  Z_{i_3}^{\m_1}
\rp^{p-1} \Theta_{G,2}}{\sqrt{t}}\rp \nonumber\\
 & = &  -\mE_{G,{\mathcal U}_3}\lp \Phi_{{\mathcal U}_2}\frac{\m_1-\m_2}{\m_1} s\beta^2\m_1 p \langle(\x^{(p_1)})^T\x^{(i_1)} (\y^{(p_2)})^T\y^{(i_2)} \rangle_{\gamma_{21}^{(1)}}\rp \nonumber\\
  & = & -s\beta^2(\m_1-\m_2) p \mE_{G,{\mathcal U}_3} \langle(\x^{(p_1)})^T\x^{(i_1)} (\y^{(p_2)})^T\y^{(i_2)} \rangle_{\gamma_{22}^{(2)}}.
 \end{eqnarray}

To find $T_{G,j}^{e_{1}}$, we start with
\begin{eqnarray}\label{eq:lev2genGanal7bb0}
 \frac{d}{d\bar{\u}_j^{(p_1,1)}}\lp
\sum_{i_3=1}^{l}
\lp
 \mE_{{\mathcal U}_1}  Z_{i_3}^{\m_1}
\rp^{p-1}
 \rp
=
(p-1)
\lp
 \mE_{{\mathcal U}_1}  Z_{i_3}^{\m_1}
\rp^{p-2}
\mE_{{\mathcal U}_1}\frac{d Z_{i_3}^{\m_1}}{d\bar{\u}_j^{(p_1,1)}}.
 \end{eqnarray}
A combination of (\ref{eq:lev2genGanal7bb0}) and (\ref{eq:lev2genGanal7a}) gives
\begin{eqnarray}\label{eq:lev2genGanal7bbb0}
 \frac{d}{d\bar{\u}_j^{(p_1,1)}}\lp
\sum_{i_3=1}^{l}
\lp
 \mE_{{\mathcal U}_1}  Z_{i_3}^{\m_1}
\rp^{p-1}
 \rp
&  = &
\m_1 (p-1) \lp
 \mE_{{\mathcal U}_1}  Z_{i_3}^{\m_1}
\rp^{p-1}
\mE_{{\mathcal U}_1}\frac{ Z_{i_3}^{\m_1} } { \mE_{{\mathcal U}_1} Z_{i_3}^{\m_1}  }
\nonumber \\
& & \times
  \frac{1}{Z_{i_3}} s (C_{i_3}^{(p_1)})^{s-1}\sum_{p_2=1}^{l}A_{i_3}^{(p_1,p_2)}\beta_{p_1}\y_j^{(p_2)}\sqrt{t}.
\end{eqnarray}
Combining further  the expression for $\Theta_{G,22}^{(2)}$ from (\ref{eq:lev2genGanal3}) with (\ref{eq:lev2genGanal7bbb0}) while recalling on (\ref{eq:genGanal8bb2}), one obtains
\begin{eqnarray}\label{eq:lev2genGanal8bb0}
\mE_{{\mathcal U}_2}\mE_{{\mathcal U}_1}  \Theta_{G,22}^{(2)} & = &
\mE_{{\mathcal U}_2}\mE_{{\mathcal U}_1}
 \Bigg ( \Bigg .
   \sum_{i_3=1}^{l}
\frac{\lp
\sum_{i_3=1}^{l}
\lp
 \mE_{{\mathcal U}_1}  Z_{i_3}^{\m_1}
\rp^p
\rp^{\frac{\m_2}{\m_1}-1} }   {\mE_{{\mathcal U}_2}\lp\lp
\sum_{i_3=1}^{l}
\lp
 \mE_{{\mathcal U}_1}  Z_{i_3}^{\m_1}
\rp^p
\rp^{\frac{\m_2}{\m_1}}\rp}
 \nonumber \\
 & & \times
 \frac{(C_{i_3}^{(i_1)})^{s-1} A_{i_3}^{(i_1,i_2)}\y_j^{(i_2)}}{Z_{i_3}^{1-\m_1}     }          \sum_{p_1=1}^{l} \mE_G (\u_j^{(i_1,1)}\u_j^{(p_1,1)}) \m_1 (p-1)
\lp
 \mE_{{\mathcal U}_1}  Z_{i_3}^{\m_1}
\rp^{p-1}
\mE_{{\mathcal U}_1}\frac{ Z_{i_3}^{\m_1} } { \mE_{{\mathcal U}_1} Z_{i_3}^{\m_1}  }
\nonumber \\
& & \times
  \frac{1}{Z_{i_3}} s (C_{i_3}^{(p_1)})^{s-1}\sum_{p_2=1}^{l}A_{i_3}^{(p_1,p_2)}\beta_{p_1}\y_j^{(p_2)}\sqrt{t}
   \Bigg .\Bigg )
   \nonumber \\
   & = &
\mE_{{\mathcal U}_2}\mE_{{\mathcal U}_1}
 \Bigg ( \Bigg .
   \sum_{i_3=1}^{l}
\frac{\lp
\sum_{i_3=1}^{l}
\lp
 \mE_{{\mathcal U}_1}  Z_{i_3}^{\m_1}
\rp^p
\rp^{\frac{\m_2}{\m_1}} }   {\mE_{{\mathcal U}_2}\lp\lp
\sum_{i_3=1}^{l}
\lp
 \mE_{{\mathcal U}_1}  Z_{i_3}^{\m_1}
\rp^p
\rp^{\frac{\m_2}{\m_1}}\rp}
\frac{ \lp
 \mE_{{\mathcal U}_1}  Z_{i_3}^{\m_1}
\rp^{p} }
{  \lp
\sum_{i_3=1}^{l}
\lp
 \mE_{{\mathcal U}_1}  Z_{i_3}^{\m_1}
\rp^p
\rp  }
\mE_{{\mathcal U}_1}\frac{ Z_{i_3}^{\m_1} } { \mE_{{\mathcal U}_1} Z_{i_3}^{\m_1}  }
 \nonumber \\
 & & \times
 \frac{(C_{i_3}^{(i_1)})^{s-1} A_{i_3}^{(i_1,i_2)}\y_j^{(i_2)}}{Z_{i_3}    }          \sum_{p_1=1}^{l} \mE_G (\u_j^{(i_1,1)}\u_j^{(p_1,1)})  \m_1  (p-1)
\mE_{{\mathcal U}_1}\frac{ Z_{i_3}^{\m_1} } { \mE_{{\mathcal U}_1} Z_{i_3}^{\m_1}  }
\nonumber \\
& & \times
  \frac{1}{Z_{i_3}} s (C_{i_3}^{(p_1)})^{s-1}\sum_{p_2=1}^{l}A_{i_3}^{(p_1,p_2)}\beta_{p_1}\y_j^{(p_2)}\sqrt{t}
   \Bigg .\Bigg ) \nonumber \\
  & = & \Phi_{{\mathcal U}_2}  \mE_{{\mathcal U}_1}
 \Theta_{G,3}.
  \end{eqnarray}
Connecting  $T_{G,j}^{e}$ from (\ref{eq:genGanal3}),  (\ref{eq:genGanal9bb3}), $T_{G,j}^{e_1}$ from (\ref{eq:lev2genGanal3}), and (\ref{eq:lev2genGanal8bb0}), we find
\begin{eqnarray}\label{eq:lev2genGanal9bb0}
 \sum_{i_1=1}^{l} \sum_{i_2=1}^{l}\sum_{j=1}^{m}\beta_{i_1} \frac{T_{G,j}^{e_1}}{\sqrt{t}}
 & = & \sum_{i_1=1}^{l} \sum_{i_2=1}^{l}\sum_{j=1}^{m}\beta_{i_1} \frac{\mE_{G,{\mathcal U}_3}\lp \mE_{{\mathcal U}_2} \mE_{{\mathcal U}_1}  \Theta_{G,22}^{(2)}\rp}{\sqrt{t}} \nonumber\\
 & = &  \mE_{G,{\mathcal U}_3}\lp \mE_{{\mathcal U}_2} \mE_{{\mathcal U}_1} \sum_{i_1=1}^{l} \sum_{i_2=1}^{l}\sum_{j=1}^{m}\beta_{i_1} \frac{\Theta_{G,22}^{(22)}}{\sqrt{t}}\rp \nonumber\\
 & = &  \mE_{G,{\mathcal U}_3}\lp \Phi_{{\mathcal U}_2} \mE_{{\mathcal U}_1} \sum_{i_1=1}^{l} \sum_{i_2=1}^{l}\sum_{j=1}^{m}\beta_{i_1} \frac{
 \Theta_{G,3}}{\sqrt{t}}\rp \nonumber\\
 & = &  \mE_{G,{\mathcal U}_3}\lp \Phi_{{\mathcal U}_2} s\beta^2\m_1 (p-1) \langle(\x^{(p_1)})^T\x^{(i_1)} (\y^{(p_2)})^T\y^{(i_2)} \rangle_{\gamma_{22}^{(1)}}\rp \nonumber\\
  & = & s\beta^2 \m_1 (p-1) \mE_{G,{\mathcal U}_3} \langle(\x^{(p_1)})^T\x^{(i_1)} (\y^{(p_2)})^T\y^{(i_2)} \rangle_{\gamma_{22}^{(2)}}.
 \end{eqnarray}

To find $T_{G,j}^{d_2}$, we start with
\begin{eqnarray}\label{eq:lev2genGanal10}
\frac{d}{d \bar{\u}_j^{(p_1,1)}} \lp \frac{1}{  \mE_{{\mathcal U}_2}\lp\lp
\sum_{i_3=1}^{l}
\lp
 \mE_{{\mathcal U}_1}  Z_{i_3}^{\m_1}
\rp^p
\rp^{\frac{\m_2}{\m_1}}\rp    }   \rp
& = & -    \mE_{{\mathcal U}_2} \frac{ \frac{\m_2}{\m_1} \lp\lp
\sum_{i_3=1}^{l}
\lp
 \mE_{{\mathcal U}_1}  Z_{i_3}^{\m_1}
\rp^p
\rp^{\frac{\m_2}{\m_1} -1 }\rp    }{\lp    \mE_{{\mathcal U}_2}\lp\lp
\sum_{i_3=1}^{l}
\lp
 \mE_{{\mathcal U}_1}  Z_{i_3}^{\m_1}
\rp^p
\rp^{\frac{\m_2}{\m_1}}\rp      \rp^2}
\nonumber \\
& &
\times
p\sum_{p_3=1}^{l}
\lp
 \mE_{{\mathcal U}_1}  Z_{p_3}^{\m_1}
\rp^{p-1}
 \mE_{{\mathcal U}_1}\frac{d Z_{p_3}^{\m_1}}{d \bar{\u}_j^{(p_1,1)}}
\nonumber \\
& = & -    \mE_{{\mathcal U}_2} \frac{ \frac{\m_2}{\m_1} \lp\lp
\sum_{i_3=1}^{l}
\lp
 \mE_{{\mathcal U}_1}  Z_{i_3}^{\m_1}
\rp^p
\rp^{\frac{\m_2}{\m_1} -1 }\rp    }{\lp    \mE_{{\mathcal U}_2}\lp\lp
\sum_{i_3=1}^{l}
\lp
 \mE_{{\mathcal U}_1}  Z_{i_3}^{\m_1}
\rp^p
\rp^{\frac{\m_2}{\m_1}}\rp      \rp^2}
\nonumber \\
& &
\times
p\sum_{p_3=1}^{l}
\lp
 \mE_{{\mathcal U}_1}  Z_{p_3}^{\m_1}
\rp^{p-1}
  \nonumber\\
& &
\times \mE_{{\mathcal U}_1}   \m_1   \frac{Z_{p_3}^{\m_1}}{Z_{p_3}} s (C_{p_3}^{(p_1)})^{s-1}\sum_{p_2=1}^{l}A_{p_3}^{(p_1,p_2)}\beta_{p_1}\y_j^{(p_2)}\sqrt{t}.
\nonumber \\
\end{eqnarray}
Combining  the expression for $\Theta_{G,3}^{(2)}$ from (\ref{eq:lev2genGanal3}) with (\ref{eq:lev2genGanal10}), we obtain
\begin{eqnarray}\label{eq:lev2genGanal11}
 \mE_{{\mathcal U}_2,{\mathcal U}_1} \Theta_{G,3}^{(2)} & = &
  \mE_{{\mathcal U}_2,{\mathcal U}_1}  \Bigg ( \Bigg .
    \sum_{i_3=1}^{l}
\lp
\sum_{i_3=1}^{l}
\lp
 \mE_{{\mathcal U}_1}  Z_{i_3}^{\m_1}
\rp^p
\rp^{\frac{\m_2}{\m_1}-1} \lp
 \mE_{{\mathcal U}_1}  Z_{i_3}^{\m_1}
\rp^{p-1}
\nonumber \\
& & \times
  \frac{(C_{i_3}^{(i_1)})^{s-1} A_{i_3}^{(i_1,i_2)}\y_j^{(i_2)}}{Z_{i_3}^{1-\m_1}    }          \sum_{p_1=1}^{l} \mE_G (\u_j^{(i_1,1)}\u_j^{(p_1,1)})
    \mE_{{\mathcal U}_2} \frac{ \frac{\m_2}{\m_1} \lp\lp
\sum_{i_3=1}^{l}
\lp
 \mE_{{\mathcal U}_1}  Z_{i_3}^{\m_1}
\rp^p
\rp^{\frac{\m_2}{\m_1} -1 }\rp    }{\lp    \mE_{{\mathcal U}_2}\lp\lp
\sum_{i_3=1}^{l}
\lp
 \mE_{{\mathcal U}_1}  Z_{i_3}^{\m_1}
\rp^p
\rp^{\frac{\m_2}{\m_1}}\rp      \rp^2}
\nonumber \\
& &
\times
p\sum_{p_3=1}^{l}
\lp
 \mE_{{\mathcal U}_1}  Z_{p_3}^{\m_1}
\rp^{p-1}
   \mE_{{\mathcal U}_1}   \m_1   \frac{Z_{p_3}^{\m_1}}{Z_{p_3}} s (C_{p_3}^{(p_1)})^{s-1}\sum_{p_2=1}^{l}A_{p_3}^{(p_1,p_2)}\beta_{p_1}\y_j^{(p_2)}\sqrt{t}
 \Bigg . \Bigg ) \nonumber \\
  & = &
s\beta^2\m_2 p
\Bigg( \Bigg.
    \mE_{{\mathcal U}_2} \frac{ \lp
\sum_{i_3=1}^{l}
\lp
 \mE_{{\mathcal U}_1}  Z_{i_3}^{\m_1}
\rp^p
\rp^{\frac{\m_2}{\m_1} }     }   {   \mE_{{\mathcal U}_2}\lp\lp
\sum_{i_3=1}^{l}
\lp
 \mE_{{\mathcal U}_1}  Z_{i_3}^{\m_1}
\rp^p
\rp^{\frac{\m_2}{\m_1}}\rp     }
\sum_{i_3=1}^{l}
\frac{ \lp
 \mE_{{\mathcal U}_1}  Z_{i_3}^{\m_1}
\rp^p }{  \sum_{i_3=1}^{l}
\lp
 \mE_{{\mathcal U}_1}  Z_{i_3}^{\m_1}
\rp^p    }
  \mE_{{\mathcal U}_1}
\frac{ Z_{i_3}^{\m_1}  }{   \mE_{{\mathcal U}_1}  Z_{i_3}^{\m_1}   }
\nonumber \\
& & \times
  \frac{(C_{i_3}^{(i_1)})^s}{Z_{i_3}}
  \frac{ A_{i_3}^{(i_1,i_2)}}{C_{i_3}^{(i_1)}}\y_j^{(i_2)}
  \sum_{p_1=1}^{l} \frac{(\x^{(p_1)})^T\x^{(i_1)}}{\beta_{i_1}}
 \nonumber \\
& & \times
    \mE_{{\mathcal U}_2} \frac{ \lp
\sum_{i_3=1}^{l}
\lp
 \mE_{{\mathcal U}_1}  Z_{p_3}^{\m_1}
\rp^p
\rp^{\frac{\m_2}{\m_1} }     }   {   \mE_{{\mathcal U}_2}\lp\lp
\sum_{p_3=1}^{l}
\lp
 \mE_{{\mathcal U}_1}  Z_{p_3}^{\m_1}
\rp^p
\rp^{\frac{\m_2}{\m_1}}\rp     }
\sum_{p_3=1}^{l}
\frac{ \lp
 \mE_{{\mathcal U}_1}  Z_{p_3}^{\m_1}
\rp^p }{  \sum_{p_3=1}^{l}
\lp
 \mE_{{\mathcal U}_1}  Z_{p_3}^{\m_1}
\rp^p    }
  \mE_{{\mathcal U}_1}
\frac{ Z_{p_3}^{\m_1}  }{   \mE_{{\mathcal U}_1}  Z_{p_3}^{\m_1}   }
\nonumber \\
& & \times
 \frac{1}{Z_{p_3}}  (C_{p_3}^{(p_1)})^{s-1}\sum_{p_2=1}^{l}A_{p_3}^{(p_1,p_2)}\y_j^{(p_2)}\sqrt{t} \Bigg. \Bigg).
  \end{eqnarray}
After a further combination of $T_{G,j}^{d_2}$ from (\ref{eq:lev2genGanal3}) and (\ref{eq:lev2genGanal11}) we find
\begin{eqnarray}\label{eq:lev2genGanal12}
 \sum_{i_1=1}^{l} \sum_{i_2=1}^{l}\sum_{j=1}^{m}\beta_{i_1} \frac{T_{G,j}^{d_2}}{\sqrt{t}}
 & = & \sum_{i_1=1}^{l} \sum_{i_2=1}^{l}\sum_{j=1}^{m}\beta_{i_1} \frac{\mE_{G,{\mathcal U}_3}\lp \mE_{{\mathcal U}_2} \mE_{{\mathcal U}_1}  \Theta_{G,3}^{(2)}\rp}{\sqrt{t}} \nonumber\\
   & = & -s\beta^2\m_2 p \mE_{G,{\mathcal U}_3} \langle(\x^{(p_1)})^T\x^{(i_1)} (\y^{(p_2)})^T\y^{(i_2)} \rangle_{\gamma_{3}^{(2)}}.
 \end{eqnarray}
Finally, an additional combination of (\ref{eq:lev2genGanal4}), (\ref{eq:lev2genGanal6}), (\ref{eq:lev2genGanal9}), (\ref{eq:lev2genGanal9bb0}), and (\ref{eq:lev2genGanal12}) gives
\begin{eqnarray}\label{eq:lev2genGanal13}
 \sum_{i_1=1}^{l} \sum_{i_2=1}^{l}\sum_{j=1}^{m}\beta_{i_1} \frac{T_{G,j}}{\sqrt{t}}
 & = &  \sum_{i_1=1}^{l} \sum_{i_2=1}^{l}\sum_{j=1}^{m}\beta_{i_1} \frac{T_{G,j}^c+T_{G,j}^{d_1}+T_{G,j}^{e_1}+T_{G,j}^{d_2}}{\sqrt{t}} \nonumber \\
& = & \beta^2  \lp \mE_{G,{\mathcal U}_3} \langle \|\x^{(i_1)}\|_2^2\|\y^{(i_2)}\|_2^2\rangle_{\gamma_{01}^{(2)}} +   (s-1) \mE_{G,{\mathcal U}_3}\langle \|\x^{(i_1)}\|_2^2(\y^{(p_2)})^T\y^{(i_2)}\rangle_{\gamma_{02}^{(2)}}   \rp    \nonumber \\
 &  & -s\beta^2(1-\m_1) \mE_{G,{\mathcal U}_3}\langle (\x^{(p_1)})^T\x^{(i_1)}(\y^{(p_2)})^T\y^{(i_2)}\rangle_{\gamma_{1}^{(2)}} \nonumber \\
 &  & -s\beta^2(\m_1-\m_2) p \mE_{G,{\mathcal U}_3} \langle(\x^{(p_1)})^T\x^{(i_1)} (\y^{(p_2)})^T\y^{(i_2)} \rangle_{\gamma_{21}^{(2)}} \nonumber \\
 &  & +s\beta^2 \m_1 (p-1) \mE_{G,{\mathcal U}_3} \langle(\x^{(p_1)})^T\x^{(i_1)} (\y^{(p_2)})^T\y^{(i_2)} \rangle_{\gamma_{22}^{(2)}} \nonumber \\
  &  & -s\beta^2\m_2 p \mE_{G,{\mathcal U}_3} \langle(\x^{(p_1)})^T\x^{(i_1)} (\y^{(p_2)})^T\y^{(i_2)} \rangle_{\gamma_{3}^{(2)}}.
 \end{eqnarray}

\subsubsection{Connecting everything together -- second level}
\label{sec:lev2conalt}

 Similarly to what was done in Section \ref{sec:gencon}, we are now in position to connect together all the results obtained above. To that end we first utilize (\ref{eq:lev2genanal10e}) and (\ref{eq:lev2genanal10f}) to write
\begin{eqnarray}\label{eq:lev2ctp1}
\frac{d\psi(t)}{dt}  & = &       \frac{\mbox{sign}(s)}{2\beta\sqrt{n}} \lp \Omega_G+\Omega_1+\Omega_2+\Omega_3\rp,
\end{eqnarray}
with
\begin{eqnarray}\label{eq:lev2ctp2}
\Omega_G & = & \sum_{i_1=1}^{l}  \sum_{i_2=1}^{l}\sum_{j=1}^{m} \beta_{i_1}\frac{T_{G,j}}{\sqrt{t}}  \nonumber\\
\Omega_1 & = &
-\sum_{i_1=1}^{l}  \sum_{i_2=1}^{l} \sum_{j=1}^{m}\beta_{i_1}\frac{T_{3,1,j}}{\sqrt{1-t}}-\sum_{i_1=1}^{l}  \sum_{i_2=1}^{l} \sum_{j=1}^{m}\beta_{i_1}\frac{T_{2,1,j}}{\sqrt{1-t}}
-\sum_{i_1=1}^{l}  \sum_{i_2=1}^{l} \sum_{j=1}^{m}\beta_{i_1}\frac{T_{1,1,j}}{\sqrt{1-t}} \nonumber\\
\Omega_2 & = &
-\sum_{i_1=1}^{l}  \sum_{i_2=1}^{l}\beta_{i_1}\|\y^{(i_2)}\|_2\frac{T_{3,2}}{\sqrt{1-t}}
-\sum_{i_1=1}^{l}  \sum_{i_2=1}^{l}\beta_{i_1}\|\y^{(i_2)}\|_2\frac{T_{2,2}}{\sqrt{1-t}}
-\sum_{i_1=1}^{l}  \sum_{i_2=1}^{l}\beta_{i_1}\|\y^{(i_2)}\|_2\frac{T_{1,2}}{\sqrt{1-t}} \nonumber\\
\Omega_3 & = &
\sum_{i_1=1}^{l}  \sum_{i_2=1}^{l}\beta_{i_1}\|\y^{(i_2)}\|_2\frac{T_{3,3}}{\sqrt{t}}
\sum_{i_1=1}^{l}  \sum_{i_2=1}^{l}\beta_{i_1}\|\y^{(i_2)}\|_2\frac{T_{2,3}}{\sqrt{t}}
+ \sum_{i_1=1}^{l}  \sum_{i_2=1}^{l}\beta_{i_1}\|\y^{(i_2)}\|_2\frac{T_{1,3}}{\sqrt{t}}.
\end{eqnarray}
From (\ref{eq:lev2genGanal13}) we have
\begin{eqnarray}\label{eq:lev2cpt3}
\Omega_G & = & \beta^2 \lp \mE_{G,{\mathcal U}_3} \langle \|\x^{(i_1)}\|_2^2\|\y^{(i_2)}\|_2^2\rangle_{\gamma_{01}^{(2)}} +  (s-1) \mE_{G,{\mathcal U}_3}\langle \|\x^{(i_1)}\|_2^2(\y^{(p_2)})^T\y^{(i_2)}\rangle_{\gamma_{02}^{(2)}}   \rp    \nonumber \\
 &  & -s\beta^2(1-\m_1) \mE_{G,{\mathcal U}_3}\langle (\x^{(p_1)})^T\x^{(i_1)}(\y^{(p_2)})^T\y^{(i_2)}\rangle_{\gamma_{1}^{(2)}} \nonumber \\
 &  & -s\beta^2(\m_1-\m_2) \mE_{G,{\mathcal U}_3} \langle(\x^{(p_1)})^T\x^{(i_1)} (\y^{(p_2)})^T\y^{(i_2)} \rangle_{\gamma_{21}^{(2)}} \nonumber \\
 &  & +s\beta^2 \m_1 (p-1) \mE_{G,{\mathcal U}_3} \langle(\x^{(p_1)})^T\x^{(i_1)} (\y^{(p_2)})^T\y^{(i_2)} \rangle_{\gamma_{22}^{(2)}} \nonumber \\
  &  & -s\beta^2\m_2\mE_{G,{\mathcal U}_3} \langle(\x^{(p_1)})^T\x^{(i_1)} (\y^{(p_2)})^T\y^{(i_2)} \rangle_{\gamma_{3}^{(2)}}.
 \end{eqnarray}
From (\ref{eq:lev2liftgenAanal19i}), (\ref{eq:lev2genDanal25}), and (\ref{eq:lev2x3lev2genDanal25}) we have
\begin{eqnarray}\label{eq:lev2cpt4}
-\Omega_1
& = & (\p_0-\p_1)\beta^2 \lp \mE_{G,{\mathcal U}_3}\langle \|\x^{(i_1)}\|_2^2\|\y^{(i_2)}\|_2^2\rangle_{\gamma_{01}^{(2)}} +  (s-1)\mE_{G,{\mathcal U}_3}\langle \|\x^{(i_1)}\|_2^2(\y^{(p_2)})^T\y^{(i_2)}\rangle_{\gamma_{02}^{(2)}} \rp \nonumber \\
& & - (\p_0-\p_1)s\beta^2(1-\m_1)\mE_{G,{\mathcal U}_3}\langle \|\x^{(i_1)}\|_2\|\x^{(p_1)}\|_2(\y^{(p_2)})^T\y^{(i_2)} \rangle_{\gamma_{1}^{(2)}} \nonumber \\
&  & +(\p_1-\p_2)\beta^2\lp \mE_{G,{\mathcal U}_3}\langle \|\x^{(i_1)}\|_2^2\|\y^{(i_2)}\|_2^2\rangle_{\gamma_{01}^{(2)}} +  (s-1)\mE_{G,{\mathcal U}_3}\langle \|\x^{(i_1)}\|_2^2(\y^{(p_2)})^T\y^{(i_2)}\rangle_{\gamma_{02}^{(2)}} \rp\nonumber \\
& & - (\p_1-\p_2)s\beta^2(1-\m_1)\mE_{G,{\mathcal U}_3}\langle \|\x^{(i_1)}\|_2\|\x^{(p_1)}\|_2(\y^{(p_2)})^T\y^{(i_2)} \rangle_{\gamma_{1}^{(2)}}
\nonumber \\
 &   &
  -s\beta^2(\p_1-\p_2)(\m_1-\m_2) p \mE_{G,{\mathcal U}_3} \langle \|\x^{(i_1)}\|_2\|\x^{(p_1)}\|_2(\y^{(p_2)})^T\y^{(i_2)} \rangle_{\gamma_{21}^{(2)}} \nonumber \\
 &   &
  +s\beta^2(\p_1-\p_2) \m_1 (p-1) \mE_{G,{\mathcal U}_3} \langle \|\x^{(i_1)}\|_2\|\x^{(p_1)}\|_2(\y^{(p_2)})^T\y^{(i_2)} \rangle_{\gamma_{22}^{(2)}}   \nonumber \\
&  & +\p_2 \beta^2\lp \mE_{G,{\mathcal U}_3}\langle \|\x^{(i_1)}\|_2^2\|\y^{(i_2)}\|_2^2\rangle_{\gamma_{01}^{(2)}} +  (s-1)\mE_{G,{\mathcal U}_3}\langle \|\x^{(i_1)}\|_2^2(\y^{(p_2)})^T\y^{(i_2)}\rangle_{\gamma_{02}^{(2)}} \rp\nonumber \\
& & - \p_2s\beta^2(1-\m_1)\mE_{G,{\mathcal U}_3}\langle \|\x^{(i_1)}\|_2\|\x^{(p_1)}\|_2(\y^{(p_2)})^T\y^{(i_2)} \rangle_{\gamma_{1}^{(2)}}\nonumber \\
 &   &
  -s\beta^2\p_2(\m_1-\m_2) p \mE_{G,{\mathcal U}_3} \langle \|\x^{(i_1)}\|_2\|\x^{(p_1)}\|_2(\y^{(p_2)})^T\y^{(i_2)} \rangle_{\gamma_{21}^{(2)}}\nonumber \\
 &   &
  +s\beta^2\p_2 \m_1  (p-1) \mE_{G,{\mathcal U}_3} \langle \|\x^{(i_1)}\|_2\|\x^{(p_1)}\|_2(\y^{(p_2)})^T\y^{(i_2)} \rangle_{\gamma_{22}^{(2)}}\nonumber \\
 &  & -s\beta^2\p_2\m_2 p \mE_{G,{\mathcal U}_3} \langle \|\x^{(i_1)}\|_2\|\x^{(p_1)}\|_2(\y^{(p_2)})^T\y^{(i_2)} \rangle_{\gamma_{3}^{(2)}}
 \nonumber \\
 & = & \p_0 \beta^2\lp\mE_{G,{\mathcal U}_3}\langle \|\x^{(i_1)}\|_2^2\|\y^{(i_2)}\|_2^2\rangle_{\gamma_{01}^{(2)}} +  (s-1)\mE_{G,{\mathcal U}_3}\langle \|\x^{(i_1)}\|_2^2(\y^{(p_2)})^T\y^{(i_2)}\rangle_{\gamma_{02}^{(2)}} \rp\nonumber \\
& & - \p_0s\beta^2(1-\m_1)\mE_{G,{\mathcal U}_3}\langle \|\x^{(i_1)}\|_2\|\x^{(p_1)}\|_2(\y^{(p_2)})^T\y^{(i_2)} \rangle_{\gamma_{1}^{(2)}}\nonumber \\
 &   &
  -s\beta^2\p_1(\m_1-\m_2) p \mE_{G,{\mathcal U}_3} \langle \|\x^{(i_1)}\|_2\|\x^{(p_1)}\|_2(\y^{(p_2)})^T\y^{(i_2)} \rangle_{\gamma_{21}^{(2)}}\nonumber \\
 &   &
  +s\beta^2\p_1 \m_1 (p-1) \mE_{G,{\mathcal U}_3} \langle \|\x^{(i_1)}\|_2\|\x^{(p_1)}\|_2(\y^{(p_2)})^T\y^{(i_2)} \rangle_{\gamma_{22}^{(2)}}\nonumber \\
 &  & -s\beta^2\p_2\m_2p\mE_{G,{\mathcal U}_3} \langle \|\x^{(i_1)}\|_2\|\x^{(p_1)}\|_2(\y^{(p_2)})^T\y^{(i_2)} \rangle_{\gamma_{3}^{(2)}}.
   \end{eqnarray}
From (\ref{eq:lev2liftgenBanal20b}), (\ref{eq:lev2genEanal25}), and (\ref{eq:lev2x3lev2genEanal25}), we find
\begin{eqnarray}\label{eq:lev2cpt5}
-\Omega_2 & = & (\q_0-\q_1)\beta^2\lp\mE_{G,{\mathcal U}_3}\langle \|\x^{(i_1)}\|_2^2\|\y^{(i_2)}\|_2^2\rangle_{\gamma_{01}^{(2)}} +  (s-1)\mE_{G,{\mathcal U}_3}\langle \|\x^{(i_1)}\|_2^2 \|\y^{(i_2)}\|_2\|\y^{(p_2)}\|_2\rangle_{\gamma_{02}^{(2)}}\rp\nonumber \\
& & - (\q_0-\q_1)s\beta^2(1-\m_1)\mE_{G,{\mathcal U}_3}\langle (\x^{(p_1)})^T\x^{(i_1)}\|\y^{(i_2)}\|_2\|\y^{(p_2)}\|_2 \rangle_{\gamma_{1}^{(2)}}\nonumber \\
&  & +(\q_1-\q_2)\beta^2 \lp \mE_{G,{\mathcal U}_3}\langle \|\x^{(i_1)}\|_2^2\|\y^{(i_2)}\|_2^2\rangle_{\gamma_{01}^{(2)}} +   (s-1)\mE_{G,{\mathcal U}_3}\langle \|\x^{(i_1)}\|_2^2 \|\y^{(i_2)}\|_2\|\y^{(p_2)}\|_2\rangle_{\gamma_{02}^{(2)}}\rp
\nonumber \\
& & - (\q_1-\q_2)s\beta^2(1-\m_1)\mE_{G,{\mathcal U}_3}\langle (\x^{(p_1)})^T\x^{(i_1)}\|\y^{(i_2)}\|_2\|\y^{(p_2)}\|_2 \rangle_{\gamma_{1}^{(2)}} \nonumber \\
&  & -s\beta^2(\q_1-\q_2)(\m_1-\m_2) p \mE_{G,{\mathcal U}_2} \langle \|\y^{(i_2)}\|_2\|\y^{(p_2)}\|_2(\x^{(i_1)})^T\x^{(p_1)}\rangle_{\gamma_{21}^{(2)}} \nonumber \\
&  & +s\beta^2(\q_1-\q_2) \m_1  (p-1)  \mE_{G,{\mathcal U}_2} \langle \|\y^{(i_2)}\|_2\|\y^{(p_2)}\|_2(\x^{(i_1)})^T\x^{(p_1)}\rangle_{\gamma_{22}^{(2)}} \nonumber \\
&  & +\q_2\beta^2\lp\mE_{G,{\mathcal U}_3}\langle \|\x^{(i_1)}\|_2^2\|\y^{(i_2)}\|_2^2\rangle_{\gamma_{01}^{(2)}} +  (s-1)\mE_{G,{\mathcal U}_3}\langle \|\x^{(i_1)}\|_2^2 \|\y^{(i_2)}\|_2\|\y^{(p_2)}\|_2\rangle_{\gamma_{02}^{(2)}}\rp\nonumber \\
& & - \q_2s\beta^2(1-\m_1)\mE_{G,{\mathcal U}_3}\langle (\x^{(p_1)})^T\x^{(i_1)}\|\y^{(i_2)}\|_2\|\y^{(p_2)}\|_2 \rangle_{\gamma_{1}^{(2)}} \nonumber \\
&  & -s\beta^2\q_2(\m_1-\m_2) p \mE_{G,{\mathcal U}_3} \langle \|\y^{(i_2)}\|_2\|\y^{(p_2)}\|_2(\x^{(i_1)})^T\x^{(p_1)}\rangle_{\gamma_{21}^{(2)}} \nonumber \\
&  & +s\beta^2\q_2 \m_1 (p-1) \mE_{G,{\mathcal U}_3} \langle \|\y^{(i_2)}\|_2\|\y^{(p_2)}\|_2(\x^{(i_1)})^T\x^{(p_1)}\rangle_{\gamma_{22}^{(2)}} \nonumber \\
& & -s\beta^2 \q_2 \m_2 p \mE_{G,{\mathcal U}_3} \langle \|\y^{(i_2)}\|_2\|\y^{(p_2)}\|_2(\x^{(i_1)})^T\x^{(p_1)}\rangle_{\gamma_{3}^{(2)}} \nonumber \\
& = & \q_0 \beta^2 \lp \mE_{G,{\mathcal U}_3}\langle \|\x^{(i_1)}\|_2^2\|\y^{(i_2)}\|_2^2\rangle_{\gamma_{01}^{(2)}} +   (s-1)\mE_{G,{\mathcal U}_3}\langle \|\x^{(i_1)}\|_2^2 \|\y^{(i_2)}\|_2\|\y^{(p_2)}\|_2\rangle_{\gamma_{02}^{(2)}}\rp\nonumber \\
& & - \q_0s\beta^2(1-\m_1)\mE_{G,{\mathcal U}_3}\langle (\x^{(p_1)})^T\x^{(i_1)}\|\y^{(i_2)}\|_2\|\y^{(p_2)}\|_2 \rangle_{\gamma_{1}^{(2)}} \nonumber \\
&  & -s\beta^2\q_1(\m_1-\m_2) p\mE_{G,{\mathcal U}_3} \langle \|\y^{(i_2)}\|_2\|\y^{(p_2)}\|_2(\x^{(i_1)})^T\x^{(p_1)}\rangle_{\gamma_{21}^{(2)}} \nonumber \\
&  & +s\beta^2\q_1 \m_1 ( p -1 ) \mE_{G,{\mathcal U}_3} \langle \|\y^{(i_2)}\|_2\|\y^{(p_2)}\|_2(\x^{(i_1)})^T\x^{(p_1)}\rangle_{\gamma_{22}^{(2)}} \nonumber \\
& & -s\beta^2 \q_2 \m_2 p \mE_{G,{\mathcal U}_3} \langle \|\y^{(i_2)}\|_2\|\y^{(p_2)}\|_2(\x^{(i_1)})^T\x^{(p_1)}\rangle_{\gamma_{3}^{(2)}}.
\end{eqnarray}

From (\ref{eq:lev2liftgenCanal21b}) and (\ref{eq:lev2genFanal29}), we also have
  \begin{eqnarray}\label{eq:lev2cpt6}
\Omega_3
& = & (\p_0\q_0-\p_1\q_1)\beta^2 \lp \mE_{G,{\mathcal U}_3}\langle \|\x^{(i_1)}\|_2^2\|\y^{(i_2)}\|_2^2\rangle_{\gamma_{01}^{(2)}} +   (s-1)\mE_{G,{\mathcal U}_3}\langle \|\x^{(i_1)}\|_2^2 \|\y^{(i_2)}\|_2\|\y^{(p_2)}\|_2\rangle_{\gamma_{02}^{(2)}}\rp\nonumber \\
& & - (\p_0\q_0-\p_1\q_1)s\beta^2(1-\m_1)\mE_{G,{\mathcal U}_3}\langle \|\x^{(i_1)}\|_2\|\x^{(p_`)}\|_2\|\y^{(i_2)}\|_2\|\y^{(p_2)}\|_2 \rangle_{\gamma_{1}^{(2)}} \nonumber \\
&  &
+(\p_1\q_1-\p_2\q_2)\beta^2\lp\mE_{G,{\mathcal U}_3}\langle \|\x^{(i_1)}\|_2^2\|\y^{(i_2)}\|_2^2\rangle_{\gamma_{01}^{(2)}} +   (s-1)\mE_{G,{\mathcal U}_3}\langle \|\x^{(i_1)}\|_2^2 \|\y^{(i_2)}\|_2\|\y^{(p_2)}\|_2\rangle_{\gamma_{02}^{(2)}}\rp\nonumber \\
& & - (\p_1\q_1-\p_2\q_2) s\beta^2(1-\m_1)\mE_{G,{\mathcal U}_3}\langle \|\x^{(i_1)}\|_2\|\x^{(p_`)}\|_2\|\y^{(i_2)}\|_2\|\y^{(p_2)}\|_2 \rangle_{\gamma_{1}^{(2)}} \nonumber \\
&  & -s\beta^2(\p_1\q_1-\p_2\q_2)(\m_1-\m_2) p \mE_{G,{\mathcal U}_3} \langle\|\x^{(i_2)}\|_2\|\x^{(p_2)}\|_2\|\y^{(i_2)}\|_2\|\y^{(p_2)}\rangle_{\gamma_{21}^{(2)}}
\nonumber \\
&  & +s\beta^2(\p_1\q_1-\p_2\q_2) \m_1 (p-1) \mE_{G,{\mathcal U}_3} \langle\|\x^{(i_2)}\|_2\|\x^{(p_2)}\|_2\|\y^{(i_2)}\|_2\|\y^{(p_2)}\rangle_{\gamma_{22}^{(2)}}
\nonumber \\
&  &
+\p_2\q_2\beta^2\lp\mE_{G,{\mathcal U}_3}\langle \|\x^{(i_1)}\|_2^2\|\y^{(i_2)}\|_2^2\rangle_{\gamma_{01}^{(2)}} +  (s-1)\mE_{G,{\mathcal U}_3}\langle \|\x^{(i_1)}\|_2^2 \|\y^{(i_2)}\|_2\|\y^{(p_2)}\|_2\rangle_{\gamma_{02}^{(2)}}\rp\nonumber \\
& & -  \p_2\q_2 s\beta^2(1-\m_1)\mE_{G,{\mathcal U}_3}\langle \|\x^{(i_1)}\|_2\|\x^{(p_`)}\|_2\|\y^{(i_2)}\|_2\|\y^{(p_2)}\|_2 \rangle_{\gamma_{1}^{(2)}} \nonumber \\
&  & -s\beta^2 \p_2\q_2(\m_1-\m_2) p \mE_{G,{\mathcal U}_3} \langle\|\x^{(i_2)}\|_2\|\x^{(p_2)}\|_2\|\y^{(i_2)}\|_2\|\y^{(p_2)}\rangle_{\gamma_{21}^{(2)}} \nonumber \\
&  & +s\beta^2 \p_2\q_2 \m_1 (p-1) \mE_{G,{\mathcal U}_3} \langle\|\x^{(i_2)}\|_2\|\x^{(p_2)}\|_2\|\y^{(i_2)}\|_2\|\y^{(p_2)}\rangle_{\gamma_{22}^{(2)}} \nonumber \\
 &  & -s\beta^2 \p_2\q_2 \m_2 p \mE_{G,{\mathcal U}_3} \langle\|\x^{(i_2)}\|_2\|\x^{(p_2)}\|_2\|\y^{(i_2)}\|_2\|\y^{(p_2)}\rangle_{\gamma_{3}^{(2)}}
 \nonumber\\
 & = &
\p_0\q_0\beta^2\lp\mE_{G,{\mathcal U}_3}\langle \|\x^{(i_1)}\|_2^2\|\y^{(i_2)}\|_2^2\rangle_{\gamma_{01}^{(2)}} +   (s-1)\mE_{G,{\mathcal U}_3}\langle \|\x^{(i_1)}\|_2^2 \|\y^{(i_2)}\|_2\|\y^{(p_2)}\|_2\rangle_{\gamma_{02}^{(2)}}\rp\nonumber \\
& & -  \p_0\q_0 s\beta^2(1-\m_1)\mE_{G,{\mathcal U}_3}\langle \|\x^{(i_1)}\|_2\|\x^{(p_`)}\|_2\|\y^{(i_2)}\|_2\|\y^{(p_2)}\|_2 \rangle_{\gamma_{1}^{(2)}} \nonumber \\
&  & -s\beta^2 \p_1\q_1(\m_1-\m_2) p \mE_{G,{\mathcal U}_3} \langle\|\x^{(i_2)}\|_2\|\x^{(p_2)}\|_2\|\y^{(i_2)}\|_2\|\y^{(p_2)}\rangle_{\gamma_{21}^{(2)}} \nonumber \\
&  & +s\beta^2 \p_1\q_1 \m_1 ( p -1) \mE_{G,{\mathcal U}_3} \langle\|\x^{(i_2)}\|_2\|\x^{(p_2)}\|_2\|\y^{(i_2)}\|_2\|\y^{(p_2)}\rangle_{\gamma_{21}^{(2)}} \nonumber \\
 &  & -s\beta^2 \p_2\q_2 \m_2 p \mE_{G,{\mathcal U}_3} \langle\|\x^{(i_2)}\|_2\|\x^{(p_2)}\|_2\|\y^{(i_2)}\|_2\|\y^{(p_2)}\rangle_{\gamma_{3}^{(2)}}.
\end{eqnarray}
Finally, a combination of (\ref{eq:lev2ctp1}) and (\ref{eq:lev2cpt3})-(\ref{eq:lev2cpt6}) gives
\begin{eqnarray}\label{eq:lev2cpt7}
\frac{d\psi(\calX,\calY,\q,\m,\beta,s,t)}{dt}  & = &       \frac{\mbox{sign}(s)\beta}{2\sqrt{n}} \lp \phi_1^{(2)}+\phi_{21}^{(2)}+\phi_{22}^{(2)}+\phi_3^{(2)} +\phi_{01}^{(2)} +\phi_{02}^{(2)} \rp ,
 \end{eqnarray}
where
\begin{eqnarray}\label{eq:lev2cpt8}
\phi_1^{(2)} & = &
-s(1-\m_1)\mE_{G,{\mathcal U}_3} \langle (\p_0\|\x^{(i_1)}\|_2\|\x^{(p_1)}\|_2 -(\x^{(p_1)})^T\x^{(i_1)})(\q_0\|\y^{(i_2)}\|_2\|\y^{(p_2)}\|_2 -(\y^{(p_2)})^T\y^{(i_2)})\rangle_{\gamma_{1}^{(2)}} \nonumber \\
\phi_{21}^{(2)} & = &
-s(\m_1-\m_2) p \mE_{G,{\mathcal U}_3} \langle (\p_1\|\x^{(i_1)}\|_2\|\x^{(p_1)}\|_2 -(\x^{(p_1)})^T\x^{(i_1)})(\q_1\|\y^{(i_2)}\|_2\|\y^{(p_2)}\|_2 -(\y^{(p_2)})^T\y^{(i_2)})\rangle_{\gamma_{21}^{(2)}} \nonumber \\
\phi_{22}^{(2)} & = &
s \m_1  (p-1) \mE_{G,{\mathcal U}_3} \langle (\p_1\|\x^{(i_1)}\|_2\|\x^{(p_1)}\|_2 -(\x^{(p_1)})^T\x^{(i_1)})(\q_1\|\y^{(i_2)}\|_2\|\y^{(p_2)}\|_2 -(\y^{(p_2)})^T\y^{(i_2)})\rangle_{\gamma_{22}^{(2)}} \nonumber \\
\phi_3^{(2)} & = &
-s\m_2 p \mE_{G,{\mathcal U}_3} \langle (\p_2\|\x^{(i_1)}\|_2\|\x^{(p_1)}\|_2 -(\x^{(p_1)})^T\x^{(i_1)})(\q_2\|\y^{(i_2)}\|_2\|\y^{(p_2)}\|_2 -(\y^{(p_2)})^T\y^{(i_2)})\rangle_{\gamma_{3}^{(2)}} \nonumber \\
\phi_{01}^{(2)} & = & (1-\p_0)(1-\q_0)\mE_{G,{\mathcal U}_3}\langle \|\x^{(i_1)}\|_2^2\|\y^{(i_2)}\|_2^2\rangle_{\gamma_{01}^{(2)}} \nonumber\\
\phi_{02}^{(2)} & = & (1-\p_0)\mE_{G,{\mathcal U}_3}\left\langle \|\x^{(i_1)}\|_2^2 \lp\q_0\|\y^{(i_2)}\|_2\|\y^{(p_2)}\|_2-(\y^{(p_2)})^T\y^{(i_2)}\rp\right\rangle_{\gamma_{02}^{(2)}}. \end{eqnarray}

The above results are summarized in the following proposition.
\begin{proposition}
\label{thm:thm2}
 For $k=\{1,2,3\}$ let components of $G\in\mR^{m \times n},u^{(4,k)}\in\mR^1,\u^{(2,k)}\in\mR^{m\times 1}$, and $\h^{(k)}\in\mR^{n\times 1}$ be independent  zero-mean Gaussians. For vectors $\m=[\m_1,\m_2]$, $\p=[\p_0,\p_1,\p_2,\p_3]$ with $\p_0\geq \p_1\geq \p_2\geq \p_3= 0$ and $\q=[\q_0,\q_1,\q_2,\q_3]$ with $\q_0\geq \q_1\geq \q_2\geq \q_3= 0$, let the variances of the elements of $G$, $u^{(4,k)}$, $\u^{(2,k)}$, and $\h^{(k)}$ be $1$, $\p_{k-1}\q_{k-1}-\p_k\q_k$, $\p_{k-1}-\p_{k}$, $\q_{k-1}-\q_{k}$, respectively.
Let sets ${\mathcal X}, \bar{{\mathcal X}}, {\mathcal Y}$, scalars $\beta$, $p$, $s$, and function $f_{\bar{\x}^{(i_3)}}(\cdot)$ be as in Proposition
\ref{thm:thm1}. Set ${\mathcal U}_k=[u^{(4,k)},\u^{(2,k)},\h^{(2k)}]$
 and consider the following
\begin{equation}\label{eq:thm2eq1}
\psi(t)  =  \mE_{G,{\mathcal U}_3} \frac{1}{p|s|\sqrt{n}\m_2} \log \lp \mE_{{\mathcal U}_2}\lp\lp  \sum_{i_3=1}^{l}\lp \mE_{{\mathcal U}_1}  Z_{i_3}^{\m_1}\rp^p\rp^{\frac{\m_2}{\m_1}}\rp\rp,
\end{equation}
where
\begin{eqnarray}\label{eq:thm2eq2}
Z_{i_3} & \triangleq & \sum_{i_1=1}^{l}\lp\sum_{i_2=1}^{l}e^{\beta D_0^{(i_1,i_2,i_3)}} \rp^{s} \nonumber \\
 D_0^{(i_1,i_2,i_3)} & \triangleq & \sqrt{t}(\y^{(i_2)})^T
 G\x^{(i_1)}+\sqrt{1-t}\|\x^{(i_1)}\|_2 (\y^{(i_2)})^T(\u^{(2,1)}+\u^{(2,2)}+\u^{(2,3)})\nonumber \\
 & & +\sqrt{t}\|\x^{(i_1)}\|_2\|\y^{(i_2)}\|_2(u^{(4,1)}+u^{(4,2)}+u^{(4,3)}) +\sqrt{1-t}\|\y^{(i_2)}\|_2(\h^{(1)}+\h^{(2)}+\h^{(3)})^T\x^{(i_1)} \nonumber \\
 & & + f_{\bar{\x}^{(i_3)}} (\x^{(i_1)}).
 \end{eqnarray}
Then
\begin{eqnarray}\label{eq:prop1eq3}
\frac{d\psi(t)}{dt}  & = &        \frac{\mbox{sign}(s)\beta}{2\sqrt{n}} \lp \phi_1^{(2)}+\phi_{21}^{(2)}+\phi_{22}^{(2)}+\phi_3^{(2)} +\phi_{01}^{(2)} +\phi_{02}^{(2)} \rp,
 \end{eqnarray}
where $\phi$'s are as in (\ref{eq:lev2cpt8}) and $\gamma$'s are as defined in (\ref{eq:lev2genAanal19d2}) and (\ref{eq:lev2genAanal19e}).
\end{proposition}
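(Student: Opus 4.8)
The plan is to mimic exactly the derivation that led to Proposition \ref{thm:thm1}, promoting each of the seven term computations of Section \ref{sec:gencon} to the ten term computations needed once the second averaging layer $\mE_{{\mathcal U}_2}$ (and the associated exponent $\m_2/\m_1$) is inserted. First I would differentiate $\psi(t)$ in (\ref{eq:thm2eq1}) with respect to $t$, exactly as in (\ref{eq:lev2genanal9}), pushing the derivative through the two nested power/log structures and through the change of variables (\ref{eq:lev2genanal4})--(\ref{eq:lev2genanal7}); this produces the decomposition (\ref{eq:lev2genanal10e})--(\ref{eq:lev2genanal10g}) of $d\psi/dt$ into the groups $T_G$, $T_1$, $T_2$, $T_3$, with each $T_k$ split into $T_{k,1,j}$, $T_{k,2}$, $T_{k,3}$. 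The key structural observation — already exploited throughout Section \ref{sec:seclev} — is that because $u^{(4,k)},\u^{(2,k)},\h^{(k)}$ are independent across $k$, Gaussian integration by parts in the variables carrying index $1$ only touches the inner ($\mE_{{\mathcal U}_1}$) structure, in the variables with index $2$ only the middle structure, and in the variables with index $3$ only the outer structure. This is what lets one recycle the first-level identities: e.g.\ $T_{k,1,j}^c$ scaled by the appropriate variance gap is \emph{structurally identical} to a lower-index term, so (\ref{eq:lev2genDanal19b1}), (\ref{eq:lev2x3lev2genDanal19b1}), (\ref{eq:lev2genEanal19c1}), (\ref{eq:lev2genFanal23b}), (\ref{eq:lev2x3lev2genEanal19c1}), (\ref{eq:lev2x3lev2genFanal23b}) follow immediately, and only the genuinely new ``$d$'' and ``$e$'' pieces (those hitting the outer power $(\sum(\cdot))^{\m_2/\m_1}$) require fresh computation.

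Next I would carry out, one block at a time in the order $T_1\to T_2\to T_3\to T_G$, the integration-by-parts computations: for $T_1$ this is (\ref{eq:lev2liftgenAanal19})--(\ref{eq:lev2liftgenCanal21b}); for $T_2$ the splitting (\ref{eq:lev2genDanal19a})--(\ref{eq:lev2genFanal29}), where the new feature is the appearance of the factor $(\m_1-\m_2)$ coming from differentiating $(\sum(\cdot))^{\m_2/\m_1-1}$ as in (\ref{eq:lev2genDanal20}); for $T_3$ the splitting (\ref{eq:lev2x3lev2genDanal19})--(\ref{eq:lev2x3lev2genFanal29}), where differentiating $\mE_{{\mathcal U}_2}[(\sum(\cdot))^{\m_2/\m_1}]^{-1}$ produces the new $\gamma_3^{(2)}$ measure and the factor $\m_2$; and finally $T_G$ via (\ref{eq:lev2genGanal1})--(\ref{eq:lev2genGanal13}), which is the most intricate because the integration by parts in $\bar{\u}_j^{(i_1,1)}$ simultaneously hits the inner, middle, \emph{and} outer structures, yielding the four pieces $T_{G,j}^c, T_{G,j}^{d_1}, T_{G,j}^{e_1}, T_{G,j}^{d_2}$. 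Throughout I would verify, as in (\ref{eq:lev2genAanal19f}), that each $\gamma^{(2)}$ in (\ref{eq:lev2genAanal19d2})--(\ref{eq:lev2genAanal19e}) is a bona fide probability measure (nonnegativity is trivial; normalization collapses by the definitions of $C_{i_3}^{(i_1)}$ and $Z_{i_3}$ and by the fact that $\Phi_{{\mathcal U}_1}^{(i_3)}$ and $\Phi_{{\mathcal U}_2}$ are expectations against normalized Gibbs weights), so that the bracketed averages $\langle\cdot\rangle_{\gamma}$ are well defined.

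Finally I would assemble the pieces: substitute the block identities into (\ref{eq:lev2ctp2}) to get $\Omega_G,\Omega_1,\Omega_2,\Omega_3$ as in (\ref{eq:lev2cpt3})--(\ref{eq:lev2cpt6}), where the telescoping of the variance gaps $(\p_0-\p_1)+(\p_1-\p_2)+\p_2=\p_0$ (and likewise for $\q$ and for the products $\p_k\q_k$) is what produces the clean coefficients $\p_0,\q_0,\p_0\q_0$ and pairs up the $\m$-dependent terms into the $\gamma_{21}^{(2)},\gamma_{22}^{(2)},\gamma_3^{(2)}$ contributions; then plug (\ref{eq:lev2cpt3})--(\ref{eq:lev2cpt6}) into (\ref{eq:lev2ctp1}) and collect terms by measure, recognizing the combinations $\p_0\|\x^{(i_1)}\|_2\|\x^{(p_1)}\|_2-(\x^{(p_1)})^T\x^{(i_1)}$ etc.\ to arrive at the stated form (\ref{eq:lev2cpt7})--(\ref{eq:lev2cpt8}). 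I expect the main obstacle to be bookkeeping rather than conceptual: keeping the nested fractions in (\ref{eq:lev2genanal10g}) straight through each integration by parts, correctly tracking which exponent ($\m_1$, $p$, or $\m_2/\m_1$) is being differentiated and hence which of $(1-\m_1)$, $(p-1)$, $(\m_1-\m_2)$, $\m_2$ appears, and making sure the $T_G$ block's triple hit is distributed correctly among $\gamma_{21}^{(2)},\gamma_{22}^{(2)},\gamma_3^{(2)}$. Once the first-level template of Section \ref{sec:gencon} is in hand, every individual step is a direct, if lengthy, analogue, and the proposition follows by assembling them exactly as indicated above.
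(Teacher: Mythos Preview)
Your proposal is correct and follows essentially the same approach as the paper: the paper's proof of the proposition is literally ``Follows from the discussion presented above,'' that discussion being the entire Section \ref{sec:seclev} derivation you have accurately outlined block by block, with the same $T_1\to T_2\to T_3\to T_G$ ordering, the same scaling/structural-identity shortcuts, and the same final telescoping assembly in (\ref{eq:lev2ctp1})--(\ref{eq:lev2cpt8}). Your identification of the bookkeeping as the only real obstacle is also on target.
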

\begin{proof}
  Follows from the discussion presented above.
\end{proof}

\section{Move to the $r$-th ($r\geq 3$) level of full lifting}
\label{sec:rthlev}

After introducing key conceptual ingredients in Section \ref{sec:gencon}, we  in Section \ref{sec:seclev} formalized them into a sequence of mathematical steps needed to proceed to second and higher levels of lifting. In this section we present the final results obtained via applying the procedure from Section \ref{sec:seclev} to any level of lifting $r\in\mN$. The presentation of  \cite{Stojnicnflgscompyx23}'s Section 4
 turns out to be of great help in that regard. In fact, as will soon be clear, many of the results obtained therein apply here either in an unaltered fashion or with fairly minimal adjustments.

As in \cite{Stojnicnflgscompyx23}'s Section 4, we now take $r\geq 3$ and  consider vectors $\m=[\m_1,\m_2,...,\m_r]$,
$\p=[\p_0,\p_1,...,\p_r,\p_{r+1}]$ with $\p_0\geq \p_1\geq \p_2\geq \dots \geq \p_r\geq\p_{r+1} = 0$ and $\q=[\q_0,\q_1,\q_2,\dots,\q_r,\q_{r+1}]$ with $\q_0\geq \q_1\geq \q_2\geq \dots \geq \q_r\geq \q_{r+1} = 0$. Also, we assume that the elements of $G\in\mR^{m \times n}$ and  $u^{(4,k)}\in\mR^1,\u^{(2,k)}\in\mR^{m\times 1}$, and $\h^{(k)}\in\mR^{n\times 1}$ (for any $k\in\{1,2,\dots,r+1\}$)  are independent  zero-mean normals. Moreover, he variances of the components of $G$, $u^{(4,k)}$, $\u^{(2,k)}$, and $\h^{(k)}$ are assumed to be $1$, $\p_{k-1}\q_{k-1}-\p_k\q_k$, $\p_{k-1}-\p_{k}$, $\q_{k-1}-\q_{k}$, respectively. Similarly to what was done earlier, we also set ${\mathcal U}_k\triangleq [u^{(4,k)},\u^{(2,k)},\h^{(2k)}]$ and assume that sets ${\mathcal X}$, $\bar{{\mathcal X}}$, ${\mathcal Y}$, scalars  $\beta$, $p$, $s$, and function $f_{\bar{\x}^{(i_3)}}(\cdot)$ are as in Proposition \ref{thm:thm1}. We then focus on the following function
\begin{equation}\label{eq:rthlev2genanal3}
\psi(t)  =  \mE_{G,{\mathcal U}_{r+1}} \frac{1}{\beta|s|\sqrt{n}\m_r} \log
\lp \mE_{{\mathcal U}_{r}} \lp \dots \lp \mE_{{\mathcal U}_2}\lp\lp \sum_{i_3=1}^{l} \lp \mE_{{\mathcal U}_1}  Z_{i_3}^{\m_1}\rp^p \rp^{\frac{\m_2}{\m_1}}\rp\rp^{\frac{\m_3}{\m_2}} \dots \rp^{\frac{\m_{r}}{\m_{r-1}}}\rp,
\end{equation}
where
\begin{eqnarray}\label{eq:rthlev2genanal3a}
Z_{i_3} & \triangleq & \sum_{i_1=1}^{l}\lp\sum_{i_2=1}^{l}e^{\beta D_0^{(i_1,i_2,i_3)}} \rp^{s} \nonumber \\
 D_0^{(i_1,i_2,i_3)} & \triangleq & \sqrt{t}(\y^{(i_2)})^T
 G\x^{(i_1)}+\sqrt{1-t}\|\x^{(i_1)}\|_2 (\y^{(i_2)})^T\lp\sum_{k=1}^{r+1}\u^{(2,k)}\rp\nonumber \\
 & & +\sqrt{t}\|\x^{(i_1)}\|_2\|\y^{(i_2)}\|_2\lp\sum_{k=1}^{r+1}u^{(4,k)}\rp +\sqrt{1-t}\|\y^{(i_2)}\|_2\lp\sum_{k=1}^{r+1}\h^{(k)}\rp^T\x^{(i_1)}
  + f_{\bar{\x}^{(i_3)}}(\x^{(i_3)}). \nonumber \\
 \end{eqnarray}
Analogously to (\ref{eq:genanal4}), we find it convenient to set
\begin{eqnarray}\label{eq:rthlev2genanal4}
\bar{\u}^{(i_1,1)} & =  & \frac{G\x^{(i_1)}}{\|\x^{(i_1)}\|_2} \nonumber \\
\bar{\u}^{(i_1,3,k)} & =  & \frac{(\h^{(k)})^T\x^{(i_1)}}{\|\x^{(i_1)}\|_2},
\end{eqnarray}
and recall that
\begin{eqnarray}\label{eq:rthlev2genanal5}
\bar{\u}_j^{(i_1,1)} & =  & \frac{G_{j,1:n}\x^{(i_1)}}{\|\x^{(i_1)}\|_2},1\leq j\leq m.
\end{eqnarray}
We observe that the elements of $\bar{\u}^{(i_1,1)}$ are i.i.d. standard normals, whereas the elements of $\u^{(2,k)}$  are zero-mean independent Gaussians with variances $\p_{k-1}-\p_k$ and the elements of $\u^{(i_1,3,k)}$ are zero-mean  independent  Gaussians with variances $\q_{k-1}-\q_k$. One can then rewrite (\ref{eq:rthlev2genanal3}) in the following way
\begin{equation}\label{eq:rthlev2genanal6}
\psi(t)  =  \mE_{G,{\mathcal U}_{r+1}} \frac{1}{p|s|\sqrt{n}\m_r} \log
\lp \mE_{{\mathcal U}_{r}} \lp \dots \lp \mE_{{\mathcal U}_2}\lp\lp  \sum_{i_3=1}^{l} \lp \mE_{{\mathcal U}_1} Z_{i_3}^{\m_1}\rp^p \rp^{\frac{\m_2}{\m_1}}\rp\rp^{\frac{\m_3}{\m_2}} \dots \rp^{\frac{\m_{r}}{\m_{r-1}}}\rp,
\end{equation}
where $\beta_{i_1}=\beta\|\x^{(i_1)}\|_2$ and now
\begin{eqnarray}\label{eq:rthlev2genanal7}
B^{(i_1,i_2)} & \triangleq &  \sqrt{t}(\y^{(i_2)})^T\bar{\u}^{(i_1,1)}+\sqrt{1-t} (\y^{(i_2)})^T\lp \sum_{k=1}^{r+1}\u^{(2,k)} \rp \nonumber \\
D^{(i_1,i_2,i_3)} & \triangleq &  B^{(i_1,i_2)}+\sqrt{t}\|\y^{(i_2)}\|_2 \lp \sum_{k=1}^{r+1}u^{(4,k)}\rp+\sqrt{1-t}\|\y^{(i_2)}\|_2 \lp \sum_{k=1}^{r+1}\u^{(i_1,3,k)}  \rp   + f_{\bar{\x}^{(i_3)}} (\x^{(i_1)})  \nonumber \\
A_{i_3}^{(i_1,i_2)} & \triangleq &  e^{\beta_{i_1}D^{(i_1,i_2,i_3)}}\nonumber \\
C_{i_3}^{(i_1)} & \triangleq &  \sum_{i_2=1}^{l}A_{i_3}^{(i_1,i_2)}\nonumber \\
Z_{i_3} & \triangleq & \sum_{i_1=1}^{l} \lp \sum_{i_2=1}^{l} A_{i_3}^{(i_1,i_2)}\rp^s =\sum_{i_1=1}^{l}  (C_{i_3}^{(i_1)})^s.
\end{eqnarray}
We set $\m_0=1$ and  analogously to \cite{Stojnicnflgscompyx23}'s (199)
\begin{eqnarray}\label{eq:rthlev2genanal7a}
\zeta_r\triangleq \mE_{{\mathcal U}_{r}} \lp \dots \lp \mE_{{\mathcal U}_2}\lp\lp  \sum_{i_3=1}^{l} \lp \mE_{{\mathcal U}_1}  Z_{i_3}^{\frac{\m_1}{\m_0}}\rp^p \rp^{\frac{\m_2}{\m_1}}\rp\rp^{\frac{\m_3}{\m_2}} \dots \rp^{\frac{\m_{r}}{\m_{r-1}}}, r\geq 2.
\end{eqnarray}
Then one has
\begin{eqnarray}\label{eq:rthlev2genanal7b}
\zeta_k = \mE_{{\mathcal U}_{k}} \lp  \zeta_{k-1} \rp^{\frac{\m_{k}}{\m_{k-1}}}, k\geq 2,\quad \mbox{and} \quad
\zeta_1=  \sum_{i_3=1}^{l} \lp \mE_{{\mathcal U}_1}  Z_{i_3}^{\frac{\m_1}{\m_0}}\rp^p.
\end{eqnarray}
Utilizing  \cite{Stojnicnflgscompyx23}'s (201) we write
\begin{eqnarray}\label{eq:rthlev2genanal9}
\frac{d\psi(t)}{dt}
 & = &  \mE_{G,{\mathcal U}_{r+1}} \frac{1}{p|s|\sqrt{n}\m_1\zeta_r}
\prod_{k=r}^{2}\mE_{{\mathcal U}_{k}} \zeta_{k-1}^{\frac{\m_k}{\m_{k-1}}-1}
 \frac{d\zeta_{1}}{dt} \nonumber \\
& = &   \mE_{G,{\mathcal U}_{r+1}} \frac{1}{|s|\sqrt{n}\m_1\zeta_r}
\prod_{k=r}^{2}\mE_{{\mathcal U}_{k}} \zeta_{k-1}^{\frac{\m_k}{\m_{k-1}}-1}
\sum_{i_3=1}^{l} \lp \mE_{{\mathcal U}_1} Z_{i_3}^{\m_1}  \rp^{p-1}
 \frac{d \mE_{{\mathcal U}_1} Z_{i_3}^{\m_1} }{dt} \nonumber \\
 & = &
\mE_{G,{\mathcal U}_{r+1}} \frac{1}{|s|\sqrt{n}\zeta_r}
\prod_{k=r}^{2}\mE_{{\mathcal U}_{k}} \zeta_{k-1}^{\frac{\m_k}{\m_{k-1}}-1}
\sum_{i_3=1}^{l} \lp \mE_{{\mathcal U}_1} Z_{i_3}^{\m_1}  \rp^{p-1}
 \mE_{{\mathcal U}_1} \frac{1}{Z_{i_3}^{1-\m_1}}  \sum_{i=1}^{l} (C_{i_3}^{(i_1)})^{s-1} \nonumber \\
& & \times \sum_{i_2=1}^{l}\beta_{i_1}A_{i_3}^{(i_1,i_2)}\frac{dD^{(i_1,i_2,i_3)}}{dt},
\end{eqnarray}
with product running in an \emph{index decreasing order} and
\begin{eqnarray}\label{eq:rthlev2genanal9a}
\frac{dD^{(i_1,i_2,i_3)}}{dt}= \lp \frac{dB^{(i_1,i_2)}}{dt}+\frac{\|\y^{(i_2)}\|_2 (\sum_{k=1}^{r+1} u^{(4,k)})}{2\sqrt{t}}-\frac{\|\y^{(i_2)}\|_2 (\sum_{k=1}^{r+1}\u^{(i_1,3,k)})}{2\sqrt{1-t}}\rp.
\end{eqnarray}
As shown in \cite{Stojnicnflgscompyx23}'s (203), we have
\begin{eqnarray}\label{eq:rthlev2genanal10}
\frac{dB^{(i_1,i_2)}}{dt}
 & = &
\sum_{j=1}^{m}\lp \frac{\y_j^{(i_2)}\u_j^{(i_1,1)}}{2\sqrt{t}}-\frac{\y_j^{(i_2)} \sum_{k=1}^{r+1} \u_j^{(2,k)}}{2\sqrt{1-t}}\rp.
\end{eqnarray}
and then  analogously to (\ref{eq:genanal10e}) (and \cite{Stojnicnflgscompyx23}'s (203)-(205))
\begin{equation}\label{eq:rthlev2genanal10e}
\frac{d\psi(t)}{dt}  =       \frac{\mbox{sign}(s)}{2\beta\sqrt{n}} \sum_{i_1=1}^{l}  \sum_{i_2=1}^{l}
\beta_{i_1}\lp T_G + \sum_{k=1}^{r+1}T_k\rp,
\end{equation}
where
\begin{eqnarray}\label{eq:rthlev2genanal10f}
T_G & = & \sum_{j=1}^{m}\frac{T_{G,j}}{\sqrt{t}}  \nonumber\\
T_k & = & -\sum_{j=1}^{m}\frac{T_{k,1,j}}{\sqrt{1-t}}-\|\y^{(i_2)}\|_2\frac{T_{k,2}}{\sqrt{1-t}}+\|\y^{(i_2)}\|_2\frac{T_{k,3}}{\sqrt{t}}, k\in\{1,2,\dots,r+1\}.
 \end{eqnarray}
 \begin{eqnarray}\label{eq:rthlev2genanal10g}
T_{G,j} & = &  \mE_{G,{\mathcal U}_{r+1}} \lp
\zeta_r^{-1}\prod_{v=r}^{2}\mE_{{\mathcal U}_{v}} \zeta_{v-1}^{\frac{\m_v}{\m_{v-1}}-1}
\sum_{i_3=1}^{l} \lp \mE_{{\mathcal U}_1} Z_{i_3}^{\m_1}  \rp^{p-1}
  \mE_{{\mathcal U}_1}\frac{(C_{i_3}^{(i_1)})^{s-1} A_{i_3}^{(i_1,i_2)} \y_j^{(i_2)}\u_j^{(i_1,1)}}{Z_{i_3}^{1-\m_1}} \rp \nonumber \\
T_{k,1,j} & = &   \mE_{G,{\mathcal U}_{r+1}} \lp
\zeta_r^{-1}\prod_{v=r}^{2}\mE_{{\mathcal U}_{v}} \zeta_{v-1}^{\frac{\m_v}{\m_{v-1}}-1}
\sum_{i_3=1}^{l} \lp \mE_{{\mathcal U}_1} Z_{i_3}^{\m_1}  \rp^{p-1}
  \mE_{{\mathcal U}_1}\frac{(C_{i_3}^{(i_1)})^{s-1} A_{i_3}^{(i_1,i_2)} \y_j^{(i_2)}\u_j^{(2,k)}}{Z_{i_3}^{1-\m_1}} \rp \nonumber \\
T_{k,2} & = &   \mE_{G,{\mathcal U}_{r+1}} \lp
\zeta_r^{-1}\prod_{v=r}^{2}\mE_{{\mathcal U}_{v}} \zeta_{v-1}^{\frac{\m_v}{\m_{v-1}}-1}
\sum_{i_3=1}^{l} \lp \mE_{{\mathcal U}_1} Z_{i_3}^{\m_1}  \rp^{p-1}
  \mE_{{\mathcal U}_1}\frac{(C_{i_3}^{(i_1)})^{s-1} A_{i_3}^{(i_1,i_2)} \u^{(i_1,3,k)}}{Z_{i_3}^{1-\m_1}} \rp \nonumber \\
T_{k,3} & = &  \mE_{G,{\mathcal U}_{r+1}} \lp
\zeta_r^{-1}\prod_{v=r}^{2}\mE_{{\mathcal U}_{v}} \zeta_{v-1}^{\frac{\m_v}{\m_{v-1}}-1}
\sum_{i_3=1}^{l} \lp \mE_{{\mathcal U}_1} Z_{i_3}^{\m_1}  \rp^{p-1}
  \mE_{{\mathcal U}_1}\frac{(C_{i_3}^{(i_1)})^{s-1} A_{i_3}^{(i_1,i_2)} u^{(4,k)}}{Z_{i_3}^{1-\m_1}} \rp.
\end{eqnarray}

We first discuss the three sequences $\lp T_{k,1,j}\rp_{k=1:r+1}$, $\lp T_{k,2}\rp_{k=1:r+1}$, and $\lp T_{k,3}\rp_{k=1:r+1}$  and then $T_{G,j}$. As Section \ref{sec:seclev} showed, the internal relations among the components in any of these sequences are for all that we need identical. It will therefore be sufficient to take one of them, say $\lp T_{k,1,j}\rp_{k=1:r+1}$, and show how the results of Section \ref{sec:seclev} immediately extend from $r=2$ to any integer $r>2$. For the remaining two, $\lp T_{k,2}\rp_{k=1:r+1}$, and $\lp T_{k,3}\rp_{k=1:r+1}$, we avoid unnecessarily repeating the same strategy and instead  quickly deduce the final results.

\subsection{Successive scaling and canceling out}
\label{sec:scaledcanout}

Following the strategy of \cite{Stojnicnflgscompyx23}, we consider sequence $\lp T_{k,1,j}\rp_{k=1:r+1}$, take one component, say $T_{k_1,1,j}$, $k_1\geq 2$, and show how it relates to the very next one, $T_{k_1+1,1,j}$. Two aspects of such a recursive relation play a key role: 1) \emph{successive scaling}; and 2) appearance of an appropriately \emph{reweightedly averaged (over a $\gamma$ measure) new term}.

Utilizing \cite{Stojnicnflgscompyx23}'s (207)-(209), we have
 \begin{eqnarray}\label{eq:rthlev2genanal11}
 T_{k_1,1,j}   \hspace{-.06in}  & = &  \hspace{-.06in}  (\p_{k_1-1}-\p_{k_1}) \mE_{G,{\mathcal U}_{r+1}} \Bigg(\Bigg.
\zeta_r^{-1}\prod_{v=r}^{k_1+1}\mE_{{\mathcal U}_{v}} \zeta_{v-1}^{\frac{\m_v}{\m_{v-1}}-1}
\mE_{{\mathcal U}_{k_1}}  \nonumber \\
& &  \hspace{-.06in} \times
\frac{d}{d\u_j^{(2,k_1)}} \lp \zeta_{k_1-1}^{\frac{\m_{k_1}}{\m_{k_1-1}}-1}\prod_{v=k_1-1}^{2}\mE_{{\mathcal U}_{v}} \zeta_{v-1}^{\frac{\m_v}{\m_{v-1}}-1}
\sum_{i_3=1}^{l} \lp \mE_{{\mathcal U}_1} Z_{i_3}^{\m_1}  \rp^{p-1}
  \mE_{{\mathcal U}_1}\frac{(C_{i_3}^{(i_1)})^{s-1} A_{i_3}^{(i_1,i_2)} \y_j^{(i_2)}\u_j^{(2,k_1)}}{Z_{i_3}^{1-\m_1}} \rp \Bigg.\Bigg)  , \nonumber \\
\end{eqnarray}
and also
 \begin{eqnarray}\label{eq:rthlev2genanal12}
 T_{k_1+1,1,j}
       & = & (\p_{k_1} -\p_{k_1+1})
  \mE_{G,{\mathcal U}_{r+1}} \Bigg( \Bigg.
\zeta_r^{-1}\prod_{v=r}^{k_1+2}\mE_{{\mathcal U}_{v}} \zeta_{v-1}^{\frac{\m_v}{\m_{v-1}}-1}
\mE_{{\mathcal U}_{k_1+1}}  \nonumber \\
& & \times  \frac{d}{d\u_j^{(2,k_1+1)}}  \lp \zeta_{k_1}^{\frac{\m_{k_1+1}}{\m_{k_1}}-1}
\prod_{v=k_1}^{2}\mE_{{\mathcal U}_{v}} \zeta_{v-1}^{\frac{\m_v}{\m_{v-1}}-1}
\sum_{i_3=1}^{l} \lp \mE_{{\mathcal U}_1} Z_{i_3}^{\m_1}  \rp^{p-1}
  \mE_{{\mathcal U}_1}\frac{(C_{i_3}^{(i_1)})^{s-1} A_{i_3}^{(i_1,i_2)} \y_j^{(i_2)}}{Z_{i_3}^{1-\m_1}} \rp   \Bigg. \Bigg)   \nonumber \\
& = &  \Pi_{k_1+1,1,j}^{(1)}
+  \Pi_{k_1+1,1,j}^{(2)}, \nonumber \\
 \end{eqnarray}
where
 \begin{eqnarray}\label{eq:rthlev2genanal13}
 \Pi_{k_1+1,1,j}^{(1)}
      & = & (\p_{k_1}-\p_{k_1+1})
  \mE_{G,{\mathcal U}_{r+1}} \Bigg( \Bigg.
\zeta_r^{-1}\prod_{v=r}^{k_1+2}\mE_{{\mathcal U}_{v}} \zeta_{v-1}^{\frac{\m_v}{\m_{v-1}}-1}
\mE_{{\mathcal U}_{k_1+1}}  \nonumber \\
& & \times \zeta_{k_1}^{\frac{\m_{k_1+1}}{\m_{k_1}}-1}
 \frac{d}{d\u_j^{(2,k_1+1)}}  \lp
\prod_{v=k_1}^{2}\mE_{{\mathcal U}_{v}} \zeta_{v-1}^{\frac{\m_v}{\m_{v-1}}-1}
\sum_{i_3=1}^{l} \lp \mE_{{\mathcal U}_1} Z_{i_3}^{\m_1}  \rp^{p-1}
  \mE_{{\mathcal U}_1}\frac{(C_{i_3}^{(i_1)})^{s-1} A_{i_3}^{(i_1,i_2)} \y_j^{(i_2)}}{Z_{i_3}^{1-\m_1}} \rp   \Bigg. \Bigg)   \nonumber \\
\Pi_{k_1+1,1,j}^{(2)}
& = &(\p_{k_1}-\p_{k_1+1})
  \mE_{G,{\mathcal U}_{r+1}} \Bigg( \Bigg.
\zeta_r^{-1}\prod_{v=r}^{k_1+2}\mE_{{\mathcal U}_{v}} \zeta_{v-1}^{\frac{\m_v}{\m_{v-1}}-1}
\mE_{{\mathcal U}_{k_1+1}}  \nonumber \\
& & \times \prod_{v=k_1}^{2}\mE_{{\mathcal U}_{v}} \zeta_{v-1}^{\frac{\m_v}{\m_{v-1}}-1}
\sum_{i_3=1}^{l} \lp \mE_{{\mathcal U}_1} Z_{i_3}^{\m_1}  \rp^{p-1}
  \mE_{{\mathcal U}_1}\frac{(C_{i_3}^{(i_1)})^{s-1} A_{i_3}^{(i_1,i_2)} \y_j^{(i_2)}}{Z_{i_3}^{1-\m_1}}
   \frac{d}{d\u_j^{(2,k_1+1)}}  \lp \zeta_{k_1}^{\frac{\m_{k_1+1}}{\m_{k_1}}-1}
 \rp   \Bigg. \Bigg). \nonumber \\
\end{eqnarray}
Analogously to \cite{Stojnicnflgscompyx23}'s (210), we further have  for $\Pi_{k_1+1,1,j}^{(1)}$
 \begin{eqnarray}\label{eq:rthlev2genanal14}
 \Pi_{k_1+1,1,j}^{(1)}
\hspace{-.01in}       & = & (\p_{k_1}-\p_{k_1+1})
  \mE_{G,{\mathcal U}_{r+1}} \Bigg( \Bigg.
\zeta_r^{-1}\prod_{v=r}^{k_1+1}\mE_{{\mathcal U}_{v}} \zeta_{v-1}^{\frac{\m_v}{\m_{v-1}}-1}
\mE_{{\mathcal U}_{k_1}}
\nonumber \\
& & \times
 \frac{d}{d\u_j^{(2,k_1+1)}}  \lp
\zeta_{k_1-1}^{\frac{\m_{k_1}}{\m_{k_1-1}}-1}
\prod_{v=k_1-1}^{2}\mE_{{\mathcal U}_{v}} \zeta_{v-1}^{\frac{\m_v}{\m_{v-1}}-1}
\sum_{i_3=1}^{l} \lp \mE_{{\mathcal U}_1} Z_{i_3}^{\m_1}  \rp^{p-1}
  \mE_{{\mathcal U}_1}\frac{(C_{i_3}^{(i_1)})^{s-1} A_{i_3}^{(i_1,i_2)} \y_j^{(i_2)}}{Z_{i_3}^{1-\m_1}} \rp   \Bigg. \Bigg).   \nonumber \\
 \end{eqnarray}
 Following \cite{Stojnicnflgscompyx23}'s (211)-(214), we set
  \begin{eqnarray}\label{eq:rthlev2genanal15}
 f_c(B^{(i_1,i_2)}) = \lp
\zeta_{k_1-1}^{\frac{\m_{k_1}}{\m_{k_1-1}}-1}
\prod_{v=k_1-1}^{2}\mE_{{\mathcal U}_{v}} \zeta_{v-1}^{\frac{\m_v}{\m_{v-1}}-1}
  \mE_{{\mathcal U}_1}\frac{(C_{i_3}^{(i_1)})^{s-1} A_{i_3}^{(i_1,i_2)} \y_j^{(i_2)}}{Z_{i_3}^{1-\m_1}} \rp,
 \end{eqnarray}
 and observe
 \begin{eqnarray}\label{eq:rthlev2genanal18}
   \frac{d}{d\u_j^{(2,k_1)}}  \lp f_c(B^{(i_1,i_2)}) \rp=
   \frac{d}{d\u_j^{(2,k_1+1)}}  \lp f_c(B^{(i_1,i_2)}) \rp.
 \end{eqnarray}
After combining  (\ref{eq:rthlev2genanal11}), (\ref{eq:rthlev2genanal14}), and (\ref{eq:rthlev2genanal18}) we arrive at the \emph{successive scaling} relation (the key component  of the \emph{canceling out} mechanism)
 \begin{eqnarray}\label{eq:rthlev2genanal19}
 \Pi_{k_1+1,1,j}^{(1)}=\frac{(\p_{k_1} -\p_{k_1+1})}{(\p_{k_1-1}-\p_{k_1})}T_{k_1,1,j}.
 \end{eqnarray}

For $\Pi_{k_1+1,1,j}^{(2)}$, we have
\begin{eqnarray}\label{eq:rthlev2genanal20}
 \Pi_{k_1+1,1,j}^{(2)}
& = &(\p_{k_1}-\p_{k_1+1})
  \mE_{G,{\mathcal U}_{r+1}} \Bigg( \Bigg.
\zeta_r^{-1}\prod_{v=r}^{k_1+2}\mE_{{\mathcal U}_{v}} \zeta_{v-1}^{\frac{\m_v}{\m_{v-1}}-1}
\mE_{{\mathcal U}_{k_1+1}}  \nonumber \\
& & \times \prod_{v=k_1}^{2}\mE_{{\mathcal U}_{v}} \zeta_{v-1}^{\frac{\m_v}{\m_{v-1}}-1}
\sum_{i_3=1}^{l} \lp \mE_{{\mathcal U}_1} Z_{i_3}^{\m_1}  \rp^{p-1}
  \mE_{{\mathcal U}_1}\frac{(C_{i_3}^{(i_1)})^{s-1} A_{i_3}^{(i_1,i_2)} \y_j^{(i_2)}}{Z_{i_3}^{1-\m_1}}
   \frac{d}{d\u_j^{(2,k_1+1)}}  \lp \zeta_{k_1}^{\frac{\m_{k_1+1}}{\m_{k_1}}-1}
 \rp   \Bigg. \Bigg), \nonumber \\
\end{eqnarray}
where analogously to \cite{Stojnicnflgscompyx23}'s (229)
\begin{eqnarray}\label{eq:rthlev2genanal23}
    \frac{d}{d\u_j^{(2,k_1+1)}}  \lp \zeta_{k_1}^{\frac{\m_{k_1+1}}{\m_{k_1}}-1}
 \rp   \Bigg. \Bigg)
  & =  & s \lp \m_{k_1+1} - \m_{k_1} p \rp \zeta_{k_1}^{\frac{\m_{k_1+1}}{\m_{k_1}}-2}
\prod_{k=k_1}^{2}\mE_{{\mathcal U}_{k}} \zeta_{k-1}^{\frac{\m_k}{\m_{k-1}}-1}
\sum_{p_3=1}^{l} \lp \mE_{{\mathcal U}_1} Z_{p_3}^{\m_1}  \rp^{p-1}
\nonumber  \\
& & \times
 \mE_{{\mathcal U}_1} \frac{1}{Z_{p_3}^{1-\m_1}}  \sum_{p_1=1}^{l} (C_{p_3}^{(p_1)})^{s-1}
 \sum_{p_2=1}^{l}\beta_{p_1}A_{p_3}^{(p_1,p_2)}\sqrt{1-t} \y_j^{(p_2)}.
\end{eqnarray}
Combining (\ref{eq:rthlev2genanal20}) and (\ref{eq:rthlev2genanal23}) one obtains the following analogue of \cite{Stojnicnflgscompyx23}'s (220)
\begin{eqnarray}\label{eq:rthlev2genanal24}
 \Pi_{k_1+1,1,j}^{(2)}
 & = & s(\p_{k_1}-\p_{k_1+1}) \lp \m_{k_1+1} - \m_{k_1}  p \rp
  \mE_{G,{\mathcal U}_{r+1}} \Bigg( \Bigg.
\prod_{v=r}^{k_1+1} \mE_{{\mathcal U}_{v}} \frac{\zeta_{v-1}^{\frac{\m_v}{\m_{v-1}}}}{\zeta_v} \nonumber \\
& & \times \prod_{v=k_1}^{2}\mE_{{\mathcal U}_{v}} \frac{\zeta_{v-1}^{\frac{\m_v}{\m_{v-1}}}}{\zeta_v}
\sum_{i_3=1}^{l} \lp \mE_{{\mathcal U}_1} Z_{i_3}^{\m_1}  \rp^{p-1}
   \mE_{{\mathcal U}_1}\frac{Z_{i_3}^{\m_1}}{\zeta_1}   \frac{(C_{i_3}^{(i_1)})^{s} }{Z_{i_3}}
\frac{ A_{i_3}^{(i_1,i_2)} }{C_{i_3}^{(i_1)}}\y_j^{(i_2)}
\nonumber \\
& & \times \prod_{k=k_1}^{2}  \mE_{{\mathcal U}_{k}} \frac{\zeta_{k-1}^{\frac{\m_k}{\m_{k-1}}}}{\zeta_k}
\sum_{p_3=1}^{l} \lp \mE_{{\mathcal U}_1} Z_{p_3}^{\m_1}  \rp^{p-1}
 \mE_{{\mathcal U}_1}\frac{Z_{p_3}^{\m_1}}{\zeta_1}  \sum_{p_1=1}^{l} \frac{(C_{p_3}^{(p_1)})^s}{Z_{p_3}}
 \sum_{p_2=1}^{l} \frac{A_{p_3}^{(p_1,p_2)}}{C_{p_3}^{(p_1)}}\sqrt{1-t} \beta_{p_1} \y_j^{(p_2)}
   \Bigg. \Bigg). \nonumber \\
\end{eqnarray}
We then set
\begin{eqnarray}\label{eq:rthlev2genanal25}
 \Phi_{{\mathcal U}_1}^{(i_3)} & \triangleq &  \mE_{{\mathcal U}_{1}} \frac{  Z_{i_3}^{\m_1}  } {  \mE_{{\mathcal U}_{1}} Z_{i_3}^{\m_1}  } \nonumber \\
 \Phi_{{\mathcal U}_k} & \triangleq &  \mE_{{\mathcal U}_{k}} \frac{\zeta_{k-1}^{\frac{\m_k}{\m_{k-1}}}}{\zeta_k} ,k\geq 2 \nonumber \\
 \gamma_{00}(i_3) & = &
\frac{  \lp  \mE_{{\mathcal U}_{1}} Z_{i_3}^{\m_1} \rp^p   }  { \sum_{i_3=1}^{l} \lp  \mE_{{\mathcal U}_{1}} Z_{i_3}^{\m_1} \rp^p   } \nonumber \\
 \gamma_0(i_1,i_2;i_3) & = &
\frac{(C_{i_3}^{(i_1)})^{s}}{Z_{i_3}}  \frac{A_{i_3}^{(i_1,i_2)}}{C_{i_3}^{(i_1)}} \nonumber \\
\gamma_{01}^{(r)}  & = & \prod_{k=r}^{2}\Phi_{{\mathcal U}_k} \gamma_{00}(i_3) \Phi_{{\mathcal U}_1}^{(i_3)} (\gamma_0(i_1,i_2;i_3))  \nonumber \\
\gamma_{02}^{(r)}  & = & \prod_{k=r}^{2} \Phi_{{\mathcal U}_k}   \gamma_{00}(i_3)    \Phi_{{\mathcal U}_1}^{(i_3)}   (\gamma_0(i_1,i_2;i_3)\times \gamma_0(i_1,p_2;i_3)) \nonumber \\
\gamma_{1}^{(r)}  & = & \prod_{k=r}^{2} \Phi_{{\mathcal U}_k}   \gamma_{00}(i_3)    \Phi_{{\mathcal U}_1}^{(i_3)}   (\gamma_0(i_1,i_2;i_3)\times \gamma_0(p_1,p_2;i_3)) \nonumber \\
\gamma_{21}^{(r)}  & = & \prod_{k=r}^{2}\Phi_{{\mathcal U}_k} \lp
  \gamma_{00}(i_3)    \Phi_{{\mathcal U}_1}^{(i_3)}   \gamma_0(i_1,i_2;i_3)\times
    \gamma_{00}(p_3)    \Phi_{{\mathcal U}_1}^{(p_3)}
  \gamma_0(p_1,p_2;p_3) \rp  \nonumber \\
\gamma_{22}^{(r)}  & = & \prod_{k=r}^{2}\Phi_{{\mathcal U}_k} \lp
  \gamma_{00}(i_3)  \lp  \Phi_{{\mathcal U}_1}^{(i_3)}   \gamma_0(i_1,i_2;i_3)\times
     \Phi_{{\mathcal U}_1}^{(i_3)}
  \gamma_0(p_1,p_2;i_3) \rp \rp \nonumber \\
\gamma_{k_1+1}^{(r)}  & = & \prod_{k=r}^{k_1+1}\Phi_{{\mathcal U}_k} \lp \prod_{k=k_1}^{2}\Phi_{{\mathcal U}_k}
  \gamma_{00}(i_3)    \Phi_{{\mathcal U}_1}^{(i_3)}   \gamma_0(i_1,i_2;i_3)\times \prod_{k=k_1}^{2} \Phi_{{\mathcal U}_k}
    \gamma_{00}(p_3)    \Phi_{{\mathcal U}_1}^{(p_3)}
  \gamma_0(p_1,p_2;p_3) \rp, k_1\geq 2, \nonumber \\
 \end{eqnarray}
and from (\ref{eq:rthlev2genanal24}) find
\begin{eqnarray}\label{eq:rthlev2genanal26}
 \sum_{i_1=1}^{l}\sum_{i_2=1}^{l} \sum_{j=1}^{m}
\beta_{i_1} \frac{\Pi_{k_1+1,1,j}^{(2)}}{\sqrt{1-t}}
 & = & -s(\p_{k_1}-\p_{k_1+1}) \lp \m_{k_1}  - \m_{k_1+1} \rp p
  \mE_{G,{\mathcal U}_{r+1}} \Bigg( \Bigg.
\prod_{v=r}^{k_1+1} \Phi_{{\mathcal U}_v} \nonumber \\
& & \times \prod_{v=k_1}^{2} \Phi_{{\mathcal U}_v}
\sum_{i_3=1}^{l}\gamma_{00}(i_3) \Phi_{ {\mathcal U}_1 }^{(i_3)}
\sum_{i_1=1}^{l}\sum_{i_2=1}^{l} \frac{(C^{(i_1)})^{s} }{Z}
\frac{ A^{(i_1,i_2)} }{C^{(i_1)}}\y_j^{(i_2)}
\nonumber \\
& & \times \prod_{k=k_1}^{2} \Phi_{{\mathcal U}_k}
\sum_{p_3=1}^{p}\gamma_{00}(p_3) \Phi_{ {\mathcal U}_1 }^{(p_3)}
 \sum_{p_1=1}^{l} \frac{(C^{(p_1)})^s}{Z}
 \sum_{p_2=1}^{l} \frac{A^{(p_1,p_2)}}{C^{(p_1)}} \beta_{i_1} \beta_{p_1} (\y^{(p_2)})^T \y^{(i_2)}
   \Bigg. \Bigg) \nonumber \\
 & = & -s\beta^2(\p_{k_1}-\p_{k_1+1}) \lp \m_{k_1} - \m_{k_1+1} \rp p \nonumber\\
 & & \times
  \mE_{G,{\mathcal U}_{r+1}} \left \langle \|\x^{(i_1)}\|_2\|\x^{(p_1)}\|_2 (\y^{(p_2)})^T \y^{(i_2)} \right \rangle_{\gamma_{k_1+1}^{(r)}}.
\end{eqnarray}
A combination of  (\ref{eq:rthlev2genanal12}), (\ref{eq:rthlev2genanal19}), and (\ref{eq:rthlev2genanal26}) then gives
 \begin{eqnarray}\label{eq:rthlev2genanal27}
 \sum_{i_1=1}^{l}\sum_{i_2=1}^{l} \sum_{j=1}^{m}
\beta_{i_1} \frac{T_{k_1+1,1,j}}{\sqrt{1-t}}
& = &  \sum_{i_1=1}^{l}\sum_{i_2=1}^{l} \sum_{j=1}^{m}
\beta_{i_1} \frac{\Pi_{k_1+1,1,j}^{(1)}}{\sqrt{1-t}} +  \sum_{i_1=1}^{l}\sum_{i_2=1}^{l} \sum_{j=1}^{m}
\beta_{i_1} \frac{\Pi_{k_1+1,1,j}^{(2)}}{\sqrt{1-t}} \nonumber \\
& = &  \frac{\p_{k_1}-\p_{k_1+1}}{\p_{k_1-1}-\p_{k_1}}\sum_{i_1=1}^{l}\sum_{i_2=1}^{l} \sum_{j=1}^{m}
\beta_{i_1} \frac{T_{k_1,1,j}}{\sqrt{1-t}} \nonumber \\
 &  & - \Bigg( \Bigg. s\beta^2(\p_{k_1}-\p_{k_1+1}) \lp \m_{k_1} -  \m_{k_1+1} \rp p \nonumber \\
 & & \times
  \mE_{G,{\mathcal U}_{r+1}} \left \langle \|\x^{(i_1)}\|_2\|\x^{(p_1)}\|_2 (\y^{(p_2)})^T \y^{(i_2)} \right \rangle_{\gamma_{k_1+1}^{(r)} } \Bigg. \Bigg).
\end{eqnarray}
Keeping in mind  first and second level results obtained earlier, one can then immediately write analogously for the other two sequences $\lp T_{k,2}\rp_{k=1:r+1}$ and $\lp T_{k,3}\rp_{k=1:r+1}$
\begin{eqnarray}\label{eq:rthlev2genanal28}
 \sum_{i_1=1}^{l}\sum_{i_2=1}^{l}
\beta_{i_1} \|\y^{(i_2)}\|_2 \frac{T_{k_1+1,2}}{\sqrt{1-t}}
 & = &  \frac{\q_{k_1}-\q_{k_1+1}}{\q_{k_1-1}-\q_{k_1}}\sum_{i_1=1}^{l}\sum_{i_2=1}^{l}
\beta_{i_1} \|\y^{(i_2)}\|_2 \frac{T_{k_1,2}}{\sqrt{1-t}} \nonumber \\
 &  & - \Bigg( \Bigg. s\beta^2(\q_{k_1}-\q_{k_1+1}) \lp \m_{k_1} - \m_{k_1+1} \rp p \nonumber \\
 & &
\times  \mE_{G,{\mathcal U}_{r+1}} \left \langle   (\x^{(p_1)})^T \x^{(i_1)} \|\y^{(i_2)}\|_2\|\y^{(p_2)}\|_2  \right \rangle_{\gamma_{k_1+1}^{(r)} } \Bigg.\Bigg), \nonumber \\
\end{eqnarray}
and
 \begin{eqnarray}\label{eq:rthlev2genanal29}
 \sum_{i_1=1}^{l}\sum_{i_2=1}^{l}
\beta_{i_1} \|\y^{(i_2)}\|_2 \frac{T_{k_1+1,3}}{\sqrt{1-t}}
 & = &  \frac{\p_{k_1}\q_{k_1}-\p_{k_1+1}\q_{k_1+1}}{\p_{k_1-1}\q_{k_1-1}-\p_{k_1}\q_{k_1}}\sum_{i_1=1}^{l}\sum_{i_2=1}^{l}
\beta_{i_1} \|\y^{(i_2)}\|_2 \frac{T_{k_1,3}}{\sqrt{1-t}} \nonumber \\
 &  & - \Bigg( \Bigg. s\beta^2(\p_{k_1}\q_{k_1}-\p_{k_1+1}\q_{k_1+1}) \lp  \m_{k_1} - \m_{k_1+1} \rp p \nonumber \\
& & \times   \mE_{G,{\mathcal U}_{r+1}} \left \langle   \|\x^{(i_1)}\|_2\|\x^{(p_1)}\|_2 \|\y^{(i_2)}\|_2\|\y^{(p_2)}\|_2  \right \rangle_{\gamma_{k_1+1}^{(r)} } \Bigg. \Bigg).
\end{eqnarray}
As observed in \cite{Stojnicnflgscompyx23},  adding the elements in each of the sequences results in canceling out parts of the first and second summands.

\subsection{$T_{G,j}$ --- $r$-th level}
\label{sec:rthTG}

After introducing $\m_{r+1}=0$ and utilizing (\ref{eq:rthlev2genanal10g}), we write the following analogue of \cite{Stojnicnflgscompyx23}'s (226)
 \begin{eqnarray}\label{eq:rthlev2genanal30}
T_{G,j}
 & = &  \mE_{{\mathcal U}_{r+1},{{\mathcal U}_{r}},\dots,{{\mathcal U}_{1}}} \lp
\mE_{G} \prod_{v=r+1}^{2} \zeta_{v-1}^{\frac{\m_v}{\m_{v-1}}-1}
\sum_{i_3=1}^{l} \lp  \mE_{{\mathcal U}_{1}} Z_{i_3}^{ \m_1 } \rp^{p-1}
    \frac{(C_{i_3}^{(i_1)})^{s-1} A_{i_3}^{(i_1,i_2)} \y_j^{(i_2)}\bar{\u}_j^{(i_1,1)}}{Z_{i_3}^{1-\m_1}} \rp
    \nonumber \\
  & = &  \beta^2 \mE_{G,{\mathcal U}_{r+1},{{\mathcal U}_{r}},\dots,{{\mathcal U}_{1}}}  \Bigg . \Bigg (
 \sum_{p_1=1}^{l} (\x^{(p_1)})^T\x^{(i_1)} \lp \beta_{i_1}\beta_{p_1} \rp^{-1}
\prod_{v=r+1}^{2} \zeta_{v-1}^{\frac{\m_v}{\m_{v-1}}-1}
\sum_{i_3=1}^{l} \lp  \mE_{{\mathcal U}_{1}} Z_{i_3}^{ \m_1 } \rp^{p-1}
\nonumber \\
& & \times
\frac{d}{d\bar{\u}_j^{(p_1,1)}} \lp
    \frac{(C_{i_3}^{(i_1)})^{s-1} A_{i_3}^{(i_1,i_2)} \y_j^{(i_2)}}{Z_{i_3}^{1-\m_1}} \rp \Bigg . \Bigg ) \nonumber \\
 &  & + \sum_{k_1=2}^{2} \beta^2 \mE_{G,{\mathcal U}_{r+1},{{\mathcal U}_{r}},\dots,{{\mathcal U}_{1}}} \Bigg(\Bigg.
 \sum_{p_1=1}^{l} (\x^{(p_1)})^T\x^{(i_1)}    \lp \beta_{i_1}\beta_{p_1} \rp^{-1}
\prod_{v=r+1,v\neq k_1}^{2} \zeta_{v-1}^{\frac{\m_v}{\m_{v-1}}-1}
\sum_{i_3=1}^{l} \lp  \mE_{{\mathcal U}_{1}} Z_{i_3}^{ \m_1 } \rp^{p-1}
\nonumber \\
& &  \times
    \frac{(C_{i_3}^{(i_1)})^{s-1} A_{i_3}^{(i_1,i_2)} \y_j^{(i_2)}}{Z_{i_3}^{1-\m_1}}    \frac{d}{d\bar{\u}_j^{(p_1,1)}} \lp
 \zeta_{k_1-1}^{\frac{\m_{k_1}}{\m_{k_1-1}}-1}
  \rp \Bigg.\Bigg) \nonumber \\
 &  & +  \beta^2 \mE_{G,{\mathcal U}_{r+1},{{\mathcal U}_{r}},\dots,{{\mathcal U}_{1}}} \Bigg(\Bigg.
 \sum_{p_1=1}^{l} (\x^{(p_1)})^T\x^{(i_1)}   \lp \beta_{i_1}\beta_{p_1} \rp^{-1}
\prod_{v=r+1}^{2} \zeta_{v-1}^{\frac{\m_v}{\m_{v-1}}-1}
\sum_{i_3=1}^{l} \nonumber \\
& &  \times
    \frac{(C_{i_3}^{(i_1)})^{s-1} A_{i_3}^{(i_1,i_2)} \y_j^{(i_2)}}{Z_{i_3}^{1-\m_1}}    \frac{d}{d\bar{\u}_j^{(p_1,1)}} \lp
\lp  \mE_{{\mathcal U}_{1}} Z_{i_3}^{ \m_1 } \rp^{p-1}
  \rp \Bigg.\Bigg) \nonumber \\
 &  & + \sum_{k_1=3}^{r+1} \beta^2 \mE_{G,{\mathcal U}_{r+1},{{\mathcal U}_{r}},\dots,{{\mathcal U}_{1}}} \Bigg(\Bigg.
 \sum_{p_1=1}^{l} (\x^{(p_1)})^T\x^{(i_1)}   \lp \beta_{i_1}\beta_{p_1} \rp^{-1}
\prod_{v=r+1,v\neq k_1}^{2} \zeta_{v-1}^{\frac{\m_v}{\m_{v-1}}-1}
\sum_{i_3=1}^{l} \lp  \mE_{{\mathcal U}_{1}} Z_{i_3}^{ \m_1 } \rp^{p-1}
\nonumber \\
& &  \times
    \frac{(C_{i_3}^{(i_1)})^{s-1} A_{i_3}^{(i_1,i_2)} \y_j^{(i_2)}}{Z_{i_3}^{1-\m_1}}    \frac{d}{d\bar{\u}_j^{(p_1,1)}} \lp
 \zeta_{k_1-1}^{\frac{\m_{k_1}}{\m_{k_1-1}}-1}
  \rp \Bigg.\Bigg) \nonumber \\
& = & T_{G,j}^c+ T_{G,j}^{d_{1}} + T_{G,j}^{e_{1}} +\sum_{k_1=3}^{r+1} T_{G,j}^{d_{k_1-1}},
 \end{eqnarray}
 where
 \begin{eqnarray}\label{eq:rthlev2genanal31}
T_{G,j}^c   & = &
   \mE_{G,{\mathcal U}_{r+1},{{\mathcal U}_{r}},\dots,{{\mathcal U}_{1}}} \lp
\prod_{v=r+1}^{2} \zeta_{v-1}^{\frac{\m_v}{\m_{v-1}}-1}
\Theta_{G,1}^{(2)} \rp,
\end{eqnarray}
and
 \begin{eqnarray}\label{eq:rthlev2genanal32}
T_{G,j}^{d_{1}} & = &  \sum_{k_1=2}^{2}   \beta^2 \mE_{G,{\mathcal U}_{r+1},{{\mathcal U}_{r}},\dots,{{\mathcal U}_{1}}} \Bigg(\Bigg.
 \sum_{p_1=1}^{l} (\x^{(p_1)})^T\x^{(i_1)}  \lp \beta_{i_1}\beta_{p_1} \rp^{-1}
\prod_{v=r+1,v\neq k_1}^{2} \zeta_{v-1}^{\frac{\m_v}{\m_{v-1}}-1}
\sum_{i_3=1}^{l} \lp  \mE_{{\mathcal U}_{1}} Z_{i_3}^{ \m_1 } \rp^{p-1}
    \nonumber \\
& &
\times
\frac{(C_{i_3}^{(i_1)})^{s-1} A_{i_3}^{(i_1,i_2)} \y_j^{(i_2)}}{Z_{i_3}^{1-\m_1}}
     \frac{d}{d\bar{\u}_j^{(p_1,1)}} \lp
 \zeta_{k_1-1}^{\frac{\m_{k_1}}{\m_{k_1-1}}-1}
  \rp \Bigg.\Bigg)
\nonumber \\
T_{G,j}^{e_{1}} & = &   \beta^2 \mE_{G,{\mathcal U}_{r+1},{{\mathcal U}_{r}},\dots,{{\mathcal U}_{1}}} \Bigg(\Bigg.
 \sum_{p_1=1}^{l} (\x^{(p_1)})^T\x^{(i_1)}  \lp \beta_{i_1}\beta_{p_1} \rp^{-1}
\prod_{v=r+1}^{2} \zeta_{v-1}^{\frac{\m_v}{\m_{v-1}}-1}
\sum_{i_3=1}^{l}
    \nonumber \\
& &
\times
\frac{(C_{i_3}^{(i_1)})^{s-1} A_{i_3}^{(i_1,i_2)} \y_j^{(i_2)}}{Z_{i_3}^{1-\m_1}}
     \frac{d}{d\bar{\u}_j^{(p_1,1)}} \lp
 \lp  \mE_{{\mathcal U}_{1}} Z_{i_3}^{ \m_1 } \rp^{p-1}
  \rp \Bigg.\Bigg)
\nonumber \\
 T_{G,j}^{d_{k_1-1}} & = &  \beta^2 \mE_{G,{\mathcal U}_{r+1},{{\mathcal U}_{r}},\dots,{{\mathcal U}_{1}}} \Bigg(\Bigg.
 \sum_{p_1=1}^{l} (\x^{(p_1)})^T\x^{(i_1)}  \lp \beta_{i_1}\beta_{p_1} \rp^{-1}
\prod_{v=r+1,v\neq k_1}^{2} \zeta_{v-1}^{\frac{\m_v}{\m_{v-1}}-1}
\sum_{i_3=1}^{l} \lp  \mE_{{\mathcal U}_{1}} Z_{i_3}^{ \m_1 } \rp^{p-1}
    \nonumber \\
& &
\times
\frac{(C_{i_3}^{(i_1)})^{s-1} A_{i_3}^{(i_1,i_2)} \y_j^{(i_2)}}{Z_{i_3}^{1-\m_1}}
     \frac{d}{d\bar{\u}_j^{(p_1,1)}} \lp
 \zeta_{k_1-1}^{\frac{\m_{k_1}}{\m_{k_1-1}}-1}
  \rp \Bigg.\Bigg), k_1\geq 3.
 \end{eqnarray}

A combination of (\ref{eq:lev2genGanal5}), (\ref{eq:rthlev2genanal31}), and \cite{Stojnicnflgscompyx23}'s (229) allows to write
\begin{eqnarray}\label{eq:rthlev2genanal33}
T_{G,j}^c
  & = &
   \mE_{G,{\mathcal U}_{r+1}} \Bigg( \Bigg.
\prod_{v=r}^{2} \mE_{{\mathcal U}_{v}} \frac{\zeta_{v-1}^{\frac{\m_v}{\m_{v-1}}}}{\zeta_v}
\sum_{i_3=1}^{l} \lp \mE_{{\mathcal U}_{1}}  Z_{i_3}^{\m_1} \rp^{p-1}
 \nonumber \\
& &  \times
  \Bigg. \Bigg (
\mE_{{\mathcal U}_{1}}\frac{Z_{i_3}^{\m_1}}{\zeta_1}
 \frac{\y_j^{(i_2)}}{Z_{i_3}}\lp(C_{i_3}^{(i_1)})^{s-1}\beta_{i_1}A_{i_3}^{(i_1,i_2)}\y_j^{(i_2)}\sqrt{t}+(s-1)(C_{i_3}^{(i_1)})^{s-2}\beta_{i_1}\sum_{p_2=1}^{l}A_{i_3}^{(i_1,p_2)}\y_j^{(p_2)}\sqrt{t}\rp \nonumber \\
& &  -(1-\m_1)
\mE_{{\mathcal U}_{1}}\frac{Z_{i_3}^{\m_1}}{\zeta_1}
\nonumber \\
& & \times
  \lp\sum_{p_1=1}^{l} \frac{(\x^{(i_1)})^T\x^{(p_1)}}{\|\x^{(i_1)}\|_2\|\x^{(p_1)}\|_2}
\frac{(C_{i_3}^{(i_1)})^{s-1} A_{i_3}^{(i_1,i_2)}\y_j^{(i_2)}}{Z_{i_3}^{2}}
s  (C_{i_3}^{(p_1)})^{s-1}\sum_{p_2=1}^{l}\beta_{p_1}A_{i_3}^{(p_1,p_2)}\y_j^{(p_2)}\sqrt{t}\rp \Bigg. \Bigg)  \Bigg. \Bigg) \nonumber \\
 & = &
  \mE_{G,{\mathcal U}_{r+1}} \Bigg( \Bigg.
\prod_{v=r}^{2} \Phi_{{\mathcal U}_{v}}
\sum_{i_3=1}^{l}\gamma_{00} ((i_3)
   \nonumber \\
& &  \times
  \Bigg. \Bigg(
\Phi_{{\mathcal U}_{1}}
 \frac{\y_j^{(i_2)}}{Z_{i_3}}\lp(C_{i_3}^{(i_1)})^{s-1}\beta_{i_1}A_{i_3}^{(i_1,i_2)}\y_j^{(i_2)}\sqrt{t}+(s-1)(C_{i_3}^{(i_1)})^{s-2}\beta_{i_1}\sum_{p_2=1}^{l}A_{i_3}^{(i_1,p_2)}\y_j^{(p_2)}\sqrt{t}\rp \nonumber \\
& &  -(1-\m_1)
\Phi_{{\mathcal U}_{1}}
\nonumber \\
& & \times
  \lp\sum_{p_1=1}^{l} \frac{(\x^{(i_1)})^T\x^{(p_1)}}{\|\x^{(i_1)}\|_2\|\x^{(p_1)}\|_2}
\frac{(C_{i_3}^{(i_1)})^{s-1} A_{i_3}^{(i_1,i_2)}\y_j^{(i_2)}}{Z_{i_3}^{2}}
s  (C_{i_3}^{(p_1)})^{s-1}\sum_{p_2=1}^{l}\beta_{p_1}A_{i_3}^{(p_1,p_2)}\y_j^{(p_2)}\sqrt{t}\rp \Bigg. \Bigg) \Bigg. \Bigg),
\end{eqnarray}
and
\begin{eqnarray}\label{eq:rthlev2genanal34}
\sum_{i_1=1}^{l}\sum_{i_2=1}^{l}\sum_{j=1}^{m} \beta_{i_1} \frac{T_{G,j}^c}{\sqrt{t}}   & = &  \beta^2 \lp  \mE_{G,{\mathcal U}_{r+1}} \langle \|\x^{(i_1)}\|_2^2\|\y^{(i_2)}\|_2^2\rangle_{\gamma_{01}^{(r)}} +   (s-1)  \mE_{G,{\mathcal U}_{r+1}}\langle \|\x^{(i_1)}\|_2^2(\y^{(p_2)})^T\y^{(i_2)}\rangle_{\gamma_{02}^{(r)}} \rp      \nonumber \\
 &  & -s\beta^2(1-\m_1) \mE_{G,{\mathcal U}_{r+1}}\langle (\x^{(p_1)})^T\x^{(i_1)}(\y^{(p_2)})^T\y^{(i_2)}\rangle_{\gamma_1^{(r)}}.
\end{eqnarray}

To find $T_{G,j}^{d_{k_1-1}}$ we first note
 \begin{eqnarray}\label{eq:rthlev2genanal35}
T_{G,j}^{d_{k_1-1}} & = &  \beta^2 \mE_{G,{\mathcal U}_{r+1},{{\mathcal U}_{r}},\dots,{{\mathcal U}_{1}}} \Bigg(\Bigg.
 \sum_{p_1=1}^{l} (\x^{(p_1)})^T\x^{(i_1)} \lp  \beta_{i_1}\beta_{p_1} \rp^{-1}
\prod_{v=r+1,v\neq k_1}^{2} \zeta_{v-1}^{\frac{\m_v}{\m_{v-1}}-1}
  \sum_{i_3=1}^{l} \lp \mE_{{\mathcal U}_1} Z_{i_3}^{\m_1} \rp^{p-1}
     \nonumber \\
& &
\times
\frac{(C_{i_3}^{(i_1)})^{s-1} A_{i_3}^{(i_1,i_2)} \y_j^{(i_2)}}{Z_{i_3}^{1-\m_1}}
     \frac{d}{d\bar{\u}_j^{(p_1,1)}} \lp
 \zeta_{k_1-1}^{\frac{\m_{k_1}}{\m_{k_1-1}}-1}
  \rp \Bigg.\Bigg) \nonumber \\
& = &  \beta^2 \mE_{G,{\mathcal U}_{r+1},{{\mathcal U}_{r}},\dots,{{\mathcal U}_{1}}} \Bigg(\Bigg.
 \sum_{p_1=1}^{l} (\x^{(p_1)})^T\x^{(i_1)}
\prod_{v=r+1,v\neq k_1}^{2} \zeta_{v-1}^{\frac{\m_v}{\m_{v-1}}-1}
  \sum_{i_3=1}^{l} \lp \mE_{{\mathcal U}_1} Z_{i_3}^{\m_1} \rp^{p-1}
     \nonumber \\
& &
\times
\frac{(C_{i_3}^{(i_1)})^{s-1} A_{i_3}^{(i_1,i_2)} \y_j^{(i_2)}}{Z_{i_3}^{1-\m_1}}
       s \lp \m_{k_1} - \m_{k_1-1}\rp
       \zeta_{k_1-1}^{\frac{\m_{k_1}}{\m_{k_1-1}}-2}
\prod_{k=k_1-1}^{2}\mE_{{\mathcal U}_{k}} \zeta_{k-1}^{\frac{\m_k}{\m_{k-1}}-1}
p  \sum_{p_3=1}^{l} \lp \mE_{{\mathcal U}_1} Z_{p_3}^{\m_1} \rp^{p-1}
 \nonumber \\
& & \times
 \mE_{{\mathcal U}_1} \frac{1}{Z_{p_3}^{1-\m_1}}  (C_{p_3}^{(p_1)})^{s-1}
  \sum_{p_2=1}^{l}   \frac{1} { \beta_{i_1} }      A_{p_3}^{(p_1,p_2)}\frac{dD^{(p_1,p_2)}}{d\u_j^{(p_1,1)}}
 \Bigg.\Bigg).
 \end{eqnarray}
 Since
\begin{eqnarray}\label{eq:rthlev2genanal36}
 \frac{dD^{(p_1,p_2)}}{d\u_j^{(p_1,1)}}=
 \frac{dB^{(p_1,p_2)}}{d\u_j^{(p_1,1)}}=\sqrt{t} \y_j^{(p_2)},
\end{eqnarray}
after combining  (\ref{eq:rthlev2genanal35}) and \cite{Stojnicnflgscompyx23}'s (233) we further find
\begin{eqnarray}\label{eq:rthlev2genanal37}
T_{G,j}^{d_{k_1-1}}
  & = &  \beta^2 \mE_{G,{\mathcal U}_{r+1}} \Bigg(\Bigg.
\prod_{k=r}^{k_1+1}\mE_{{\mathcal U}_{k}} \frac{\zeta_{k-1}^{\frac{\m_k}{\m_{k-1}}}}{\zeta_k}
 \prod_{k=k_1}^{2}\mE_{{\mathcal U}_{k}} \frac{\zeta_{k-1}^{\frac{\m_k}{\m_{k-1}}}}{\zeta_k}
 \sum_{i_3=1}^{l} \lp \mE_{{\mathcal U}_1} Z_{i_3}^{\m_1} \rp^{p-1}
 \mE_{{\mathcal U}_1} \frac{Z_{i_3}^{\m_1}}{\zeta_1}
    \frac{(C_{i_3}^{(i_1)})^{s} }{Z_{i_3}}     \frac{ A_{i_3}^{(i_1,i_2)} }{C_{i_3}^{(i_1)}}  \nonumber \\
& &
\times   s \lp \m_{k_1} - \m_{k_1-1}\rp p
\prod_{k=k_1}^{2}\mE_{{\mathcal U}_{k}} \frac{\zeta_{k-1}^{\frac{\m_k}{\m_{k-1}}}}{\zeta_k}
 \sum_{p_3=1}^{l} \lp \mE_{{\mathcal U}_1} Z_{p_3}^{\m_1} \rp^{p-1}
 \mE_{{\mathcal U}_1} \frac{Z_{p_3}^{\m_1}}{\zeta_1}   \sum_{p_1=1}^{l}\frac{(C_{p_3}^{(p_1)})^{s} }{Z_{p_3}}  \nonumber \\
& & \times \sum_{p_2=1}^{l} \frac{A_{p_3}^{(p_1,p_2)}}{C_{p_3}^{(p_1)}}\sqrt{t}  \frac{ 1} { \beta_{i_1} } (\x^{(p_1)})^T\x^{(i_1)}  \y_j^{(p_2)}\y_j^{(i_2)}
 \Bigg.\Bigg) \nonumber \\
  & = &  s\beta^2 \lp \m_{k_1} - \m_{k_1-1}\rp p \mE_{G,{\mathcal U}_{r+1}} \Bigg(\Bigg.
\prod_{k=r}^{k_1+1}\Phi_{{\mathcal U}_{k}}
 \prod_{k=k_1}^{2}\Phi_{{\mathcal U}_{k}}
\sum_{i_3=1}^{l}
 \gamma_{00}(i_3)
 \Phi_{{\mathcal U}_{1}}^{(i_3)}
    \frac{(C_{i_3}^{s} }{Z_{i_3}}     \frac{ A_{i_3}^{(i_1,i_2)} }{C_{i_3}^{(i_1)}}  \nonumber \\
& &
\times
\sum_{p_3=1}^{l}
 \gamma_{00}(p_3)
 \Phi_{{\mathcal U}_{1}}^{(p_3)}
\prod_{k=k_1}^{1}\Phi_{{\mathcal U}_{k}}    \sum_{p_1=1}^{l}\frac{(C_{p_3}^{(p_1)})^{s} }{Z_{p_3}}  \sum_{p_2=1}^{l} \frac{A_{p_3}^{(p_1,p_2)}}{C_{p_3}^{(p_1)}}\sqrt{t}  \frac{ 1 } {\beta_{i_1} }   (\x^{(p_1)})^T\x^{(i_1)}  \y_j^{(p_2)}\y_j^{(i_2)}
  \Bigg.\Bigg), k_1\geq 2. \nonumber \\
\end{eqnarray}

To find $T_{G,j}^{e_{1}}$ we first write
 \begin{eqnarray}\label{eq:rthlev2genanal35bb0}
T_{G,j}^{e_{1}} & = &   \beta^2 \mE_{G,{\mathcal U}_{r+1},{{\mathcal U}_{r}},\dots,{{\mathcal U}_{1}}} \Bigg(\Bigg.
 \sum_{p_1=1}^{l} (\x^{(p_1)})^T\x^{(i_1)}  \lp  \beta_{i_1}\beta_{p_1} \rp^{-1}
\prod_{v=r+1}^{2} \zeta_{v-1}^{\frac{\m_v}{\m_{v-1}}-1}
\sum_{i_3=1}^{l}
    \nonumber \\
& &
\times
\frac{(C_{i_3}^{(i_1)})^{s-1} A_{i_3}^{(i_1,i_2)} \y_j^{(i_2)}}{Z_{i_3}^{1-\m_1}}
     \frac{d}{d\bar{\u}_j^{(p_1,1)}} \lp
 \lp  \mE_{{\mathcal U}_{1}} Z_{i_3}^{ \m_1 } \rp^{p-1}
  \rp \Bigg.\Bigg).
\end{eqnarray}
Utilizing (\ref{eq:lev2genGanal7bbb0}) we also have
 \begin{eqnarray}\label{eq:rthlev2genanal35bb1}
T_{G,j}^{e_{1}}
& = &   \beta^2 \mE_{G,{\mathcal U}_{r+1},{{\mathcal U}_{r}},\dots,{{\mathcal U}_{1}}} \Bigg(\Bigg.
 \sum_{p_1=1}^{l} (\x^{(p_1)})^T\x^{(i_1)}
\prod_{v=r+1}^{2} \zeta_{v-1}^{\frac{\m_v}{\m_{v-1}}-1}
\sum_{i_3=1}^{l}
\mE_{{\mathcal U}_1}\frac{  Z_{i_3}^{\m_1} } { \mE_{{\mathcal U}_1}  Z_{i_3}^{\m_1}  }
\frac{(C_{i_3}^{(i_1)})^{s-1} A_{i_3}^{(i_1,i_2)} \y_j^{(i_2)}}{Z_{i_3} }
    \nonumber \\
& &
\times
     \m_1 (p-1) \lp
 \mE_{{\mathcal U}_1}  Z_{i_3}^{\m_1}
\rp^{p}
 \mE_{{\mathcal U}_1}\frac{ Z_{i_3}^{\m_1} } { \mE_{{\mathcal U}_1} Z_{i_3}^{\m_1}  }
  \frac{1}{Z_{i_3}} s (C_{i_3}^{(p_1)})^{s-1}\sum_{p_2=1}^{l}A_{i_3}^{(p_1,p_2)}
 \frac{1} { \beta_{i_1} }   \y_j^{(p_2)}\sqrt{t}
     \Bigg.\Bigg)
\nonumber \\
& = &   \beta^2 \mE_{G,{\mathcal U}_{r+1}} \Bigg(\Bigg.
 \prod_{v=r}^{2}
  \mE_{{\mathcal U}_r}
 \frac {  \zeta_{v-1}^{\frac{\m_v}{\m_{v-1}}}  }  {  \zeta_{v} }
\sum_{i_3=1}^{l}
\mE_{{\mathcal U}_1}\frac{  Z_{i_3}^{\m_1} } { \mE_{{\mathcal U}_1}  Z_{i_3}^{\m_1}  }
\frac{(C_{i_3}^{(i_1)})^{s-1} A_{i_3}^{(i_1,i_2)} \y_j^{(i_2)}}{Z_{i_3}}
    \nonumber \\
& &
\times
   s  \m_1 (p-1)
   \frac{ \lp
 \mE_{{\mathcal U}_1}  Z_{i_3}^{\m_1}
\rp^{p} }  {  \zeta_1 }
\mE_{{\mathcal U}_1}\frac{ Z_{i_3}^{\m_1} } { \mE_{{\mathcal U}_1} Z_{i_3}^{\m_1}  }
 \sum_{p_1=1}^{l} (\x^{(p_1)})^T\x^{(i_1)}
  \frac{1}{Z_{i_3}}  (C_{i_3}^{(p_1)})^{s-1}\sum_{p_2=1}^{l}A_{i_3}^{(p_1,p_2)}
  \frac{ 1 } { \beta_{i_1} }  \y_j^{(p_2)}\sqrt{t}
     \Bigg.\Bigg) \nonumber \\
& = &     s
 \beta^2  \m_1 (p-1) \mE_{G,{\mathcal U}_{r+1}} \Bigg(\Bigg.
 \prod_{v=r}^{2}
\Phi_{ {\mathcal U}_v  }
\sum_{i_3=1}^{l}
\Phi_{ {\mathcal U}_1  } ^ { (i_3 )}
\frac{(C_{i_3}^{(i_1)})^{s-1} A_{i_3}^{(i_1,i_2)} \y_j^{(i_2)}}{Z_{i_3} }
    \nonumber \\
& &
\times
 \gamma_{00}(i_3)  \Phi_{ {\mathcal U}_1 } ^{(i_3)}
  \sum_{p_1=1}^{l} (\x^{(p_1)})^T\x^{(i_1)}
  \frac{1}{Z_{i_3}}  (C_{i_3}^{(p_1)})^{s-1}\sum_{p_2=1}^{l}A_{i_3}^{(p_1,p_2)}
  \frac{ 1 } { \beta_{i_1} }  \y_j^{(p_2)}\sqrt{t}
     \Bigg.\Bigg).
\end{eqnarray}

A combination of  (\ref{eq:rthlev2genanal30}), (\ref{eq:rthlev2genanal34}), (\ref{eq:rthlev2genanal37}), and (\ref{eq:rthlev2genanal35bb1})  then gives
\begin{eqnarray}\label{eq:rthlev2genanal38}
\sum_{i_1=1}^{l}\sum_{i_2=1}^{l}\sum_{j=1}^{m} \beta_{i_1 }\frac{T_{G,j}}{\sqrt{t}}
& = & \sum_{i_1=1}^{l}\sum_{i_2=1}^{l}\sum_{j=1}^{m} \beta_{i_1 }\frac{(T_{G,j}^c+T_{G,j}^{d_1}+T_{G,j}^{e_1}+\sum_{k_1=3}^{r+1} T_{G,j}^{d_{k_1-1}})}{\sqrt{t}} \nonumber \\
& = & \beta^2 \lp \mE_{G,{\mathcal U}_{r+1}} \langle \|\x^{(i_1)}\|_2^2\|\y^{(i_2)}\|_2^2\rangle_{\gamma_{01}^{(r)}} +   (s-1)\mE_{G,{\mathcal U}_{r+1}}\langle \|\x^{(i_1)}\|_2^2(\y^{(p_2)})^T\y^{(i_2)}\rangle_{\gamma_{02}^{(r)}}   \rp    \nonumber \\
 &  & -s\beta^2(1-\m_1) \mE_{G,{\mathcal U}_{r+1}}\langle (\x^{(p_1)})^T\x^{(i_1)}(\y^{(p_2)})^T\y^{(i_2)}\rangle_{\gamma_1^{(r)}}
  \nonumber \\
  &  & -s\beta^2 \sum_{k_1=2}^{2}  (\m_{k_1-1}-\m_{k_1}) p \mE_{G,{\mathcal U}_{r+1}}\langle (\x^{(p_1)})^T\x^{(i_1)}(\y^{(p_2)})^T\y^{(i_2)}\rangle_{\gamma_{21}^{(r)}}  \nonumber \\
  &  & + s\beta^2 \m_1 (p-1) \mE_{G,{\mathcal U}_{r+1}}\langle (\x^{(p_1)})^T\x^{(i_1)}(\y^{(p_2)})^T\y^{(i_2)}\rangle_{\gamma_{22}^{(r)}}
    \nonumber \\
  &  & -s\beta^2 \sum_{k_1=2}^{r+1}  (\m_{k_1-1}-\m_{k_1}) p \mE_{G,{\mathcal U}_{r+1}}\langle (\x^{(p_1)})^T\x^{(i_1)}(\y^{(p_2)})^T\y^{(i_2)}\rangle_{\gamma_{k_1}^{(r)}}.
\end{eqnarray}
With $\m_0=1$, $\gamma_{2}^{(r)}\triangleq \gamma_{21}^{(r)}$, and
\begin{eqnarray}\label{eq:rthlev2genanal38aa0}
\omega(x;p) \triangleq \begin{cases}
                         1, & \mbox{if } x=1 \\
                         p, & \mbox{otherwise}.
                       \end{cases},
\end{eqnarray}
 the above can also be  rewritten in a bit more compact form
\begin{eqnarray}\label{eq:rthlev2genanal39}
\sum_{i_1=1}^{l}\sum_{i_2=1}^{l}\sum_{j=1}^{m} \beta_{i_1 }\frac{T_{G,j}}{\sqrt{t}}
 & = & \beta^2  \lp \mE_{G,{\mathcal U}_{r+1}} \langle \|\x^{(i_1)}\|_2^2\|\y^{(i_2)}\|_2^2\rangle_{\gamma_{01}^{(r)}} +   (s-1)\mE_{G,{\mathcal U}_{r+1}}\langle \|\x^{(i_1)}\|_2^2(\y^{(p_2)})^T\y^{(i_2)}\rangle_{\gamma_{02}^{(r)}}   \rp    \nonumber \\
   &  & -s\beta^2 \sum_{k_1=1}^{r+1}  (\m_{k_1-1}-\m_{k_1}) \omega(k_1;p)  \mE_{G,{\mathcal U}_{r+1}}\langle (\x^{(p_1)})^T\x^{(i_1)}(\y^{(p_2)})^T\y^{(i_2)}\rangle_{\gamma_{k_1}^{(r)}}
 \nonumber  \\
  &  & + s\beta^2 \m_1 (p-1) \mE_{G,{\mathcal U}_{r+1}}\langle (\x^{(p_1)})^T\x^{(i_1)}(\y^{(p_2)})^T\y^{(i_2)}\rangle_{\gamma_{22}^{(r)}} .
\end{eqnarray}

\subsection{Connecting everything together}
\label{sec:connect}

We summarize the above in the following theorem.
\begin{theorem}
\label{thm:thm3}
For any integer $r\geq 2$ let $k\in\{1,2,\dots,r+1\}$. Consider three vectors $\m=[\m_0,\m_1,\m_2,...,\m_r,\m_{r+1}]$, $\p=[\p_0,\p_1,...,\p_r,\p_{r+1}]$, and $\q=[\q_0,\q_1,\q_2,\dots,\q_r,\q_{r+1}]$  such that  $\m_0=1$, $\m_{r+1}=0$, $1\geq\p_0\geq \p_1\geq \p_2\geq \dots \geq \p_r\geq \p_{r+1} =0$, and $1\geq\q_0\geq \q_1\geq \q_2\geq \dots \geq \q_r\geq \q_{r+1} = 0$. Let the variances of the independent zero-mean  normal elements of $G\in\mR^{m\times n}$, $u^{(4,k)}\in\mR$, $\u^{(2,k)}\in\mR^m$, and $\h^{(k)}\in\mR^n$ be $1$, $\p_{k-1}\q_{k-1}-\p_k\q_k$, $\p_{k-1}-\p_{k}$, $\q_{k-1}-\q_{k}$, respectively.   Also, let sets ${\mathcal X}$, $\bar{{\mathcal X}}$, ${\mathcal Y}$, scalars $\beta$, $p$, $s$, and function $f_{\bar{\x}^{ ( i_3 ) } }   (\cdot) $  be as in Proposition \ref{thm:thm1}. For  ${\mathcal U}_k\triangleq [u^{(4,k)},\u^{(2,k)},\h^{(2k)}]$ we consider the following
\begin{equation}\label{eq:thm3eq1}
\psi(t)  =  \mE_{G,{\mathcal U}_{r+1}} \frac{1}{p|s|\sqrt{n}\m_r} \log
\lp \mE_{{\mathcal U}_{r}} \lp \dots \lp \mE_{{\mathcal U}_2}\lp\lp    \sum_{i_3=1}^{l} \lp \mE_{{\mathcal U}_1}  Z_{i_3}^{\m_1}\rp^p\rp^{\frac{\m_2}{\m_1}}\rp\rp^{\frac{\m_3}{\m_2}} \dots \rp^{\frac{\m_{r}}{\m_{r-1}}}\rp,
\end{equation}
where
\begin{eqnarray}\label{eq:thm3eq2}
Z_{i_3} & \triangleq & \sum_{i_1=1}^{l}\lp\sum_{i_2=1}^{l}e^{\beta D_0^{(i_1,i_2,i_3)}} \rp^{s} \nonumber \\
 D_0^{(i_1,i_2,i_3)} & \triangleq & \sqrt{t}(\y^{(i_2)})^T
 G\x^{(i_1)}+\sqrt{1-t}\|\x^{(i_1)}\|_2 (\y^{(i_2)})^T\lp\sum_{k=1}^{r+1}\u^{(2,k)}\rp\nonumber \\
 & & +\sqrt{t}\|\x^{(i_1)}\|_2\|\y^{(i_2)}\|_2\lp\sum_{k=1}^{r+1}u^{(4,k)}\rp +\sqrt{1-t}\|\y^{(i_2)}\|_2\lp\sum_{k=1}^{r+1}\h^{(k)}\rp^T\x^{(i_1)}
 + f_{\bar{\x}^{(i_3)}  } ( \x^{ (i_1) }  ). \nonumber \\
 \end{eqnarray}
For  $\zeta_k$ defined in (\ref{eq:rthlev2genanal7a}) and (\ref{eq:rthlev2genanal7b})   set
\begin{eqnarray}\label{eq:thm3eq4}
 \Phi_{{\mathcal U}_1}^{(i_3)} & \triangleq &  \mE_{{\mathcal U}_{1}} \frac{  Z_{i_3}^{\m_1}  } {  \mE_{{\mathcal U}_{1}} Z_{i_3}^{\m_1}  } \nonumber \\
 \Phi_{{\mathcal U}_k} & \triangleq &  \mE_{{\mathcal U}_{k}} \frac{\zeta_{k-1}^{\frac{\m_k}{\m_{k-1}}}}{\zeta_k} ,k\geq 2 \nonumber \\
 \gamma_{00}(i_3) & = &
\frac{  \lp  \mE_{{\mathcal U}_{1}} Z_{i_3}^{\m_1} \rp^p   }  { \sum_{i_3=1}^{l} \lp  \mE_{{\mathcal U}_{1}} Z_{i_3}^{\m_1} \rp^p   } \nonumber \\
 \gamma_0(i_1,i_2;i_3) & = &
\frac{(C_{i_3}^{(i_1)})^{s}}{Z_{i_3}}  \frac{A_{i_3}^{(i_1,i_2)}}{C_{i_3}^{(i_1)}} \nonumber \\
\gamma_{01}^{(r)}  & = & \prod_{k=r}^{2}\Phi_{{\mathcal U}_k} \gamma_{00}(i_3) \Phi_{{\mathcal U}_1}^{(i_3)} (\gamma_0(i_1,i_2;i_3))  \nonumber \\
\gamma_{02}^{(r)}  & = & \prod_{k=r}^{2} \Phi_{{\mathcal U}_k}   \gamma_{00}(i_3)    \Phi_{{\mathcal U}_1}^{(i_3)}   (\gamma_0(i_1,i_2;i_3)\times \gamma_0(i_1,p_2;i_3)) \nonumber \\
 \gamma_{1}^{(r)}  & = & \prod_{k=r}^{2} \Phi_{{\mathcal U}_k}   \gamma_{00}(i_3)    \Phi_{{\mathcal U}_1}^{(i_3)}   (\gamma_0(i_1,i_2;i_3)\times \gamma_0(p_1,p_2;i_3)) \nonumber \\
\gamma_{2}^{(r)} \triangleq \gamma_{21}^{(r)}  & = & \prod_{k=r}^{2}\Phi_{{\mathcal U}_k} \lp
  \gamma_{00}(i_3)    \Phi_{{\mathcal U}_1}^{(i_3)}   \gamma_0(i_1,i_2;i_3)\times
    \gamma_{00}(p_3)    \Phi_{{\mathcal U}_1}^{(p_3)}
  \gamma_0(p_1,p_2;p_3) \rp  \nonumber \\
\gamma_{22}^{(r)}  & = & \prod_{k=r}^{2}\Phi_{{\mathcal U}_k} \lp
  \gamma_{00}(i_3)  \lp  \Phi_{{\mathcal U}_1}^{(i_3)}   \gamma_0(i_1,i_2;i_3)\times
     \Phi_{{\mathcal U}_1}^{(i_3)}
  \gamma_0(p_1,p_2;i_3) \rp \rp \nonumber \\
\gamma_{k_1+1}^{(r)}  & = & \prod_{k=r}^{k_1+1}\Phi_{{\mathcal U}_k} \lp \prod_{k=k_1}^{2}\Phi_{{\mathcal U}_k}
  \gamma_{00}(i_3)    \Phi_{{\mathcal U}_1}^{(i_3)}   \gamma_0(i_1,i_2;i_3)\times \prod_{k=k_1}^{2} \Phi_{{\mathcal U}_k}
    \gamma_{00}(p_3)    \Phi_{{\mathcal U}_1}^{(p_3)}
  \gamma_0(p_1,p_2;p_3) \rp, k_1\geq 2. \nonumber \\
  \end{eqnarray}
For $k_1\geq 1$ and $\omega(x;p)$ as in (\ref{eq:rthlev2genanal38aa0}), let
\begin{eqnarray}\label{eq:thm3eq5}
 \phi_{k_1}^{(r)} & = &
-s(\m_{k_1-1}-\m_{k_1}) \omega(k_1;p) \nonumber \\
&  & \times
\mE_{G,{\mathcal U}_{r+1}} \langle (\p_{k_1-1}\|\x^{(i_1)}\|_2\|\x^{(p_1)}\|_2 -(\x^{(p_1)})^T\x^{(i_1)})(\q_{k_1-1}\|\y^{(i_2)}\|_2\|\y^{(p_2)}\|_2 -(\y^{(p_2)})^T\y^{(i_2)})\rangle_{\gamma_{k_1}^{(r)}}
\nonumber \\
 \phi_{22}^{(r)} & = &
s \m_1(p-1) \nonumber \\
&  & \times
\mE_{G,{\mathcal U}_{r+1}} \langle (\p_{k_1-1}\|\x^{(i_1)}\|_2\|\x^{(p_1)}\|_2 -(\x^{(p_1)})^T\x^{(i_1)})(\q_{k_1-1}\|\y^{(i_2)}\|_2\|\y^{(p_2)}\|_2 -(\y^{(p_2)})^T\y^{(i_2)})\rangle_{\gamma_{22}^{(r)}} \nonumber \\
 \phi_{01}^{(r)} & = & (1-\p_0)(1-\q_0)\mE_{G,{\mathcal U}_{r+1}}\langle \|\x^{(i_1)}\|_2^2\|\y^{(i_2)}\|_2^2\rangle_{\gamma_{01}^{(r)}} \nonumber\\
\phi_{02}^{(r)} & = & (s-1)(1-\p_0)\mE_{G,{\mathcal U}_{r+1}}\left\langle \|\x^{(i_1)}\|_2^2 \lp\q_0\|\y^{(i_2)}\|_2\|\y^{(p_2)}\|_2-(\y^{(p_2)})^T\y^{(i_2)}\rp\right\rangle_{\gamma_{02}^{(r)}}. \end{eqnarray}

\noindent Then
\begin{eqnarray}\label{eq:thm3eq6}
\frac{d\psi(t)}{dt}  & = &       \frac{\mbox{sign}(s)\beta^2}{2\sqrt{n}} \lp  \lp\sum_{k_1=1}^{r+1} \phi_{k_1}^{(r)}\rp
+ \phi_{22}^{(r)}
+\phi_{01}^{(r)}+\phi_{02}^{(r)}\rp.
 \end{eqnarray}
For a particular choice $\p_0=\q_0=1$ one additionally has
\begin{eqnarray}\label{eq:rthlev2genanal43}
\frac{d\psi(t)}{dt}  & = &       \frac{\mbox{sign}(s)\beta^2}{2\sqrt{n}}
\lp  \sum_{k_1=1}^{r+1} \phi_{k_1}^{(r)}  + \phi_{22}^{(r)} \rp.
 \end{eqnarray}
 \end{theorem}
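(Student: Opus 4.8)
The plan is to assemble the claimed identity from the ingredients already developed in Sections \ref{sec:scaledcanout} and \ref{sec:rthTG}, following exactly the route by which Proposition \ref{thm:thm2} was obtained from the second-level computations. First I would start from the derivative decomposition (\ref{eq:rthlev2genanal10e})--(\ref{eq:rthlev2genanal10g}), which already writes $\frac{d\psi(t)}{dt}$ as a sign-normalized sum of the $T_G$--group term and the $r+1$ group terms $T_k$, each of the latter splitting into $T_{k,1,j}$, $T_{k,2}$, $T_{k,3}$. The objective is then to evaluate the four aggregates $\Omega_G=\sum_{i_1,i_2,j}\beta_{i_1}T_{G,j}/\sqrt{t}$ together with the analogues $\Omega_1,\Omega_2,\Omega_3$ of (\ref{eq:lev2ctp2}), obtained by summing over $k\in\{1,\dots,r+1\}$ the $T_{k,1,j}$, $T_{k,2}$, and $T_{k,3}$ contributions, respectively.

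Second, for the $T_G$--group, (\ref{eq:rthlev2genanal39}) already supplies the closed form of $\Omega_G$ in terms of the measures $\gamma_{01}^{(r)},\gamma_{02}^{(r)},\gamma_{k_1}^{(r)},\gamma_{22}^{(r)}$, so that step is done. For each of the three remaining sequences I would use the base cases — the first-level expressions (\ref{eq:liftgenAanal19i}), (\ref{eq:liftgenBanal20b}), (\ref{eq:liftgenCanal21b}) for the $k=1$ terms and the second-level results (\ref{eq:lev2genDanal25}), (\ref{eq:lev2genEanal25}), (\ref{eq:lev2genFanal29}) for the $k=2$ terms — and then the recursive relations (\ref{eq:rthlev2genanal27})--(\ref{eq:rthlev2genanal29}), which determine every subsequent term. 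Summing each recursion over $k$ realizes the \emph{canceling out}: the scaling pieces $\tfrac{\p_{k_1}-\p_{k_1+1}}{\p_{k_1-1}-\p_{k_1}}T_{k_1,1,j}$ telescope so that all $\p$-prefactors collapse to $\p_0$ (and, for $\Omega_2$, the $\q$'s to $\q_0$; for $\Omega_3$, the products $\p_k\q_k$ to $\p_0\q_0$), while each index $k_1$ leaves behind exactly one new $\gamma_{k_1+1}^{(r)}$-averaged term.

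Third, I would form $-\Omega_1$, $-\Omega_2$, $-\Omega_3$ in the style of (\ref{eq:lev2cpt4})--(\ref{eq:lev2cpt6}), add them to $\Omega_G$ from (\ref{eq:rthlev2genanal39}), and collect the coefficient of each of $\langle\cdot\rangle_{\gamma_{k_1}^{(r)}}$, $\langle\cdot\rangle_{\gamma_{22}^{(r)}}$, $\langle\cdot\rangle_{\gamma_{01}^{(r)}}$, $\langle\cdot\rangle_{\gamma_{02}^{(r)}}$. The key algebraic observation, identical to the one used at levels $1$ and $2$, is that the separate contributions carrying factors $\p_{k_1-1}$ only (from the $T_{k,1,j}$ stream), $\q_{k_1-1}$ only (from the $T_{k,2}$ stream), the product $\p_{k_1-1}\q_{k_1-1}$ (from the $T_{k,3}$ stream), and $1$ (from the $T_G$ stream) recombine into the bilinear form $(\p_{k_1-1}\|\x^{(i_1)}\|_2\|\x^{(p_1)}\|_2-(\x^{(p_1)})^T\x^{(i_1)})(\q_{k_1-1}\|\y^{(i_2)}\|_2\|\y^{(p_2)}\|_2-(\y^{(p_2)})^T\y^{(i_2)})$ of $\phi_{k_1}^{(r)}$ and $\phi_{22}^{(r)}$, while the residual pure-$\|\cdot\|_2^2$ and $\|\cdot\|_2\|\cdot\|_2$ pieces assemble, with prefactors $(1-\p_0)(1-\q_0)$ and $(1-\p_0)$, into $\phi_{01}^{(r)}$ and $\phi_{02}^{(r)}$. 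Matching against (\ref{eq:thm3eq5}) then gives (\ref{eq:thm3eq6}), and the specialization $\p_0=\q_0=1$ annihilates $\phi_{01}^{(r)}$ and $\phi_{02}^{(r)}$, yielding (\ref{eq:rthlev2genanal43}).

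The main obstacle is purely the combinatorial bookkeeping: checking that the telescoping of the $\p$- and $\q$-prefactors across the summed recursions, together with the reindexing $k_1\mapsto k_1+1$ in the newly generated $\gamma$-averaged terms, leaves precisely one term attached to each measure $\gamma_{k_1}^{(r)}$, and that the weights $\omega(k_1;p)$ emerge correctly — so that the $k_1=1$ contribution (originating from $\gamma_1^{(r)}$ with weight $1-\m_1$ and no extra $p$) is treated uniformly with the $k_1\geq2$ contributions (weight $\m_{k_1-1}-\m_{k_1}$ and an extra factor $p$), the lone $\gamma_{22}^{(r)}$ term carrying its own weight $s\m_1(p-1)$. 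Once the $r=1$ and $r=2$ patterns are in hand this verification is routine, so the proof reduces, as for Propositions \ref{thm:thm1} and \ref{thm:thm2}, to noting that it follows from the discussion above.
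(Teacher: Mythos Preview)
Your proposal is correct and follows essentially the same route as the paper: start from the decomposition (\ref{eq:rthlev2genanal10e})--(\ref{eq:rthlev2genanal10g}), use (\ref{eq:rthlev2genanal39}) for $\Omega_G$, feed the base cases into the recursions (\ref{eq:rthlev2genanal27})--(\ref{eq:rthlev2genanal29}), telescope the $\p$-, $\q$-, and $\p\q$-prefactors, and then recombine the four streams into the bilinear $\phi_{k_1}^{(r)}$, $\phi_{22}^{(r)}$, $\phi_{01}^{(r)}$, $\phi_{02}^{(r)}$ terms. The paper only makes the bookkeeping you flag as the main obstacle slightly more explicit, by writing the inductive hypothesis (\ref{eq:rthlev2genanal44}) and verifying the step (\ref{eq:rthlev2genanal45}) before summing to (\ref{eq:rthlev2genanal46})--(\ref{eq:rthlev2genanal48}).
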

\begin{proof}
The $r=2$ scenario follows by results presented in  Proposition \ref{thm:thm2}. The $r\geq 3$ scenario
follows via a combination of (\ref{eq:rthlev2genanal10e})-(\ref{eq:rthlev2genanal10g}), (\ref{eq:rthlev2genanal27})-(\ref{eq:rthlev2genanal29}), (\ref{eq:rthlev2genanal39}), after one performs summing and canceling scaled terms while additionally  noting that, what was shown in (\ref{eq:lev2liftgenAanal19i}),  (\ref{eq:lev2liftgenBanal20b}), and (\ref{eq:lev2liftgenCanal21b}) for the  second level in an analogous manner holds for the $r$-th level as well. In particular, one has the following analogue of   (\ref{eq:lev2liftgenAanal19i})
\begin{eqnarray}\label{eq:rthlev2genanal40}
\sum_{i_1=1}^{l}\sum_{i_2=1}^{l}\sum_{j=1}^{m} \beta_{i_1}\frac{T_{1,1,j}}{\sqrt{1-t
}}
& = & \beta^2(\p_0-\p_1) \Bigg( \Bigg. \mE_{G,{\mathcal U}_{r+1}}\langle \|\x^{(i_1)}\|_2^2\|\y^{(i_2)}\|_2^2\rangle_{\gamma_{01}^{(r)}} \nonumber \\
& & +  (s-1)\mE_{G,{\mathcal U}_{r+1}}\langle \|\x^{(i_1)}\|_2^2(\y^{(p_2)})^T\y^{(i_2)}\rangle_{\gamma_{02}^{(r)}} \Bigg.\Bigg)  \nonumber \\
& & - (\p_0-\p_1)s\beta^2(1-\m_1)\mE_{G,{\mathcal U}_{r+1}}\langle \|\x^{(i_1)}\|_2\|\x^{(p_1)}\|_2(\y^{(p_2)})^T\y^{(i_2)} \rangle_{\gamma_1^{(r)}},
\end{eqnarray}
the following analogue of  (\ref{eq:lev2liftgenBanal20b})
\begin{eqnarray}\label{eq:rthlev2genanal41}
\sum_{i_1=1}^{l}\sum_{i_2=1}^{l} \beta_{i_1}\|\y^{(i_2)}\|_2 \frac{T_{1,2}}{\sqrt{1-t}}
& = & \beta^2(\q_0-\q_1)\Bigg( \Bigg.  \mE_{G,{\mathcal U}_{r+1}}\langle \|\x^{(i_1)}\|_2^2\|\y^{(i_2)}\|_2^2\rangle_{\gamma_{01}^{(r)}} \nonumber \\
& &
+   (s-1)\mE_{G,{\mathcal U}_{r+1}}\langle \|\x^{(i_1)}\|_2^2 \|\y^{(i_2)}\|_2\|\y^{(p_2)}\|_2\rangle_{\gamma_{02}^{(r)}}\Bigg.\Bigg) \nonumber \\
& & - (\q_0-\q_1)s\beta^2(1-\m_1)\mE_{G,{\mathcal U}_{r+1}}\langle (\x^{(p_1)})^T\x^{(i_1)}\|\y^{(i_2)}\|_2\|\y^{(p_2)}\|_2 \rangle_{\gamma_1^{(r)}},\nonumber \\
\end{eqnarray}
and the following analogue of (\ref{eq:lev2liftgenCanal21b})
  \begin{eqnarray}\label{eq:rthlev2genanal42}
\sum_{i_1=1}^{l}\sum_{i_2=1}^{l} \beta_{i_1}\|\y^{(i_2)}\|_2 \frac{T_{1,3}}{\sqrt{t}}
& = & \beta^2(\p_0\q_0-\p_1\q_1)\Bigg( \Bigg. \mE_{G,{\mathcal U}_{r+1}}\langle \|\x^{(i_1)}\|_2^2\|\y^{(i_2)}\|_2^2\rangle_{\gamma_{01}^{(r)}} \nonumber \\
& & +   (s-1)\mE_{G,{\mathcal U}_{r+1}}\langle \|\x^{(i_1)}\|_2^2 \|\y^{(i_2)}\|_2\|\y^{(p_2)}\|_2\rangle_{\gamma_{02}^{(r)}}\Bigg.\Bigg)    \nonumber \\
& & - (\p_0\q_0-\p_1\q_1)s\beta^2(1-\m_1)\mE_{G,{\mathcal U}_{r+1}}\langle \|\x^{(i_1)}\|_2\|\x^{(p_`)}\|_2\|\y^{(i_2)}\|_2\|\y^{(p_2)}\|_2 \rangle_{\gamma_1^{(r)}}.\nonumber \\
\end{eqnarray}
The summing  and canceling out procedure works out in the same way for all three sequences, $\lp T_{k,1,j}\rp_{k=1:r+1}$, $\lp T_{k,2}\rp_{k=1:r+1}$, and $\lp T_{k,3}\rp_{k=1:r+1}$. To avoid showing the same mechanism repetitively, we present in details how it works for the first sequence. By trivial
adjustments of the results from the first part of Section \ref{sec:lev2handlT2} we have the following $r$ level analogue of (\ref{eq:lev2genDanal25})
\begin{eqnarray}\label{eq:rthlev2genanal43}
 \sum_{i_1=1}^{l}  \sum_{i_2=1}^{l} \sum_{j=1}^{m}  \beta_{i_1}\frac{T_{2,1,j}}{\sqrt{1-t}}
 & = & (\p_1-\p_2)\beta^2 \nonumber \\
 & & \times
  \lp \mE_{G,{\mathcal U}_{r+1}  }\langle \|\x^{(i_1)}\|_2^2\|\y^{(i_2)}\|_2^2\rangle_{\gamma_{01}^{(r)}} +   (s-1)\mE_{G,{\mathcal U}_{r+1} }\langle \|\x^{(i_1)}\|_2^2(\y^{(p_2)})^T\y^{(i_2)}\rangle_{\gamma_{02}^{(r)}} \rp\nonumber \\
& & - (\p_1-\p_2)s\beta^2(1-\m_1)\mE_{G,{\mathcal U}_{r+1} }\langle \|\x^{(i_1)}\|_2\|\x^{(p_1)}\|_2(\y^{(p_2)})^T\y^{(i_2)} \rangle_{\gamma_{1}^{(r)}}
\nonumber \\
 &   &
  -s\beta^2(\p_1-\p_2)(\m_1-\m_2) p \mE_{G,{\mathcal U}_{r+1}   } \langle \|\x^{(i_1)}\|_2\|\x^{(p_1)}\|_2(\y^{(p_2)})^T\y^{(i_2)} \rangle_{\gamma_{21}^{(r)}}
\nonumber \\
   &   &
+  s\beta^2(\p_1-\p_2) \m_1 (p-1) \mE_{G,{\mathcal U}_{r+1}  } \langle \|\x^{(i_1)}\|_2\|\x^{(p_1)}\|_2(\y^{(p_2)})^T\y^{(i_2)} \rangle_{\gamma_{22}^{(r)}}
\nonumber \\
 & = & (\p_1-\p_2)\beta^2 \nonumber \\
 & & \times \Bigg .\Bigg (
  \lp \mE_{G,{\mathcal U}_{r+1}}\langle \|\x^{(i_1)}\|_2^2\|\y^{(i_2)}\|_2^2\rangle_{\gamma_{01}^{(r)}} +   (s-1)\mE_{G,{\mathcal U}_{r+1}}\langle \|\x^{(i_1)}\|_2^2(\y^{(p_2)})^T\y^{(i_2)}\rangle_{\gamma_{02}^{(r)}} \rp\nonumber \\
  &   &
  -s \sum_{v=1}^{2}  (\m_{v-1}-\m_v) \omega(v;p) \mE_{G,{\mathcal U}_{r+1}  } \langle \|\x^{(i_1)}\|_2\|\x^{(p_1)}\|_2(\y^{(p_2)})^T\y^{(i_2)} \rangle_{\gamma_{v}^{(r)}}
\nonumber \\
   &   &
+  s  \m_1 (p-1) \mE_{G,{\mathcal U}_{r+1}   } \langle \|\x^{(i_1)}\|_2\|\x^{(p_1)}\|_2(\y^{(p_2)})^T\y^{(i_2)} \rangle_{\gamma_{22}^{(r)}}
\Bigg .\Bigg ).
\end{eqnarray}
To generalize, we assume for $k_1\geq 2$
 \begin{eqnarray}\label{eq:rthlev2genanal44}
 \sum_{i_1=1}^{l}\sum_{i_2=1}^{l} \sum_{j=1}^{m}
\beta_{i_1} \frac{T_{k_1,1,j}}{\sqrt{1-t}}
 & = & \beta^2 (\p_{k_1-1}-\p_{k_1}) \Bigg( \Bigg. \mE_{G,{\mathcal U}_{r+1}}\langle \|\x^{(i_1)}\|_2^2\|\y^{(i_2)}\|_2^2\rangle_{\gamma_{01}^{(r)}} \nonumber \\
 & & +  (s-1)\mE_{G,{\mathcal U}_{r+1}}\langle \|\x^{(i_1)}\|_2^2(\y^{(p_2)})^T\y^{(i_2)}\rangle_{\gamma_{02}^{(r)}}   \nonumber \\
 &  & - s \sum_{v=1}^{k_1} \lp \m_{v-1} -  \m_{v} \rp \omega(v;p)
  \mE_{G,{\mathcal U}_{r+1}} \left \langle \|\x^{(i_1)}\|_2\|\x^{(p_1)}\|_2 (\y^{(p_2)})^T \y^{(i_2)} \right \rangle_{\gamma_{v}^{(r)} }
  \nonumber \\
   &   &
+  s  \m_1 (p-1) \mE_{G,{\mathcal U}_{r+1}   } \langle \|\x^{(i_1)}\|_2\|\x^{(p_1)}\|_2(\y^{(p_2)})^T\y^{(i_2)} \rangle_{\gamma_{22}^{(r)}}
  \Bigg. \Bigg),\nonumber \\
\end{eqnarray}
and utilizing  (\ref{eq:rthlev2genanal27}) find
 \begin{eqnarray}\label{eq:rthlev2genanal45}
 \sum_{i_1=1}^{l}\sum_{i_2=1}^{l} \sum_{j=1}^{m}
\beta_{i_1} \frac{T_{k_1+1,1,j}}{\sqrt{1-t}}
 & = &  \frac{\p_{k_1}-\p_{k_1+1}}{\p_{k_1-1}-\p_{k_1}}\sum_{i_1=1}^{l}\sum_{i_2=1}^{l} \sum_{j=1}^{m}
\beta_{i_1} \frac{T_{k_1,1,j}}{\sqrt{1-t}} \nonumber \\
 &  & - \Bigg( \Bigg. s\beta^2(\p_{k_1}-\p_{k_1+1}) \lp \m_{k_1} -  \m_{k_1+1}
\rp  \omega(k_1;p)
  \nonumber \\
  & & \times
  \mE_{G,{\mathcal U}_{r+1}} \left \langle \|\x^{(i_1)}\|_2\|\x^{(p_1)}\|_2 (\y^{(p_2)})^T \y^{(i_2)} \right \rangle_{\gamma_{k_1+1}^{(r)} } \Bigg. \Bigg) \nonumber \\
 & = &  \frac{\p_{k_1}-\p_{k_1+1}}{\p_{k_1-1}-\p_{k_1}}\beta^2 (\p_{k_1-1}-\p_{k_1}) \Bigg( \Bigg. \mE_{G,{\mathcal U}_{r+1}}\langle \|\x^{(i_1)}\|_2^2\|\y^{(i_2)}\|_2^2\rangle_{\gamma_{01}^{(r)}} \nonumber \\
 & & +  (s-1)\mE_{G,{\mathcal U}_{r+1}}\langle \|\x^{(i_1)}\|_2^2(\y^{(p_2)})^T\y^{(i_2)}\rangle_{\gamma_{02}^{(r)}}   \nonumber \\
 &  & - s \sum_{v=1}^{k_1} \lp \m_{v-1} -  \m_{v} \rp \omega(v;p)
  \nonumber \\
  & & \times
  \mE_{G,{\mathcal U}_{r+1}} \left \langle \|\x^{(i_1)}\|_2\|\x^{(p_1)}\|_2 (\y^{(p_2)})^T \y^{(i_2)} \right \rangle_{\gamma_{v}^{(r)} }
        \nonumber \\
   &   &
+  s  \m_1 (p-1) \mE_{G,{\mathcal U}_{r+1}   } \langle \|\x^{(i_1)}\|_2\|\x^{(p_1)}\|_2(\y^{(p_2)})^T\y^{(i_2)} \rangle_{\gamma_{22}^{(r)}}
    \Bigg. \Bigg) \nonumber \\
   &  & - \Bigg( \Bigg. s \beta^2(\p_{k_1}-\p_{k_1+1}) \lp \m_{k_1} -  \m_{k_1+1} \rp \omega(k_1;p)
  \nonumber \\
  & & \times
  \mE_{G,{\mathcal U}_{r+1}} \left \langle \|\x^{(i_1)}\|_2\|\x^{(p_1)}\|_2 (\y^{(p_2)})^T \y^{(i_2)} \right \rangle_{\gamma_{k_1+1}^{(r)} }
 \Bigg. \Bigg)
   \nonumber \\
 & = &  \beta^2 (\p_{k_1}-\p_{k_1+1}) \Bigg( \Bigg. \mE_{G,{\mathcal U}_{r+1}}\langle \|\x^{(i_1)}\|_2^2\|\y^{(i_2)}\|_2^2\rangle_{\gamma_{01}^{(r)}} \nonumber \\
 & & +  (s-1)\mE_{G,{\mathcal U}_{r+1}}\langle \|\x^{(i_1)}\|_2^2(\y^{(p_2)})^T\y^{(i_2)}\rangle_{\gamma_{02}^{(r)}}   \nonumber \\
 &  & - s \sum_{v=1}^{k_1+1} \lp \m_{v-1} -  \m_{v} \rp \omega(v;p)
  \mE_{G,{\mathcal U}_{r+1}} \left \langle \|\x^{(i_1)}\|_2\|\x^{(p_1)}\|_2 (\y^{(p_2)})^T \y^{(i_2)} \right \rangle_{\gamma_{v}^{(r)} }
    \nonumber \\
   &   &
+  s  \m_1 (p-1) \mE_{G,{\mathcal U}_{r+1}   } \langle \|\x^{(i_1)}\|_2\|\x^{(p_1)}\|_2(\y^{(p_2)})^T\y^{(i_2)} \rangle_{\gamma_{22}^{(r)}}
    \Bigg. \Bigg)
. \nonumber \\
    \end{eqnarray}
Since (\ref{eq:rthlev2genanal44}) holds for $k_1=2$, (\ref{eq:rthlev2genanal45}) ensures that it actually holds for any $k_1\in\{1,2,\dots,r+1\}$. Finally summing over $k_1$ and utilizing \cite{Stojnicnflgscompyx23}'s (249) we obtain
 \begin{eqnarray}\label{eq:rthlev2genanal46}
 \sum_{k_1=1}^{r+1}\sum_{i_1=1}^{l}\sum_{i_2=1}^{l} \sum_{j=1}^{m}
\beta_{i_1} \frac{T_{k_1,1,j}}{\sqrt{1-t}}
 & = & \beta^2  \sum_{k_1=1}^{r+1}  (\p_{k_1-1}-\p_{k_1}) \Bigg( \Bigg. \mE_{G,{\mathcal U}_{r+1}}\langle \|\x^{(i_1)}\|_2^2\|\y^{(i_2)}\|_2^2\rangle_{\gamma_{01}^{(r)}} \nonumber \\
 & & +  (s-1)\mE_{G,{\mathcal U}_{r+1}}\langle \|\x^{(i_1)}\|_2^2(\y^{(p_2)})^T\y^{(i_2)}\rangle_{\gamma_{02}^{(r)}}   \nonumber \\
 &  & - s \sum_{v=1}^{k_1} \lp \m_{v-1} -  \m_{v} \rp \omega(v;p)
  \mE_{G,{\mathcal U}_{r+1}} \left \langle \|\x^{(i_1)}\|_2\|\x^{(p_1)}\|_2 (\y^{(p_2)})^T \y^{(i_2)} \right \rangle_{\gamma_{k_1}^{(r)} }
\Bigg. \Bigg)
       \nonumber \\
   &   &
+  s  \sum_{k_1=2}^{r+1}  (\p_{k_1-1}-\p_{k_1})  \m_1 (p-1) \mE_{G,{\mathcal U}_{r+1}   } \langle \|\x^{(i_1)}\|_2\|\x^{(p_1)}\|_2(\y^{(p_2)})^T\y^{(i_2)} \rangle_{\gamma_{22}^{(r)}}
\nonumber \\
  & = & \beta^2 (\p_{0}-\p_{r+1}) \Bigg( \Bigg. \mE_{G,{\mathcal U}_{r+1}}\langle \|\x^{(i_1)}\|_2^2\|\y^{(i_2)}\|_2^2\rangle_{\gamma_{01}^{(r)}} \nonumber \\
 & & +  (s-1)\mE_{G,{\mathcal U}_{r+1}}\langle \|\x^{(i_1)}\|_2^2(\y^{(p_2)})^T\y^{(i_2)}\rangle_{\gamma_{02}^{(r)}}  \Bigg. \Bigg) \nonumber \\
 &  & - s  \beta^2\sum_{v=1}^{r+1} \p_{v-1}\lp \m_{v-1} -  \m_{v} \rp \omega(v;p) \mE_{G,{\mathcal U}_{r+1}} \left \langle \|\x^{(i_1)}\|_2\|\x^{(p_1)}\|_2 (\y^{(p_2)})^T \y^{(i_2)} \right \rangle_{\gamma_{v}^{(r)} }
        \nonumber \\
   &   &
+  s  \beta^2 \p_{1}  \m_1 (p-1) \mE_{G,{\mathcal U}_{r+1}   } \langle \|\x^{(i_1)}\|_2\|\x^{(p_1)}\|_2(\y^{(p_2)})^T\y^{(i_2)} \rangle_{\gamma_{22}^{(r)}}
 .\nonumber \\
\end{eqnarray}
We then analogously find for the other two sequences
 \begin{eqnarray}\label{eq:rthlev2genanal47}
 \sum_{k_1=1}^{r+1}\sum_{i_1=1}^{l}\sum_{i_2=1}^{l}
\beta_{i_1}\|\y^{(i_2)}\|_2 \frac{T_{k_1,2}}{\sqrt{1-t}}
   & = & \beta^2 (\q_{0}-\q_{r+1}) \Bigg( \Bigg. \mE_{G,{\mathcal U}_{r+1}}\langle \|\x^{(i_1)}\|_2^2\|\y^{(i_2)}\|_2^2\rangle_{\gamma_{01}^{(r)}} \nonumber \\
 & & +  (s-1)\mE_{G,{\mathcal U}_{r+1}}\langle \|\x^{(i_1)}\|_2^2 \|\y^{(i_2)}\|_2\|\y^{(p_2)}\|_2\rangle_{\gamma_{02}^{(r)}} \Bigg. \Bigg)  \nonumber \\
 &  & - s \beta^2 \sum_{v=1}^{r+1} \q_{v-1}\lp \m_{v-1} -  \m_{v} \rp   \omega(v;p)
  \nonumber \\
  & & \times
  \mE_{G,{\mathcal U}_{r+1}} \left \langle \|\y^{(i_2)}\|_2\|\y^{(p_2)}\|_2 (\x^{(p_1)})^T \x^{(i_1)} \right \rangle_{\gamma_{v}^{(r)} }
          \nonumber \\
   &   &
+  s  \beta^2 \p_{1}  \m_1 (p-1) \mE_{G,{\mathcal U}_{r+1}   } \langle \|\x^{(i_1)}\|_2\|\x^{(p_1)}\|_2(\y^{(p_2)})^T\y^{(i_2)} \rangle_{\gamma_{22}^{(r)}},
\nonumber \\
\end{eqnarray}
and
 \begin{eqnarray}\label{eq:rthlev2genanal48}
 \sum_{k_1=1}^{r+1}\sum_{i_1=1}^{l}\sum_{i_2=1}^{l}
\beta_{i_1}\|\y^{(i_2)}\|_2 \frac{T_{k_1,3}}{\sqrt{t}}
   & = & \beta^2 (\p_{0}\q_{0}-\p_{r+1}\q_{r+1}) \Bigg( \Bigg. \mE_{G,{\mathcal U}_{r+1}}\langle \|\x^{(i_1)}\|_2^2\|\y^{(i_2)}\|_2^2\rangle_{\gamma_{01}^{(r)}} \nonumber \\
 & & +  (s-1)\mE_{G,{\mathcal U}_{r+1}}\langle \|\x^{(i_1)}\|_2^2\|\y^{(i_2)}\|_2\|\y^{(p_2)}\|_2\rangle_{\gamma_{02}^{(r)}} \Bigg. \Bigg)  \nonumber \\
 &  & - s  \sum_{v=1}^{r+1} \p_{v-1}\q_{v-1}\lp \m_{v-1} -  \m_{v} \rp   \omega(v;p)
 \nonumber \\
 & & \times
 \mE_{G,{\mathcal U}_{r+1}} \left \langle \|\y^{(i_2)}\|_2\|\y^{(p_2)}\|_2 \|\x^{(i_1)}\|_2\|\x^{(p_1)}\|_2 \right \rangle_{\gamma_{v}^{(r)} }
         \nonumber \\
   &   &
+  s  \beta^2 \p_{1}  \m_1 (p-1) \mE_{G,{\mathcal U}_{r+1}   } \langle \|\x^{(i_1)}\|_2\|\x^{(p_1)}\|_2(\y^{(p_2)})^T\y^{(i_2)} \rangle_{\gamma_{22}^{(r)}}
, \nonumber \\
\end{eqnarray}
Combining (\ref{eq:rthlev2genanal10e})-(\ref{eq:rthlev2genanal10g}), (\ref{eq:rthlev2genanal39}), and (\ref{eq:rthlev2genanal46})-(\ref{eq:rthlev2genanal48}) one obtains that (\ref{eq:thm3eq5}) and (\ref{eq:thm3eq6}) indeed hold.
\end{proof}

\section{Practical relevance}
\label{sec:examples}

As noted in \cite{Stojnicnflgscompyx23}, considered models and associated  bilinearly indexed random processes relate to a host of well known random structures and accompanying classical optimization problems. These include both feasibility and standard optimal objective seeking ones. The key features associated with such problems (optimal objective, critical dimensionality, etc.) are usually associated with the so-called  \emph{typical} behavior and the concepts from \cite{Stojnicnflgscompyx23} are powerful enough to handle them. On the other hand, in recent years features associated with \emph{atypical} behavior gained a lot of interest as they are presumed to be tightly connected to the appearance of the so called \emph{computational gap} -- a phenomenon where known practically feasible algorithms fail to achieve the predicted theoretical limit. The machinery introduced here presents an  upgrade on \cite{Stojnicnflgscompyx23} that
is powerful enough to allow for studying a much wider class of random structures features including algorithmically the most sought after atypical ones.

To see how this connection materializes, we  first recall that the key object of interest from Theorem \ref{thm:thm3} is the following function
\begin{equation}\label{eq:exampleseq1}
\psi( t)  =  \mE_{G,{\mathcal U}_{r+1}} \frac{1}{ p|s|\sqrt{n}\m_r} \log
\lp \mE_{{\mathcal U}_{r}} \lp \dots \lp \mE_{{\mathcal U}_2}\lp\lp   \sum_{i_3=1}^{l}\lp \mE_{{\mathcal U}_1} Z^{\m_1}\rp^p \rp^{\frac{\m_2}{\m_1}}\rp\rp^{\frac{\m_3}{\m_2}} \dots \rp^{\frac{\m_{r}}{\m_{r-1}}}\rp,
\end{equation}
where
\begin{eqnarray}\label{eq:exampleseq2}
Z_{i_3} & \triangleq & \sum_{i_1=1}^{l}\lp\sum_{i_2=1}^{l}e^{\beta D_0^{(i_1,i_2,i_3)}} \rp^{s} \nonumber \\
 D_0^{(i_1,i_2,i_3)} & \triangleq & \sqrt{t}(\y^{(i_2)})^T
 G\x^{(i_1)}+\sqrt{1-t}\|\x^{(i_1)}\|_2 (\y^{(i_2)})^T\lp\sum_{k=1}^{r+1}\u^{(2,k)}\rp\nonumber \\
 & & +\sqrt{t}\|\x^{(i_1)}\|_2\|\y^{(i_2)}\|_2\lp\sum_{k=1}^{r+1}u^{(4,k)}\rp +\sqrt{1-t}\|\y^{(i_2)}\|_2\lp\sum_{k=1}^{r+1}\h^{(k)}\rp^T\x^{(i_1)}
 + f_{\x^{(i_3)} }  (\x^{(i_1)}  ), \nonumber \\
 \end{eqnarray}
and
${\mathcal X}=\{\x^{(1)},\x^{(2)},\dots,\x^{(l)}\}$ with $\x^{(i)}\in \mR^{n},1\leq i\leq l$ and ${\mathcal Y}=\{\y^{(1)},\y^{(2)},\dots,\y^{(l)}\}$ with $\y^{(i)}\in \mR^{m},1\leq i\leq l$. The so-called linear regime, $\frac{m}{n}=\alpha$, where $\alpha$ remains constant as $n\rightarrow\infty$ is of prevalent interest.

As \cite{Stojnicnflgscompyx23} demonstrated, classical neural network perceptron concepts can be cast as feasibility problems and fit the model studied here. For example, for the so-called spherical perceptron one takes $s=-1$, ${\mathcal X}=\mS^n$, $\bar{{\mathcal X}}=\emptyset$, ${\mathcal Y}=\mS_+^n$  (where $\mS_+^m$ is the unit sphere positive orthant portion), and $p=1$ and then observes that $\psi(\cdot)$ in (\ref{eq:exampleseq1}) is properly adjusted associated free energy (see, e.g., \cite{StojnicGardGen13,StojnicGardSphErr13,StojnicGardSphNeg13,GarDer88,Gar88,SchTir02,SchTir03}).
As is the case in general random optimizations, two values of this function are often of key relevance (see, e.g., \cite{FPSUZ17,FraHwaUrb19,FraPar16,FraSclUrb19,FraSclUrb20,AlaSel20,StojnicGardGen13,StojnicGardSphErr13,StojnicGardSphNeg13,GarDer88,Gar88,Schlafli,Cover65,Winder,Winder61,Wendel62,Cameron60,Joseph60,BalVen87,Ven86,SchTir02,SchTir03}). In particular, we have
 \begin{equation}\label{eq:exampleseq7}
  \lim_{n,\beta\rightarrow\infty} \frac{\psi\lp 1\rp }{\beta}=  \lim_{n\rightarrow\infty}
  \mE_G
   \frac{\min_{\x\in\mS^m}\max_{\y\in\mS_+^m} \y^TG\x}{\sqrt{n}}
\end{equation}
 and  $\lim_{n,\beta\rightarrow\infty}\frac{\psi\lp 0\rp}{\beta}$ which are precisely the associated ground state energy and its intended (presumably analytically  simpler) decoupled counterpart in the thermodynamic limit.

The famous binary perceptron (BP) (see, e.g., \cite{StojnicGardGen13,GarDer88,Gar88,StojnicDiscPercp13,KraMez89,GutSte90,KimRoc98,TalBook11a,NakSun23,BoltNakSunXu22,PerkXu21,CXu21,DingSun19,Huang24,Stojnicbinperflrdt23,LiSZ24}) fits the considered model in exactly the same way as the spherical does with the only difference being that now ${\mathcal X}=\{-\frac{1}{\sqrt{n}},\frac{1}{\sqrt{n}}\}^n$. One then analogously has that
\begin{equation}\label{eq:exampleseq8}
  \lim_{n,\beta\rightarrow\infty} \frac{\psi\lp 1\rp }{\beta}
   =  \lim_{n\rightarrow\infty} \mE_G \frac{\min_{\x\in\left \{-\frac{1}{\sqrt{m}},\frac{1}{\sqrt{m}}\right \}^n}\max_{\y\in\mS_+^m} \y^TG\x}{\sqrt{n}}
\end{equation}
 and  $\lim_{n,\beta\rightarrow\infty} \frac{ \psi\lp  0\rp }{\beta}$ are the  thermodynamic limit values of the associated ground state energy and its decoupled counterpart (the concentrations actually ensure that $\mE_G$ can be removed in  (\ref{eq:exampleseq7}) and  (\ref{eq:exampleseq8})). Various other perceptron variants including closely related symmetric binary perceptrons (SBP) from \cite{AubPerZde19,AbbLiSly21a,AbbLiSly21b,Bald20,GamKizPerXu22,PerkXu21,ElAlGam24,SahSaw23,Barb24,djalt22,BarbAKZ23} (or their discrepancy minimization equivalents from, e.g., \cite{KaKLO86,Spen85,LovMek15,GamKizPerXu23,Roth17,AlwLiuSaw21})  can be fit into the model in an analogous fashion as well.

 The machinery of \cite{Stojnicnflgscompyx23}  can be used to determine the critical $\alpha$ (the so-called perceptron capacity, $\alpha_c$) for which $\psi(1)$ transitions from zero to a nonzero value which corresponds to BP transitioning from successful to unsuccessful associative memory. Moreover, \cite{Stojnicnflgscompyx23} can be used to characterize various other features associated with the so-called typical behavior. For example, when one operates in regimes below the capacity ($\alpha< \alpha_c$), there will be a large number of feasibility problem solutions (BP memories) and they are ordered in a typical hierarchical way. The key features of these solutions, their number (entropy) as well as how they relate to each other can be precisely determined through the utilization of \cite{Stojnicnflgscompyx23}. On the other hand, when one tries to algorithmically find these solutions, an NP problem is faced. Since NP-ness is a \emph{worst case} concept it rarely can give a proper explanation about typical solvability of underlying optimization problems. Things are practically actually even worse. For example, a host of excellent BP algorithms exist that work fairly well for a large portion of $\alpha< \alpha_c$ region (see, e.g., \cite{BrZech06,BaldassiBBZ07,Hubara16,KimRoc98}). In concrete terms, the zero threshold BP (precisely the one characterized via (\ref{eq:exampleseq8})) has the capacity $\alpha_c\approx 0.833$ and very good algorithms can be constructed in the $\alpha$-regions up to  $\alpha_{alg}\approx 0.75$ (sometimes this can be even pushed to $0.77$). It is an extraordinary challenge to determine precisely the critical algorithmic value $\alpha_{alg}$ below which the efficient algorithms exist and to characterize the structural phenomenology behind their existence. The fact that the current $\alpha_{alg}$ is below the $\alpha_c$ constitutes the presence of the so-called \emph{computational gap} -- a key feature present in a plethora of (random) optimization problems (see, e.g., \cite{MMZ05,GamarSud14,GamarSud17,GamarSud17a,AchlioptasR06,AchlioptasCR11,GamMZ22}).

 An enormous effort has been put forth in studying these concepts over the last 10-15 years in a range of optimization problems that go way beyond the BP.  Many great results have been achieved but the generic conclusions and overall resolution remain unreachable. Among the most attractive approaches are studies of clustered solutions. The so-called \emph{overlap gap property} (OGP) approach \cite{Gamar21,GamarSud14,GamarSud17,GamarSud17a,AchlioptasCR11,MMZ05} connects algorithmic efficacy to the existence of pairs (in general $m$-tuples) of solutions at arbitrary Hamming distances. For example, for an analytically presumably more convenient SBP alternative, it is known (see, e.g., \cite{GamKizPerXu22,Bald20} or \cite{GamKizPerXu23} for the corresponding discrepancy minimization) that OGP is absent for $\alpha$'s well below the critical capacity, indicating potential existence of computational gap (provided that algorithmic tractability is indeed impacted by OGP; for a recent shortest path problem example that disproves OGP generic hardness impact see, e.g., \cite{LiSch24} (earlier disproving examples included simple algebraically solvable $k$-XOR ones); it should be noted  however, that this does not disprove OGP's relevance when it comes to particular algorithms or potentially its generic hardness value for different optimization problems). While the OGP's generic role with respect to hardness remains to be determined, its absence precludes many particular classes of algorithms (see, e.g.,  \cite{GamKizPerXu22}). In fact, the OGP's relevance  is further strengthened for many problems by the existence of practically  feasible algorithms in a similar or even the exact $\alpha$ range (see, e.g., \cite{RahVir17,GamarSud14,GamarSud17,GamAW24,Wein22}). Its direct impact on polynomial solvability of famous SK model \cite{SheKir72} might be even more valuable \cite{Montanari19}.

 An alternative view to the above OGP was put forth in \cite{Huang13,Huang14} where the entropy of typical solutions was considered. A stronger entropic refinement was then presented in \cite{Bald15,Bald16,Bald20}, where the existence of atypical well connected clusters is considered and the concepts related to the so-called \emph{local entropy} (LE) are proposed and discussed. The idea is that while typical solutions might be well apart (basically disconnected from each other and unreachable via simple bit flipping type of local roaming \cite{Huang13,Huang14,PerkXu21,AbbLiSly21b}), there still exist well connected but rather rare (subdominant) clusters. It is then postulated (and in some cases fully justified by running concrete algorithms) that somewhat miraculously efficient algorithms manage to find precisely these rare clusters (for a sampling type of justification in this direction for SBP, see, e.g., \cite{ElAlGam24}). This consequently suggested that properties  of such clusters might be responsible for generic computational hardness. In particular, behavior of their local entropy (its monotonicity, negativity, or even non-existence) was speculated in \cite{Bald15,Bald16,Bald20} to be directly connected to the algorithmic hardness. We should also add that for the SBP, \cite{AbbLiSly21a} showed that for sufficiently low $\alpha$ rare well-connected clusters of maximal diameter indeed exist. Moreover, existence of such clusters, albeit of linear diameter, was also shown in \cite{AbbLiSly21a} even for any $\alpha<\alpha_c$. Reconnecting back to the OGP, \cite{BarbAKZ23} showed that LE results for small $\alpha$ SBP at least scaling wise to a large degree match the OGP predictions of \cite{GamKizPerXu22} (and modulo a log term the algorithmic performance of \cite{BanSpen20}).  While this establishes a nice OGP -- LE correspondence,
 providing a definite answer as to whether or not any of these properties indeed critically impacts \emph{generic} algorithmic hardness (and if so then precisely how and why) remains extraordinary challenge. On the other hand, irrespective to their algorithmic hardness relevance, all of these properties  provide rather deep insights into the intrinsic organization of random structures and studying them warrants an independent interest as well.

Clearly, all of the above concepts are logically more foundational and analytically much harder than the original studying of the capacity and its phase-transitioning behavior. Consequently, with a few special cases exceptions the analytical progress is often restricted to statistical physics replica methods. The machinery that we introduce here provides a large deviation view of the underlying phenomena and  as such can be used for studying atypical features. To see this practically, let us again look at the binary perceptron and for a given configurational overlap  $\bar{\delta}$, take  $\bar{{\mathcal X}}=\{-\frac{1}{\sqrt{n}},\frac{1}{\sqrt{n}}\}^n$, ${\mathcal X} ={\mathcal X}^{(i_3)} = \{\x| \x\in  \bar{{\mathcal X}}, \lp \bar{\x}^{(i_3)}\rp^T \x = \bar{\delta}  \} $,  and $f_{\bar{\x}^{(i_3)} }=0$. For $h(\cdot)$ denoting the standard Heaviside function, one then observes that
 \begin{equation}\label{eq:exampleseq8aa0}
  \lim_{n,\beta,p\rightarrow\infty} \frac{\psi\lp 1\rp }{\sqrt{n}}
   =  \lim_{n\rightarrow\infty} \frac{
    \log  \lp \max_{\bar{\x}^{(i_3)} } \sum_{\x^{(i_1)}\in {\mathcal X}, \lp \bar{\x}^{(i_3)}\rp^T \x^{(i_1)} = \bar{\delta}  } \lp 1-h\lp \max_{\y\in\mS_+^m} \y^TG\x^{(i_1)}    \rp \rp\rp
    }
    {n} \triangleq \sigma(\bar{\delta})
\end{equation}
 and  $\lim_{n,\beta\rightarrow\infty} \frac{ \psi\lp  0\rp }{\beta}$ are the  thermodynamic limit values of the local entropy (LE), $\sigma(\bar{\delta})$, and its decoupled counterpart (these are the LEs of a general so-called reference configuration; various other highly relevant types of LEs can be considered as well, see, e.g., \cite{Bald15,Bald21,BaldMPZ23,BMPZ23} for BP or e.g., \cite{BarbAKZ23} for SBP). It is not that difficult to see that  $\sigma(\bar{\delta})$ is effectively the $n$-scaled exponent of the number of solutions in the densest cluster at Hamming distance $\frac{1-\bar{\delta}}{2}$ from the cluster's center. We also note that the constraint free variant of (\ref{eq:exampleseq8aa0}) can be conveniently obtained by taking $f_{\bar{\x}^{(i_3)} }=\nu \lp  \lp \bar{\x}^{(i_3)}\rp^T \x - \bar{\delta}  \rp $, where scalar $\nu$ is an optimizing variable (this avenue is pursued in companion paper \cite{Stojnicabple25} in full detail).

The above example is only for illustrative purposes and is selected due to perceptron's enormous popularity over the last decade primarily motivated by the machine learning and neureal networks applications. However, a rather large  collection of closely related relevant examples exists as well. In addition to the feasibility problems (such as perceptrons) it  also includes many standard optimal objective seeking optimizations as well. For example, in famous Hopfield models, one is often interested in behavior of the free energy. Usually a single optimal configuration is associated with the ground state energy values (in a similar way how a single typical solution is associated with the $\alpha_c$ capacity in perceptrons). However, away from the ground state, many other energetic band levels can be achieved by a large set of configurations. Their typical and atypical behavior can then be studied via the concepts introduced here. While the key mathematical engines that each of these studies relies on are already shown here,  additional  problem specific technical adjustments are often needed as well so that one can obtain practically usable concrete results. For several widely popular problems we discuss these adjustments in separate companion  papers.

\section{Conclusion}
\label{sec:lev2x3lev2liftconc}

In \cite{Stojnicnflgscompyx23} a powerful \emph{fully lifted} (fl)  probabilistic interpolating  mechanism of bilinearly indexed random processes (blirps) was introduced. We here present a large deviation view upgrade. Differently from\cite{Stojnicnflgscompyx23}, which is intended for studying \emph{typical} features of random structures, the large deviations nature of the machinery introduced here allows to substantially extend the range of applicability. In particular, studying analytically much harder \emph{atypical} features is now possible as well.

To give a bit of a flavor  regarding practical applications, we discussed how the presented concepts and underlying models directly relate to studying the so-called \emph{computational gaps} in NP problems.  The existence of atypical dense solutions clusters is predicated to play a key role in justifying the success of efficient algorithms. Studying such atypicalities  is therefore certainly of great interest for many classes of algorithms but on its own or together with other features might be even universally relevant driving force behind the computational gaps. Due to their enormous popularity over the last decade, perceptrons were chosen as  convenient illustrative examples to highlight the connection between the presented mathematical concepts and the relevance of studying atypical random structures' features. However, the presented machinery is a very powerful generic tool that allows for various further extensions and a range of applications way beyond classical perceptrons. As such extensions/applications are problem specific, we discuss them together with the relevance of the results that they provide in separate papers.

\begin{singlespace}
\bibliographystyle{plain}
\bibliography{nflgscompyxRefs}
\end{singlespace}

\end{document}